\theoremstyle{plain}
\newtheorem{theorem}{Theorem}[section]
\newtheorem{proposition}[theorem]{Proposition}
\newtheorem{lemma}[theorem]{Lemma}
\newtheorem{corollary}[theorem]{Corollary}
\theoremstyle{definition}
\newtheorem{definition}[theorem]{Definition}
\newtheorem*{definition*}{Definition}
\newtheorem{remark}[theorem]{Remark}
\newtheorem*{remark*}{Remark}
\newtheorem{example}[theorem]{Example}
\providecommand{\customgenericname}{}
\newcommand{\newcustomtheorem}[2]{%
  \newenvironment{#1}[1]
  {%
   \renewcommand\customgenericname{#2}%
   \renewcommand\theinnercustomgeneric{##1}%
   \innercustomgeneric
  }
  {\endinnercustomgeneric}
}
\newcommand{\ca}{\mathcal{A}}
 \newcommand{\cb}{\mathcal{B}}
\newcommand{\cc}{\mathcal{C}}
\newcommand{\ce}{\mathcal{E}}
\newcommand{\cf}{\mathcal{F}}
\newcommand{\cg}{\mathcal{G}}
 \newcommand{\ci}{\mathcal{I}}
 \newcommand{\cl}{\mathcal{L}}
 \newcommand{\ct}{\mathcal{T}} 
 \newcommand{\cv}{\mathcal{V}}
 \newcommand{\cx}{\mathcal{X}}
\newcommand{\cy}{\mathcal{Y}}
\newcommand{\C}{\mathbb{C}}           % Use for complex numbers.
\newcommand{\N}{\mathbb{ N}}           % Use for positive integers.
\newcommand{\ds}{\displaystyle}
\newcommand{\abs}[1]{\left| #1 \right|}
\newcommand{\llangle}{\left\langle}
\newcommand{\rrangle}{\right\rangle}
\newcommand{\polyr}[1]{\C[t_1,\ldots,t_{#1}]}
\newcommand{\poly}{\C[t_\bullet]}
\newcommand{\supp}{\mathrm{supp}}
\newcommand{\adj}{\mathrm{N}}
\newcommand{\cut}{\mathfrak{c}}
\newcommand{\Cut}{\mathfrak{C}}
\newcommand{\IB}{\mathrm{IB}}
\newcommand{\LB}{\mathrm{LB}}
\newcommand{\IC}{\mathrm{IC}}
 \newcommand{\splines}[1]{\mathcal{M}_{#1}}
 \newcommand{\lqot}[1]{\mathrm{L}_{#1}}
 \newcommand{\rqot}[1]{\mathrm{R}_{#1}}
 \newcommand{\lrep}[1]{\mathbf{ch}\left(\mathrm{L}_{#1}\right)}
 \newcommand{\rrep}[1]{\mathbf{ch}\left(\mathrm{R}_{#1}\right)}
 \newcommand{\choos}[2]{\genfrac(){0pt}{0}{#1}{#2}}
\title{Splines on Cayley Graphs of the Symmetric Group}
\author{Nathan R.\,T. Lesnevich}
\thanks{The author was partially supported by NSF grant DMS-1954001, and would like to thank Martha Precup and John Shareshian for their continued advice and support.}
\address{Department of Mathematics, Washington University in St.\ Louis, One Brookings Drive, St.\ Louis, MO 63130, USA}
\email{\href{mailto:nlesnevich@wustl.edu}{nlesnevich@wustl.edu}}
\begin{document}
\begin{abstract}
A spline is an assignment of polynomials to the vertices of a graph whose edges are labeled by ideals, where the difference of two polynomials labeling adjacent vertices must belong to the corresponding ideal. The set of splines forms a ring. We consider spline rings where the underlying graph is the Cayley graph of a symmetric group generated by a collection of transpositions. These rings generalize the GKM construction for equivariant cohomology rings of flag, regular semisimple Hessenberg, and permutohedral varieties. These cohomology rings carry two actions of the symmetric group $S_n$ whose graded characters are both of general interest in algebraic combinatorics. In this paper, we generalize the graded $S_n$-representations from the cohomologies of the above varieties to splines on Cayley graphs of $S_n$, then (1) give explicit module and ring generators for whenever the $S_n$-generating set is minimal, (2) give a combinatorial characterization of when graded pieces of one $S_n$-representation is trivial, and (3) compute the first degree piece of both graded characters for all generating sets.
\end{abstract}

\maketitle

\tableofcontents
\addtocontents{toc}{\protect\setcounter{tocdepth}{1}}

\section{Introduction}\label{sec:intro}
Let $\cg$ be a graph with edges labeled by ideals in $\poly \coloneqq \polyr{n}$. A \emph{spline} on $\cg$ is an assignment of polynomials to vertices such that the difference of two polynomials labeling adjacent vertices must be in the corresponding ideal. The \emph{Cayley graph} for a group $G$ and generating set $S \subseteq G$ has vertex set $G$ and edge set $\{(g,gs) \mid g \in G, s \in S\}$. When the group $G$ is a symmetric group $S_n$ and the generating set $S$ consists of inversions, there is a natural edge labeling for the corresponding Cayley graph. This labeled Cayley graph, and thereby the splines on it, are entirely determined by the data of the inversion graph $\Gamma = ([n],S)$. This paper determines algebraic structures of splines on Cayley graphs of symmetric groups using the combinatorial data of the inversion graph $\Gamma$. \par 
 To discuss the results below, we begin with some notation. Let $\Gamma$ be a connected simple graph with vertex set $[n] \coloneqq \{1,\ldots,n\}$, and identify the edges in its edge set $E(\Gamma)$ with transpositions in $S_n$. This paper studies how properties of $\Gamma$ determine the algebraic structure of splines on the Cayley graph $\cg_{\Gamma}$ of $S_n$ with generating set $E(\Gamma)$ and edge label $(w,w(i,j)) \mapsto \llangle t_{w(i)} - t_{w(j)} \rrangle$. Formally, the \emph{ring of splines} is defined as
    \[
        \splines{\Gamma} \coloneqq \left\{ \left.\bar{\rho} \in \prod_{w \in S_n} \poly\; \right|\; \bar{\rho}(w) - \bar{\rho}(w(i,j)) \in \llangle t_{w(i)}-t_{w(j)} \rrangle\text{ when } (i,j) \in E(\Gamma) \right\},
   \]
   with (graded) $S_n$-module structure $w\cdot \bar{\rho}(v) = w\bar{\rho}(w^{-1}v)$ and (graded) $\poly$-module structure given by multiplication.  \par 
    This definition of the ring of splines generalizes the case where $\cg_\Gamma$ is the moment graph of a geometric object called a regular semisimple Hessenberg variety and the ring of splines is isomorphic to the equivariant cohomology of that variety \cite{DMPS1992hessenbergvarieties,GKM_theory,tymoczko2008permutation}. We call this the \emph{geometric case}, and in this case the corresponding graph $\Gamma$ is a \emph{Hessenberg graph}, commonly characterized in algebraic combinatorics as being the indifference graph of a $3+1$- and $2+2$-free poset. The more general setting considered in this paper allows one to spot patterns in rich algebraic structure that would otherwise be restricted for geometric reasons. For example, in the geometric case $\splines{\Gamma}$ is always a free module over the polynomial ring, whereas for general $\Gamma$ it is not. \par 
   The $S_n$-module structure on $\splines{\Gamma}$ was first defined in the geometric case as the \emph{dot action} on equivariant cohomology by Tymoczko in \cite{tymoczko2008permutation}. There are two natural $S_n$-equivariant quotients, $\lqot{\Gamma}$ and $\rqot{\Gamma}$, of $\splines{\Gamma}$ that are in fact graded $\C$-vector spaces. The graded $S_n$-module structure of $\splines{\Gamma}$ induces graded $S_n$-representations on the quotients, admitting (via the Frobenius character map $\mathbf{ch}$) two different graded symmetric functions: \[\lrep{\Gamma} \coloneqq \bigoplus_{i} \lrep{\Gamma}_i\;\text{ and }\;\rrep{\Gamma} \coloneqq \bigoplus_{i} \rrep{\Gamma}_i.\] These are manifestly Schur-positive symmetric function invariants of any simple graph. \par 
   The graded symmetric functions $\lrep{\Gamma}$ and $\rrep{\Gamma}$ are historically of interest to algebraic combinatorists because of their connections to chromatic symmetric functions \cite{guaypaquet2016shar_wachs_conj,SW2016chromaticquasisymmetric,brosnan_chow_dotactn_is_chromsym} and LLT polynomials \cite{guaypaquet2016shar_wachs_conj,Ayzenberg2018isospectral, ALEXANDERSSON2018LLTchromsym} in the geometric case. The two bases of symmetric functions we consider here are \emph{Schur functions} $\{s_\lambda\}$ and \emph{homogeneous symmetric functions} $\{h_\lambda\}$. In the geometric case, two major open problems seek (1) a homogeneous basis expansion of $\lrep{\Gamma}$ (\cite{STANLEY1995chromsym, Gasharov96,GuayPaquet13, SW2016chromaticquasisymmetric,  harada2017cohomology, brosnan_chow_dotactn_is_chromsym,Dahlberg19, Abreu_Nigro20}, and many others), and (2) a Schur basis expansion of $\rrep{\Gamma}$ (\cite{Leclerc2000LLT_LR_KL,Blasiak2016LLTMacdonald,guaypaquet2016shar_wachs_conj,Huh2020LLTLollipop,Alexandersson2020Lollipop,Lee2021LLTlinear} and many others). Again, our object of study is more general, and because of this we can identify patterns otherwise masked by geometric structure. For example, the Stanley--Stembridge conjecture \cite{SS1993immanantsofJTmatrices} claims that the homogeneous basis expansion of $\lrep{\Gamma}$ has only non-negative integer coefficients ($h$-positivity) in the geometric case. We observe below that this is not the case for general $\Gamma$, but $h$-positivity seems to occur whenever $\splines{\Gamma}$ is a free module over $\poly$. \par 

 This paper begins with several fundamental properties of $\splines{\Gamma}$. First, we establish the algebraic structure of $\splines{\Gamma}$ as an invariant of the graph $\Gamma$.
\begin{customlemma}{A}\label{intlem:isoms}
    An isomorphism of graphs $\Gamma \cong \Gamma'$ induces a ring isomorphism of splines $\splines{\Gamma} \cong \splines{\Gamma'}$, and equality of graded symmetric functions: $\lrep{\Gamma} = \lrep{\Gamma'}$ and $\rrep{\Gamma} = \rrep{\Gamma'}$.
\end{customlemma} 
 In particular, Lemma \ref{intlem:isoms} shows that the graded symmetric functions $\lrep{\Gamma}$ and $\rrep{\Gamma}$ are (Schur-positive) invariants of unlabeled simple graphs. Lemma \ref{intlem:isoms} is  proved via Propositions \ref{prop:isom_rings} and \ref{prop:isom_reps} below. \par  
 Then when $\Gamma$ is a tree, we determine explicit ring and module generators of $\splines{\Gamma}$ called \emph{coset splines} (Def. \ref{def:coset_spline}). 
 \begin{customthm}{B}\label{intthm:coset_splines}
     If $\Gamma$ is a tree, then the set of coset splines is a $\poly$-module generating set of $\splines{\Gamma}$, and the set of linear and constant coset splines is a ring generating set of $\splines{\Gamma}$.
 \end{customthm}
Since they generate, one can compute $\splines{\Gamma}$ explicitly with coset splines using a computer algebra system. Theorem \ref{intthm:coset_splines} is Theorem \ref{thm:coset_splines_spann} and Corollary \ref{cor:ring_gen} below. \par 
    We use Theorem \ref{intthm:coset_splines} to show that $\splines{\Gamma}$ is \emph{not} always a free $\poly$-module, and $\lrep{\Gamma}$ is \emph{not} always $h$-positive. One example is if $\Gamma = \left([4],\{(1,4),(2,4),(3,4)\}\right)$, then $\splines{\Gamma}$ is not a free module and $\lrep{\Gamma}_2$ is not $h$-positive. This also confirms that $\splines{\Gamma}$ is not always the equivariant cohomology of an (equivariantly formal) algebraic variety as in \cite{GKM_theory}, since in that case $\splines{\Gamma}$ is a free $\poly$-module. \par 
Our next main results, Theorems \ref{intthm:connectedness} and \ref{intthm:linear} below, explicitly compute certain graded pieces of the symmetric functions $\lrep{\Gamma}$ and $\rrep{\Gamma}$. Specifically, we determine when graded pieces of $\lrep{\Gamma}$ and $\rrep{\Gamma}$ are equal to $\lrep{K_n}$ and $\rrep{K_n}$ where $K_n$ is the complete graph ($\Gamma = K_n$ is a very special geometric case), and we compute $\lrep{\Gamma}_1$ and $\rrep{\Gamma}_1$ for all connected graphs $\Gamma$. \par 
For a variety of reasons, for example by formulae in \cite{SW2016chromaticquasisymmetric} or by some geometric observations, in the geometric case it is straightforward to tell from a Hessenberg graph  $H$ whether the symmetric function $\lrep{H}_d$ corresponds to a trivial representation. We achieve an analogous result for arbitrary graphs. The $k$-connectivity (Def. \ref{def:connectivity}) of a graph is a combinatorial invariant that measures how many vertices can be removed from a graph before it might become disconnected.
\begin{customthm}{C}\label{intthm:connectedness}
    Let $\Gamma$ be a connected simple graph. The following are equivalent:
    \begin{itemize}
        \item[1)] The graph $\Gamma$ is $k$-connected.
        \item[2)] For all $d<k$, the symmetric function $\lrep{\Gamma}_d$ corresponds to a trivial representation.  
        \item[3)] For all $d<k$, the $d$-th graded piece of $\splines{\Gamma}$ is isomorphic to the $d$-th graded piece of $\splines{K_n}$, where $K_n$ is the complete graph on $n$ vertices.
    \end{itemize}
\end{customthm}
Geometrically, the $d$-th graded piece of $\splines{K_n}$ is isomorphic to the $2d$-th equivariant cohomology of the full flag variety, and is thus spanned by equivariant Schubert classes whose spline formula is given in \cite{billey1999schub_formula}. Theorem \ref{intthm:connectedness} is a consequence of Theorem \ref{thm:kconn} below.\par 
When $\Gamma$ is a Hessenberg graph, the first graded piece of $\lrep{\Gamma}$ has been computed in a variety of ways. The Schur expansion is computed by counting $P$-tableaux \cite{SW2016chromaticquasisymmetric}. Expansions in the homogeneous basis have been computed with $P$-tableaux \cite{chow_linearp}, geometrically \cite{chohonglee_second_cohom}, as well as with splines \cite{ayzenberg2022second}. Our methods here most directly generalize those in \cite{ayzenberg2022second}. 
\begin{customthm}{D}\label{intthm:linear}
    Let $\Gamma$ be any connected simple graph. The first degree pieces of the graded symmetric functions $\lrep{\Gamma}$ and $\rrep{\Gamma}$ can be computed in both the Schur and homogeneous bases of symmetric functions from the data of (1) cut edges of $\Gamma$ and (2) cut vertices of $\Gamma$ and the number of connected components those vertices separate. \par 
    Formally, there exist a subset $E_1$ of cut edges, a subset $E_2$ of 2-connected subgraphs, a non-negative integer $k \in \N$, and a function $e \mapsto \lambda_e$ from $E_1$ to the set of partitions of $n$, such that 
    \[
    \lrep{\Gamma}_1 = \sum_{e \in E_1} h_{\lambda_e} + \left(\abs{E_2}-1\right) h_{n-1,1} + kh_n
    \]
    and 
    \[
    \rrep{\Gamma}_1 = \sum_{e \in E_1}\left(h_{\lambda_e} - s_n\right) +\abs{E_2} s_{n-1,1}.
    \]
\end{customthm}
Theorem \ref{intthm:linear} is Theorem \ref{thm:linear_reps} and Corollary \ref{cor:nonlabel_reps} below. The subsets $E_1$ and $E_2$ are defined using a combinatorial construction from the block-cut tree (Def. \ref{def:block_cut}) of $\Gamma$ in Section \ref{sec:lin_dim}.\par 
The paper is structured as follows. Section \ref{sec:background} constructs $\splines{\Gamma}$ and proves some of the fundamental algebraic properties, including the isomorphism of Lemma \ref{intlem:isoms}. Section \ref{sec:structure} builds tools for computing spline conditions from paths in $\Gamma$ and $\cg_\Gamma$. It also contains the construction of coset splines for trees and the proof that coset splines generate $\splines{\Gamma}$, Theorem \ref{intthm:coset_splines} above. Section \ref{sec:connectedness} leverages the tools in Section \ref{sec:structure} to prove our result on $k$-connectedness, Theorem \ref{intthm:connectedness} above. Sections \ref{sec:lin_gen_rels}, \ref{ssec:cliqued}, \ref{sec:lin_span}, \ref{sec:lin_dim}, and \ref{sec:lin_reps} are all to compute the representations in Theorem \ref{intthm:linear} above. Section \ref{sec:lin_gen_rels} defines a set of linear splines on the graph $\cg_\Gamma$ and proves some linear relations within that set. Section \ref{ssec:cliqued} reduces the computation to a subclass of graphs $\Gamma$ that will be used in all of the remaining sections. Section \ref{sec:lin_span} proves that the set of splines from Section \ref{sec:lin_gen_rels} is in fact a $\C$-spanning set for linear splines, and Section \ref{sec:lin_dim} computes the $\C$-dimension of this space. Finally Section \ref{sec:lin_reps} computes the first graded piece of $\lrep{\Gamma}$ and $\rrep{\Gamma}$, Theorem \ref{intthm:linear}. 

\section{Background}\label{sec:background}
There is a natural action of the symmetric group $S_n$ on the polynomial ring $\poly$ by 
\begin{equation}\label{eqn:sn_action_poly}
    w f\left(t_1,\ldots,t_n\right) \mapsto f\left(t_{w(1)},\ldots,t_{w(n)}\right).
\end{equation}
We use both one-line and cycle notation for elements of $S_n$. We denote a permutation's cycle notation with parentheses and commas, and its one-line notation without, so that $(1,2,3)=231$.
\subsection{Graphs: Simple and Cayley}
This subsection establishes the basic definitions, results, and notation from graph theory needed below. A \emph{graph} is a tuple $\Gamma = (V,E)$ where $V$ is the set of vertices and $E \subset V\times V$ is the set of edges. Graphs here are understood to be undirected and simple, i.e. finite, loopless, and without multiple edges. We will always take $\Gamma$ to be connected, and may remind the reader of this assumption where particularly important. Write $E(\Gamma)$ for the edge set of a graph $\Gamma$ and $V(\Gamma)$ for the vertex set. Inclusion $v \in \Gamma$ means $v \in V(\Gamma)$. \par 
If the vertex set $V$ has some natural linear order (in particular when $V = [n]$), then an edge between vertices $i < j$ will always be written with the lower vertex first $(i,j)$, unless explicitly stated otherwise. Note that these edges are undirected, so an edge $(i,j)$ is the same as an edge $(j,i)$.\par 
We denote graphs pictorially with circles as vertices and lines as edges between them, for example we would display a particular graph $\Gamma$ on $9$ vertices as 
\begin{center}
    \begin{tikzpicture}
        \begin{scope}
            \draw (0,0) node[draw,circle] (V11) {$v_1$};
            \draw (1,-0.5) node[draw,circle] (V12) {$v_2$};
            \draw (1,0.5) node[draw,circle] (V13) {$v_3$};
            \draw (2,0) node[draw,circle] (V14) {$v_4$};
            \draw (3,0.5) node[draw,circle] (V21) {$v_5$};
            \draw (3,1.5) node[draw,circle] (V22) {$v_6$};
            \draw (4,1.5) node[draw,circle] (V23) {$v_7$};
            \draw (4,0.5) node[draw,circle] (V24) {$v_8$};
            \draw (3,-0.5) node[draw,circle] (V31) {$v_9$};

            \draw (V11)--(V12);
            \draw (V11)--(V13);
            \draw (V12)--(V14);
            \draw (V13)--(V14);
            \draw (V14)--(V21);
            \draw (V14)--(V31);

            \draw (V21)--(V22);
            \draw (V22)--(V23);
            \draw (V23)--(V24);
            \draw (V24)--(V21);

        \end{scope}
        \end{tikzpicture}
        \end{center}
 The \emph{induced subgraph} of $\Gamma$ with vertex set $V \setminus A$ is $\Gamma \setminus A \coloneqq (V \setminus A,E')$ where $E' = E \cap \left( V \setminus A \times V \setminus A\right)$. We write $\Gamma - v$ for $\Gamma\setminus \{v\}$. When collapsing a subgraph in drawing, we reference the subgraph in a square to distinguish that there are multiple vertices being referenced, and double lines connecting to acknowledge the possibility of multiple edges. For example, we may display $\Gamma$ above as \par 
\begin{center}
    \begin{tikzpicture}
            \draw (2,1) node[draw,circle] (V2) {$v_1$};
            \draw (2,-1) node[draw,circle] (V3) {$v_2$};
            \draw (5,0) node[draw,rectangle] (V4) {$\Gamma \setminus \left\{\substack{v_1 \\ v_2}\right\}$};     

            \draw (V2)--(V3);
            \draw[double] (V2)--(V4);
            \draw[double] (V3)--(V4);
    \end{tikzpicture}
\end{center}
if the structure within $\Gamma \setminus \left\{v_1  ,v_2\right\}$ is not needed.
\begin{definition}
    For a graph $\Gamma$, a set $A \subset V(\Gamma)$ is a \emph{cut set} if $\Gamma \setminus A$ is disconnected. Similarly $v \in V(\Gamma)$ is a \emph{cut vertex} of $\Gamma$ (denoted $v \vdash \Gamma$) if $\Gamma - v$ is disconnected.\par 
    An edge $e \in E(\Gamma)$ is a \emph{cut edge} if the graph $(V(\Gamma),E(\Gamma)\setminus\{e\})$ is disconnected.
\end{definition}
A \emph{path} in $\Gamma$ from vertex $v_0$ to vertex $v_\ell$ of length $\ell$ is a sequences of vertices $(v_0,v_1,...,v_\ell)$ where $(v_k,v_{k+1}) \in E(\Gamma)$ for $k=0,...,\ell-1$. Define the \emph{distance} $d(v,w)$ between $v$ and $w$ as the minimum length over all paths from $v$ to $w$, and let $d(v,w) \coloneqq \infty$ if no such path exists.
\begin{definition}\label{def:connectivity}
    A graph $\Gamma = (V,E)$ is \emph{$k$-connected} if $\Gamma \setminus A$ is connected for all $A \subset V$ such that $\abs{A} \leq k-1$.
\end{definition}
In other words, a graph is $k$-connected if there exists no cut-set $A$ where $\abs{A} <k$. The following is an equivalent characterization used in $\S$\ref{sec:connectedness}.
\begin{theorem}[Menger's Theorem]\label{thm:menger}
    A graph $\Gamma$ is $k$-connected if and only if for every pair of vertices $i,j \in \Gamma$, there exist at least $k$ vertex-disjoint paths from $i$ to $j$.
\end{theorem}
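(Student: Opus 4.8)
Menger's theorem is classical, so in the paper one would cite a standard graph-theory reference rather than prove it from scratch; but if a self-contained argument were wanted, the plan is to reduce the global statement to the ``local'' (two-vertex) form of Menger's theorem and prove that by induction on the number of edges. The ``if'' direction is immediate and needs no reduction: if every pair $i,j$ is joined by $k$ internally vertex-disjoint paths, then any vertex set $A$ with $\abs{A}\le k-1$ misses at least one of the $k$ paths between any two vertices of $\Gamma\setminus A$, so $\Gamma\setminus A$ is connected and $\Gamma$ is $k$-connected. The content is the ``only if'' direction. The local form I would aim to establish is: for a graph $G$ and vertices $s\ne t$ with $st\notin E(G)$, the maximum number of internally disjoint $s$--$t$ paths equals the minimum cardinality of an $s$--$t$ separator, i.e.\ a set $S\subseteq V(G)\setminus\{s,t\}$ such that $G\setminus S$ has no $s$--$t$ path.

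I would prove this local statement by induction on $\abs{E(G)}$, following the standard argument. Let $m$ be the minimum separator size; the goal is to produce $m$ internally disjoint paths. If some edge $e=xy$ avoids $\{s,t\}$, delete it: either the minimum $s$--$t$ separator of $G-e$ still has size $m$ and induction finishes, or $G-e$ has a separator $S_0$ of size $m-1$, in which case both $S_0\cup\{x\}$ and $S_0\cup\{y\}$ are separators of $G$ of size $m$; one then applies induction to the two ``sides'' cut out by these separators and splices the resulting families of paths at $x$ and $y$, checking internal disjointness via the minimality of $S_0$. The base case, in which every edge meets $\{s,t\}$, is handled directly: such a $G$ is essentially a disjoint union of $s$--$t$ paths of length $\le 2$, and the claim is transparent. (Alternatively, one can deduce the vertex form from the edge form of Menger's theorem via the classical gadget splitting each vertex $v$ into $v^{-}\to v^{+}$, and obtain the edge form from max-flow--min-cut.)

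Granting the local statement, I would finish the global statement as follows. Assume $\Gamma$ is $k$-connected (and $\abs{V(\Gamma)}>k$, as one must for the equivalence to hold on small graphs) and fix distinct $i,j$. If $ij\notin E(\Gamma)$, then every $i$--$j$ separator $S$ makes $\Gamma\setminus S$ disconnected, hence $\abs{S}\ge k$, and the local theorem yields $k$ internally disjoint $i$--$j$ paths. If $ij\in E(\Gamma)$, pass to $G=(V(\Gamma),E(\Gamma)\setminus\{ij\})$: deleting any $\le k-2$ vertices from the $k$-connected graph $\Gamma$ leaves a $2$-connected, hence bridgeless, graph, so $G$ is still $(k-1)$-connected and $ij\notin E(G)$; the previous case gives $k-1$ internally disjoint $i$--$j$ paths in $G$, and adjoining the edge $ij$ supplies a $k$-th path disjoint from all of them. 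The main obstacle is the inductive step of the local version—keeping track of which separator one deletes ``into'' and verifying that the spliced paths are genuinely internally disjoint and exactly $m$ in number; once that is in hand, the reduction of the global form to the local one, including the adjacent-endpoints trick, is routine bookkeeping.
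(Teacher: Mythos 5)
Your treatment matches the paper's: the paper states Menger's Theorem as a classical fact and gives no proof, relying on it only in Section \ref{sec:connectedness}, which is exactly the citation-style handling you propose. Your backup sketch is the standard argument (edge-induction for the local min-separator/max-disjoint-paths form, then the global reduction with the edge-deletion trick for adjacent $i,j$) and is sound, including your correct caveat that the global equivalence requires $\abs{V(\Gamma)}>k$, a hypothesis the paper's statement leaves implicit.
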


An \emph{$R$-labeled graph} is a tuple $(V,E,L)$, where $(V,E)$ is a graph and $L$ is a function $L \colon E \to R$ for some set $R$. 
A \emph{Cayley graph} of a group $G$ and a set of generators $S$ is the graph $\left(G,\{(g,h) \mid g^{-1}h \in S\}\right)$. 
Cayley graphs are usually directed graphs, but all generators considered here will be involutions and so the Cayley graphs will be undirected simple graphs. Note that $g^{-1}h \in S$ if and only if $h=gs$ for $s \in S$, so edges in a Cayley graph correspond to right multiplication by generators.\par 
This paper concerns graphs $\Gamma$ on vertex set $[n]$, and labelled Cayley graphs of the symmetric group with generators being some set of transpositions. The edge labels are principle ideals in $\poly$. 
\begin{definition}
    Let $\Gamma$ be a graph on $[n]$. Identify each edge $(i,j) \in E(\Gamma)$ with the transposition $(i,j) \in S_n$. The \emph{labelled Cayley graph associated to $\Gamma$} is $\cg_\Gamma := (\cv,\ce,\cl)$ where 
    \begin{itemize}
        \item $\cv = S_n$,
        \item $\ce = \{(w,v) \mid w^{-1}v \in E(\Gamma)\}$, and 
        \item $\cl(w,v) = \llangle t_i-t_j \rrangle$ where $(i,j) = wv^{-1}$.
    \end{itemize}
\end{definition}
Note $w^{-1}v$ is conjugate to $wv^{-1}$, so if $w = v(i,j)$ then $\cl(w,v) = \langle t_{w(i)}-t_{w(j)}\rangle = \langle t_{v(i)}-t_{v(j)}\rangle.$ Note also that $\cl$ is defined whenever $wv^{-1}$ is a transposition.  \par 
\begin{example}\label{ex:labelled_Cayley}
Let $\Gamma = \left([3],\{(1,2),(2,3)\}\right)$. Then $\cg_\Gamma$ has vertex set $S_3$, edges $\{(w,v) \mid w^{-1}v \in \{(1,2),(2,3)\}$, and labels of the form $\langle t_i-t_j\rangle$ where $i,j \in [3]$. 
Below is $\cg_\Gamma$, with labeling ideals denoted by generators.

\begin{center}
    \begin{tikzpicture}[scale=1.5]
\tikzstyle{every node}=[shape=rectangle, inner sep=4pt];
\draw  (0,0) node[draw] (v0) {$123$};
\draw  (1,1) node[draw]  (v1) {$132$};
\draw  (-1,1) node[draw] (v2) {$213$};
\draw  (-1,2) node[draw] (v3) {$231$};
\draw  (1,2) node[draw] (v4) {$312$};
\draw (0,3)  node[draw]  (v5) {$321$};

\draw (v0)--(v1) node[midway,right] {\textcolor{red}{ $t_2-t_3$}};
\draw (v0)--(v2) node[midway,left] {\textcolor{red}{ $t_1-t_2$}};
\draw (v1)--(v4)node[midway,right] {\textcolor{red}{ $t_1-t_3$}};
\draw (v2)--(v3) node[midway,left] {\textcolor{red}{ $t_1-t_3$}};
\draw (v3)--(v5)node[midway,left] {\textcolor{red}{ $t_2-t_3$}};
\draw (v4)--(v5)node[midway,right] {\textcolor{red}{ $t_1-t_2$}};

\end{tikzpicture}
\end{center}
Consider the edge $(132,312)$. These permutations have their first and second positions swapped, corresponding to right multiplication by $(1,2) \in E(\Gamma)$. The edge is labelled $\langle t_1-t_3\rangle$ because these permutations have the \emph{entries} $1$ and $3$ swapped, corresponding to left multiplication by $(1,3)$.  
\end{example}
The \emph{$\Gamma$-length} of a permutation $w \in S_n$ is 
\begin{equation}\label{eqn:gamma_length}
    \ell_\Gamma(w) \coloneqq\min\left\{ \ell \mid w = s_1\cdots s_\ell,\;\; \{s_1,\ldots,s_\ell\} \subseteq E(\Gamma)\right\}.
\end{equation}
This is also the value of $d(e,w)$ in $\cg_\Gamma$. When $\Gamma$ is the path graph, $\Gamma$-length is the traditional length function on permutations.
\subsection{Splines}\label{ssec:background_splines}
This section introduces the ring of splines on a labelled Cayley graph. The lemmas in this subsection are well known and straightforward, but we include proofs for completeness.
\begin{definition}
    Let $\Gamma$ be a graph on $[n]$. A \emph{spline} on $\cg_\Gamma$ is a function $\bar{\rho} \colon S_n \to \poly$ such that $\bar{\rho}(w) - \bar{\rho}(v) \in \cl(w,v)$ whenever $(w,v) \in E\left(\cg_\Gamma\right)$. The \emph{support} of the spline $\bar{\rho}$ is the set $\supp(\bar{\rho})\coloneqq \{w \mid \bar{\rho}(w) \neq 0\}$.
\end{definition}
 To distinguish from polynomials, we always denote a spline with a bar.
\begin{example}\label{ex:splines}
Again consider $\Gamma = \left([3],\{(1,2),(2,3)\}\right)$. Drawn below (omitting edge-labels) are three examples of splines on $\cg_\Gamma$.
\begin{center}
    \begin{tikzpicture}[scale=1.5]
\tikzstyle{every node}=[shape=rectangle, inner sep=4pt];
\begin{scope}[xshift = 0]
\draw  (0,0) node[draw] (v0) {$123$};
\draw  (1,1) node[draw]  (v1) {$132$};
\draw  (-1,1) node[draw] (v2) {$213$};
\draw  (-1,2) node[draw] (v3) {$231$};
\draw  (1,2) node[draw] (v4) {$312$};
\draw (0,3)  node[draw]  (v5) {$321$};
\draw (0,1.5) node (name) {$\bar{\rho}_1$};

\draw  (0.5,0) node (v00) {\textcolor{blue}{$t_1$}};
\draw  (1.5,1) node  (v10) {\textcolor{blue}{$t_1$}};
\draw  (-1.5,1) node (v20) {\textcolor{blue}{$t_1$}};
\draw  (-1.5,2) node (v30) {\textcolor{blue}{$t_1$}};
\draw  (1.5,2) node (v40) {\textcolor{blue}{$t_1$}};
\draw (0.5,3)  node  (v50) {\textcolor{blue}{$t_1$}};

\draw (v0)--(v1);
\draw (v0)--(v2);
\draw (v1)--(v4);
\draw (v2)--(v3);
\draw (v3)--(v5);
\draw (v4)--(v5);
\end{scope}

\begin{scope}[xshift = 5cm]
\draw  (0,0) node[draw] (v0) {$123$};
\draw  (1,1) node[draw]  (v1) {$132$};
\draw  (-1,1) node[draw] (v2) {$213$};
\draw  (-1,2) node[draw] (v3) {$231$};
\draw  (1,2) node[draw] (v4) {$312$};
\draw (0,3)  node[draw]  (v5) {$321$};
\draw (0,1.5) node (name) {$\bar{\rho}_2$};

\draw  (0.5,0) node (v00) {\textcolor{blue}{$t_1$}};
\draw  (1.5,1) node  (v10) {\textcolor{blue}{$t_1$}};
\draw  (-1.5,1) node (v20) {\textcolor{blue}{$t_2$}};
\draw  (-1.5,2) node (v30) {\textcolor{blue}{$t_2$}};
\draw  (1.5,2) node (v40) {\textcolor{blue}{$t_3$}};
\draw (0.5,3)  node  (v50) {\textcolor{blue}{$t_3$}};

\draw (v0)--(v1);
\draw (v0)--(v2);
\draw (v1)--(v4);
\draw (v2)--(v3);
\draw (v3)--(v5);
\draw (v4)--(v5);
\end{scope}

\begin{scope}[xshift = 2.5cm, yshift=-2.8cm]
\draw  (0,0) node[draw] (v0) {$123$};
\draw  (1,1) node[draw]  (v1) {$132$};
\draw  (-1,1) node[draw] (v2) {$213$};
\draw  (-1,2) node[draw] (v3) {$231$};
\draw  (1,2) node[draw] (v4) {$312$};
\draw (0,3)  node[draw]  (v5) {$321$};
\draw (0,1.5) node (name) {$\bar{\rho}_3$};

\draw  (0.5,0) node (v00) {\textcolor{blue}{$0$}};
\draw  (1.5,1) node  (v10) {\textcolor{blue}{$0$}};
\draw  (-1.5,0.7) node (v20) {\textcolor{blue}{$t_1-t_2$}};
\draw  (-1.5,2.3) node (v30) {\textcolor{blue}{$t_3-t_2$}};
\draw  (1.5,2) node (v40) {\textcolor{blue}{$0$}};
\draw (0.5,3)  node  (v50) {\textcolor{blue}{$0$}};

\draw (v0)--(v1);
\draw (v0)--(v2);
\draw (v1)--(v4);
\draw (v2)--(v3);
\draw (v3)--(v5);
\draw (v4)--(v5);
\end{scope}
\end{tikzpicture}
\end{center}
So $\bar{\rho}_1(w) = t_1$ for all $w \in S_3$, $\bar{\rho}_2(w) = t_{w(1)}$ for all $w \in S_3$, and ${\ds \bar{\rho_3}(w) = \begin{cases}
    t_1-t_2 & \text{if }w = 213 \\ t_3-t_2 & \text{if }w = 231 \\ 0 & \text{otherwise.}
\end{cases}}$
\end{example}
The set of splines is closed under addition, as well as multiplication.
\begin{lemma}\label{lem:right_action_works}
    Let $\Gamma$ be a graph on $[n]$. If $\bar{\rho}$ and $\bar{\sigma}$ are splines on $\cg_\Gamma$, then so is $\bar{\sigma}\bar{\rho}$, the spline constructed via point-wise multiplication.
\end{lemma}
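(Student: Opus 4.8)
The plan is to verify the single defining condition directly on each edge of $\cg_\Gamma$, using the fact that the edge labels $\cl(w,v)$ are \emph{ideals} in $\poly$ (indeed principal ideals $\llangle t_i - t_j\rrangle$), so they are closed under multiplication by arbitrary polynomials. Fix an edge $(w,v) \in E(\cg_\Gamma)$; I must show $(\bar\sigma\bar\rho)(w) - (\bar\sigma\bar\rho)(v) \in \cl(w,v)$, where by definition $(\bar\sigma\bar\rho)(u) = \bar\sigma(u)\bar\rho(u)$ for each $u \in S_n$.

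The key step is the standard ``add and subtract'' telescoping identity:
\[
\bar\sigma(w)\bar\rho(w) - \bar\sigma(v)\bar\rho(v) = \bar\sigma(w)\bigl(\bar\rho(w) - \bar\rho(v)\bigr) + \bigl(\bar\sigma(w) - \bar\sigma(v)\bigr)\bar\rho(v).
\]
Since $\bar\rho$ is a spline, $\bar\rho(w) - \bar\rho(v) \in \cl(w,v)$, and since $\cl(w,v)$ is an ideal, the first summand $\bar\sigma(w)\bigl(\bar\rho(w) - \bar\rho(v)\bigr)$ lies in $\cl(w,v)$. Symmetrically, since $\bar\sigma$ is a spline, $\bar\sigma(w) - \bar\sigma(v) \in \cl(w,v)$, so the second summand $\bigl(\bar\sigma(w) - \bar\sigma(v)\bigr)\bar\rho(v)$ also lies in $\cl(w,v)$. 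An ideal is closed under addition, so the sum lies in $\cl(w,v)$, which is exactly the required condition. As the edge $(w,v)$ was arbitrary, $\bar\sigma\bar\rho$ is a spline on $\cg_\Gamma$.

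There is no real obstacle here: the only facts used are that $\poly$ is commutative, that each $\cl(w,v)$ is an ideal, and the elementary telescoping identity above; the lemma is exactly the observation that the spline conditions are ``ideal-linear'' in a way compatible with products. One could also phrase the same argument more compactly by noting that $\splines{\Gamma}$ is visibly a $\poly$-submodule of $\prod_{w \in S_n}\poly$ and that the product condition follows from the module condition applied twice, but the explicit computation above is the cleanest self-contained write-up. (This, together with the evident closure under addition already remarked before the lemma, is what makes $\splines{\Gamma}$ a ring.)
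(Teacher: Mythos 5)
Your proof is correct and is essentially the same as the paper's: both add and subtract a cross term to write the difference of products as $\bar\sigma(w)(\bar\rho(w)-\bar\rho(v)) + (\bar\sigma(w)-\bar\sigma(v))\bar\rho(v)$ (up to symmetry in which factor is held fixed) and then use that $\cl(w,v)$ is an ideal. No gaps.
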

\begin{proof}
    Let $(w,v) \in E(\cg_\Gamma)$. By assumption $\bar{\rho}(w)-\bar{\rho}(v) \in \cl(w,v)$ and $\bar{\sigma}(w)-\bar{\sigma}(v) \in \cl(w,v)$. We have
    \begin{align*}
        \bar{\rho}(w)\bar{\sigma}(w) - \bar{\rho}(v)\bar{\sigma}(v) &= \bar{\rho}(w)\bar{\sigma}(w) - \bar{\sigma}(v)\bar{\rho}(w) +\bar{\sigma}(v)\bar{\rho}(w)   - \bar{\rho}(v)\bar{\sigma}(v) \\
        &= \bar{\rho}(w)\left(\bar{\sigma}(w) - \bar{\sigma}(v)\right) +\bar{\sigma}(v)\left(\bar{\rho}(w)   - \bar{\rho}(v)\right),
    \end{align*}
    and sum is clearly in $\cl(w,v)$.
\end{proof}
\begin{definition}
    The \emph{ring of splines} on $\cg_\Gamma$ is the subring 
    \[
        \splines{\Gamma} \coloneqq\left\{ \left.\bar{\rho} \in \prod_{w \in S_n} \poly\; \right|\; \bar{\rho}(w) - \bar{\rho}(v) \in \cl(w,v) \text{ for all } (w,v) \in E(\cg_\Gamma)\right\}.
   \]
   of ${\ds \prod_{w \in S_n} \poly}$ with pointwise addition and multiplication. 
\end{definition}
\begin{lemma}\label{lem:ring_is_graded}
    The ring $\splines{\Gamma}$ is graded by degree, so ${\ds \splines{\Gamma} = \bigoplus_{i \geq 0} \splines{\Gamma}^i}$.
\end{lemma}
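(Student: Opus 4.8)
The plan is to exhibit the grading explicitly. For each $i \geq 0$ set
\[
\splines{\Gamma}^i \coloneqq \left\{ \bar{\rho} \in \splines{\Gamma} \;\middle|\; \bar{\rho}(w)\text{ is homogeneous of degree }i\text{ for every }w \in S_n \right\},
\]
with the convention that the zero polynomial is homogeneous of every degree, so that each $\splines{\Gamma}^i$ is genuinely a $\C$-subspace. The work is then to check three things: that every spline decomposes as a finite sum of elements of the $\splines{\Gamma}^i$, that this decomposition is unique, and that $\splines{\Gamma}^i \cdot \splines{\Gamma}^j \subseteq \splines{\Gamma}^{i+j}$ so that the decomposition is compatible with the ring structure.

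First I would recall that $\poly$ is graded by degree, $\poly = \bigoplus_{i \geq 0} \poly_i$, and that since $S_n$ is finite the product ring $\prod_{w \in S_n}\poly$ is therefore graded coordinatewise, with $i$-th graded piece $\prod_{w}\poly_i$. Uniqueness and finiteness of the decomposition of any $\bar{\rho}$ are then inherited from this ambient grading. So the only real point is that the homogeneous components of a spline are again splines, i.e. that intersecting the ambient grading with $\splines{\Gamma}$ recovers the subspaces $\splines{\Gamma}^i$.

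The key observation is that every edge label $\cl(w,v) = \llangle t_a - t_b \rrangle$ is a \emph{homogeneous} ideal, being generated by the degree-one element $t_a - t_b$. Given a spline $\bar{\rho}$ and an edge $(w,v) \in E(\cg_\Gamma)$, write $\bar{\rho}(w) - \bar{\rho}(v) = f\cdot(t_a - t_b)$ and decompose $f = \sum_d f_d$ into homogeneous components; comparing degree-$i$ parts yields $\bar{\rho}(w)_i - \bar{\rho}(v)_i = f_{i-1}\cdot(t_a - t_b) \in \cl(w,v)$, where $\bar{\rho}(w)_i$ is the degree-$i$ component of $\bar{\rho}(w)$. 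Hence $w \mapsto \bar{\rho}(w)_i$ is a spline lying in $\splines{\Gamma}^i$, and $\bar{\rho} = \sum_i \bar{\rho}_i$ is the required decomposition. Finally, multiplicativity of the grading follows from Lemma \ref{lem:right_action_works} together with the fact that the product of a homogeneous degree-$i$ polynomial and a homogeneous degree-$j$ polynomial is homogeneous of degree $i+j$.

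There is no substantive obstacle here; the statement follows from homogeneity of the edge ideals plus finiteness of $S_n$. The only mild care needed is the bookkeeping around the convention that $0$ is homogeneous of every degree, which is handled cleanly by performing all the work inside the already-graded ambient ring $\prod_{w}\poly$ and then intersecting with $\splines{\Gamma}$.
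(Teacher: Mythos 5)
Your proposal is correct and follows the same route as the paper: the homogeneous components of a spline are again splines because each edge ideal $\cl(w,v)$ is homogeneous, and multiplicativity of the grading follows from degree arithmetic together with Lemma \ref{lem:right_action_works}. You simply spell out the degree-by-degree comparison and the ambient grading of $\prod_{w\in S_n}\poly$ more explicitly than the paper does.
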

\begin{proof}
    Let $\bar{\rho}$ be a spline in $\splines{\Gamma}$ and let $\bar{\rho}_k(w)$ be the $k$-th graded piece of the polynomial $\bar{\rho}(w)$. We aim to show that $\bar{\rho}_k$ is a spline as well. For each $(w,v) \in E(\cg_\Gamma)$, the ideal $\cl(w,v)$ is a homogeneous ideal. Thus $\bar{\rho}(w)-\bar{\rho}(v) \in \cl(w,v)$ and it follows that $\bar{\rho}_k(w)-\bar{\rho}_k(v) \in \cl(w,v)$, so $\bar{\rho}_k$ is a spline. For two homogeneous splines $\bar{\rho}$ and $\bar{\sigma}$ of degrees $p$ and $q$ respectively, the product $\bar{\rho}\bar{\sigma}$ is homogeneous of degree $p+q$ on its support. 
    \end{proof}
We now construct two sets of splines and the \emph{identity spline}, each are elements of $\splines{\Gamma}$ for all $\Gamma$. Let 
\begin{center} 
\begin{tabular}{ll}
    $\bar{\mathbb{1}} \colon S_n \to \poly$ be $\bar{\mathbb{1}}(w)\coloneqq 1$ & for all $w \in S_n$,\\
    $\bar{t}_i \colon S_n \to \poly$ be $\bar{t}_i(w) \coloneqq t_i$ & for all $w\in S_n$, $i\in\{1,\ldots,n\}$, and\\
    $\bar{x}_i \colon S_n \to \poly$ be $\bar{x}_i(w) \coloneqq t_{w(i)}$ & for all $w\in S_n$, $i\in\{1,\ldots,n\}$.
\end{tabular}
\end{center}
The ring $\splines{\Gamma}$ is an infinite-dimensional $\C$-vector space in the natural way, and can also be viewed as a finitely generated graded $\poly$-module in two ways via the following module actions:
\begin{equation}\label{eqn:left_action}
    f\left(t_1,\ldots,t_n\right).\bar{\rho} = f\left(\bar{t}_1,\ldots,\bar{t}_n\right)\bar{\rho}
\end{equation}
and
\begin{equation}\label{eqn:right_action}
    f\left(t_1,\ldots,t_n\right).\bar{\rho} = f\left(\bar{x}_1,\ldots,\bar{x}_n\right)\bar{\rho},
\end{equation}
where the right-hand side of both (\ref{eqn:left_action}) and (\ref{eqn:right_action}) work by substituting splines for variables in to the polynomial $f$ then multiplying as in the ring structure of $\splines{\Gamma}$. For both actions the constant $f(0,\ldots,0)$ is naturally mapped to $f(0,\ldots,0)\bar{\mathbb{1}}$. Since $\splines{\Gamma}$ is a $\poly$-submodule of $\prod_{w \in S_n} \poly$ for either module action, it is finitely generated. We call the module action (\ref{eqn:left_action}) the \emph{left action} and the module action (\ref{eqn:right_action}) the \emph{right action} of $\poly$ on $\splines{\Gamma}$. Given any $\omega \in S_n$ both actions may be twisted by sending $f \mapsto \omega f$ first in the polynomial ring. Both the left and right actions are naturally compatible with the grading on $\splines{\Gamma}$. \par 
\begin{example}\label{ex:poly_action}
    Let $\bar{\rho} \in \splines{\Gamma}$ and let $f(t_\bullet) = t_1^3 + t_2^2 + t_3$. Let $\omega=(1,2,3) \in S_n$. \\
    The left action of $f$ on $\bar{\rho}$ evaluated at any $v \in S_n$ is: \[ f(t_\bullet).\bar{\rho}(v) = \left[((\bar{t}_1)^3 + (\bar{t}_2)^2 + \bar{t}_3)\bar{\rho}\right](v) = (t_1^3 + t_2^2 + t_3)\bar{\rho}(v),\]
    the right action of $f$ on $\bar{\rho}$ evaluated at any $v \in S_n$ is: \[ f(t_\bullet).\bar{\rho}(v) =\left[((\bar{x}_1)^3 + (\bar{x}_2)^2 + \bar{x}_3)\bar{\rho}\right](v) = (t_{v(1)}^3 + t_{v(2)}^2 + t_{v(3)})\bar{\rho}(v),\]
    the $\omega$-twisted left action of $f$ on $\bar{\rho}$ evaluated at any $v \in S_n$ is: \[ f(t_\bullet).\bar{\rho}(v) = \left[((\bar{t}_{\omega(1)})^3 + (\bar{t}_{\omega(2)})^2 + \bar{t}_{\omega(3)})\bar{\rho}\right](v) = (t_2^3 + t_3^2 + t_1)\bar{\rho}(v) ,\]
    and the $\omega$-twisted right action of $f$ on $\bar{\rho}$ evaluated at any $v \in S_n$ is: \[ f(t_\bullet).\bar{\rho}(v) = \left[((\bar{x}_{\omega(1)})^3 + (\bar{x}_{\omega(2)})^2 + \bar{x}_{\omega(3)})\bar{\rho}\right](v)= (t_{v(2)}^3 + t_{v(3)}^2 + t_{v(1)})\bar{\rho}(v).\]
\end{example}
The ring of splines has a $S_n$-module structure, originally defined for Hessenberg graphs in \cite{tymoczko2008permutation,tymoczko2008schubertreps}. 
\begin{definition}
    Let $\bar{\rho} \in \splines{\Gamma}$. The \emph{dot action} of $S_n$ on $\splines{\Gamma}$ is given by 
    \[
    w\cdot \bar{\rho}(v) \coloneqq w\bar{\rho}(w^{-1}v)
    \]
    for $w,v \in S_n$.
    Any $\omega \in S_n$ may twist the dot action by first sending $v \to \omega v \omega^{-1}$ (conjugating by $\omega$). Since conjugation is an inner automorphism of $S_n$, the standard and $\omega$-twisted $S_n$-module structures on $\splines{\Gamma}$ are isomorphic.
\end{definition}
Using our standard for visualizing splines, the dot action by $w$ moves polynomials around $\cg_\Gamma$ by sending the polynomial at $v$ to $wv$ (for all $v \in S_n$), then acts on every polynomial by $w$ as in Equation (\ref{eqn:sn_action_poly}). \begin{example}\label{ex:dot_action}
The dot action of the transposition $(1,2)$ on the spline $\bar{\rho}_3$ from Example \ref{ex:splines} is computed below. 
\begin{center}
    
    \begin{tikzpicture}[scale=1.5]
\tikzstyle{every node}=[shape=rectangle, inner sep=4pt];
\begin{scope}[xshift = 0cm]
\draw (0,3.2) node (h) {};
\draw (-2,1.5) node (dot) {$(1,2)\;\; \cdot$};
\draw  (0,0) node[draw] (v0) {$123$};
\draw  (1,1) node[draw]  (v1) {$132$};
\draw  (-1,1) node[draw] (v2) {$213$};
\draw  (-1,2) node[draw] (v3) {$231$};
\draw  (1,2) node[draw] (v4) {$312$};
\draw (0,3)  node[draw]  (v5) {$321$};

\draw  (0.5,0) node (v00) {\textcolor{blue}{$0$}};
\draw  (1.5,1) node  (v10) {\textcolor{blue}{$0$}};
\draw  (-1.5,0.7) node (v20) {\textcolor{blue}{$t_1-t_2$}};
\draw  (-1.5,2.3) node (v30) {\textcolor{blue}{$t_3-t_2$}};
\draw  (1.5,2) node (v40) {\textcolor{blue}{$0$}};
\draw (0.5,3)  node  (v50) {\textcolor{blue}{$0$}};

\draw (v0)--(v1);
\draw (v0)--(v2);
\draw (v1)--(v4);
\draw (v2)--(v3);
\draw (v3)--(v5);
\draw (v4)--(v5);
\end{scope}
\begin{scope}[xshift = 4cm]
\draw (-2,1.5) node (dotact) {$=$};
\draw  (0,0) node[draw] (v0) {$123$};
\draw  (1,1) node[draw]  (v1) {$132$};
\draw  (-1,1) node[draw] (v2) {$213$};
\draw  (-1,2) node[draw] (v3) {$231$};
\draw  (1,2) node[draw] (v4) {$312$};
\draw (0,3)  node[draw]  (v5) {$321$};

\draw  (0.5,-0.3) node (v00) {\textcolor{blue}{$t_2-t_1$}};
\draw  (1.5,0.7) node  (v10) {\textcolor{blue}{$t_3-t_1$}};
\draw  (-1.5,1) node (v20) {\textcolor{blue}{$0$}};
\draw  (-1.5,2) node (v30) {\textcolor{blue}{$0$}};
\draw  (1.5,2) node (v40) {\textcolor{blue}{$0$}};
\draw (0.5,3)  node  (v50) {\textcolor{blue}{$0$}};

\draw (v0)--(v1);
\draw (v0)--(v2);
\draw (v1)--(v4);
\draw (v2)--(v3);
\draw (v3)--(v5);
\draw (v4)--(v5);
\end{scope}
\end{tikzpicture}
\end{center}
Computed below is the $\omega = (1,2,3)$-twisted action of the  transposition $(1,2)$ on the spline $\bar{\rho}_3$ from Example \ref{ex:splines}. Note this is the same as the untwisted action of $(1,2,3)(1,2)(1,2,3)^{-1} = (2,3)$.
\begin{center}
    
    \begin{tikzpicture}[scale=1.5]
\tikzstyle{every node}=[shape=rectangle, inner sep=4pt];
\begin{scope}[xshift = 0cm]
\draw (0,3.2) node (h) {};
\draw (-2,1.5) node (dot) {$(1,2)\;\; \cdot$};
\draw  (0,0) node[draw] (v0) {$123$};
\draw  (1,1) node[draw]  (v1) {$132$};
\draw  (-1,1) node[draw] (v2) {$213$};
\draw  (-1,2) node[draw] (v3) {$231$};
\draw  (1,2) node[draw] (v4) {$312$};
\draw (0,3)  node[draw]  (v5) {$321$};

\draw  (0.5,0) node (v00) {\textcolor{blue}{$0$}};
\draw  (1.5,1) node  (v10) {\textcolor{blue}{$0$}};
\draw  (-1.5,0.7) node (v20) {\textcolor{blue}{$t_1-t_2$}};
\draw  (-1.5,2.3) node (v30) {\textcolor{blue}{$t_3-t_2$}};
\draw  (1.5,2) node (v40) {\textcolor{blue}{$0$}};
\draw (0.5,3)  node  (v50) {\textcolor{blue}{$0$}};

\draw (v0)--(v1);
\draw (v0)--(v2);
\draw (v1)--(v4);
\draw (v2)--(v3);
\draw (v3)--(v5);
\draw (v4)--(v5);
\end{scope}
\begin{scope}[xshift = 4cm]
\draw (-2,1.5) node (dotact) {$=$};
\draw  (0,0) node[draw] (v0) {$123$};
\draw  (1,1) node[draw]  (v1) {$132$};
\draw  (-1,1) node[draw] (v2) {$213$};
\draw  (-1,2) node[draw] (v3) {$231$};
\draw  (1,2) node[draw] (v4) {$312$};
\draw (0,3)  node[draw]  (v5) {$321$};

\draw  (0.5,-0.3) node (v00) {\textcolor{blue}{$0$}};
\draw  (1.5,0.7) node  (v10) {\textcolor{blue}{$0$}};
\draw  (-1.5,1) node (v20) {\textcolor{blue}{$0$}};
\draw  (-1.5,2) node (v30) {\textcolor{blue}{$0$}};
\draw  (1.75,2) node (v40) {\textcolor{blue}{$t_1-t_3$}};
\draw (0.75,3)  node  (v50) {\textcolor{blue}{$t_2-t_3$}};

\draw (v0)--(v1);
\draw (v0)--(v2);
\draw (v1)--(v4);
\draw (v2)--(v3);
\draw (v3)--(v5);
\draw (v4)--(v5);
\end{scope}
\end{tikzpicture}
\end{center}
\end{example}
\begin{remark} The dot action is well defined. We have for all $(v_1,v_2) \in E(\cg_\Gamma)$ that 
\begin{align*}
w \cdot \bar{\rho}(v_1) - w \cdot \bar{\rho}(v_2) &= w\bar{\rho}(w^{-1}v_1)- w\bar{\rho}(w^{-1}v_2) \\
&= w(\bar{\rho}(w^{-1}v_1)- \bar{\rho}(w^{-1}v_2)) \\
&\in w\cl(w^{-1}v_1,w^{-1}v_2).
\end{align*}
If $v_1v_2^{-1} = (i,j)$ then $w^{-1}v_1v_2^{-1}w = (w^{-1}(i),w^{-1}(j))$. So 
\[w\cl(w^{-1}v_1,w^{-1}v_2) = \llangle w(t_{w^{-1}(i)}-t_{w^{-1}(j)}) \rrangle = \llangle t_i-t_j \rrangle  = \cl(v_1,v_2).\] 
Thus $w \cdot \bar{\rho}(v_1) - w \cdot \bar{\rho}(v_2) \in \cl(v_1,v_2)$, and $w \cdot \bar{\rho} \in \splines{\Gamma}$.
\end{remark}
Finally, consider the quotients 
\begin{equation}\label{eqndef:left_quotient}
    \mathrm{L}_\Gamma \coloneqq \faktor{\splines{\Gamma}}{\llangle \bar{t}_1,\ldots,\bar{t}_n\rrangle}
\end{equation}
and
\begin{equation}\label{eqndef:right_quotient}
    \mathrm{R}_\Gamma \coloneqq \faktor{\splines{\Gamma}}{\llangle \bar{x}_1,\ldots,\bar{x}_n\rrangle}
\end{equation}
Call $\mathrm{L}_\Gamma$ and $\mathrm{R}_\Gamma$ the left and right quotients of $\splines{\Gamma}$ respectively. As $\poly$-modules for the left and right action, both quotients are $\faktor{\splines{\Gamma}}{I\splines{\Gamma}}$ where $I$ is the ``irrelevant ideal" $\llangle t_1,...,t_n\rrangle$ of $\polyr{n}$. Thus, $\lqot{\Gamma}$ and $\rqot{\Gamma}$ each inherit the structure of a finite-dimensional graded $\C$-vector space from the left- and right-module structure of $\splines{\Gamma}$ respectively. Any homogeneous module-generating set over $\poly$ projects to a spanning set over $\C$ in the quotient.\par 
The ideals $\llangle \bar{t}_1,\ldots,\bar{t}_n\rrangle$ and $\llangle \bar{x}_1,\ldots,\bar{x}_n\rrangle$ are homogeneous and $S_n$-equivariant, and so the graded $S_n$-module structure on $\splines{\Gamma}$ projects to graded $S_n$-representations on both $\lqot{\Gamma}$ and $\rqot{\Gamma}$. \emph{Symmetric functions} are formal power series in $\{x_1,x_2,...\}$ invariant under permuting the variables. The Frobenius character map gives an isomorphism from the algebra of representations of symmetric groups to the algebra of symmetric functions. The two bases of symmetric functions we consider are \emph{Schur functions} $\{s_\lambda\}$, which correspond to irreducible representations, and \emph{homogeneous symmetric functions} \{$h_\lambda\}$, which correspond to induced representations of trivial representations on Young subgroups to symmetric groups. Both Schur and homogeneous symmetric functions are indexed by integer partitions. Denote the Frobenius character of these ($q$-graded) $S_n$-representations as $\lrep{\Gamma}$ and $\rrep{\Gamma}$ respectively. Since both $\lrep{\Gamma}$ and $\rrep{\Gamma}$ correspond to graded representations, and all representations are sums of irreducible representations, both $\lrep{\Gamma}$ and $\rrep{\Gamma}$ are manifestly Schur-positive graded symmetric functions.
\begin{example}
    Again consider $\Gamma = \left([3],\{(1,2),(2,3)\}\right)$. Then 
    \[\lrep{\Gamma} = s_3 + (s_{2,1} + 2s_3)q + s_3q^2=h_3 + (h_{2,1}+h_3)q + h_3q^2 \] and 
    \[\rrep{\Gamma} = s_3 + 2s_{2,1}q + s_3q^2. \]
\end{example}
The following Lemma \ref{lem:unions} is useful for computer calculations.
\begin{lemma}\label{lem:unions}
    Let $\Gamma$ and $\Gamma'$ be two graphs on $[n]$, and $\Gamma \cup \Gamma' \coloneqq \left([n], E(\Gamma) \cup E(\Gamma')\right)$. Then 
    \[
    \splines{\Gamma \cup \Gamma'} = \splines{\Gamma} \cap \splines{\Gamma'}
    \]
\end{lemma}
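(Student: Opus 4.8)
The statement is essentially a bookkeeping identity between the defining conditions of the two spline rings, so the plan is to unwind the definitions carefully rather than to do any computation. First I would note that $\Gamma \cup \Gamma' = ([n], E(\Gamma)\cup E(\Gamma'))$ is again a simple graph on $[n]$, so $\cg_{\Gamma\cup\Gamma'}$ is defined, and its edge set is
\[
E(\cg_{\Gamma\cup\Gamma'}) = \{(w,v) \mid w^{-1}v \in E(\Gamma)\cup E(\Gamma')\} = E(\cg_\Gamma) \cup E(\cg_{\Gamma'}),
\]
since membership of $w^{-1}v$ in $E(\Gamma)\cup E(\Gamma')$ is exactly membership in $E(\Gamma)$ or in $E(\Gamma')$. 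All three rings $\splines{\Gamma}$, $\splines{\Gamma'}$, $\splines{\Gamma\cup\Gamma'}$ sit inside the same product $\prod_{w\in S_n}\poly$ with the same pointwise addition and multiplication, so it suffices to check equality of the underlying subsets.

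The one point that genuinely needs to be checked — the ``main obstacle,'' such as it is — is that the edge label is unambiguous: an edge $(w,v)$ that occurs in more than one of these Cayley graphs must receive the same label in each. But the label function is $\cl(w,v) = \llangle t_i - t_j\rrangle$ where $(i,j) = wv^{-1}$, which depends only on the ordered pair $(w,v)$ and not at all on which graph we regard the edge as belonging to. Hence writing ``$\bar\rho(w) - \bar\rho(v) \in \cl(w,v)$'' is meaningful without reference to an ambient graph.

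With these two observations in hand I would conclude directly: a function $\bar\rho \colon S_n \to \poly$ lies in $\splines{\Gamma\cup\Gamma'}$ iff $\bar\rho(w)-\bar\rho(v) \in \cl(w,v)$ for every $(w,v) \in E(\cg_{\Gamma\cup\Gamma'}) = E(\cg_\Gamma)\cup E(\cg_{\Gamma'})$, which happens iff the condition holds for every $(w,v)\in E(\cg_\Gamma)$ and also for every $(w,v)\in E(\cg_{\Gamma'})$, i.e.\ iff $\bar\rho \in \splines{\Gamma}$ and $\bar\rho \in \splines{\Gamma'}$. This is precisely the assertion $\splines{\Gamma\cup\Gamma'} = \splines{\Gamma}\cap\splines{\Gamma'}$, and since all operations are pointwise the intersection of the two subrings is again a subring, matching the subring structure on the left-hand side. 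No part of the argument uses connectedness of $\Gamma$ or $\Gamma'$, so the lemma holds in the stated generality.
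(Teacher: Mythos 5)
Your proposal is correct and is essentially the paper's argument: the paper simply notes that the lemma follows from the set-theoretic definition of $\splines{\Gamma}$, and your write-up spells out the two ingredients that make this immediate, namely $E(\cg_{\Gamma\cup\Gamma'}) = E(\cg_\Gamma)\cup E(\cg_{\Gamma'})$ and the fact that the label $\cl(w,v)$ depends only on the pair $(w,v)$, not on the ambient graph. Nothing is missing; you have just made explicit what the paper leaves implicit.
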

\begin{proof}
    This easily follows from the set-theoretic definition \[\splines{\Gamma} = \left\{ \left.\bar{\rho} \in \prod_{w \in S_n} \poly\; \right|\; \bar{\rho}(w) - \bar{\rho}(v) \in \cl(w,v) \text{ for all } (w,v) \in E(\cg_\Gamma)\right\}.\]
\end{proof}
\subsection{Isomorphisms}\label{ssec:isoms}
It is natural to expect that if two graphs $\Gamma$ and $\Gamma'$ on $[n]$ are isomorphic, that the resulting algebraic structures on  $\splines{\Gamma}$ and $\splines{\Gamma'}$ should also have meaningful isomorphisms between them. This section shows that an isomorphism $\Gamma \to \Gamma'$ induces a labeled-graph isomorphism $\cg_\Gamma \to \cg_{\Gamma'}$, a ring isomorphism $\splines{\Gamma} \to \splines{\Gamma'}$, a collection of different $\poly$-module isomorphisms $\splines{\Gamma} \to \splines{\Gamma}$, and an $S_n$-module isomorphism $\splines{\Gamma} \to \splines{\Gamma}$ that leads to equalities $\lrep{\Gamma} = \lrep{\Gamma'}$ and $\rrep{\Gamma} = \rrep{\Gamma'}$. \par 
Throughout this subsection, let $\Gamma$ and $\Gamma'$ be graphs on $[n]$ and say that $\omega \colon \Gamma \to \Gamma'$ is a graph isomorphism. Then $\omega$ is also naturally an element of $S_n$, viewed as a bijection from $[n]$ to itself. Let $\omega$ denote both the graph isomorphism and associated permutation. \par 
Our first construction is an isomorphism between the corresponding labelled Cayley graphs. The following Lemma \ref{lem:isom_cayley_graph} states that $\cg_\Gamma$ and $\cg_{\Gamma'}$ are related as graphs by conjugation, and the associated labels are related via the action on ideals induced by the action on polynomials in Equation (\ref{eqn:sn_action_poly}).
\begin{lemma}\label{lem:isom_cayley_graph}
    Let $\omega \colon \Gamma \to \Gamma'$ be a graph isomorphism. Then $v \mapsto \omega v \omega^{-1}$ is a graph isomorphism  $\cg_\Gamma \to \cg_{\Gamma'}$. Additionally, if $\mathcal{L}$ is the label on $\cg_{\Gamma}$, $\mathcal{L'}$ the label on $\cg_{\Gamma'}$, and $(v_1,v_2) \in E(\cg_\Gamma)$, then  $\mathcal{L}'(\omega v_1 \omega^{-1},\omega v_2 \omega^{-1}) = \omega \mathcal{L}(v_1,v_2)$.
\end{lemma}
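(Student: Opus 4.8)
\textbf{Proof plan for Lemma~\ref{lem:isom_cayley_graph}.}
The plan is to verify directly that the conjugation map $\varphi\colon v \mapsto \omega v \omega^{-1}$ is a bijection $S_n \to S_n$ that carries edges of $\cg_\Gamma$ to edges of $\cg_{\Gamma'}$ and respects the labels. First I would note that $\varphi$ is a bijection on the common vertex set $S_n$ with inverse $v \mapsto \omega^{-1} v \omega$, since $\omega$ (as a permutation) is invertible; so it suffices to check the edge and label conditions.

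For the edge condition, recall that $(v_1,v_2) \in E(\cg_\Gamma)$ exactly when $v_1^{-1} v_2 \in E(\Gamma)$, viewed as a transposition. If $v_1^{-1} v_2 = (i,j)$ with $(i,j) \in E(\Gamma)$, then
\[
(\omega v_1 \omega^{-1})^{-1}(\omega v_2 \omega^{-1}) = \omega v_1^{-1} v_2 \omega^{-1} = \omega (i,j) \omega^{-1} = (\omega(i),\omega(j)).
\]
Since $\omega\colon \Gamma \to \Gamma'$ is a graph isomorphism, $(i,j) \in E(\Gamma)$ if and only if $(\omega(i),\omega(j)) \in E(\Gamma')$, so $(v_1,v_2) \in E(\cg_\Gamma)$ if and only if $(\omega v_1\omega^{-1},\omega v_2\omega^{-1}) \in E(\cg_{\Gamma'})$. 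This shows $\varphi$ is a graph isomorphism $\cg_\Gamma \to \cg_{\Gamma'}$.

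For the label condition, take $(v_1,v_2) \in E(\cg_\Gamma)$ and write $v_1 v_2^{-1} = (a,b)$, so by definition $\cl(v_1,v_2) = \llangle t_a - t_b \rrangle$. Then $(\omega v_1\omega^{-1})(\omega v_2\omega^{-1})^{-1} = \omega v_1 v_2^{-1}\omega^{-1} = (\omega(a),\omega(b))$, so by the definition of the label on $\cg_{\Gamma'}$ we get $\cl'(\omega v_1\omega^{-1},\omega v_2\omega^{-1}) = \llangle t_{\omega(a)} - t_{\omega(b)} \rrangle$. On the other hand, applying the $S_n$-action of Equation~(\ref{eqn:sn_action_poly}) to the generator, $\omega(t_a - t_b) = t_{\omega(a)} - t_{\omega(b)}$, hence $\omega \cl(v_1,v_2) = \llangle t_{\omega(a)} - t_{\omega(b)} \rrangle$, and the two ideals agree. (One should make a small remark that this is well-defined independent of the choice of writing $v_1 v_2^{-1}$ as a transposition, which is automatic since a transposition determines its unordered pair.)

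The only mild subtlety — and the closest thing to an obstacle — is bookkeeping the distinction between the two ways a Cayley edge encodes a transposition: $w^{-1}v$ governs which generator of $\Gamma$ the edge corresponds to (and thus whether it is an edge at all), while $wv^{-1}$ governs the label; both are conjugate, and conjugation by $\omega$ interacts cleanly with each, as the computations above show. Everything else is a routine manipulation of permutation conjugation, so no genuine difficulty arises.
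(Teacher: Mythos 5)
Your proposal is correct and follows essentially the same route as the paper: compute $(\omega v_1\omega^{-1})^{-1}(\omega v_2\omega^{-1}) = (\omega(i),\omega(j))$ to transfer the edge condition, and $(\omega v_1\omega^{-1})(\omega v_2\omega^{-1})^{-1} = (\omega(p),\omega(q))$ to match the labels with $\omega\cl(v_1,v_2)$. Your extra remarks on the if-and-only-if for edges and on well-definedness are fine but add nothing beyond the paper's argument.
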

\begin{proof}
    Conjugation is a group automorphism of $S_n$. Say $(v_1,v_2) \in E(\Gamma)$, and in particular that $v_1^{-1}v_2 = (i,j) \in E(\Gamma)$. Then 
    \begin{align*}
        \left(\omega v_1 \omega^{-1}\right)^{-1}\left(\omega v_2\omega^{-1}\right) &= \omega v_1^{-1} v_2\omega^{-1} \\
        &= \omega(i,j)\omega^{-1} \\
        &= \left(\omega(i),\omega(j)\right) \in E(\Gamma').
    \end{align*}
    Thus conjugation by $\omega$ defines a graph isomorphism $\cg_\Gamma \to \cg_{\Gamma'}$. For the labels on $\cg_\Gamma$ and $\cg_{\Gamma'}$, the computation above also shows that if $v_1v_2^{-1} = (p,q)$, then $\left(\omega v_1 \omega^{-1}\right)\left(\omega v_2\omega^{-1}\right)^{-1} = (\omega(p),\omega(q))$. It follows that $\left(\omega v_1 \omega^{-1},\omega v_2\omega^{-1}\right) \in E(\cg_{\Gamma'})$ is labeled $\llangle t_{\omega(p)} - t_{\omega(q)}\rrangle = \omega\llangle t_p-t_q\rrangle$. The claim follows.
    \end{proof}
Define $\Omega \colon \splines{\Gamma} \to \splines{\Gamma'}$ by $\Omega(\bar{\rho})(v) \coloneqq \omega\bar{\rho}\left(\omega^{-1} v \omega\right)$. The following Proposition \ref{prop:isom_rings} proves $\Omega$ is a ring isomorphism, and is actually a consequence of Lemma \ref{lem:isom_cayley_graph} and a more general Proposition of Gilbert, Tymoczko, and Viel \cite[Prop 2.7]{tymoczko2016generalized_splines}. We include the proof here for completeness.
\begin{proposition}\label{prop:isom_rings}
    The map $\Omega \colon \splines{\Gamma} \to \splines{\Gamma'}$ is a ring isomorphism.
\end{proposition}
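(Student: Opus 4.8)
The plan is to verify the three things that make $\Omega$ a ring isomorphism: that it takes splines on $\cg_\Gamma$ to splines on $\cg_{\Gamma'}$, that it preserves sums, products, and the identity element, and that it is bijective. The whole argument is a pointwise computation resting on two facts already in hand: that each $\omega \in S_n$ acts on $\poly$ as a ring automorphism fixing the constants (Equation~(\ref{eqn:sn_action_poly})), and the edge-and-label compatibility of Lemma~\ref{lem:isom_cayley_graph}.

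First I would check well-definedness. Given $\bar\rho \in \splines{\Gamma}$ and an edge $(v_1,v_2) \in E(\cg_{\Gamma'})$, I apply Lemma~\ref{lem:isom_cayley_graph} to the inverse graph isomorphism $\omega^{-1}\colon \Gamma' \to \Gamma$ (equivalently, invert the isomorphism $v \mapsto \omega v \omega^{-1}$ stated there): this gives $(\omega^{-1}v_1\omega, \omega^{-1}v_2\omega) \in E(\cg_\Gamma)$ together with $\cl'(v_1,v_2) = \omega\,\cl(\omega^{-1}v_1\omega, \omega^{-1}v_2\omega)$. Since $\bar\rho$ is a spline, $\bar\rho(\omega^{-1}v_1\omega) - \bar\rho(\omega^{-1}v_2\omega) \in \cl(\omega^{-1}v_1\omega, \omega^{-1}v_2\omega)$, and applying the automorphism $\omega$ of $\poly$ to this membership yields exactly $\Omega(\bar\rho)(v_1) - \Omega(\bar\rho)(v_2) \in \cl'(v_1,v_2)$, so $\Omega(\bar\rho) \in \splines{\Gamma'}$.

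Next I would confirm $\Omega$ is a unital ring homomorphism. Evaluating at an arbitrary $v \in S_n$, additivity and multiplicativity follow because $\omega$ is a ring homomorphism on $\poly$: $\Omega(\bar\rho+\bar\sigma)(v) = \omega\bigl(\bar\rho(\omega^{-1}v\omega) + \bar\sigma(\omega^{-1}v\omega)\bigr) = \Omega(\bar\rho)(v) + \Omega(\bar\sigma)(v)$, and likewise $\Omega(\bar\rho\,\bar\sigma)(v) = \Omega(\bar\rho)(v)\,\Omega(\bar\sigma)(v)$; and $\Omega(\bar{\mathbb{1}})(v) = \omega \cdot 1 = 1$, so $\Omega(\bar{\mathbb{1}}) = \bar{\mathbb{1}}$.

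Finally, for bijectivity I would produce the inverse explicitly by running the same construction for $\omega^{-1}\colon \Gamma' \to \Gamma$, obtaining a ring homomorphism $\Psi\colon \splines{\Gamma'} \to \splines{\Gamma}$ with $\Psi(\bar\sigma)(v) = \omega^{-1}\bar\sigma(\omega v \omega^{-1})$. A direct substitution gives $\Psi(\Omega(\bar\rho))(v) = \omega^{-1}\omega\,\bar\rho(\omega^{-1}\omega v \omega^{-1}\omega) = \bar\rho(v)$ and symmetrically $\Omega(\Psi(\bar\sigma)) = \bar\sigma$, so $\Omega$ is a bijection and hence a ring isomorphism. None of these steps is genuinely difficult; the only place I would be careful is bookkeeping the direction of conjugation, i.e.\ making sure Lemma~\ref{lem:isom_cayley_graph} is invoked with the arguments $\omega^{-1}v_i\omega$ and not $\omega v_i \omega^{-1}$, which is why I would write that invocation out explicitly rather than leaving it implicit.
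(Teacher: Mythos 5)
Your proposal is correct and follows essentially the same route as the paper: well-definedness via Lemma \ref{lem:isom_cayley_graph} (with the conjugation direction handled exactly as the paper does), the ring homomorphism property by pointwise computation using that $\omega$ acts as a ring automorphism of $\poly$, and bijectivity by constructing the inverse from $\omega^{-1}$. You simply spell out the homomorphism and inverse verifications that the paper leaves as easy checks.
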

\begin{proof}
    Let $\bar{\rho} \in \splines{\Gamma}$. First, we show that $\Omega(\bar{\rho}) \in \splines{\Gamma'}$. Let $(v_1,v_2) \in E(\cg_{\Gamma'})$. By Lemma \ref{lem:isom_cayley_graph} there is an edge $(\omega^{-1}v_1\omega,\omega^{-1}v_2\omega) \in E(\cg_\Gamma)$, and so $\bar{\rho}(\omega^{-1}v_1\omega)-\bar{\rho}(\omega^{-1}v_2\omega) \in \mathcal{L}(\omega^{-1}v_1\omega,\omega^{-1}v_2\omega)$. Now we have 
    \begin{align*}
        \Omega(\bar{\rho})(v_1) - \Omega(\bar{\rho})(v_2) &= \omega\bar{\rho}\left(\omega^{-1} v_1 \omega\right) - \omega\bar{\rho}\left(\omega^{-1} v_2 \omega\right) \\
        &= \omega\left(\bar{\rho}\left(\omega^{-1} v_1 \omega\right) - \bar{\rho}\left(\omega^{-1} v_2 \omega\right)\right) \\
        &\in \omega \mathcal{L}\left(\omega^{-1} v_1 \omega,\omega^{-1} v_2 \omega\right) = \mathcal{L}'(v_1,v_2).
    \end{align*}
    Thus $\Omega(\bar{\rho}) \in \splines{\Gamma'}$. It is easy to verify that this map is a ring homomorphism, and the inverse from $\splines{\Gamma'}$ to $\splines{\Gamma}$ is constructed in the same manner with the map $\omega^{-1} \colon \Gamma' \to \Gamma$.
\end{proof}
The following lemma gives three instances in which $\Omega$ is also a module isomorphism between $\splines{\Gamma}$ and $\splines{\Gamma'}$.
\begin{lemma}\label{lem:mod_isom}
    The ring isomorphism $\Omega$ is a module isomorphism from $\splines{\Gamma}$ to $\splines{\Gamma'}$ with respect to the following actions:
    \begin{enumerate}
        \item the left $\poly$-action on $\splines{\Gamma}$ to the $\omega$-twisted left $\poly$-action on $\splines{\Gamma'}$, 
        \item the right $\poly$-action on $\splines{\Gamma}$ to the $\omega$-twisted right $\poly$-action on $\splines{\Gamma'}$, and 
        \item the dot action of $S_n$ on  $\splines{\Gamma}$ to the $\omega$-twisted dot action of $S_n$ on $\splines{\Gamma'}$.
    \end{enumerate}
\end{lemma}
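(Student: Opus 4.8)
\emph{The plan} is to exploit that $\Omega$ is already known to be a ring isomorphism (Proposition \ref{prop:isom_rings}), so that verifying module compatibility reduces to understanding how $\Omega$ acts on the distinguished splines $\bar t_i$ and $\bar x_i$ and how it interacts with the dot action. The computation that does all the work is that $\Omega$ permutes the generating splines according to $\omega$: directly from $\Omega(\bar\rho)(v) = \omega\bar\rho(\omega^{-1}v\omega)$ one gets $\Omega(\bar t_i)(v) = \omega\bigl(\bar t_i(\omega^{-1}v\omega)\bigr) = \omega(t_i) = t_{\omega(i)}$, hence $\Omega(\bar t_i) = \bar t_{\omega(i)}$; and since $(\omega^{-1}v\omega)(i) = \omega^{-1}\bigl(v(\omega(i))\bigr)$,
\[
\Omega(\bar x_i)(v) = \omega\bigl(t_{(\omega^{-1}v\omega)(i)}\bigr) = \omega\bigl(t_{\omega^{-1}(v(\omega(i)))}\bigr) = t_{v(\omega(i))},
\]
so $\Omega(\bar x_i) = \bar x_{\omega(i)}$, using Equation (\ref{eqn:sn_action_poly}) throughout.

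\emph{Parts (1) and (2).} Fix $f \in \poly$ and $\bar\rho \in \splines{\Gamma}$. Because $\Omega$ is a ring homomorphism it commutes with substituting splines into $f$, so
\[
\Omega\bigl(f(\bar t_1,\ldots,\bar t_n)\,\bar\rho\bigr) = f\bigl(\Omega(\bar t_1),\ldots,\Omega(\bar t_n)\bigr)\,\Omega(\bar\rho) = f\bigl(\bar t_{\omega(1)},\ldots,\bar t_{\omega(n)}\bigr)\,\Omega(\bar\rho),
\]
and by (\ref{eqn:sn_action_poly}) the last expression is exactly $(\omega f)(\bar t_1,\ldots,\bar t_n)\,\Omega(\bar\rho)$, i.e.\ the $\omega$-twisted left action of $f$ on $\Omega(\bar\rho)$. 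This is (1), and (2) is verbatim the same with $\bar x$ in place of $\bar t$.

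\emph{Part (3).} Evaluate both sides at arbitrary $v \in S_n$. On one hand,
\[
\Omega(w\cdot\bar\rho)(v) = \omega\,(w\cdot\bar\rho)(\omega^{-1}v\omega) = \omega w\,\bar\rho\bigl(w^{-1}\omega^{-1}v\omega\bigr).
\]
On the other hand, the $\omega$-twisted dot action of $w$ is by definition the ordinary dot action of $\omega w\omega^{-1}$, and
\begin{align*}
\bigl((\omega w\omega^{-1})\cdot\Omega(\bar\rho)\bigr)(v) &= (\omega w\omega^{-1})\,\Omega(\bar\rho)\bigl(\omega w^{-1}\omega^{-1}v\bigr) \\
&= (\omega w\omega^{-1})\,\omega\,\bar\rho\bigl(\omega^{-1}(\omega w^{-1}\omega^{-1}v)\omega\bigr) = \omega w\,\bar\rho\bigl(w^{-1}\omega^{-1}v\omega\bigr),
\end{align*}
by associativity in $S_n$. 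The two coincide, so $\Omega$ intertwines the dot action with the $\omega$-twisted dot action; as $\Omega$ is bijective it is a module isomorphism in all three cases.

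The only place that demands care — and where an index slip would most likely occur — is the identity $\Omega(\bar x_i) = \bar x_{\omega(i)}$, since $\bar x_i$ feels the argument permutation both through the index $(\omega^{-1}v\omega)(i)$ and through the outer application of $\omega$ to the resulting variable; everything else is a direct unwinding of the definitions of $\Omega$, the twisted actions, and the $S_n$-action on $\poly$.
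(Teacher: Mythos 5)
Your proposal is correct and follows essentially the same route as the paper: both rest on the computations $\Omega(\bar{t}_i)=\bar{t}_{\omega(i)}$ and $\Omega(\bar{x}_i)=\bar{x}_{\omega(i)}$ combined with the ring-homomorphism property for parts (1)--(2), and on a direct unwinding of $\Omega(w\cdot\bar{\rho})(v)$ against the dot action of $\omega w\omega^{-1}$ for part (3). The only cosmetic difference is that the paper packages part (3) as the induced action $(u,\bar{\rho})\coloneqq\Omega\left(u\cdot\Omega^{-1}(\bar{\rho})\right)$ and then identifies it with the twisted action, whereas you verify the equivalent intertwining identity directly.
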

\begin{proof}
    Both $\poly$-module statements follow from two straightforward computations,
    \[
        \Omega(\bar{t}_i)(v) =\bar{t}_{\omega(i)}(v)\; \text{ and }\;
        \Omega(\bar{x}_i)(v) = \bar{x}_{\omega(i)}(v).
    \]
    Say for the left action, if $f \in \poly$ and $\bar{\rho} \in \splines{\Gamma}$, then $\Omega(f(t_1,\ldots,t_n).\bar{\rho}) =  f(\bar{t}_{\omega(1)},\ldots,\bar{t}_{\omega(n)})\Omega(\bar{\rho})$, precisely the twisted action. The same holds for the right $\poly$-action to the $\omega$-twisted right $\poly$-action. It is easy to show that ring isomorphism $\Omega^{-1}$ is the inverse for $\Omega$ as a $\poly$-module morphism for both pairs of actions, and so $\Omega$ is a $\poly$-module isomorphism as in (1) and (2).\par 
    Given the dot action on $\splines{\Gamma}$, the induced action of $u \in S_n$ on $\bar{\rho} \in \splines{\Gamma'}$ is 
    \[ (u,\bar{\rho}) \coloneqq \Omega\left( u \cdot \Omega^{-1}(\bar{\rho})\right).
    \]
    To check that the action is compatible with multiplication of elements $v,u \in S_n$, compute
    \begin{align*}
    (v,(u,\bar{\rho})) &= \left(v,\Omega\left( u \cdot \Omega^{-1}(\bar{\rho})\right)\right) \\ 
    &= \Omega\left( v \cdot \Omega^{-1}\Omega\left( u \cdot \Omega^{-1}(\bar{\rho})\right)\right) \\ 
    &= \Omega\left( v \cdot \left( u \cdot \Omega^{-1}(\bar{\rho})\right)\right)\\
    &= \Omega\left( vu \cdot \Omega^{-1}(\bar{\rho})\right)\\ 
    &= (vu,\bar{\rho}).
    \end{align*}
    Now compute for $u,v \in S_n$ that
    \begin{align*}
        (u,\bar{\rho})(v) &=  \Omega\left( u \cdot \Omega^{-1}(\bar{\rho})\right) (v) \\ 
        &= \omega \left( u\cdot \Omega^{-1}(\bar{\rho})\right)(\omega^{-1}v\omega) \\ 
        &= \omega u \left(\Omega^{-1}(\bar{\rho})\right)(u^{-1}\omega^{-1}v\omega) \\ 
        &= \omega u \omega^{-1}\bar{\rho}(\omega u^{-1}\omega^{-1}v)\\ 
        &= \omega u \omega^{-1} \cdot \bar{\rho}(v).
    \end{align*}
    This is precisely the $\omega$-twisted dot action of $u$ on $\splines{\Gamma'}$. Again by computing with $\Omega^{-1}$ it follows that $\Omega$ is an $S_n$-module isomorphism.
\end{proof}
Note that any generating set for the left or right $\poly$-module structures on $\splines{\Gamma}$ must necessarily be generators for the $\omega$-twisted versions as well. As such, when searching for generators we may choose any graph isomorphic to $\Gamma$ for explicit calculations.
\begin{proposition}\label{prop:isom_reps}
    If $\Gamma$ and $\Gamma'$ are isomorphic, then $\lrep{\Gamma} =\lrep{\Gamma'}$ and $\rrep{\Gamma} = \rrep{\Gamma'}$.
\end{proposition}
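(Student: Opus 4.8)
The plan is to transport the $S_n$-representation structure across the ring isomorphism $\Omega$ of Proposition \ref{prop:isom_rings}, taking care to absorb the conjugation twist that appears in Lemma \ref{lem:mod_isom}. First I would check that $\Omega$ carries the ideal $\llangle \bar t_1,\ldots,\bar t_n\rrangle$ of $\splines{\Gamma}$ onto the ideal $\llangle \bar t_1,\ldots,\bar t_n\rrangle$ of $\splines{\Gamma'}$: the identity $\Omega(\bar t_i) = \bar t_{\omega(i)}$ recorded in the proof of Lemma \ref{lem:mod_isom} shows $\Omega$ sends the generators of the first ideal to the set $\{\bar t_{\omega(1)},\ldots,\bar t_{\omega(n)}\}$, which generates the second ideal because $\omega$ permutes $[n]$. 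Hence $\Omega$ descends to a graded $\C$-linear isomorphism $\bar\Omega\colon \lqot{\Gamma} \to \lqot{\Gamma'}$; the verbatim argument using $\Omega(\bar x_i) = \bar x_{\omega(i)}$ gives a graded $\C$-linear isomorphism $\rqot{\Gamma} \to \rqot{\Gamma'}$.

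Next I would invoke Lemma \ref{lem:mod_isom}(3), which by the computation in its proof gives $\Omega(u\cdot \bar\rho) = (\omega u \omega^{-1})\cdot \Omega(\bar\rho)$ for all $u \in S_n$ and $\bar\rho \in \splines{\Gamma}$, with $\cdot$ denoting the untwisted dot action on each side. Passing to the quotient this becomes $\bar\Omega(u\cdot \xi) = (\omega u\omega^{-1})\cdot\bar\Omega(\xi)$ for all $\xi \in \lqot{\Gamma}$. Since the ideal $\llangle \bar t_1,\ldots,\bar t_n \rrangle$ is $S_n$-equivariant, the dot action descends to $\lqot{\Gamma'}$, so $\eta\mapsto \omega^{-1}\cdot\eta$ is a well-defined graded automorphism of $\lqot{\Gamma'}$, and I would set $\psi(\xi) \coloneqq \omega^{-1}\cdot\bar\Omega(\xi)$. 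A one-line group computation, $\psi(u\cdot\xi) = \omega^{-1}\cdot\bigl((\omega u\omega^{-1})\cdot\bar\Omega(\xi)\bigr) = u\cdot\psi(\xi)$, shows $\psi$ is $S_n$-equivariant for the untwisted dot actions; being a composite of graded $\C$-linear bijections it is an isomorphism of graded $S_n$-representations $\lqot{\Gamma}\cong\lqot{\Gamma'}$. Applying the Frobenius characteristic map yields $\lrep{\Gamma} = \lrep{\Gamma'}$, and the same argument with $\bar x_i$ in place of $\bar t_i$ (using parts (2) and (3) of Lemma \ref{lem:mod_isom}) yields $\rrep{\Gamma} = \rrep{\Gamma'}$.

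The only subtlety, and the step I expect to require the most care, is the twist: Lemma \ref{lem:mod_isom} does not directly provide an isomorphism of dot-action representations but only an intertwining with the $\omega$-twisted dot action, so one must first absorb the inner automorphism, using the elementary fact that twisting an $S_n$-representation by conjugation by a fixed $\omega$ produces an isomorphic representation (the intertwiner being the action of $\omega$), which is exactly the remark recorded after the definition of the dot action. Everything else is routine diagram-chasing: $\Omega$ and all module actions in play are degree-preserving, so the grading is carried along at every step, and no freeness or finiteness hypotheses on $\splines{\Gamma}$ beyond those already established are needed.
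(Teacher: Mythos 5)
Your proposal is correct and follows essentially the same route as the paper: push the ideal $\llangle \bar t_1,\ldots,\bar t_n\rrangle$ (resp. $\llangle \bar x_1,\ldots,\bar x_n\rrangle$) through $\Omega$, descend to an isomorphism onto the $\omega$-twisted quotient via Lemma \ref{lem:mod_isom}(3), and then absorb the conjugation twist using the fact that twisting by an inner automorphism yields an isomorphic representation, hence equal characters. The only difference is that you make the intertwiner $\psi(\xi)=\omega^{-1}\cdot\bar\Omega(\xi)$ and the equivariance check explicit, which the paper leaves implicit.
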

\begin{proof}
    Let $\omega$ be an isomorphism from $\Gamma$ to $\Gamma'$. The $S_n$-isomorphism $\Omega$ from Lemma \ref{lem:mod_isom} (3) preserves the ideal $\llangle \bar{t}_1,\ldots,\bar{t}_n\rrangle$ from $\splines{\Gamma}$ to $\splines{\Gamma'}$. Thus we have a $S_n$-module isomorphism from $\lqot{\Gamma}$ to the $\omega$-twisted $\lqot{\Gamma'}$. Twisting by $\omega$ is an inner automorphism of $S_n$, so the $\omega$-twisted $\lqot{\Gamma'}$ is in turn isomorphic to the untwisted $\lqot{\Gamma'}$ as an $S_n$-representation. The exact same argument holds for $\rqot{\Gamma}$ and $\rqot{\Gamma'}$. Isomorphic representations have identical traces (i.e. equal characters), and the equalities follow.
\end{proof}
By Proposition \ref{prop:isom_reps} we may consider any graph isomorphic to $\Gamma$ when calculating $\lrep{\Gamma}$ and $\rrep{\Gamma}$.
\begin{corollary}
    The graded symmetric functions $\lrep{\Gamma}$ and $\rrep{\Gamma}$ are invariants of simple graphs. 
\end{corollary}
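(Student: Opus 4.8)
The plan is short, since the statement is essentially a repackaging of Proposition \ref{prop:isom_reps}. By definition, a graph invariant is a quantity attached to a simple graph that is constant on each isomorphism class; so to prove the corollary it suffices to check that the assignments $\Gamma \mapsto \lrep{\Gamma}$ and $\Gamma \mapsto \rrep{\Gamma}$ factor through the isomorphism relation on simple graphs. That is exactly the content of Proposition \ref{prop:isom_reps}: whenever $\Gamma \cong \Gamma'$ we have $\lrep{\Gamma} = \lrep{\Gamma'}$ and $\rrep{\Gamma} = \rrep{\Gamma'}$. I would therefore simply invoke that proposition.

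For completeness I would recall the chain of constructions that produces this equality, as a sanity check: a graph isomorphism $\omega \colon \Gamma \to \Gamma'$, viewed as an element of $S_n$, induces by Lemma \ref{lem:isom_cayley_graph} a labeled-graph isomorphism $\cg_\Gamma \to \cg_{\Gamma'}$ via conjugation; by Proposition \ref{prop:isom_rings} this gives a ring isomorphism $\Omega \colon \splines{\Gamma} \to \splines{\Gamma'}$; by Lemma \ref{lem:mod_isom}(3) the map $\Omega$ intertwines the dot $S_n$-action on $\splines{\Gamma}$ with the $\omega$-twisted dot action on $\splines{\Gamma'}$, and it carries the ideal $\llangle \bar{t}_1,\ldots,\bar{t}_n\rrangle$ onto $\llangle \bar{t}_1,\ldots,\bar{t}_n\rrangle$ (and likewise for the $\bar{x}_i$); finally, twisting by $\omega$ is an inner automorphism of $S_n$, so the twisted and untwisted $S_n$-representations on $\lqot{\Gamma'}$ (resp. $\rqot{\Gamma'}$) have equal characters. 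Equal characters in each degree means equal Frobenius images in each degree, hence equality of the graded symmetric functions.

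No step here is a genuine obstacle — all the real work was already done in Lemma \ref{lem:isom_cayley_graph} through Proposition \ref{prop:isom_reps}. The only point worth a sentence is the reading of "invariant": it should be understood up to isomorphism of unlabeled simple graphs, and since $\lrep{\Gamma}$ and $\rrep{\Gamma}$ depend on nothing beyond the combinatorial data of $\Gamma$ used to build $\cg_\Gamma$, and Proposition \ref{prop:isom_reps} kills the residual dependence on that labeling, this is automatic. I would close by remarking that, combined with the Schur-positivity already observed, $\lrep{\Gamma}$ and $\rrep{\Gamma}$ are in fact Schur-positive graded symmetric function invariants of simple graphs, as advertised in the introduction.
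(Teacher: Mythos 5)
Your proposal is correct and follows the paper exactly: the corollary is an immediate consequence of Proposition \ref{prop:isom_reps}, which is precisely how the paper treats it (no separate proof is given there), and your recap of the chain Lemma \ref{lem:isom_cayley_graph} $\to$ Proposition \ref{prop:isom_rings} $\to$ Lemma \ref{lem:mod_isom}(3) $\to$ inner-automorphism argument matches the paper's route through Subsection \ref{ssec:isoms}. Nothing further is needed.
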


\section{Module Structure of \texorpdfstring{$\splines{\Gamma}$}{Splines-Gamma}}\label{sec:structure}
This section establishes some algebraic properties of $\splines{\Gamma}$ as a module over the polynomial ring $\poly$. It begins with a two results, one that establishes the size of a minimal homogeneous $\poly$-module generating set as an invariant of $\Gamma$, and a second that proves the module generated by constant and linear splines is a free module over $\poly$. This section continues with subsection \ref{ssec:structure:implied_conditions}, which establishes an algebraic relation that must be satisfied by elements of $\splines{\Gamma}$. This section ends with subsection \ref{ssec:structure_trees}, which gives an explicit and combinatorially meaningful generating set of $\splines{\Gamma}$ as a $\poly$-module when $\Gamma$ is a tree (proving Theorem \ref{intthm:coset_splines}).\par 
Before continuing, we will briefly describe what is already known in the geometric case. If $\Gamma$ is a Hessenberg graph, then 
\begin{itemize}
    \item $\splines{\Gamma}$ is a free $\poly$-module with a combinatorial formula for its rank-generating function \cite{DMPS1992hessenbergvarieties}, and furthermore
    \item $\splines{\Gamma}$ has explicit upper-triangular generators are achieved from a Bia{\l}ynicki-Birula decomposition of the corresponding variety \cite{DMPS1992hessenbergvarieties,ChoHongLee_Bases}.
\end{itemize}
In the geometric case, the rank-generating function is equivalent (substituting $q \mapsto q^2$) to the Poincar{\'e} polynomial of the corresponding variety. If $\Gamma$ is not in the geometric case, Then $\splines{\Gamma}$ is not always a free module. We now prove that the number of generators in each degree of a homogeneous generating set is still an invariant of $\Gamma$. We compute the minimal number of linear generators for $\splines{\Gamma}$ in Section \ref{sec:lin_dim}. \par 
A generating set $F$ of a finitely generated $\poly$-module $M$ is minimal if there exists a collection of polynomials $\{c_f \mid f \in F\} \subset \poly$ such that $\sum_{f \in F} c_f.f = 0$, then $c_f \notin \C \setminus\{0\}$ for all $f \in F$ (i.e. no $c_f$ is a unit). In other words, no proper subset of $F$ generates $M$. If $M$ is graded then a set $F$ is homogeneous if every element $f \in F$ is homogeneous.\par 
The following lemma is known, essentially as a corollary to the graded Nakayama lemma, and holds in greater generality (i.e. for other graded rings over a field). We include a proof for completeness.

\begin{lemma}\label{lem:graded_basis_size}
    Let $M$ be a finitely generated $\N$-graded module over $\poly$. Then every minimal homogeneous generating set has the same number of elements of each degree.
\end{lemma}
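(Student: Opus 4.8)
The plan is to prove this via the graded Nakayama lemma, which is the standard approach for statements about minimal generating sets of graded modules over graded rings with a field in degree zero. Let $R = \poly$ with maximal graded ideal $\mathfrak{m} = \langle t_1,\ldots,t_n\rangle$, so that $R/\mathfrak{m} \cong \C$. The key object is the graded $\C$-vector space $\overline{M} \coloneqq M/\mathfrak{m}M$, which inherits a grading from $M$.

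\medskip

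First I would record the graded Nakayama statement in the form needed: if $F = \{f_1,\ldots,f_r\}$ is a homogeneous subset of $M$ whose images $\overline{f_1},\ldots,\overline{f_r}$ span $\overline{M}$ as a $\C$-vector space, then $F$ generates $M$ as an $R$-module. This is where finite generation of $M$ (and the fact that $R$ is $\N$-graded with $R_0$ a field, so $M$ is bounded below in each relevant sense) is used: one writes any $m \in M$ modulo the span of $F$, lands in $\mathfrak{m}M$, iterates degree by degree, and the grading forces termination. Conversely, and this is the easy direction, if $F$ generates $M$ then the images span $\overline{M}$, since $\mathfrak{m}M$ is exactly the ``non-minimal'' part.

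\medskip

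Next I would show that a homogeneous generating set $F$ is \emph{minimal} (in the sense defined just above the lemma — no proper subset generates) if and only if the images $\{\overline{f} : f \in F\}$ form a \emph{basis} of $\overline{M}$, not merely a spanning set. The $(\Leftarrow)$ direction: if the images are linearly independent in $\overline{M}$, then removing any $f$ destroys the spanning property of the images, hence by the easy direction of Nakayama the smaller set cannot generate. The $(\Rightarrow)$ direction: if the images were linearly dependent, some $\overline{f_0}$ would lie in the span of the others; since the images of $F$ span $\overline{M}$ (easy direction again), the images of $F \setminus \{f_0\}$ already span $\overline{M}$, so by graded Nakayama $F \setminus \{f_0\}$ generates $M$, contradicting minimality. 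One should be slightly careful matching this up with the paper's phrasing in terms of a relation $\sum c_f . f = 0$ with no $c_f$ a unit — a homogeneous linear dependence among the $\overline{f}$ lifts to such a relation and vice versa, so the two notions of minimality agree.

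\medskip

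Finally, the conclusion is immediate: for any minimal homogeneous generating set $F$, the multiset of degrees of elements of $F$ equals the multiset of degrees of a homogeneous $\C$-basis of $\overline{M}$, and the latter is $\bigl(\dim_\C \overline{M}_d\bigr)_{d}$, an invariant of $M$ alone. Hence any two minimal homogeneous generating sets have the same number of elements in each degree. The main obstacle — really the only non-bookkeeping point — is the graded Nakayama lemma itself (the spanning-implies-generating direction), which requires the degree-wise termination argument; everything else is formal manipulation of the correspondence between $F$, its image in $\overline{M}$, and linear (in)dependence there. Since the paper explicitly allows citing this as ``essentially a corollary to the graded Nakayama lemma,'' I would state graded Nakayama cleanly and then spend the bulk of the proof on the minimal $\Leftrightarrow$ basis equivalence.
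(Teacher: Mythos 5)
Your proposal is correct and follows essentially the same route as the paper: pass to the graded vector space $M/\langle t_1,\ldots,t_n\rangle M$ and show that a minimal homogeneous generating set projects to a graded basis, so the counts in each degree are the graded dimensions of the quotient. The only organizational difference is that you invoke the graded Nakayama lemma as a black box (spanning in the quotient implies generating), whereas the paper inlines the corresponding degree-by-degree argument, deriving linear independence directly from the relation-based formulation of minimality; both are sound and the paper itself notes the lemma is essentially a corollary of graded Nakayama.
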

\begin{proof}
    Let $I = \llangle t_\bullet \rrangle$ be the irrelevant ideal. As $\faktor{\poly}{I} \cong \C$, the quotient $\faktor{M}{IM}$ is a graded $\C$-module. 
    In particular, $\faktor{M}{IM}$ is a graded $\C$-vector space of dimension $(d_0,\ldots,d_n)$, and we will prove that any homogeneous minimal generating set for $M$ projects to a graded basis in $\faktor{M}{IM}$. Let $F \coloneqq \left\{f_k^i \mid 0\leq i\leq N,\; k \in [K_i],\; \deg(f_k^i) = i\right\}$ be a minimal homogeneous generating set of $M$ with $K_i$ elements of degree $i$. It is easy to reason that $n=N$ since an element in $M$ of degree greater than $N$ is in $IF = IM$ (so $n \leq N$) and if $f_1^N \in IM$ then $F$ is not minimal (so $n \geq N$). In fact, if $f \in F$ and $f \in IM$ then $F$ is not minimal ($f$ would be in the $\poly$-span of lower degree elements of $F$), and moreover since we are assuming that $F$ is minimal we know that the image of $f \in F$ in $\faktor{M}{IM}$ is nonzero. We will show that $K_i = d_i$ for all $i=1,...,N$. Let $\pi \colon M \to \faktor{M}{IM}$ be the quotient map. \par 
    We know $\pi(F)$ is a homogeneous spanning set for the graded vector space $\faktor{M}{IM}$. Since $\faktor{M}{IM}$ is a graded vector space, we may prove linear independence degree-by-degree. Say for $c_1,\ldots,c_{K_i} \in \C$ that ${\ds \sum_{k=1}^{K_i} c_k\pi(f_k^i) = 0}$. We will show that $c_1 = \cdots = c_{K_i} = 0$. It follows that  ${\ds 
 \pi \left( \sum_{k=1}^{K_i} c_kf_k^i\right)} = 0$, and so  ${\ds 
  \sum_{k=1}^{K_i} c_kf_k^i \in IM}$. So there exists some finite set $P$ that indexes two subsets $\{r_p \mid p \in P\} \subset I$ and $\{h_p \mid P \in P\} \subset M$ such that ${\ds \sum_{k=1}^{K_i} c_kf_k^i = \sum_{p \in P}r_p.h_p}$. Since ${\ds \sum_{k=1}^{K_i} c_kf_k^i}$ is homogeneous of degree $i$, it suffices to consider only the $i$-th graded piece of each element $r_ph_p$.\par  
     Say $i=0$. Since each $r_p \in I$ has no degree $0$ component neither does $r_ph_p$, so ${\ds \sum_{k=1}^{K_0} c_kf_k^0 = 0}$. Since $F$ is a minimal generating set for $M$ it follows that $c_1=\cdots=c_{k_0} = 0$. \par 
     Now say $i>0$. Each $h_p$ is degree at most $i-1$, so $\sum_{ p \in P} r_p.h_p \in \poly\{f_q^j \mid 0\leq j < i,\; q \in [K_j]\}$. So \[\sum_{k=1}^{K_i} c_kf_k^i = \sum_{p \in P} r_ph_p = \sum_{\substack{0\leq j < i\\ 1\leq q \leq K_j}} c_{j,q}(t_\bullet) f_q^j.\]
    This is a relation in $M$ of elements from $F$ and thus cannot have any nonzero constant coefficients, so $c_1 = \cdots =c_{k_i} = 0$. Thus $\{\pi(f_k^i) \mid  k \in [K_i]\}$ is a basis of the $i$-th graded piece of the vector space $\faktor{M}{IM}$, and so $K_i = d_i$ is independent of the choice of $F$.
\end{proof}
The proof of Lemma \ref{lem:graded_basis_size} also ensures that a minimal graded generating set of $\splines{\Gamma}$ with respect to either the left or right module structure projects to a basis of $\lqot{\Gamma}$ or $\rqot{\Gamma}$ respectively. \par 
Lemma \ref{lem:linear_freeness} below shows that the first graded piece of the $\poly$-module is free. Note Lemma \ref{lem:linear_freeness} is independent of the polynomial action chosen, e.g. left, right, and twisted alternatives.
\begin{lemma}\label{lem:linear_freeness}
    The $\poly$-submodule $\splines{\Gamma}^{\leq 1}$ generated by the constant and linear splines on $\cg_\Gamma$ is a free module.
\end{lemma}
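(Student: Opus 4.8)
The plan is to write down an explicit free $\poly$-basis of $\splines{\Gamma}^{\leq 1}$. First I would note that, since $\Gamma$ is connected, its edges generate $S_n$, so $\cg_\Gamma$ is connected; a degree-$0$ spline is then a single scalar repeated at every vertex, giving $\splines{\Gamma}^0 = \C\bar{\mathbb{1}}$. The space $\splines{\Gamma}^1$ of linear splines is a finite-dimensional $\C$-vector space, and it contains the $n$-dimensional space of \emph{constant} linear splines $\mathrm{span}_\C\{\bar{t}_1,\ldots,\bar{t}_n\}$ (the $\bar{t}_i$ are independent because the $t_i$ are). Choose linear splines $\bar\rho_1,\dots,\bar\rho_c$, with $c = \dim_\C\splines{\Gamma}^1 - n$, whose images form a basis of $\splines{\Gamma}^1 / \mathrm{span}_\C\{\bar{t}_1,\ldots,\bar{t}_n\}$, and put $W = \mathrm{span}_\C\{\bar\rho_1,\dots,\bar\rho_c\}$. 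Because $\mathrm{span}_\C\{\bar{t}_1,\ldots,\bar{t}_n\} \subseteq \poly\bar{\mathbb{1}}$, the tuple $(\bar{\mathbb{1}},\bar\rho_1,\dots,\bar\rho_c)$ already generates $\splines{\Gamma}^{\leq 1}$ over $\poly$; the real claim is that it is a \emph{free} generating set.

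The engine of the argument is an edge functional. For an edge $e = (w, w(i,j))$ of $\cg_\Gamma$ the label $\llangle t_{w(i)} - t_{w(j)}\rrangle$ has degree-$1$ part the line $\C\,(t_{w(i)} - t_{w(j)})$ (assume $n \geq 2$; $n = 1$ is trivial). So for any $\bar\rho \in \splines{\Gamma}^1$ one may write $\bar\rho(w) - \bar\rho(w(i,j)) = \mu_e(\bar\rho)\,(t_{w(i)} - t_{w(j)})$ for a unique scalar $\mu_e(\bar\rho)$, and $\bar\rho \mapsto \mu_e(\bar\rho)$ is a linear functional on $\splines{\Gamma}^1$. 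The key point is that if $\mu_e(\bar\rho) = 0$ for every edge $e$, then $\bar\rho$ agrees at adjacent vertices and hence, by connectedness of $\cg_\Gamma$, is constant in $w$, so $\bar\rho \in \mathrm{span}_\C\{\bar{t}_1,\ldots,\bar{t}_n\}$. Restricting the $\mu_e$ to $W$, their common kernel is therefore $W \cap \mathrm{span}_\C\{\bar{t}_1,\ldots,\bar{t}_n\} = 0$, so $\{\mu_e|_W\}$ spans $W^\ast$.

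Given this, freeness is short. Suppose $f_0\bar{\mathbb{1}} + \sum_j f_j\bar\rho_j = 0$ with $f_0,f_j \in \poly$. Since $\splines{\Gamma}$ is graded (Lemma \ref{lem:ring_is_graded}), I may assume the relation is homogeneous, with $f_0$ homogeneous of degree $m$ and each $f_j$ homogeneous of degree $m-1$; if $m = 0$ the $f_j$ vanish and $f_0 = 0$ is immediate, so take $m \geq 1$. Evaluating at a vertex $w$ gives $\sum_j f_j\,\bar\rho_j(w) = -f_0$, the same polynomial for every $w$. Subtracting this identity at the two endpoints of an edge $e = (w, w(i,j))$ gives $\bigl(\sum_j \mu_e(\bar\rho_j) f_j\bigr)(t_{w(i)} - t_{w(j)}) = 0$, so $\sum_j \mu_e(\bar\rho_j) f_j = 0$ because $\poly$ is a domain. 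Writing the coordinate functional dual to $\bar\rho_k$ as a $\C$-combination of the $\mu_e|_W$ (possible by the previous paragraph) and taking the same combination of these relations yields $f_k = 0$ for all $k$, and then $f_0\bar{\mathbb{1}} = 0$ forces $f_0 = 0$. Hence $(\bar{\mathbb{1}},\bar\rho_1,\dots,\bar\rho_c)$ is a free $\poly$-basis of $\splines{\Gamma}^{\leq 1}$.

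The step I expect to carry the weight is the separation statement in the second paragraph — that the edge-jump functionals distinguish linear splines modulo the constant ones — and, more mundanely, the observation that one must first pass to a complement of the constant linear splines, since $\bar{t}_i = t_i\cdot\bar{\mathbb{1}}$ already produces relations among the full set of linear splines. The grading reduction and the integral-domain cancellation are then routine. Finally, the proof uses of the left action only that it is multiplication by the fixed splines $\bar{t}_1,\ldots,\bar{t}_n$; a parallel argument, replacing these by $\bar{x}_1,\ldots,\bar{x}_n$ and the constant linear splines by $\mathrm{span}_\C\{\bar{x}_1,\ldots,\bar{x}_n\}$ (and noting that twisting by $\omega \in S_n$ merely permutes these generators), yields the statement for the right and twisted actions, so it is indeed independent of the chosen action.
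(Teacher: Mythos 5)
Your argument is correct, and it takes a genuinely different route from the paper. The paper's proof fixes a linear order on $S_n$ refining $\Gamma$-length and iteratively modifies a minimal homogeneous generating set of $\splines{\Gamma}^{\leq 1}$ until each generator has a distinct minimal support element; freeness then follows from an upper-triangular (leading-term) argument that works verbatim for the left, right, and twisted actions. You instead split $\splines{\Gamma}^1 = \mathrm{span}_\C\{\bar{t}_1,\ldots,\bar{t}_n\}\oplus W$, observe that the edge-jump functionals $\mu_e$ have common kernel exactly the vertex-independent linear splines (by connectedness of $\cg_\Gamma$), hence span $W^\ast$, and then kill any homogeneous $\poly$-syzygy among $\bar{\mathbb{1}},\bar\rho_1,\ldots,\bar\rho_c$ by subtracting the evaluated relation across an edge and using that $\poly$ is a domain. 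This is more elementary (no order on $S_n$, no modification of generators), and it exhibits the rank $1+\dim_\C\splines{\Gamma}^1-n$ explicitly; the paper's triangular-generating-set method buys uniformity across all the module actions and produces the kind of ``upper triangular'' family that is reused later (e.g.\ in the discussion after Theorem \ref{thm:coset_splines_spann}).

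The one place where your write-up is thinner than claimed is the last sentence: for the right action the argument is not literally parallel. Evaluating $f_0\cdot\bar{\mathbb{1}}+\sum_j f_j\cdot\bar\rho_j=0$ at $w$ now gives $(wf_0)+\sum_j (wf_j)\,\bar\rho_j(w)=0$, with coefficients varying from vertex to vertex, so subtracting across an edge no longer isolates $\bigl(\sum_j\mu_e(\bar\rho_j)(wf_j)\bigr)\left(t_{w(i)}-t_{w(j)}\right)$; the cross terms $\left[(wf_j)-(vf_j)\right]\bar\rho_j(v)$ interfere. The fix is routine: apply $w^{-1}$ to the relation at each vertex, i.e.\ work with $\tilde\rho(w)\coloneqq w^{-1}\bigl(\bar\rho(w)\bigr)$. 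Then the coefficients become the fixed polynomials $f_j$ again, the differences of $\tilde\rho$ across an edge $(w,w(i,j))$ lie in $\C\,(t_i-t_j)$, and the common kernel of the resulting functionals is exactly $\mathrm{span}_\C\{\bar{x}_1,\ldots,\bar{x}_n\}$, so your dual-basis step goes through with $W$ replaced by a complement of that span. With that adjustment (and the observation that twisting by $\omega$ is precomposition with a ring automorphism of $\poly$, which does not affect freeness), the claim holds for all of the actions, as the paper asserts.
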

\begin{proof}
    Let $(e,w_2,w_3,\ldots,w_{n!})$ be a linear order on $S_n$ where $\ell_\Gamma(v) < \ell_\Gamma(w)$ implies that $v<w$. Since $\Gamma$ is connected, if $w \neq e$, there exists $(i,j) \in E(\Gamma)$ such that $w(i,j) < w$. \par 
    Let $F\coloneqq \{\bar{\mathbb{1}},\bar{f}_1,\ldots,\bar{f}_k\}$ be a minimal generating set of $\splines{\Gamma}^{\leq 1}$ where each $\bar{f}_1,\ldots,\bar{f}_k$ is a linear spline. Then $\{\bar{\mathbb{1}}, \bar{f}_1-\bar{f}_1(e)\bar{\mathbb{1}},\ldots,\bar{f}_n-\bar{f}_n(e)\bar{\mathbb{1}}\}$ is a homogeneous generating set of the same size and is therefore minimal by Lemma \ref{lem:graded_basis_size}. This new generating set has the property that the $\bar{\mathbb{1}}$ is the unique spline whose minimal element is $e$, so assume that $\bar{f}(e) = 0$ for all $f \in  F\setminus\{\bar{\mathbb{1}}\}$. \par 
     Let $F_v \coloneqq \{f \in F \mid \min(\supp(f)) = v\}$. So $F_e = \{\bar{\mathbb{1}}\}$. We iteratively construct a minimal generating set such that $\abs{F_v} \in \{0,1\}$ for all $v \in S_n$. Say that $\abs{F_v} \in \{0,1\}$ for all $v < w$, and $\abs{F_w} \geq 2$. Let $F_w = \{\bar{g}_1,\ldots,\bar{g}_r\}$. Since the linear order on $S_n$ is an extension of $\Gamma$-length, there exists $(a,b) \in E(\Gamma)$ such that $w(a,b) < w$, and so $\bar{g}(w(a,b)) = 0$ for all $\bar{g} \in F_w$. Thus there exist $c_1,\ldots,c_r \in \C^*$ such that $\bar{g}_i(w) = c_i(t_{w(a)}-w_{w(b)})$. For $j = 2,...,r$, the spline $\bar{g}_j - \frac{c_j}{c_1}\bar{g}_1$ is supported strictly above $w$. Let \[F' = \left(F \setminus F_w\right) \cup \left\{\bar{g}_1, \bar{g}_2 - \frac{c_2}{c_1}\bar{g}_1,\ldots,\bar{g}_r - \frac{c_r}{c_1}\bar{g}_1 \right\} \]
    is still a minimal generating set, and $\abs{F'_v} \in \{0,1\}$ for all $v \leq w$. Iterate this process letting $F = F'$. Eventually $\abs{F_v} \in \{0,1\}$ for all $v \in S_n$. In particular, this $F$ is upper triangular with respect to our total order (the minimal element in the support of each spline is unique to that spline), and so $F$ generates a free $\poly$-module.
    \end{proof}
We note that this submodule is precisely where this paper proves $h$-positivity in Theorem \ref{thm:linear_reps} and Corollary \ref{cor:nonlabel_reps}.
\subsection{Implied Conditions on Splines}\label{ssec:structure:implied_conditions}
This subsection gives algebraic conditions that an element $\bar{\rho} \in \splines{\Gamma}$ must satisfy that are not explicitly in the definition. Specifically, given $w,v \in S_n$, we want to infer conditions on $\bar{\rho}(w) - \bar{\rho}(v)$ when $(w,v)$ is not necessarily an edge in $\cg_\Gamma$. Let $w,v \in S_n$ and $(w=v_0,v_1,\ldots,v_m=v)$ be a path from $w$ to $v$ in $\cg_\Gamma$. Say for each edge $(v_{k-1},v_k)$ that $v_kv_{k-1}^{-1} = (i_k,j_k)$, so that $\cl(v_{k-1},v_k) = \llangle t_{i_k} - t_{j_k}\rrangle$ for each $k=1,\ldots ,m$. Then
    \begin{equation}\label{eqn:path_to_ideal}
        \bar{\rho}(w)-\bar{\rho}(v) = \sum_{k=1}^{m} \left(\bar{\rho}(v_{k-1})-\bar{\rho}(v_k)\right) \in \llangle t_{i_k} - t_{j_k} \mid k \in [m] \rrangle.
    \end{equation}
    Define 
    \begin{equation}\label{eqn:distance_ideals}
        I_B^w := \llangle t_{w(i)}-t_{w(j)} \mid (i,j) \in B \subseteq E(T) \rrangle.
    \end{equation} 
Lemma \ref{lem:diff_ideal} below is particularly useful when $\bar{\rho} \in \splines{\Gamma}$ satisfies $\bar{\rho}(v) = 0$ for some $v \in S_n$. Recall that we identify $B \subset E(\Gamma)$ with a subset of tranpositions, and write $w\langle B \rangle$ for the left coset at $w$ of the reflection subgroup generated by the transpositions in $B$.
\begin{lemma}\label{lem:diff_ideal}
    Let $T$ be a spanning tree of $\Gamma$. Let $v \in w\langle B \rangle$ where $B \subseteq E(T)$. If $\bar{\rho} \in \splines{\Gamma}$, then $\bar{\rho}(w)-\bar{\rho}(v) \in I_B^w$.
\end{lemma}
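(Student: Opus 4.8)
The plan is to exhibit an explicit path in $\cg_\Gamma$ from $w$ to $v$ all of whose edge‑label ideals sit inside $I_B^w$, and then quote the telescoping identity (\ref{eqn:path_to_ideal}). Since $v \in w\langle B\rangle$, write $v = w b_1 b_2 \cdots b_m$ with each $b_k \in B$. Put $v_0 \coloneqq w$ and $v_k \coloneqq v_{k-1} b_k$ for $k = 1,\ldots,m$, so that $(w = v_0, v_1, \ldots, v_m = v)$ is a path in $\cg_\Gamma$: each consecutive pair is an edge because $v_{k-1}^{-1} v_k = b_k \in B \subseteq E(T) \subseteq E(\Gamma)$ (this is the only place the spanning‑tree hypothesis is needed — to guarantee $B \subseteq E(\Gamma)$).

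Next I would identify the edge labels. Writing $b_k = (i_k, j_k)$ and using $b_k^{-1} = b_k$, we get $v_{k-1} v_k^{-1} = v_{k-1} b_k v_{k-1}^{-1} = (v_{k-1}(i_k), v_{k-1}(j_k))$, so $\cl(v_{k-1}, v_k) = \langle t_{v_{k-1}(i_k)} - t_{v_{k-1}(j_k)}\rangle$. The heart of the argument is to check this ideal is contained in $I_B^w$. Write $v_{k-1} = w\pi_{k-1}$ with $\pi_{k-1} \coloneqq b_1 \cdots b_{k-1} \in \langle B\rangle$. Each generator $b_r$ transposes two vertices adjacent in the graph $([n], B)$, hence permutes within a single connected component; therefore $\pi_{k-1}$ preserves the partition of $[n]$ into the connected components of $([n], B)$. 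Since $(i_k, j_k) \in B$, the vertices $i_k, j_k$ lie in a common component $C$, and hence so do $\pi_{k-1}(i_k)$ and $\pi_{k-1}(j_k)$. Because $C$ is connected there is a path $\pi_{k-1}(i_k) = x_0, x_1, \ldots, x_\ell = \pi_{k-1}(j_k)$ with every $(x_{r-1}, x_r) \in B$, and telescoping gives
\[
t_{v_{k-1}(i_k)} - t_{v_{k-1}(j_k)} = \sum_{r=1}^{\ell} \bigl( t_{w(x_{r-1})} - t_{w(x_r)} \bigr) \in I_B^w .
\]
Thus $\cl(v_{k-1}, v_k) \subseteq I_B^w$ for every $k$.

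Finally, applying (\ref{eqn:path_to_ideal}) to this path yields $\bar{\rho}(w) - \bar{\rho}(v) = \sum_{k=1}^{m} \bigl(\bar{\rho}(v_{k-1}) - \bar{\rho}(v_k)\bigr) \in \sum_{k=1}^{m} \cl(v_{k-1}, v_k) \subseteq I_B^w$, which is the claim. I expect the only delicate point to be the bookkeeping in the middle step: verifying that $\langle B\rangle$ respects the component partition of $([n], B)$ and, consequently, that each twisted difference $t_{w(a)} - t_{w(b)}$ with $a, b$ in one component already lies in $I_B^w$. Both facts are elementary, but one should be careful about how left multiplication by $w$ interacts with the combinatorics of $B$; note also that the forest structure of $([n], B)$ is convenient but not essential here, since mere connectivity of each component suffices for the telescoping.
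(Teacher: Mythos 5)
Your proposal is correct and follows essentially the same route as the paper's proof: build the Cayley-graph path $(w=v_0,\ldots,v_m=v)$ from a word $b_1\cdots b_m$ in $B$, apply the telescoping identity (\ref{eqn:path_to_ideal}), and show each edge-label generator lies in $I_B^w$ by observing that elements of $\langle B\rangle$ preserve the connected components of $([n],B)$ and telescoping along a path inside the relevant component. The only cosmetic difference is that you telescope directly inside $I_B^w$ via $v_{k-1}=w\pi_{k-1}$, whereas the paper works with $w^{-1}(i_k),w^{-1}(j_k)$ and $I_B^e$ before translating by $w$; these are the same computation.
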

\begin{proof}
    Let $w^{-1}v = b_1\cdots b_m$ where $b_1,\ldots,b_m \in B$. Let $(w = v_0, v_1,\ldots, v_m=v)$ be the path from $w$ to $v$ where $v_{k}^{-1}v_{k-1} = b_k \in B$ for all $k \in [m]$. Say  that $v_{k}v_{k-1}^{-1} = (i_k,j_k)$, so that $\cl(v_{k-1},v_{k})= t_{i_k} - t_{j_k}$. By Equation (\ref{eqn:path_to_ideal}), 
    \[    \bar{\rho}(w)-\bar{\rho}(v) \in \llangle t_{i_k} - t_{j_k} \mid k \in [m] \rrangle. \]
    For each $k \in [m]$, since $v_kv_{k-1}^{-1}= (i_k,j_k)$, we have that $b_k = v_k^{-1}v_{k-1} = \left(v_k^{-1}(i_k),v_k^{-1}(j_k)\right)$. Each edge $b_k \in B$, so the integers $v_k^{-1}(i_k) = (wb_1\cdots b_k)^{-1}(i_k)$ and $v_{k}^{-1}(j_k) = (wb_1\cdots b_k)^{-1}(j_k)$ must be in the same connected component of $([n],B)$.  \par 
    Since $(wb_1\cdots b_k)^{-1} = (b_k\cdots b_1)w^{-1}$, it follows that $w^{-1}(i_k)$ and $w^{-1}(j_k)$ are vertices in the same connected component of $([n],B)$ for all $k \in [m]$. If $(q_0,...,q_\ell)$ is a path in $([n],B)$ from $q_0 =  w^{-1}(i_k)$ to $q_{\ell} =  = w^{-1}(j_k)$, then $t_{q_0} - t_{q_\ell} = \sum_{r=1}^\ell t_{q_{r-1}} - t_{q_r}$ and thus $t_{w^{-1}(i_k)}-t_{w^{-1}(j_k)} \in I_B^e$. It follows that $t_{i_k} - t_{j_k} \in I_B^w$ for all $k \in [m]$, and so $\bar{\rho}(w)-\bar{\rho}(v)\in I_B^w$.
\end{proof}
A monomial ideal in $\poly$ is an ideal $I$ generated by monomials. Monomial ideals are particularly nice when computing intersections; if $I_1 = \langle m_1,\ldots,m_k\rangle$ and $I_2 = \langle n_1,\ldots,n_\ell \rangle$ are both monomial ideals, then $I_1 \cap I_2 = \langle \mathrm{lcm}(m_i,n_j) \mid i \in [k],\; j\in [\ell]\rangle$.\par 
Let $T$ be a spanning tree of $\Gamma$, where $E(T) = \{(a_1,b_1),\ldots,(a_{n-1},b_{n-1})\}$. Ideals of the form $\langle t_{a_i}-t_{b_i} \mid (a_i,b_i) \in B \subset E(T) \rangle$ can be considered monomial ideals, via the graded automorphism 
\[
\C[t_1,\ldots,t_n] \cong \C[t_{a_1}-t_{b_1},\ldots,t_{a_{n-1}}-t_{b_{n-1}},t_n]
\]
defined by $t_i \mapsto \begin{cases}t_{a_i}-t_{b_i} &\text{if } i \in [n-1] \\ t_n & \text{if }i=n \end{cases}$. Since $t_i \mapsto t_{w(i)}$ is also a graded automorphism of $\C[t_\bullet]$, the ideals $I_B^w$ 
from Equation \ref{eqn:distance_ideals} can also be considered as monomial ideals (taking care to \textbf{fix} $T$ and $w \in S_n$). We will fix $T$ and $w$ then treat ideals of the form $I_B^w$ as a monomial ideals to compute intersections in the proofs of Theorem \ref{thm:coset_splines_spann} and Lemma \ref{lem:kconn_better} in the following two sections.
\subsection{Coset Splines and Trees}\label{ssec:structure_trees}
This subsection establishes a set of splines called \emph{coset splines} (Def. \ref{def:coset_spline}) that generate $\splines{\Gamma}$ as a module over the polynomial ring when $\Gamma$ is a tree. This subsection also identifies a subset of those coset splines that generate $\splines{\Gamma}$ as a ring when $\Gamma$ is a tree.
\begin{definition}\label{def:coset_spline}
    Let $\Gamma$ be a tree, $E \coloneqq E(\Gamma)$ and $B \subseteq E$. The \emph{coset spline at the identity} $\bar{f}_e^B \colon S_n \to \poly$ is
    \[
\bar{f}_e^B(w) := \begin{cases}
    {\ds \prod_{(i,j) \in E \setminus B} \left(t_{w(i)} - t_{w(j)}\right)} & w \in \langle B \rangle \\
    0 & \text{otherwise}
\end{cases}
    \]
    The \emph{coset spline at $w$} is $\bar{f_w^B} := w \cdot \bar{f}_e^B$. We adopt the conventions that a product over the empty set $\emptyset$ is $1$ (so $f_w^\emptyset = \bar{\mathbb{1}}$) and that the subgroup generated by the empty set is the identity (so $\langle \emptyset \rangle = \{e\}$).
\end{definition}
\begin{example}\label{ex:coset_splines}
Again consider $\Gamma = \left([3],\{(1,2),(2,3)\}\right)$. Drawn below are three examples of coset splines on $\cg_\Gamma$.
\begin{center}
    \begin{tikzpicture}[scale=1.5]
\tikzstyle{every node}=[shape=rectangle, inner sep=4pt];
\begin{scope}[xshift = 0]
\draw  (0,0) node[draw] (v0) {$123$};
\draw  (1,1) node[draw]  (v1) {$132$};
\draw  (-1,1) node[draw] (v2) {$213$};
\draw  (-1,2) node[draw] (v3) {$231$};
\draw  (1,2) node[draw] (v4) {$312$};
\draw (0,3)  node[draw]  (v5) {$321$};
\draw (0,1.5) node (name) {$\bar{f}_{123}^{\;\{(1,2),(2,3)\}}$};

\draw  (0.5,0) node (v00) {\textcolor{blue}{$1$}};
\draw  (1.5,1) node  (v10) {\textcolor{blue}{$1$}};
\draw  (-1.5,1) node (v20) {\textcolor{blue}{$1$}};
\draw  (-1.5,2) node (v30) {\textcolor{blue}{$1$}};
\draw  (1.5,2) node (v40) {\textcolor{blue}{$1$}};
\draw (0.5,3)  node  (v50) {\textcolor{blue}{$1$}};

\draw (v0)--(v1);
\draw (v0)--(v2);
\draw (v1)--(v4);
\draw (v2)--(v3);
\draw (v3)--(v5);
\draw (v4)--(v5);
\end{scope}

\begin{scope}[xshift = 5cm]
\draw  (0,0) node[draw] (v0) {$123$};
\draw  (1,1) node[draw]  (v1) {$132$};
\draw  (-1,1) node[draw] (v2) {$213$};
\draw  (-1,2) node[draw] (v3) {$231$};
\draw  (1,2) node[draw] (v4) {$312$};
\draw (0,3)  node[draw]  (v5) {$321$};
\draw (0,1.5) node (name) {$\bar{f}_{312}^{\;\{(2,3)\}} $};

\draw  (0.5,0) node (v00) {\textcolor{blue}{$0$}};
\draw  (1.5,1) node  (v10) {\textcolor{blue}{$0$}};
\draw  (-1.5,1) node (v20) {\textcolor{blue}{$0$}};
\draw  (-1.5,2) node (v30) {\textcolor{blue}{$0$}};
\draw  (1.75,2) node (v40) {\textcolor{blue}{$t_3-t_1$}};
\draw (0.75,3)  node  (v50) {\textcolor{blue}{$t_3-t_2$}};

\draw (v0)--(v1);
\draw (v0)--(v2);
\draw (v1)--(v4);
\draw (v2)--(v3);
\draw (v3)--(v5);
\draw (v4)--(v5);
\end{scope}

\begin{scope}[xshift = 2.5cm, yshift=-2.8cm]
\draw  (0,0) node[draw] (v0) {$123$};
\draw  (1,1) node[draw]  (v1) {$132$};
\draw  (-1,1) node[draw] (v2) {$213$};
\draw  (-1,2) node[draw] (v3) {$231$};
\draw  (1,2) node[draw] (v4) {$312$};
\draw (0,3)  node[draw]  (v5) {$321$};
\draw (0,1.5) node (name) {$\bar{f}_{132}^{\;\emptyset}$};

\draw  (0.5,0) node (v00) {\textcolor{blue}{$0$}};
\draw  (2.25,1) node  (v10) {\textcolor{blue}{$(t_1-t_3)(t_3-t_2)$}};
\draw  (-1.5,0.7) node (v20) {\textcolor{blue}{$0$}};
\draw  (-1.5,2.3) node (v30) {\textcolor{blue}{$0$}};
\draw  (1.5,2) node (v40) {\textcolor{blue}{$0$}};
\draw (0.5,3)  node  (v50) {\textcolor{blue}{$0$}};

\draw (v0)--(v1);
\draw (v0)--(v2);
\draw (v1)--(v4);
\draw (v2)--(v3);
\draw (v3)--(v5);
\draw (v4)--(v5);
\end{scope}
\end{tikzpicture}
\end{center}
\end{example}
\begin{lemma}
    When $\Gamma$ is a tree, coset splines are elements of $\splines{\Gamma}$. Additionally, if $w,v \in S_n$ are in the same coset of $\langle B \rangle$, then $\bar{f}_w^B = \bar{f}_v^B$.
\end{lemma}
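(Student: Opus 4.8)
The plan is to reduce the whole lemma to a single claim: the coset spline at the identity, $\bar f_e^B$, is an element of $\splines{\Gamma}$. Granting that, the first assertion for arbitrary $w$ is immediate, since $\bar f_w^B = w\cdot\bar f_e^B$ by Definition~\ref{def:coset_spline} and the dot action carries $\splines{\Gamma}$ to itself (this is exactly the remark that the dot action is well defined). And the second assertion reduces, writing $v = wb$ with $b\in\langle B\rangle$, to checking that $b\cdot\bar f_e^B = \bar f_e^B$, because then $\bar f_v^B = (wb)\cdot\bar f_e^B = w\cdot(b\cdot\bar f_e^B) = w\cdot\bar f_e^B = \bar f_w^B$.

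To prove $\bar f_e^B\in\splines{\Gamma}$, I would first record the convenient description: setting $g \coloneqq \prod_{(i,j)\in E\setminus B}(t_i - t_j)$ and using the $S_n$-action of Equation~(\ref{eqn:sn_action_poly}), one has $\bar f_e^B(u) = u\cdot g$ when $u\in\langle B\rangle$ and $\bar f_e^B(u) = 0$ otherwise. Then I would fix an arbitrary edge $(u,u(i,j))$ of $\cg_\Gamma$, with $(i,j)\in E(\Gamma)$ and edge-label $\langle t_{u(i)}-t_{u(j)}\rangle$, and verify the spline condition by splitting into three cases according to which of $u$ and $u(i,j)$ lie in $\langle B\rangle$. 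If neither does, the difference of values is $0$. If both do, the difference is $u\cdot g - (u(i,j))\cdot g = u\cdot\bigl(g - (i,j)\cdot g\bigr)$; since transposing the variables $t_i$ and $t_j$ fixes any polynomial modulo $\langle t_i - t_j\rangle$ we get $g - (i,j)\cdot g\in\langle t_i - t_j\rangle$, and applying the ring automorphism $u$ lands the difference in $u\langle t_i-t_j\rangle = \langle t_{u(i)}-t_{u(j)}\rangle$. If exactly one of them lies in $\langle B\rangle$---say $u\in\langle B\rangle$, $u(i,j)\notin\langle B\rangle$, the other situation being symmetric after swapping endpoints---then $(i,j)\notin B$, since $(i,j)\in B\subseteq\langle B\rangle$ would force $u(i,j)\in\langle B\rangle$; hence $(i,j)\in E\setminus B$, so $t_i - t_j$ divides $g$, and $\bar f_e^B(u) = u\cdot g\in \langle t_{u(i)}-t_{u(j)}\rangle$ while $\bar f_e^B(u(i,j)) = 0$.

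For the coset-invariance of $\bar f_e^B$ under $\langle B\rangle$, I would just evaluate: $(b\cdot\bar f_e^B)(u) = b\cdot\bar f_e^B(b^{-1}u)$, and $b\in\langle B\rangle$ gives $b^{-1}u\in\langle B\rangle\iff u\in\langle B\rangle$. When $u\notin\langle B\rangle$ both sides are $0$; when $u\in\langle B\rangle$, using that Equation~(\ref{eqn:sn_action_poly}) is a left action, $b\cdot\bigl((b^{-1}u)\cdot g\bigr) = (bb^{-1}u)\cdot g = u\cdot g = \bar f_e^B(u)$. This finishes the plan via the reductions in the first paragraph.

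The only point requiring care is the mixed case of the middle paragraph: the spline condition holds there precisely because an edge of $\Gamma$ that moves a representative out of the subgroup $\langle B\rangle$ cannot itself belong to $B$, and this is exactly what makes the relevant linear form a factor of $g$. Everything else is bookkeeping with the left $S_n$-action on $\poly$ and its compatibility with the ring structure of $\splines{\Gamma}$; in particular the tree hypothesis is used only to make Definition~\ref{def:coset_spline} meaningful.
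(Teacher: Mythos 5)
Your proof is correct and follows the paper's argument in all essentials: reduce both claims to the identity coset spline $\bar{f}_e^B$, verify the spline condition edge by edge, and establish $\langle B\rangle$-invariance of $\bar{f}_e^B$ by the same direct dot-action evaluation; your observation that $p-(i,j)p\in\llangle t_i-t_j\rrangle$ for any polynomial $p$ is exactly the abstract form of the paper's explicit monomial expansion. The one genuine difference is the case organization: the paper splits on whether $(i,j)\in B$ or $(i,j)\in E\setminus B$, and in the latter case the assertion that the neighbor lies outside $\langle B\rangle$ is precisely where the tree hypothesis enters (for a tree, $(i,j)\in E\setminus B$ places $i$ and $j$ in different components of $([n],B)$, so $(i,j)\notin\langle B\rangle$), whereas you split on which endpoints of the Cayley edge lie in $\langle B\rangle$, so the ``exactly one'' case yields $(i,j)\notin B$ for free and the ``both'' case harmlessly absorbs configurations that cannot occur for trees. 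The net effect is that your verification never uses that $\Gamma$ is a tree beyond invoking the definition, so the same computation shows the formula defines a spline on an arbitrary graph --- slightly more general than the paper's statement, and consistent with its later treatment of the cut-edge splines $\bar{f}_A^s$ on general graphs, which the paper instead justifies by passing through spanning trees.
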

\begin{proof}
    It suffices to show $\bar{f}_e^B$ is a spline. Let $w \in \langle B \rangle$ and $v \in S_n$, where $vw^{-1} = (i,j) \in E$. \par 
    If $(i,j) \in E \setminus B$, then $v \notin \langle B \rangle$ and so $\bar{f}_e^B(v) = 0$. Thus $\bar{f}_e^B(w) - \bar{f}_w^B(v) = t_{w(i)}-t_{w(j)} \in \cl(w,v)$ as desired.  \par 
    If $(i,j) \in B$, then 
    \begin{align*}
        \bar{f}_e^B(w) - \bar{f}_e^B(v) &=  \prod_{(r_1,s_1) \in E \setminus B} \left(t_{w(r_1)} - t_{w(s_1)}\right) -  \prod_{(r_2,s_2) \in E \setminus B} \left(t_{w(i,j)(r_2)} - t_{w(i,j)(s_2)}\right) \\
        &= w\left(\prod_{(r_1,s_1) \in E \setminus B} \left(t_{r_1} - t_{s_1}\right) -  (i,j)\left(\prod_{(r_2,s_2) \in E \setminus B} \left(t_{r_2} - t_{s_2}\right)\right)\right)\\
        &= w\left(\sum_{0\leq p,q} g_{pq}(t_\bullet) t_i^p t_j^q  -(i,j)\left(\sum_{0\leq r,s} g_{rs}(t_\bullet) t_i^r t_j^s\right)\right)\\
        &= w\left(\sum_{0\leq p,q} g_{pq}(t_\bullet) (t_i^p t_j^q-  t_i^q t_j^p)\right)\\
    \end{align*}
As $t_i^p t_j^q-  t_i^q t_j^p \in \llangle t_i-t_j\rrangle$, it follows that $\bar{f}_e^B(w) - \bar{f}_e^B(v) \in \llangle t_{w(i)} - t_{w(j)} \rrangle =  \cl(w,v)$. Thus $\bar{f}_e^B$ is a spline. \par 
Now we prove that coset splines are uniquely determined by the coset. For all $u \in \langle B \rangle$ we have
\begin{align*}
    u \cdot \bar{f}_e^B(w) &=  \begin{cases}{\ds 
     u\left(\prod_{(i,j) \in E - B} \left(t_{u^{-1}w(i)} - t_{u^{-1}w(j)}\right)\right)} & u^{-1}w \in \langle B \rangle \\
    0 & \text{otherwise}
    \end{cases}\\
    &= \begin{cases}
     {\ds \prod_{(i,j) \in E - B} \left(t_{w(i)} - t_{w(j)}\right)} & w \in \langle B \rangle \\
    0 & \text{otherwise} \\
    \end{cases}\\
    &= \bar{f}_e^B(w).
\end{align*} 
If $w\langle B \rangle = v\langle B \rangle$ then $w=vu$, for some $u \in \langle B \rangle$ and so
\[
\bar{f}_w^B = w\cdot \bar{f}_e^B = (vu)\cdot \bar{f}_e^B = v\cdot (u\cdot \bar{f}_e^B) = v\cdot \bar{f}_e^B = \bar{f}_v^B.
\]
\end{proof}
Note that the following Theorem \ref{thm:coset_splines_spann} is independent of the left or right module structure.
\begin{theorem}\label{thm:coset_splines_spann}
    Let $\Gamma $ be a tree. The set of coset splines $\{\bar{f}_w^B \mid w \in S_n,\; B \subseteq E(\Gamma)\}$ is a $\poly$-generating set of $\splines{\Gamma}$.
\end{theorem}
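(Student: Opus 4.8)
The plan is to induct down the Cayley graph $\cg_\Gamma$ away from the identity. Fix once and for all a linear order $<$ on $S_n$ refining $\Gamma$-length, so that $\ell_\Gamma(v) < \ell_\Gamma(v')$ implies $v < v'$. Given a nonzero $\bar\rho \in \splines{\Gamma}$, set $w \coloneqq \min_< \supp(\bar\rho)$. I will show how to choose a polynomial $g \in \poly$ and a coset spline $\bar{f}_w^B$ so that $\bar\rho - g\cdot\bar{f}_w^B$ is again a spline whose minimal support element is strictly larger than $w$ (or is identically $0$). Since $S_n$ is finite, repeating this finitely often exhibits $\bar\rho$ as a $\poly$-linear combination of coset splines, and since $\bar\rho$ was arbitrary this proves the coset splines span $\splines{\Gamma}$. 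The argument will be symmetric in the left and right module structures.

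First I would pin down $\bar\rho(w)$ inside a principal ideal. Let $C_w \coloneqq \{(i,j) \in E(\Gamma) \mid w(i,j) < w\}$ be the set of ``descent'' edges at $w$. For each $(i,j) \in C_w$, minimality forces $\bar\rho(w(i,j)) = 0$, so the spline condition across the edge $(w, w(i,j))$ of $\cg_\Gamma$ gives $\bar\rho(w) \in \langle t_{w(i)} - t_{w(j)}\rangle$; hence $\bar\rho(w)$ lies in $\bigcap_{(i,j) \in C_w}\langle t_{w(i)} - t_{w(j)}\rangle$. Because $\Gamma$ is a tree the forms $\{t_{w(i)} - t_{w(j)} \mid (i,j) \in E(\Gamma)\}$ extend to a system of linear coordinates on $\poly$, and treating these ideals as monomial ideals in those coordinates --- precisely the device set up in the paragraphs preceding the theorem --- this intersection equals the principal ideal $\langle \prod_{(i,j) \in C_w}(t_{w(i)} - t_{w(j)})\rangle$. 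Thus $\bar\rho(w) = g\cdot\prod_{(i,j) \in C_w}(t_{w(i)} - t_{w(j)})$ for some $g \in \poly$ (for the right module structure one instead absorbs a twist, writing $\bar\rho(w) = (wg)\prod_{(i,j)\in C_w}(t_{w(i)}-t_{w(j)})$). Now take $B \coloneqq E(\Gamma) \setminus C_w$; by Definition \ref{def:coset_spline} one has $\bar{f}_w^B(w) = \prod_{(i,j) \in E(\Gamma)\setminus B}(t_{w(i)} - t_{w(j)}) = \prod_{(i,j)\in C_w}(t_{w(i)} - t_{w(j)})$, so $\bar\rho' \coloneqq \bar\rho - g\cdot\bar{f}_w^B$ vanishes at $w$.

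It remains to check that $\bar\rho'$ vanishes everywhere strictly below $w$; since $\bar\rho$ already does, this reduces to showing that $\bar{f}_w^B$ is supported on $\{v \mid v \geq w\}$. As $\supp(\bar{f}_w^B) = w\langle B\rangle$, what is needed is that $w$ is the $<$-minimal element of the coset $w\langle E(\Gamma)\setminus C_w\rangle$, and this is the step I expect to be the main obstacle. The subgroup $\langle E(\Gamma)\setminus C_w\rangle$ is generated by the edges of a subforest of $\Gamma$; it is a reflection subgroup of $S_n$ but not a standard parabolic, and $\ell_\Gamma$ is not the length function of a Coxeter system (the edge set of a star is already not a Coxeter generating set of the symmetric group), so one cannot simply invoke the theory of minimal length coset representatives. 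Instead I would prove that $w$ is the \emph{unique} minimizer of $\ell_\Gamma$ on $w\langle E(\Gamma)\setminus C_w\rangle$ --- equivalently that $\ell_\Gamma(wu) > \ell_\Gamma(w)$ for every $u \in \langle E(\Gamma)\setminus C_w\rangle$ with $u \neq e$ --- by an exchange-type argument exploiting the tree structure: a reduced $\Gamma$-word for $wu$ of length $\le \ell_\Gamma(w)$ would, using acyclicity of $E(\Gamma)\setminus C_w$ together with the defining property that $wb > w$ for every single edge $b \notin C_w$, allow one to delete a letter and produce a shorter $\Gamma$-word for $w$, a contradiction. Granting this, $\bar\rho'$ is a spline with $\min_<\supp(\bar\rho') > w$, so the induction runs and terminates, completing the proof.
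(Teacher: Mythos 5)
There is a genuine gap, and it sits exactly where you predicted: the claim that ``$wb>w$ for every edge $b\in B=E(\Gamma)\setminus C_w$ implies $w$ is the $\ell_\Gamma$-minimal element of $w\langle B\rangle$'' is false, so no exchange-type argument can establish it. Take $\Gamma$ the star on $[4]$ with edges $(1,4),(2,4),(3,4)$ and $w=[2,3,4,1]=(1,2,3,4)$, so $\ell_\Gamma(w)=3$ (e.g.\ $w=(3,4)(2,4)(1,4)$). One checks $w(1,4)=(2,3,4)$ has length $2$, while $w(2,4)=(1,2)(3,4)$ and $w(3,4)=(1,2,3)$ have length $4$; hence $C_w=\{(1,4)\}$ and your $B=\{(2,4),(3,4)\}$, with $\langle B\rangle=S_{\{2,3,4\}}$. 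But $(2,3)\in\langle B\rangle$ and $w(2,3)=[2,4,3,1]=(1,2,4)$ has $\ell_\Gamma=2<3=\ell_\Gamma(w)$, so the coset $w\langle B\rangle$ contains elements strictly below $w$ in any length-refining order. Consequently your reduction step can push support \emph{below} the current minimum: run your procedure on $\bar{\rho}=\bar{f}_w^{\emptyset}$ (supported only at $w$); you would subtract $g.\bar{f}_w^{B}$ with $g$ the product over $B$, and the result is nonzero at $(1,2,4)<w$, so the induction measure does not decrease and termination is not guaranteed. (It is no accident that the offending graph is the star the paper singles out as non-free: $\ell_\Gamma$ is not a Coxeter length here, and ``no descent along $B$'' does not control the whole coset.)

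The underlying reason the single-vertex, descent-edge information is too weak is that a spline vanishing below $w$ satisfies conditions coming from \emph{every} coset $w\langle B\rangle$ that meets the zero set, not just from adjacent vertices; in the example above, $\bar{\rho}(w)$ must in fact lie in $\langle t_3-t_1,\,t_4-t_1\rangle$ as well, which your choice of $g$ and $\bar{f}_w^{B}$ ignores. The paper's proof of Theorem \ref{thm:coset_splines_spann} is built around precisely this point: it collects all $B$ with $w\langle B\rangle\cap\supp(\bar{\rho})^c\neq\emptyset$, intersects the corresponding ideals $I_B^w$ from Lemma \ref{lem:diff_ideal} as monomial ideals, shows the intersection is generated by the values $\bar{f}_w^D(w)$ of coset splines whose supporting cosets lie entirely inside $\supp(\bar{\rho})$, and then inducts on containment of supports rather than on a linear order refining $\ell_\Gamma$. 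To repair your argument you would need to incorporate these coset-level vanishing conditions (or otherwise guarantee the subtracted splines are supported inside $\supp(\bar{\rho})$); as written, the key lemma your induction rests on is false.
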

\begin{proof}
    Let $\bar{\rho}\in \splines{\Gamma}$, we will show that $\bar{\rho} \in \poly\{\bar{f}_B^w \mid w \in S_n, B \subseteq E(\Gamma)\}$ by induction on containment of the support $\supp(\bar{\rho})$. If $\bar{\rho} \equiv 0$, this is clearly in the span of the coset splines and the base case $\supp(\bar{\rho}) = \emptyset$ is done. Otherwise $\supp(\bar{\rho}) \neq \emptyset$, and we assume all splines $\bar{\kappa}$ where $\mathrm{supp}(\bar{\kappa}) \subsetneq \mathrm{supp}(\bar{\rho})$ are in $\poly\{\bar{f}_B^w \mid w \in S_n, B \subseteq E(\Gamma)\}$. Replacing $\bar{\rho}$ by $ \bar{\rho} - \bar{\rho}(e)\bar{\mathbb{1}}$ if necessary, we assume $\bar{\rho}(e) = 0$. This also handles the case where $\supp(\bar{\rho}) = S_n$.\par 
    Fix $w \in S_n$ such that $\bar{\rho}(w) \neq 0$ and $w$ is adjacent in $\cg_\Gamma$ to some $w' \in S_n$ where $\bar{\rho}(w') = 0$. Define
    \[
        \cb_w \coloneqq \{B \mid B \subset E(\Gamma), \;\; \exists v \in w\langle B\rangle  \text{ such that } \bar{\rho}(v) = 0\}.
    \]
    Since $\bar{\rho}(w') = 0$, this set is nonempty. Each element in $\cb_w$ is a generating set for a reflection subgroup whose left coset at $w$ contains an element not in $\supp(\bar{\rho})$. Note if $B \subset B'$ and $B \in \cb_w$, then $B' \in \cb_w$.\par 
    By Lemma \ref{lem:diff_ideal}, 
    \begin{equation}
        \bar{\rho}(w) \in \bigcap_{B \in \cb_w } I_B^w = \bigcap_{B \in \cb_w} \llangle t_{w(i)}-t_{w(j)} \mid (i,j) \in B \subset E(\Gamma) \rrangle \eqqcolon \ci_\rho^w.
    \end{equation}
    Following the logic of Subsection \ref{ssec:structure:implied_conditions} (i.e. treating $\{t_{w(i)} - t_{w(j)} \mid (i,j) \in E(\Gamma)\}$ as variables), $\ci_\rho^w$ is  a monomial ideal generated by the monomials that are contained within \emph{every} element of the intersection.\par 
    A monomial $\mathfrak{m} = {\ds \prod_{(i,j) \in E} \left(t_{w(i)}-t_{w(j)}\right)^{\alpha_{ij}}}$ is contained within the ideal $\ci_\rho^w$ if and only if for every $B \in \cb_w$, there is at least one $(i,j) \in B$ such that $\alpha_{ij} > 0$. For generators of $\ci_\rho^w$, it suffices to consider only those monomials such that $\alpha_{ij} \in \{0,1\}$ for all $(i,j) \in E(\Gamma)$.
    Since $\alpha_{ij} \in \{0,1\}$, the monomials that generate $\ci_\rho^w$ are a subset of $\{\bar{f}_w^B(w) \mid B \subseteq E(\Gamma)\}$. In particular, we have the equality $\llangle \bar{f}_w^D(w) \mid D \subseteq E(\Gamma),\;\bar{f}_w^D(w) \in \ci_\rho^w \rrangle = \ci_\rho^w$. \par 
    Consider the coset splines $\{\bar{f}_w^D \mid \bar{f}_w^D(w) \in \ci_\rho^w\}$. By definition, for any $D \subset E(\Gamma)$,
    \[
        \bar{f}_w^{D}(w) = \prod_{(i,j) \in E(\Gamma)\setminus D} t_{w(i)} - t_{w(j)}.
    \]
    Now $\bar{f}_w^{D}(w)\in \ci_\rho^w$ if and only if $\left(E(\Gamma)\setminus D\right) \cap B \neq \emptyset$ for all $B \in \cb_w$. Thus $\bar{f}_w^{D}(w)\in \ci_\rho^w$ if and only if $B \not\subset D$ for all $B \in \cb_w$. Since $\cb_w$ is closed under supersets, $\bar{f}_w^{D}(w)\in \ci_\rho^w$ if and only if $D \notin \cb_w$. Thus 
    \begin{equation*}\label{eqn:cos_splines_thm}
        \bar{\rho}(w) \in \ci_\rho^w = \llangle \bar{f}_w^D(w) \mid \text{ for all } v \in w\langle D \rangle, \;  \bar{\rho}(v) \neq 0 \rrangle.
    \end{equation*}
    Let $\bar{f} \in \poly\{\bar{f}_w^D \mid \text{ for all } v \in w\langle D \rangle ,\;  \bar{\rho}(v) \neq 0 \}$ such that $\bar{\rho}(w) = \bar{f}(w)$ (a different $\bar{f}$ may be chosen for the left and right module structure, but either way such a $\bar{f}$ exists since it is only required to agree with $\bar{\rho}$ at $w$). Since $\mathrm{supp}(\bar{f}) \subseteq \mathrm{supp}(\bar{\rho})$ and $\bar{f}(w) = \bar{\rho}(w) \neq 0$, it follows that $\mathrm{supp}(\bar{\rho}) \supsetneq \mathrm{supp}(\bar{\rho}-\bar{f})$. Thus $\bar{\rho}-\bar{f} \in \poly\{\bar{f}_B^w \mid w \in S_n, B \subseteq E(\Gamma)\}$. Since $\bar{f}$ is also a sum of coset splines, $\bar{\rho} \in \poly\{\bar{f}_B^w \mid w \in S_n, B \subseteq E(\Gamma)\}$. 
    \end{proof}
    The collection of all coset splines is not a minimal generating set. One might significantly decrease the size of this set by fixing the linear order on $S_n$ in the proof of Lemma \ref{lem:linear_freeness}, and only considering the largest (by support) coset splines supported ``above" a permutation. There is no guarantee that these generators are minimal for all degrees, but it is easy to reason that this collection is minimal for the module $\splines{\Gamma}^{\leq 2}$ generated by the constant, linear, and quadratic splines. \par 
    We also achieve a generating set for $\splines{\Gamma}$ as a ring in Corollary \ref{cor:ring_gen} below.    \begin{corollary}\label{cor:ring_gen}
        Let $\Gamma$ be a tree. The constant and linear coset splines along with either $\{\bar{t}_i \mid i \in [n]\}$ or $\{\bar{x}_i \mid i \in [n]\}$ generate $\splines{\Gamma}$ as a ring.
    \end{corollary}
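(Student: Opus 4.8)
The plan is to bootstrap from Theorem~\ref{thm:coset_splines_spann}, which already says the coset splines generate $\splines{\Gamma}$ as a $\poly$-module. It therefore suffices to check two things: \emph{(i)} every coset spline is a product, inside the ring $\splines{\Gamma}$, of linear coset splines (with $\bar{\mathbb{1}}=\bar{f}_w^{E(\Gamma)}$ itself one of them, as an empty product); and \emph{(ii)} the left $\poly$-action (resp.\ right $\poly$-action) is realized by ring multiplication by polynomial expressions in the $\bar{t}_i$ (resp.\ the $\bar{x}_i$). Point \emph{(ii)} is exactly the content of the definitions~(\ref{eqn:left_action}) and~(\ref{eqn:right_action}): for $f\in\poly$ and $\bar{\sigma}\in\splines{\Gamma}$ one has $f(t_\bullet).\bar{\sigma}=f(\bar{t}_1,\dots,\bar{t}_n)\,\bar{\sigma}$ for the left action and $f(\bar{x}_1,\dots,\bar{x}_n)\,\bar{\sigma}$ for the right, so $f.\bar{\sigma}$ lies in the subring generated by $\bar{\sigma}$ together with $\{\bar{t}_i\}$ (resp.\ $\{\bar{x}_i\}$). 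Granting both points, any $\bar{\rho}\in\splines{\Gamma}$ can be written $\bar{\rho}=\sum_k f_k.\bar{f}_{w_k}^{B_k}$ by Theorem~\ref{thm:coset_splines_spann}; expanding each $f_k$ as a polynomial in the $\bar{t}_i$ (or $\bar{x}_i$) and each $\bar{f}_{w_k}^{B_k}$ as a product of linear coset splines exhibits $\bar{\rho}$ in the ring generated by the constant and linear coset splines together with $\{\bar{t}_i\}$ (or $\{\bar{x}_i\}$), which is the claim.

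The substance is point \emph{(i)}. For an edge $e=(a,b)\in E(\Gamma)$ and $w\in S_n$, write $\bar{g}_w^e\coloneqq\bar{f}_w^{\,E(\Gamma)\setminus\{e\}}$; this is a linear coset spline, supported on the left coset $w\langle E(\Gamma)\setminus\{e\}\rangle$, with value $t_{v(a)}-t_{v(b)}$ at each $v$ in that support. I would prove, for every $B\subseteq E(\Gamma)$,
\[
\bar{f}_w^B \;=\; \prod_{e\in E(\Gamma)\setminus B}\bar{g}_w^e
\]
by matching supports and values pointwise. Computed pointwise, the product has value at $v$ equal to the product of the values, so it vanishes precisely off $\bigcap_{e\in E(\Gamma)\setminus B} w\langle E(\Gamma)\setminus\{e\}\rangle=w\bigl(\bigcap_{e\in E(\Gamma)\setminus B}\langle E(\Gamma)\setminus\{e\}\rangle\bigr)$. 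Hence the supports agree once one knows the group-theoretic identity
\[
\bigcap_{e\in E(\Gamma)\setminus B}\langle E(\Gamma)\setminus\{e\}\rangle=\langle B\rangle .
\]
The inclusion $\supseteq$ is immediate from $B\subseteq E(\Gamma)\setminus\{e\}$. For $\subseteq$ one uses that $\Gamma$ is a tree: $\langle B\rangle$ is the Young subgroup attached to the partition of $[n]$ into connected components of $([n],B)$, while $\langle E(\Gamma)\setminus\{e\}\rangle$ is the Young subgroup attached to the two-block partition obtained by deleting the edge $e$ from the tree; if a permutation $\sigma$ does not preserve the $B$-components it sends some vertex $v$ to a vertex $u$ in a different $B$-component, the unique tree path from $v$ to $u$ then crosses an edge $e\notin B$, and $\sigma\notin\langle E(\Gamma)\setminus\{e\}\rangle$ because deleting $e$ separates $v$ from $u$. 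Finally, on the common support $w\langle B\rangle$, the definition of coset splines gives $\bar{f}_w^B(v)=\prod_{(i,j)\in E(\Gamma)\setminus B}(t_{v(i)}-t_{v(j)})$, which is precisely $\prod_{e\in E(\Gamma)\setminus B}\bar{g}_w^e(v)$.

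The one nontrivial step is the subgroup identity above, and that is exactly where the hypothesis that $\Gamma$ is a tree is used: for general $\Gamma$ it fails, for instance when $E(\Gamma)\setminus B$ contains the edges of a cycle, since then no single deleted edge separates its endpoints. Once that identity is established, assembling \emph{(i)} and \emph{(ii)} as in the first paragraph proves Corollary~\ref{cor:ring_gen}; the constant spline $\bar{\mathbb{1}}$ needs no separate treatment beyond being recorded among the generators, since it is $\bar{f}_w^{E(\Gamma)}$.
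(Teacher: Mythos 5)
Your proposal is correct and is essentially the paper's proof: the paper likewise reduces to Theorem~\ref{thm:coset_splines_spann} together with the factorization $\bar{f}_w^B=\prod_{s\in E(\Gamma)\setminus B}\bar{f}_w^{E(\Gamma)\setminus\{s\}}$, which it asserts follows immediately from the definition. The only difference is that you spell out the support-matching step via the subgroup identity $\bigcap_{e\in E(\Gamma)\setminus B}\langle E(\Gamma)\setminus\{e\}\rangle=\langle B\rangle$ (where the tree hypothesis enters), a detail the paper leaves implicit.
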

    \begin{proof}
        It follows immediately from the definition that 
        \[
        f_w^B = \prod_{s \in E(\Gamma) \setminus B}f_w^{E(\Gamma) \setminus\{s\}}.
        \]
        So every coset spline except $\bar{\mathbb{1}}$ is a product of linear coset splines, which generate $\splines{\Gamma}$ together with either $\{\bar{t}_i \mid i \in [n]\}$ or $\{\bar{x}_i \mid i \in [n]\}$ by Theorem \ref{thm:coset_splines_spann}.
    \end{proof}
    We can leverage Theorem \ref{thm:coset_splines_spann} to compute $\splines{\Gamma}$ for all graphs $\Gamma$.  Any graph $\Gamma$ can be expressed as the union of spanning trees $\Gamma = T_1 \cup \cdots \cup T_k$. Lemma \ref{lem:unions} says that  ${\ds \splines{\Gamma} = \bigcap_{i=1}^k \splines{T_i}}$, and Theorem \ref{thm:coset_splines_spann} gives explicit generators for each $\splines{T_i}$. This is most useful in computer calculations, where the task of constructing modules from generators and intersecting them can be completed by a computer algebra system.

\section{Connectedness and \texorpdfstring{$\splines{\Gamma}^k$}{Splines-Gamma-k}}\label{sec:connectedness}
This section proves an equivalence between between the $k$-connectivity of $\Gamma$ and which graded pieces of the representation $\lrep{\Gamma}$ are trivial.  \par 
Lemma \ref{lem:kconn_better} below infers new conditions on $\splines{\Gamma}$ from collections of vertex-disjoint paths in $\Gamma$.
\begin{lemma}\label{lem:kconn_better}
    Say that there exist $k$ vertex-disjoint paths from $i$ to $j$ in $\Gamma$. Let $\Gamma' = ([n],E(\Gamma) \cup \{(i,j)\})$. Then $\splines{\Gamma}^{k-1} = \splines{\Gamma'}^{k-1}$.
\end{lemma}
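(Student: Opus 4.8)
The plan is to prove the two inclusions $\splines{\Gamma'}^{k-1} \subseteq \splines{\Gamma}^{k-1}$ and $\splines{\Gamma}^{k-1} \subseteq \splines{\Gamma'}^{k-1}$ separately. The first is immediate: since $E(\Gamma) \subseteq E(\Gamma')$, Lemma \ref{lem:unions} (or the set-theoretic definition directly) gives $\splines{\Gamma'} \subseteq \splines{\Gamma}$, and restricting to the $(k-1)$-st graded piece preserves containment. If $(i,j) \in E(\Gamma)$ then $\Gamma = \Gamma'$ and there is nothing to prove, so I assume $(i,j) \notin E(\Gamma)$; in particular every path from $i$ to $j$ in $\Gamma$ has length at least $2$ and hence at least one internal vertex.

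For the reverse inclusion, fix a homogeneous spline $\bar\rho \in \splines{\Gamma}^{k-1}$ and $w \in S_n$. Since every edge of $\cg_{\Gamma'}$ other than those of the form $(w, w(i,j))$ is already an edge of $\cg_\Gamma$, it suffices to show $\bar\rho(w) - \bar\rho(w(i,j)) \in \llangle t_{w(i)} - t_{w(j)} \rrangle = \cl(w, w(i,j))$. Let $P_1, \dots, P_k$ be $k$ internally vertex-disjoint paths from $i$ to $j$ in $\Gamma$. For each $m$, extend $P_m$ to a spanning tree $T_m$ of $\Gamma$ and set $B_m \coloneqq E(P_m) \subseteq E(T_m)$; since $B_m$ is a path joining $i$ to $j$, the transposition $(i,j)$ lies in $\langle B_m \rangle$, so $w(i,j) \in w\langle B_m\rangle$. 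Lemma \ref{lem:diff_ideal} then gives $\bar\rho(w) - \bar\rho(w(i,j)) \in I_{B_m}^w$ for every $m$, hence $\bar\rho(w) - \bar\rho(w(i,j)) \in J \coloneqq \bigcap_{m=1}^{k} I_{B_m}^w$.

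The crux is a degree estimate on elements of $J$. Apply the linear change of coordinates on $\poly$ that replaces $t_{w(c)}$ by $z_c \coloneqq t_{w(c)} - t_{w(j)}$ for $c \neq j$, keeping $t_{w(j)}$; this preserves the grading. Under it, the ideal $I_{B_m}^w$, which (by the proof of Lemma \ref{lem:diff_ideal}) equals $\llangle t_{w(a)} - t_{w(b)} \mid a,b \in V(P_m) \rrangle$, becomes the monomial prime $\llangle z_c \mid c \in V(P_m)\setminus\{j\} \rrangle$, containing $z_i = t_{w(i)} - t_{w(j)}$ together with the variables indexed by the internal vertices of $P_m$. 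Following Subsection \ref{ssec:structure:implied_conditions}, $J$ is a monomial ideal, and a monomial lies in $J$ only if for each $m$ it is divisible either by $z_i$ or by one of the variables indexed by the internal vertices of $P_m$; since the $P_m$ are internally vertex-disjoint, these variable sets are pairwise disjoint, so a monomial in $J$ not divisible by $z_i$ is divisible by $k$ distinct variables and thus has degree at least $k$. Hence $J_d \subseteq \llangle z_i \rrangle = \llangle t_{w(i)} - t_{w(j)} \rrangle$ for all $d \leq k-1$. As $\bar\rho(w) - \bar\rho(w(i,j))$ is homogeneous of degree $k-1$ and lies in $J$, it lies in $\cl(w, w(i,j))$; since $w$ was arbitrary, $\bar\rho \in \splines{\Gamma'}$, and being homogeneous of degree $k-1$ it lies in $\splines{\Gamma'}^{k-1}$.

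I expect the main obstacle to be the monomial-ideal bookkeeping in the last paragraph: one must check that the single coordinate change "centered at $j$" makes \emph{all} the ideals $I_{B_m}^w$ simultaneously monomial (this works precisely because each $B_m$ is a path ending at the common vertex $j$), and then track that "internally vertex-disjoint" is exactly the hypothesis that disjointifies the relevant variable sets and forces the degree bound $\geq k$. Once Lemma \ref{lem:diff_ideal} is invoked along each of the $k$ paths, the rest is routine.
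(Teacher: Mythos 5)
Your proposal is correct and follows essentially the same route as the paper: apply Lemma \ref{lem:diff_ideal} along each of the $k$ vertex-disjoint paths, pass to monomial ideals via a linear change of coordinates, intersect, and conclude by a degree count that a homogeneous element of degree $k-1$ must lie in $\llangle t_{w(i)}-t_{w(j)}\rrangle$. The only difference is cosmetic bookkeeping: you monomialize by centering coordinates at $j$ (so each path ideal becomes a monomial prime and the disjointness of internal vertices forces degree $\geq k$ off $\llangle z_i\rrangle$), whereas the paper adjoins the edge $(i,j)$ to form a tree and writes the intersection explicitly with one degree-$1$ generator and the rest of degree $k$.
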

\begin{proof}
    We will show both directions of containment. Clearly $\splines{\Gamma}^{k-1} \supseteq \splines{\Gamma'}^{k-1}$. \par 
    Say that $\bar{\rho} \in \splines{\Gamma}^{k-1}$, and let $w,v \in S_n$ such that $w^{-1}v = (i,j)$. Let $p_r = (i,s_{r,1},\ldots,s_{r,\ell_r},j)$ for $r=1,\ldots,k$ be the $k$ vertex-disjoint paths from $i$ to $j$ in $\Gamma$. By Lemma \ref{lem:diff_ideal}, 
    \begin{align*}
    \bar{\rho}(w)- \bar{\rho}(v)&\in \llangle t_{w(i)}-t_{w(s_{r,1})},t_{w(s_{r,1})}-t_{w(s_{r,2})},\ldots,t_{w(s_{r,\ell_r})}-t_{w(j)}\rrangle \\
    &= \llangle t_{w(i)}-t_{w(j)}, t_{w(i)}-t_{w(s_{r,1})},\ldots,t_{w(s_{r,\ell_r-1})}-t_{w(s_{r,\ell_r})}\rrangle
    \end{align*}
    for all $r=1,\ldots,k$. Since the paths $p_1,\ldots,p_k$ are vertex independent, the set of edges
    \[
        A=\{(i,j)\} \cup \bigcup_{r=1}^k \{(i,s_{r,1}),(s_{r,1},s_{r,2}),\ldots,(s_{r,\ell_r-1},s_{r,\ell_r})\}
    \]
    contain no cycles, and thus forms a tree. In particular, we may consider $\{t_{a}-t_{b} \mid (a,b) \in A\}$ as monomials in $\C[t_\bullet]$. Let $s_{r,0} := i$ when it is convenient for indexing. It remains to compute
    \begin{align*}
        \bar{\rho}(w) &\in \bigcap_{r=1}^k \llangle t_{w(i)}-t_{w(j)}, t_{w(s_{r,0})}-t_{w(s_{r,1})},\ldots,t_{w(s_{r,\ell_r-1})}-t_{w(s_{r,\ell_r})}\rrangle \\
        &= \llangle t_{w(i)}-t_{w(j)},\left. \prod_{r=1}^k t_{w(s_{r,m_r-1})}-t_{w(s_{r,m_r})}\right| 0 < m_r \leq \ell_r  \rrangle.
    \end{align*}
    Each generator of this ideal is a homogeneous polynomial, one of degree $1$ and all others of degree $k$. Since $\bar{\rho}(w)-\bar{\rho}(v)$ is degree $k-1$, it follows that $\bar{\rho}(w)-\bar{\rho}(v) \in \langle t_{w(i)} - t_{w(j)}\rangle$.\par 
    Since $v,w$ were arbitrary such that $w^{-1}v = (i,j)$, we know that $\bar{\rho} \in \splines{\Gamma'}^{k-1}$. Since $\bar{\rho}$ was arbitrary, $\splines{\Gamma}^{k-1} \subseteq \splines{\Gamma'}^{k-1}$.
\end{proof}
\begin{theorem}\label{thm:kconn}
    Let $\Gamma = ([n],E)$ be a connected graph on $n$ vertices. The following are equivalent:
    \begin{enumerate}
        \item $\Gamma$ is $k$-connected.
        \item $\splines{\Gamma}^d = \splines{K_n}^d$ for all $d<k$, where $K_n$ is the complete graph.
        \item $\lrep{\Gamma}_{d}$ is trivial for all $d<k$.
    \end{enumerate} 
\end{theorem}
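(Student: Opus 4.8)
The plan is to prove the cycle of implications $(1)\Rightarrow(2)\Rightarrow(3)\Rightarrow(1)$. For $(1)\Rightarrow(2)$, fix $d<k$. If $(i,j)\notin E(\Gamma)$, then since $\Gamma$ is $k$-connected, Menger's Theorem (Theorem~\ref{thm:menger}) supplies at least $k\ge d+1$ vertex-disjoint $i$–$j$ paths, so Lemma~\ref{lem:kconn_better} (applied with $d+1$ in place of $k$) gives $\splines{\Gamma}^d=\splines{\Gamma'}^d$ for $\Gamma'=([n],E(\Gamma)\cup\{(i,j)\})$. I would add the missing edges one at a time: each intermediate graph is still $k$-connected (adding edges never lowers connectivity), so Menger keeps supplying enough paths, and after finitely many steps this yields $\splines{\Gamma}^d=\splines{K_n}^d$.

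For $(2)\Rightarrow(3)$, fix $d<k$. A direct computation gives $w\cdot\bar x_i=\bar x_i$ and $w\cdot\bar t_i=\bar t_{w(i)}$ for the dot action, so in $\lqot{K_n}=\splines{K_n}/\llangle\bar t_1,\dots,\bar t_n\rrangle$ the classes $\bar t_i$ vanish and the $\bar x_i$ are dot-fixed. Since $\splines{K_n}\cong H^*_T(\mathrm{Fl}_n)$ is, by the classical Borel presentation, generated as a $\C$-algebra by the $\bar x_i$ and the $\bar t_i$ (equivalently: specializing $t=0$ in Billey's formula \cite{billey1999schub_formula} for the equivariant Schubert classes produces ordinary Schubert polynomials in the $\bar x_i$), the quotient $\lqot{K_n}$ is generated by dot-fixed elements; as the dot action is by ring automorphisms, it is trivial on $\lqot{K_n}$, so $\lrep{K_n}_d$ is trivial. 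Finally $(2)$ gives $(\lqot{\Gamma})_d=(\lqot{K_n})_d$ as $S_n$-modules — the numerators agree by $(2)$, and since $\splines{\Gamma}^{d-1}=\splines{K_n}^{d-1}$ the degree-$d$ parts of the ideal $\llangle\bar t_\bullet\rrangle$ in the two rings coincide — hence $\lrep{\Gamma}_d$ is trivial.

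For $(3)\Rightarrow(1)$ I would prove the contrapositive. Suppose $\Gamma$ is not $k$-connected; let $m<k$ be its connectivity, fix a cut set $A$ with $\abs{A}=m$ and vertices $p,q$ in distinct components of $\Gamma\setminus A$, so $(p,q)\notin E(\Gamma)$ and, by minimality of $A$, there are exactly $m$ internally vertex-disjoint $p$–$q$ paths. Because $\Gamma$ is $m$-connected, the implications already proved give $\splines{\Gamma}^d=\splines{K_n}^d$ and $\lrep{\Gamma}_d$ trivial for all $d<m$, so it suffices to show $\lrep{\Gamma}_m$ is \emph{not} trivial. The dot-equivariant inclusion $\splines{K_n}\hookrightarrow\splines{\Gamma}$, together with the equality of the degree-$m$ parts of $\llangle\bar t_\bullet\rrangle$ in both rings (valid since $\splines{\Gamma}^{m-1}=\splines{K_n}^{m-1}$), yields a short exact sequence of $S_n$-modules
\[
0 \longrightarrow (\lqot{K_n})_m \longrightarrow (\lqot{\Gamma})_m \longrightarrow \splines{\Gamma}^m/\splines{K_n}^m \longrightarrow 0 .
\]
The dot-invariant splines are exactly the assignments $w\mapsto wf$ with $f\in\poly$, i.e.\ the polynomials in the $\bar x_i$, and these all lie in $\splines{K_n}$; hence $(\splines{\Gamma}^m)^{S_n}=(\splines{K_n}^m)^{S_n}$, and taking $S_n$-invariants in the sequence (exact by Maschke) forces $\splines{\Gamma}^m/\splines{K_n}^m$ to have no trivial summand. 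Combined with the triviality of $(\lqot{K_n})_m$ and Maschke's theorem, this shows $\lrep{\Gamma}_m$ is trivial if and only if $\splines{\Gamma}^m=\splines{K_n}^m$. So the theorem reduces to producing a single homogeneous degree-$m$ spline $\bar\rho$ on $\cg_\Gamma$ that fails a $K_n$-edge condition, e.g.\ with $\bar\rho(e)-\bar\rho\big((p,q)\big)\notin\llangle t_p-t_q\rrangle$.

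That construction is the main obstacle. The path-tracking of Subsection~\ref{ssec:structure:implied_conditions} shows that for any $\bar\rho\in\splines{\Gamma}^m$ the difference $\bar\rho(e)-\bar\rho((p,q))$ lies in $\llangle t_p-t_q\rrangle$ plus the ideal generated by the products obtained by choosing one edge from each of the $m$ disjoint paths; choosing, on each path, the edge incident to $p$ gives a product $\prod_{r=1}^{m}(t_p-t_{u_r})$ with every $u_r\neq q$, which is not divisible by $t_p-t_q$. The work is to \emph{realize} this extra term by an honest spline: naive candidates (products of degree-one $\bar x$-type splines, or splines supported on a single reflection-subgroup coset) either collapse back into $\llangle t_p-t_q\rrangle$ or violate the spline condition along the boundary of their support. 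I expect the construction to require a support carefully adapted to the Menger paths through $A$ — perhaps after first reducing, as in Section~\ref{ssec:cliqued}, to a minimal non-$k$-connected graph — and to be the only genuinely delicate step, with everything else amounting to the bookkeeping above.
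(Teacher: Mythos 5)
Your implications $(1)\Rightarrow(2)$ and $(2)\Rightarrow(3)$ are fine and essentially match the paper (the paper simply cites the triviality of the dot action on the flag variety rather than arguing via the Borel presentation, and it is terser about adding the missing edges one degree at a time, but the content is the same). Your reduction in $(3)\Rightarrow(1)$ is also correct as far as it goes: the identification of the dot-invariants of $\splines{\Gamma}$ with $\{w\mapsto wf\}\subseteq\splines{K_n}$, the agreement of the degree-$m$ pieces of $\llangle \bar t_\bullet\rrangle$ in both rings, and the resulting conclusion that $\lrep{\Gamma}_m$ trivial would force $\splines{\Gamma}^m=\splines{K_n}^m$ are all sound.

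However, the proof is not complete: the one step that actually carries the theorem --- producing a homogeneous degree-$m$ element of $\splines{\Gamma}^m\setminus\splines{K_n}^m$ when $\Gamma$ has connectivity exactly $m$ --- is explicitly left as an ``obstacle,'' and the path-ideal heuristic you give does not by itself yield such a spline. This is a genuine gap, and it is precisely the point where the paper takes a different route that avoids any explicit construction. The paper notes that $\Gamma$ is an edge-subgraph of the graph $H$ obtained by gluing two cliques along the cut set $K_d$; hence $\splines{H}\subseteq\splines{\Gamma}$, and (by the same ideal-degree comparison you use) $\lrep{H}_d$ is a subrepresentation of $\lrep{\Gamma}_d$. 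Since $H$ is a Hessenberg graph, the non-triviality of $\lrep{H}_d$ is known (computable via $P$-tableaux as in Shareshian--Wachs), which finishes the argument. You could close your gap the same way: instead of constructing a spline adapted to the Menger paths, compare $\Gamma$ with $H$ and invoke the Hessenberg case (or a dimension count for $\splines{H}$, which is a free module with known rank generating function) to certify $\splines{H}^m\neq\splines{K_n}^m$, hence $\splines{\Gamma}^m\neq\splines{K_n}^m$. As written, though, the proposal establishes only the reduction, not the theorem.
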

\begin{proof}
    (1) $\Rightarrow$ (2). If $\Gamma$ is $k$-connected, by Menger's theorem every $(i,j) \in [n]\times [n]$ has $k$ vertex-disjoint paths connecting them in $\Gamma$. By Lemma \ref{lem:kconn_better}, $\splines{\Gamma}^{k-1} = \splines{K_n}^{k-1}$. \par 
    (2) $\Rightarrow$ (3). The ring $\splines{K_n}$ corresponds to the equivariant cohomology of the full flag variety, where the dot action is known to be trivial \cite{tymoczko2008permutation}.\par
    (3) $\Rightarrow$ (1). Assume that $\Gamma$ is \emph{not} $k$-connected. Let $d$ be the integer such that $\Gamma$ is $d$-connected but not $(d+1)$-connected (so $0 < d < k$). We will show that $\lrep{\Gamma}_d$ is not trivial. Then $\Gamma$ has a cut-set of size $d$, and so $\Gamma$ is (isomorphic to) a sub-graph of the graph $H = ([n],\{(i,j) \mid 1 \leq i<j<\ell+d\text{ or } \ell < i < j \leq n\})$ drawn below.
    \begin{center}
        \begin{tikzpicture}[scale=2.5]
            \draw (-1,0) node[draw, rectangle] (L1) {$K_{\ell}$};
            \draw (0,0) node[draw,rectangle] (C) {$K_{d}$};
            \draw (1,0) node[draw,rectangle] (L2) {$K_{n-\ell-d}$};
            \draw[double] (L1)--(C);
            \draw[double] (C)--(L2);
        \end{tikzpicture}
    \end{center}
    (the center $K_{d}$ is the cut set). The graph $H$ is also $d$-connected. By (1) $\Rightarrow$ (2), the graded pieces $\splines{\Gamma}^p = \splines{H}^p = \splines{K_n}^p$ for all $0 \leq p < d$. Since $\Gamma$ is an edge-subgraph of $H$, it follows directly from the definitions that $\splines{\Gamma} \supseteq \splines{H}$. \par 
    If $I = \llangle t_1,...,t_n \rrangle$, then for any graded $\poly$-module $M \coloneqq \oplus_{p \geq 0} M^p$ the following equality is by definition 
    \[ \left(\faktor{M}{IM}\right)^p = \faktor{M^p}{IM \cap M^p}.\]
    Since multiplication by elements in $I$ must increase degree, the $d$-th degree component of $I\splines{\Gamma}$ and the $d$-th degree component of $I\splines{H}$ are equal. In particular, \[I\splines{\Gamma} \cap \splines{\Gamma}^d = I\left(\splines{\Gamma}^{\leq d-1}\right) \cap \splines{\Gamma}^d = I\left(\splines{H}^{\leq d-1}\right) \cap \splines{\Gamma}^d = I\splines{H} \cap \splines{\Gamma}^d. \] 
    It follows that for the quotients 
    \[
    \left( \faktor{\splines{\Gamma}}{I\splines{\Gamma}}\right)^d = \faktor{\splines{\Gamma}^d}{I\splines{\Gamma}\cap \splines{\Gamma}^d} = \faktor{\splines{\Gamma}^d}{I\splines{H}\cap \splines{\Gamma}^d} = \left( \faktor{\splines{\Gamma}}{I\splines{H}}\right)^d,
    \]
    and we get containment in the vector spaces
    \[
    (\lqot{\Gamma})_d = \left( \faktor{\splines{\Gamma}}{I\splines{\Gamma}}\right)^d = \left( \faktor{\splines{\Gamma}}{I\splines{H}}\right)^d \supseteq \left( \faktor{\splines{H}}{I\splines{H}}\right)^d = (\lqot{H})_d.
    \]
    In particular, the representation with character $\lrep{H}_d$ is a sub-representation of the representation with character $\lrep{\Gamma}_d$.
    The graph $H$ is in fact a Hessenberg graph, and it is easy to compute with $P$-tableaux from \cite{SW2016chromaticquasisymmetric} that the $d$-th graded piece of $\lrep{H}$ is non-trivial, so the $d$-th graded piece of $\lrep{\Gamma}$ contains a non-trivial sub-representation and is thus nontrivial.
\end{proof}
\begin{remark}
    The graph $H$ in the proof of Theorem \ref{thm:kconn} is the Hessenberg graph associated to the vector 
    \[
    h = (\overbrace{\ell+d,\ldots,\ell+d}^{\ell \text{ times}},n,\ldots,n).
    \]
    The $3+1$-- and $2+2$--free poset $P$ on $[n]$ for which $H$ is the indifference graph has relations $\{i <_P j \mid i \in [\ell],\; j \in \{d+\ell+1,\ldots,n\}\}$.
\end{remark}
The following corollary is a consequence of Theorem \ref{thm:kconn}.
\begin{corollary}\label{cor:conn_rrep}
    If $\Gamma$ is $k$-connected, then $\rrep{\Gamma}_d$ is equal to $\rrep{K_n}_d$, which is the $d$-th degree piece of the graded regular representation.
\end{corollary}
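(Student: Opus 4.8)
The plan is to re-run the $(3)\Rightarrow(1)$ computation from the proof of Theorem~\ref{thm:kconn}, but bookkeeping the right $\poly$-action in place of the left one. Recall that $\rqot{\Gamma}=\faktor{\splines{\Gamma}}{I\splines{\Gamma}}$ with $I=\llangle\bar{x}_1,\ldots,\bar{x}_n\rrangle$, so $(\rqot{\Gamma})_d=\faktor{\splines{\Gamma}^d}{\,I\splines{\Gamma}\cap\splines{\Gamma}^d\,}$. Since each $\bar{x}_i$ is homogeneous of degree $1$ and $I\splines{\Gamma}$ is a graded submodule, its degree-$d$ component is exactly $\sum_{i=1}^n\bar{x}_i\cdot\splines{\Gamma}^{d-1}$, a subspace of $\splines{\Gamma}^d$ that depends only on the data $\splines{\Gamma}^{d-1}\subseteq\splines{\Gamma}^d$.

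Next I would feed in Theorem~\ref{thm:kconn}: $k$-connectedness of $\Gamma$ gives $\splines{\Gamma}^p=\splines{K_n}^p$ as subsets of $\prod_{w\in S_n}\poly$ for all $p<k$. Hence for every $d<k$ both $\splines{\Gamma}^d=\splines{K_n}^d$ and $\splines{\Gamma}^{d-1}=\splines{K_n}^{d-1}$, so $I\splines{\Gamma}\cap\splines{\Gamma}^d=I\splines{K_n}\cap\splines{K_n}^d$ and therefore $(\rqot{\Gamma})_d=(\rqot{K_n})_d$ as graded vector spaces. This equality is $S_n$-equivariant for the dot action: a one-line computation gives $w\cdot\bar{x}_i=\bar{x}_i$ for all $w$, so $I\splines{\Gamma}$ is dot-stable, the dot action descends to both $(\rqot{\Gamma})_d$ and $(\rqot{K_n})_d$, and it preserves the grading. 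Taking Frobenius characters then gives $\rrep{\Gamma}_d=\rrep{K_n}_d$ for all $d<k$.

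It remains to identify $\rrep{K_n}$ itself. The ring $\splines{K_n}$ is the GKM presentation of $H^*_T(\mathrm{Fl}_n)$, in which the $\bar{x}_i$ are the equivariant Chern roots; quotienting by the ideal they generate recovers the ordinary cohomology $H^*(\mathrm{Fl}_n)$, that is, the coinvariant algebra of $S_n$ with $S_n$ permuting the degree-one generators. Tymoczko's computation of the dot action \cite{tymoczko2008permutation} (equivalently, the Borel presentation) shows that this is the graded regular representation of $S_n$, which gives the stated description of $\rrep{K_n}_d$.

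The only step needing a word of care — and it is exactly the point already handled in the proof of Theorem~\ref{thm:kconn} — is that the degree-$d$ piece of the right quotient genuinely sees only $\splines{\Gamma}^{d-1}$ and $\splines{\Gamma}^d$; this is a formal consequence of the homogeneity of the generators $\bar{x}_i$ together with the fact that $I\splines{\Gamma}$ is generated in positive degree. No genuinely new obstacle appears: the corollary is essentially the observation that the left-quotient argument of Theorem~\ref{thm:kconn} transfers verbatim to the right quotient once one records that $\bar{x}_i$ is dot-invariant.
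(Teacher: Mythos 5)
Your proposal is correct and follows essentially the route the paper intends: the corollary is stated as a direct consequence of Theorem \ref{thm:kconn}, and your bookkeeping — that the degree-$d$ piece of $\llangle\bar{x}_1,\ldots,\bar{x}_n\rrangle\splines{\Gamma}$ only involves $\splines{\Gamma}^{d-1}$, that $w\cdot\bar{x}_i=\bar{x}_i$ makes the identification of $(\rqot{\Gamma})_d$ with $(\rqot{K_n})_d$ dot-equivariant, and that $\rqot{K_n}$ is the coinvariant algebra in the $\bar{t}$-variables, hence the graded regular representation — is exactly the transfer the paper leaves implicit. One small wording caveat that does not affect the argument: the quotient of $\splines{K_n}$ by the $\bar{x}_i$ is abstractly the coinvariant algebra (so isomorphic to $H^*(\mathrm{Fl}_n)$ as a graded ring), but it is not the image of the usual forgetful map $H_T^*\to H^*$, which instead kills the $\bar{t}_i$; the representation-theoretic identification you need is the Chevalley/Borel one you invoke, not triviality of the dot action on ordinary cohomology.
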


\section{Generators for Linear Splines}\label{sec:lin_gen_rels}
The remaining sections are devoted to computing the first degree piece of the graded symmetric functions $\lrep{\Gamma}$ and $\rrep{\Gamma}$ for all connected graphs $\Gamma$. We show that the first degree piece of $\splines{\Gamma}$ is computable from the data of cut vertices and cut edges, in particular the block-cut tree of $\Gamma$ (Def. \ref{def:block_cut}). \par 
This section defines a set $\cf_\Gamma$ that we will eventually show is a $\C$-spanning set for $\splines{\Gamma}^1$. Subsection \ref{ssec:lin_relations} proves several $\C$-linear relations within the set $\cf_\Gamma$, that will turn out to be sufficient for reducing to a basis.\par 
First we will introduce (in fact re-introduce) a collection of linear splines that depend on cut edges in $\Gamma$. Let $s = (i,j)$ be a cut edge of $\Gamma$, and let $G_s$ be one of the two connected components of $([n],E(\Gamma) \setminus\{s\})$. We are free to choose either component, see Remark \ref{rem:cutedge_component_start} below. For each subset $A \subset [n]$ such that $\abs{A} = \abs{G_s}$, we define $\bar{f}_A^s \colon S_n \to \poly$ by
\[\bar{f}_A^s(w) \coloneqq \begin{cases}
    t_{w(i)}-t_{w(j)} &\text{if }w^{-1}(A) = V(G_s)  \\
    0 &\text{otherwise,} 
\end{cases} \]
 for all $w \in S_n$. We associate to $\Gamma$ the collection \[\cc_\Gamma \coloneqq \left\{\bar{f}_{A}^s \mid s \text{ is a cut edge of }\Gamma,\; A \subset [n],\; \abs{A} = \abs{G_s}\right\}.\]
Note that these splines $\bar{f}_A^s$ are actually the linear coset splines from Definition \ref{def:coset_spline}. We make the change in notation for several reasons, one being that the subset $A$ uniquely determines the coset whose support is $\bar{f}_A^s$ (as opposed to many elements $w$ defining the same $\bar{f}_w^B$). Since $G_s$ is one of two connected components in the graph $([n],E(\Gamma)\setminus\{s\})$, it follows that $v \in w\llangle E(\Gamma)\setminus\{s\} \rrangle$ if and only if $w(V(G_s)) = v(V(G_s))$.  In particular, we have equality $\bar{f}_A^s = \bar{f}_w^B$ precisely when $B = E(\Gamma) \setminus \{s\}$ and $w^{-1}(A) = V(G_s)$. \par 
\begin{remark}\label{rem:cutedge_component_start}
    When defining $\bar{f}_{A}^s$, we chose $G_s$ to be one of the two connected components in the graph $([n],E(\Gamma)\setminus\{s\})$. This choice does not affect the set of splines in $\cc_\Gamma$. More precisely, if $H$ is the other connected component in $([n],E(\Gamma)\setminus\{s\})$, then $\abs{H} = n-\abs{G_s}$, and we have that
    \[\bar{f}_{A}^s(w) = \begin{cases}
    t_{w(i)}-t_{w(j)} &\text{if }w^{-1}(A) = V(G_s)  \\
    0 &\text{otherwise} 
\end{cases} = \begin{cases}
    t_{w(i)}-t_{w(j)} &\text{if }w^{-1}(A^c) = V(H)  \\
    0 &\text{otherwise.} 
\end{cases}\]
    In particular, for a fixed cut edge $s$ the set of linear coset splines $\{\bar{f}_{A}^s\}$ associated to that cut edge is unaffected by the choice of $G_s$.
\end{remark}
Now we will introduce a (truly new) collection of linear splines that depend on cut vertices $j$ and the connected components of $\Gamma - j$, as well as an integer $k$. Let $j \vdash \Gamma$ be a cut vertex, $G$ be a connected component of $\Gamma-j$, and $k \in [n]$. We define $\bar{y}_{G,k}^j \colon S_n \to \poly$ by \[ \bar{y}_{G,k}^j(w) \coloneqq \begin{cases}
            t_k - t_{w(j)} & \text{if }w^{-1}(k) \in G \\ 0 &\text{otherwise}
        \end{cases}\]  
for all $w \in S_n$. We associate to $\Gamma$ the collection 
\[ \cy_\Gamma \coloneqq \left\{\left.\bar{y}_{
G,k}^j \right| j \vdash \Gamma,\; G\text{ a connected component of }\Gamma-j,\; k \in [n]  \right\}.\]
Finally recall the splines $\ct_n \coloneqq \{\bar{t}_i \mid i \in [n]\}$ and $\cx_n \coloneqq \{\bar{x}_i \mid i \in [n]\}$ from Subsection \ref{ssec:background_splines}. Now we define
\begin{equation}\label{eqn:linear_generators}
    \cf_\Gamma \coloneqq \ct_n \cup \cx_n \cup \cc_\Gamma \cup \cy_\Gamma.
\end{equation}
We will eventually show that $\cf_\Gamma$ is a $\C$-spanning set of $\splines{\Gamma}^1$. 
\begin{example}\label{ex:linear_generating_splines}
    Let $\Gamma$ be the graph drawn below.
    \begin{center} 
    \begin{tikzpicture}
            \begin{scope}[scale=1.5]
                \draw (0,0) node[draw,circle] (V1) {1};
                \draw (0.5,-1) node[draw,circle] (V2) {2};
                \draw (1.15,-1) node[draw,circle] (V3) {3};
                \draw (1,0) node[draw,circle] (V4) {4};
                \draw (1.5,1) node[draw,circle] (V5) {5};
                \draw (1.85,-1) node[draw,circle] (V6) {6};
                \draw (2.5,-1) node[draw,circle] (V7) {7};
                \draw (2,0) node[draw,circle] (V8) {8};
                \draw (3,0) node[draw,circle] (V10) {10};
                \draw (3.25,-1) node[draw,circle] (V9) {9};
                \draw (3.5,1) node[draw,circle] (V11) {11};
                \draw (4,0) node[draw,circle] (V12) {12};

                \draw (V1)--(V4);
                \draw (V2)--(V4);
                \draw (V3)--(V4);
                \draw (V2)--(V3);
                \draw (V4)--(V5);
                \draw (V5)--(V8);
                \draw (V4)--(V8);
                \draw (V6)--(V7);
                \draw (V6)--(V8);
                \draw (V7)--(V8);
                \draw (V8)--(V10);
                \draw (V9)--(V10);
                \draw (V10)--(V11);
                \draw (V10)--(V12);
                \draw (V11)--(V12);
            \end{scope}
        \end{tikzpicture}
    \end{center}
    Since $\Gamma$ has three cut edges $(1,4)$, $(8,10)$, and $(9,10)$, we have that  
           \[ \cc_\Gamma =\left\{\bar{f}_A^{(1,4)} \mid A \subset [12],\, \abs{A} =1 \right\} \cup \left\{\bar{f}_A^{(8,10)} \mid A \subset [12],\, \abs{A} =8 \right\} \cup \left\{\bar{f}_A^{(9,10)} \mid A \subset [12],\, \abs{A} =1 \right\}\] 
        One such element $\bar{f}_{\{6\}}^{(9,10)} \in \cc_\Gamma$ takes the form 
        \[\bar{f}_{\{6\}}^{(9,10)}(w) \coloneqq \begin{cases}
    t_{w(9)}-t_{w(10)} & w^{-1}(\{6\}) = \{9\}  \\
    0 &\text{otherwise,}
\end{cases}\]
and is supported on the coset $\{w \in S_n \mid w(9) = 6\}$. \par 
    Since $\Gamma$ has three cut vertices $4$, $8$, and $10$, we have that 
    \begin{align*}
        \cy_\Gamma=\left\{\bar{y}_{G,k}^4 \left| V(G) \in \left\{ \begin{matrix}
        \{1 \}, \\ \{2,3 \}, \\ \{5,...,12 \}
    \end{matrix} \right\},\; k \in [12] \right.\right\}&\cup \left\{\bar{y}_{G,k}^8 \left| V(G) \in \left\{ \begin{matrix}
        \{1,..,,5 \}, \\ \{6,7 \}, \\ \{9,...,12 \}
    \end{matrix} \right\},\; k \in [12] \right.\right\} \\&\cup \left\{\bar{y}_{G,k}^{10} \left| V(G) \in \left\{ \begin{matrix}
        \{1,...,8 \}, \\ \{9 \}, \\ \{11,12 \}
    \end{matrix} \right\},\; k \in [12] \right.\right\}
    \end{align*}
    One such element $\bar{y}_{G,3}^8 \in \cy_\Gamma$ takes the form \[\bar{y}_{\{6,7\},3}^8(w) \coloneqq \begin{cases}
            t_3 - t_{w(8)} & w^{-1}(3) \in \{6,7\} \\ 0 &\text{otherwise}
        \end{cases},\]
    and is supported on the set $\{w \in S_n \mid w(6) = 3 \text{ or } w(7) = 3\}$.
\end{example}

Now Lemma \ref{lem:linear_generators} below shows that $\cf_\Gamma$ is in fact a subset of $\splines{\Gamma}^1$.\par 
\begin{lemma}\label{lem:linear_generators}
    Let $\Gamma$ be a graph on $[n]$. The four sets $\ct_n$, $\cx_n$, $\cc_\Gamma$, and $\cy_\Gamma$ are subsets of $\splines{\Gamma}$.
\end{lemma}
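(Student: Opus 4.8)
The plan is to verify the spline condition separately for each of the four families. For $\ct_n$ and $\cx_n$ this is essentially immediate: given an edge $(w,v)\in E(\cg_\Gamma)$ with $w^{-1}v=(a,b)\in E(\Gamma)$ (equivalently $wv^{-1}=(i,j)$ with $i=w(a),\,j=w(b)$), we have $\bar{t}_\ell(w)-\bar{t}_\ell(v)=t_\ell-t_\ell=0\in\cl(w,v)$ for every $\ell$, and $\bar{x}_\ell(w)-\bar{x}_\ell(v)=t_{w(\ell)}-t_{v(\ell)}$, which is $0$ unless $\ell\in\{a,b\}$, in which case it equals $\pm(t_{w(a)}-t_{w(b)})=\pm(t_i-t_j)\in\cl(w,v)$. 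So $\ct_n,\cx_n\subseteq\splines{\Gamma}$.

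For $\cc_\Gamma$ the cleanest route is to invoke what is already established: the splines $\bar{f}_A^s$ are exactly the linear coset splines $\bar{f}_w^B$ with $B=E(\Gamma)\setminus\{s\}$ and $w^{-1}(A)=V(G_s)$, as noted in the text immediately preceding the lemma. Strictly, the coset-spline machinery (Definition \ref{def:coset_spline} and the lemma that coset splines are splines) was set up for $\Gamma$ a tree, so to be self-contained I would instead argue directly: fix an edge $(w,v)$ with $w^{-1}v=(a,b)\in E(\Gamma)$. If $(a,b)=s$, then exactly one of $w,v$ satisfies the support condition $u^{-1}(A)=V(G_s)$ when $w^{-1}v=s$ swaps an element of $V(G_s)$ with an element of its complement --- wait, more carefully: $w^{-1}(A)=V(G_s)$ iff $v^{-1}(A)=V(G_s)$ since $v=w s$ and $s$ permutes $V(G_s)$ only if its two endpoints both lie in $V(G_s)$, which they do not because $s$ is the cut edge joining the two components; hence at most one of $\bar{f}_A^s(w),\bar{f}_A^s(v)$ is nonzero, and in that case the difference is $\pm(t_{w(i)}-t_{w(j)})$ where $(i,j)=s$, which generates $\cl(w,v)$. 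If $(a,b)\neq s$, then $(a,b)$ is an edge within one of the two components, so it preserves $V(G_s)$ setwise; therefore $w^{-1}(A)=V(G_s)\iff v^{-1}(A)=V(G_s)$, and moreover when this holds both values equal $t_{w(s_1)}-t_{w(s_2)}$ (writing $s=(s_1,s_2)$), because $w$ and $v$ agree on $\{s_1,s_2\}$ — indeed $w^{-1}$ and $v^{-1}$ agree on $s_1,s_2$ since $(a,b)$ fixes them, so the difference is $0\in\cl(w,v)$.

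For $\cy_\Gamma$ the argument is similar. Fix a cut vertex $j$, a component $G$ of $\Gamma-j$, an index $k$, and an edge $(w,v)$ with $w^{-1}v=(a,b)\in E(\Gamma)$. The support condition for $\bar{y}_{G,k}^j$ at $u$ is $u^{-1}(k)\in V(G)$. Since $(a,b)$ is an edge of $\Gamma$ and $j$ is a cut vertex, either $\{a,b\}\subseteq V(G)\cup\{j\}$ or $\{a,b\}$ lies entirely in the rest of $\Gamma$; in all cases $(a,b)$ maps $V(G)$ to a set disjoint from the other components' vertices. The key case is when $(a,b)$ involves $j$, say $b=j$, $a\in V(G)$: then one checks $w^{-1}(k)\in V(G)\setminus\{a\}$ forces $v^{-1}(k)\in V(G)\setminus\{a\}$ too and both values are $t_k-t_{w(j)}$; while $w^{-1}(k)=a$ means $v^{-1}(k)=j$ (off-support for $v$) and the difference is $t_k-t_{w(j)}$, and since $k=w(a)$, $w(j)=w(b)$, this equals $t_{w(a)}-t_{w(b)}$, a generator of $\cl(w,v)$; the symmetric sub-case $v^{-1}(k)=a$ is handled the same way. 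When $(a,b)$ involves $j$ but the other endpoint is in a different component, or when $(a,b)$ avoids $j$ entirely, the condition $u^{-1}(k)\in V(G)$ is unaffected and when it holds $w(j)=v(j)$, so the difference is $0$. The main obstacle — really just a bookkeeping nuisance rather than a conceptual one — is correctly enumerating these sub-cases for $\cy_\Gamma$ and keeping straight which endpoint of the transposition lands where relative to $G$ and $j$; once the case split is organized around "does $(a,b)$ touch $j$, and if so is its other endpoint in $G$," each sub-case collapses to one of the two outcomes above.
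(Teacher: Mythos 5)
Your overall strategy (direct edge-by-edge verification for each family; for $\ct_n,\cx_n$ your computation is fine) is viable, but several of your "both endpoints in the support" sub-cases assert that the two values agree, and that is false exactly when the transposition $(a,b)$ moves the reference vertex of the spline; in those sub-cases the difference is not $0$ but $\pm\left(t_{w(a)}-t_{w(b)}\right)$, the generator of $\cl(w,v)$. Concretely: (i) for $\bar{f}_A^s$ with $(a,b)\neq s$, the edge $(a,b)$ may share exactly one endpoint with $s=(s_1,s_2)$, say $(a,b)=(s_1,c)$ with $c\in V(G_s)$; it still stabilizes $V(G_s)$, so both $w$ and $v=w(a,b)$ can lie in the support, but then $v(s_1)=w(c)\neq w(s_1)$, so the values $t_{w(s_1)}-t_{w(s_2)}$ and $t_{v(s_1)}-t_{v(s_2)}$ differ by $t_{w(a)}-t_{w(b)}$; your justification that "$(a,b)$ fixes $s_1,s_2$" only covers the case $\{a,b\}\cap\{s_1,s_2\}=\emptyset$. (ii) For $\bar{y}_{G,k}^j$ with $b=j$, $a\in V(G)$ and $w^{-1}(k)\in V(G)\setminus\{a\}$, you claim both values are $t_k-t_{w(j)}$; in fact $v(j)=w(a)$, so the value at $v$ is $t_k-t_{w(a)}$ and the difference is $t_{w(a)}-t_{w(j)}=t_{w(a)}-t_{w(b)}$. (iii) Likewise when $b=j$ and $a$ lies in a different component of $\Gamma-j$, your claim $w(j)=v(j)$ is false for the same reason, and the difference is again the generator rather than $0$.

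None of this sinks the lemma: in every such sub-case the difference is exactly the label's generator, hence lies in $\cl(w,v)$, so the statement survives with a one-line repair — but as written those steps are incorrect. This is precisely where the paper is careful: in its Case 2 for $\bar{y}_{G,k}^j$ (both $w^{-1}(k),v^{-1}(k)\in G$) the difference is computed as $t_{v(j)}-t_{w(j)}$, which equals $\pm(t_{v(p)}-t_{v(q)})$ when $j\in\{p,q\}$ and is $0$ only otherwise. For $\cc_\Gamma$ the paper avoids the case analysis entirely by noting that a cut edge $s$ lies in every spanning tree $T$ of $\Gamma$, so $\bar{f}_A^s\in\splines{T}$ by the coset-spline lemma for trees, and then $\bar{f}_A^s\in\splines{\Gamma}$ since $\splines{\Gamma}$ is the intersection of the $\splines{T}$ by Lemma \ref{lem:unions}; you mention this route yourself, and it is cleaner than repairing the direct computation.
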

\begin{proof}
    We already know that $\bar{t}_i$ and $\bar{x}_i$ are elements of $\splines{\Gamma}$ for all $i \in [n]$, so $\ct_n$ and $\cx_n$ are subsets.\par 
    Now we show that each element of $\cc_\Gamma$ is a well-defined spline. Recall that these are coset splines, and so are well defined for trees. If $s$ is a cut edge of $\Gamma$, then every spanning tree $T$ of $\Gamma$ must have $s$ as an edge. Fix $A \subset [n]$ where $\abs{A} = \abs{G_s}$, and for all spanning trees $T$ choose $T_s$ to be the connected component where $V(T_s) = V(G_s)$. It follows that $\bar{f}_A^s \in \splines{T}$ for every spanning tree $T$, and so $\bar{f}_A^s \in \splines{\Gamma}$ by Lemma \ref{lem:unions}. \par 
    Finally we show that every element $\bar{y}_{G,k}^j \in \cy_\Gamma$ is a linear spline on $\cg_\Gamma$. We will verify this from the definition, edge by edge. Let $(w,v) \in E(\cg_\Gamma)$ where $w = v(p,q)$. We prove that $\bar{y}_{G,k}^j(w)-\bar{y}_{G,k}^j(v) \in \cl(w,v) = \llangle t_{v(p)} - t_{v(q)}\rrangle$ in three cases, depending on the values of $w^{-1}(k)$ and $v^{-1}(k)$. \par 
    Case 1: $w^{-1}(k),v^{-1}(k) \notin G$. Then by definition both $\bar{y}_{G,k}^j(w) = 0$ and $\bar{y}_{G,k}^j(v) = 0$, so the difference is clearly in $\cl(w,v)$. \par 
    Case 2: $w^{-1}(k),v^{-1}(k) \in G$. So $\bar{y}_{G,k}^j$ contains both $w$ and $v$ in its support. We compute from the definition that 
    \begin{align*}
        \bar{y}_{G,k}^j(w)-\bar{y}_{G,k}^j(v) &= t_k-t_{w(j)}-t_k+t_{v(j)} \\
        &= t_{v(j)} - t_{w(j)}  = \begin{cases}
            \pm(t_{v(p)} - t_{v(q)}) & j\in \{p,q\} \\            
            0 & j \notin \{p,q\}.
        \end{cases}
    \end{align*}
    In either case this difference is in the ideal $\cl(w,v)$. \par 
    Case 3: $w^{-1}(k) \in G$, $v^{-1}(k) \notin G$. In particular, $w$ is in the support of $\bar{y}_{G,k}^j$ whereas $v$ is not. Since $w^{-1}(k) = (p,q)v^{-1}(k)$, we know that one of either $p$ or $q$ is in $G$ and the other is not. Without loss of generality, say $p \in G$ and $q \notin G$. In particular, $w^{-1}(k) = p$ and $v^{-1}(k) = q$. Since $(p,q) \in E(\Gamma)$ and the only element in $[n] \setminus V(G)$ that elements of $G$ are connected to is the vertex $j$, it follows that $v^{-1}(k) = q = j$. So $v(q) = k$ and $w(j) = v(p,q)(j) = v(p)$. Compute that 
    \[
    \bar{y}_{G,k}^j(w)-\bar{y}_{G,k}^j(v) = t_k-t_{w(j)} = t_{v(q)} - t_{v(p,q)(j)} = t_{v(q)} - t_{v(p)} \in \cl(w,v).
    \]
    Thus $\bar{y}_{G,k}^j$ is an element of $\splines{\Gamma}$.
\end{proof}
The splines in $\cf_\Gamma$ are defined from graph properties that are intrinsic to the isomorphism class of $\Gamma$. Lemma \ref{lem:linear_isoms} below makes this precise.
\begin{lemma}\label{lem:linear_isoms}
    Let $\omega \colon \Gamma \to \Gamma'$ be a graph isomorphism and $\Omega$ be as in Subsection \ref{ssec:isoms}. Then $\cf_{\Gamma'} = \Omega\left(\cf_{\Gamma}\right)$.
\end{lemma}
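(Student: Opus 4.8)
The plan is to show that the isomorphism $\Omega$ carries each of the four defining families of $\cf_\Gamma$ bijectively onto the corresponding family of $\cf_{\Gamma'}$, and then take unions. Recall from Lemma~\ref{lem:mod_isom} that $\Omega(\bar t_i) = \bar t_{\omega(i)}$ and $\Omega(\bar x_i) = \bar x_{\omega(i)}$; since $\omega$ is a bijection of $[n]$, this immediately gives $\Omega(\ct_n) = \ct_n = \ct_n'$ (as a set it is just $\{\bar t_1,\ldots,\bar t_n\}$) and likewise $\Omega(\cx_n) = \cx_n$. So the content is entirely in the families $\cc_\Gamma$ and $\cy_\Gamma$, which are built from cut edges and cut vertices, respectively.

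For $\cc_\Gamma$: the heart is the observation that $\omega$ sends cut edges of $\Gamma$ to cut edges of $\Gamma'$ (a graph isomorphism preserves connectivity of edge-deleted subgraphs), and that if $s = (i,j)$ is a cut edge with $G_s$ one of the two components of $([n], E(\Gamma)\setminus\{s\})$, then $\omega(s) = (\omega(i),\omega(j))$ is a cut edge of $\Gamma'$ with $\omega(V(G_s))$ as one of the two components of $([n], E(\Gamma')\setminus\{\omega(s)\})$. I would then compute directly from the definition of $\Omega$ that $\Omega(\bar f_A^s) = \bar f_{\omega(A)}^{\omega(s)}$: evaluating at $v \in S_n$, $\Omega(\bar f_A^s)(v) = \omega\,\bar f_A^s(\omega^{-1}v\omega)$, which is $\omega(t_{(\omega^{-1}v\omega)(i)} - t_{(\omega^{-1}v\omega)(j)}) = t_{v(\omega(i))} - t_{v(\omega(j))}$ exactly when $(\omega^{-1}v\omega)^{-1}(A) = V(G_s)$, i.e.\ when $v^{-1}(\omega(A)) = \omega(V(G_s)) = V(\omega(G_s))$, and $0$ otherwise — precisely $\bar f_{\omega(A)}^{\omega(s)}(v)$. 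Since $A \mapsto \omega(A)$ is a bijection between the index sets $\{A : |A| = |G_s|\}$ and $\{A' : |A'| = |\omega(G_s)|\}$ (note $|G_s| = |\omega(G_s)|$), this establishes $\Omega(\cc_\Gamma) = \cc_{\Gamma'}$.

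For $\cy_\Gamma$: analogously, $\omega$ carries cut vertices of $\Gamma$ to cut vertices of $\Gamma'$ and connected components $G$ of $\Gamma - j$ to connected components $\omega(G)$ of $\Gamma' - \omega(j)$. A direct computation with the definition of $\bar y_{G,k}^j$ and $\Omega$ — $\Omega(\bar y_{G,k}^j)(v) = \omega\,\bar y_{G,k}^j(\omega^{-1}v\omega) = \omega(t_k - t_{(\omega^{-1}v\omega)(j)}) = t_{\omega(k)} - t_{v(\omega(j))}$ when $(\omega^{-1}v\omega)^{-1}(k) \in G$, i.e.\ when $v^{-1}(\omega(k)) \in \omega(G)$ — shows $\Omega(\bar y_{G,k}^j) = \bar y_{\omega(G), \omega(k)}^{\omega(j)}$, and running over all triples $(j, G, k)$ (with $k \mapsto \omega(k)$ a bijection of $[n]$) gives $\Omega(\cy_\Gamma) = \cy_{\Gamma'}$. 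Then $\Omega(\cf_\Gamma) = \Omega(\ct_n) \cup \Omega(\cx_n) \cup \Omega(\cc_\Gamma) \cup \Omega(\cy_\Gamma) = \ct_n \cup \cx_n \cup \cc_{\Gamma'} \cup \cy_{\Gamma'} = \cf_{\Gamma'}$, using that $\Omega$ is a bijection (Proposition~\ref{prop:isom_rings}) so it commutes with unions.

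The only mild subtlety — not really an obstacle — is bookkeeping the conjugation: $\Omega$ is defined by $v \mapsto \omega^{-1}v\omega$ inside the argument, so one must be careful that $(\omega^{-1}v\omega)^{-1}(S) = V(G)$ translates correctly to a condition on $v^{-1}$ applied to $\omega$-images; this is where one uses $(\omega^{-1}v\omega)^{-1} = \omega^{-1}v^{-1}\omega$, hence $(\omega^{-1}v\omega)^{-1}(S) = V(G) \iff v^{-1}\omega(V(G)) = \omega(\text{target})$, and one checks $\omega$ applied to the ambient "location" set $V(G_s)$ or $G$ lands on the component of the deleted graph $\Gamma'$, which is exactly the claim that $\omega$ restricts to an isomorphism $\Gamma - j \to \Gamma' - \omega(j)$ and to $([n], E(\Gamma)\setminus\{s\}) \to ([n], E(\Gamma')\setminus\{\omega(s)\})$. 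Everything else is routine substitution.
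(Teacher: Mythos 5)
Your proof is correct and follows essentially the same route as the paper's: the paper likewise checks $\Omega(\ct_n)=\ct_n$ and $\Omega(\cx_n)=\cx_n$, computes the image of each coset spline $\bar{f}_A^s$ and each $\bar{y}_{G,k}^j$ under $\Omega$, and matches the indexing data (cut edges, components, subsets) via the graph isomorphism. Your explicit computation $\Omega(\bar{f}_A^s)=\bar{f}_{\omega(A)}^{(\omega(i),\omega(j))}$, with component $\omega(G_s)$, is in fact a bit more careful than the paper's terse statement (which records the new index set as $\omega^{-1}(A)$), but this does not change the conclusion since $A\mapsto\omega(A)$ is a bijection on subsets of the relevant size.
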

\begin{proof}
    It follows directly from the definitions that $\Omega(\cx_n) = \cx_n$ and and $\Omega(\ct_n) = \ct_n$. \par 
    The image of the coset spline $\bar{f}_A^{(i,j)} \in \splines{\Gamma}$ can be computed to be the coset spline $\bar{f}_{w^{-1}(A)}^{(w(i),w(j))} \in \splines{\Gamma'}$, where we consistently choose the connected component $\omega(G_s)$. From this, it is straightforward from the definitions to verify that $\cc_{\Gamma'} = \Omega(\cc_\Gamma)$ \par 
    Similarly, it is easy to verify that $\bar{y}_{G,k}^j \mapsto \bar{y}_{w(G),w(k)}^{w(j)}$, and so $\cy_{\Gamma'} = \Omega(\cy_\Gamma)$.
\end{proof}
By Lemma \ref{lem:linear_isoms} it suffices to prove that $\cf_\Gamma$ spans $\splines{\Gamma}^1$ for any particular graph in isomorphism class of $\Gamma$. 
\subsection{Some Relations}\label{ssec:lin_relations}
This set $\cf_\Gamma$ is not a $\C$-basis of $\splines{\Gamma}^1$. Indeed, the following Lemmas \ref{lem:linear_relations}, \ref{lem:strong_y_spline}, and \ref{lem:hard_relation} give relations between elements of $\cf_\Gamma$. \par 
The first set of relations in the Lemma \ref{lem:linear_relations} are relatively straightforward.
\begin{lemma}\label{lem:linear_relations}
    For $\bar{t}_i \in \ct_n$, $\bar{x}_i\in \cx_n$, $\bar{f}_A^s\in \cf_\Gamma$, and $\bar{y}_{G,k}^j \in \cy_\Gamma$, the following relations hold:
    \begin{enumerate}
        \item[(1)] ${\ds \sum_{r=1}^n \bar{x}_r = \sum_{r=1}^n \bar{t}_r }$,
        \item[(2)] if $(i,j)$ is a cut edge and $G_{(i,j)}$ is the component containing the vertex $i$, then \\${\ds \sum_{A} \bar{f}_A^{(i,j)} = \bar{x}_i - \bar{x}_j}$, where the sum is over all $A \subset [n]$ such that $\abs{A} = \abs{G_{(i,j)}}$, 
        \item[(3)] if $j \vdash \Gamma$, then ${\ds \sum_{k=1}^n \bar{y}_{G,k}^j = \left(\sum_{r\in G} \bar{x}_r\right) - \abs{G}\bar{x}_j}$ for any connected component $G$ of $\Gamma - j$, and
        \item[(4)] if $j \vdash \Gamma$ and $k \in [n]$ is fixed, ${\ds \sum_{G} \bar{y}_{G,k}^i = \bar{t}_k - \bar{x}_j}$, where the sum is over all connected components $G$ of $\Gamma - j$.
    \end{enumerate}
\end{lemma}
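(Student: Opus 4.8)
The plan is to prove all four identities by evaluation at a point: two elements of $\prod_{w \in S_n}\poly$ coincide precisely when they agree at every $w \in S_n$, so for each relation I would fix an arbitrary $w \in S_n$ and compute both sides at $w$. Lemma \ref{lem:linear_generators} already places every spline in sight inside $\splines{\Gamma}$, so no well-definedness check is required; only the equalities need verifying.

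For (1), evaluating the left side at $w$ gives $\sum_{r=1}^n t_{w(r)} = \sum_{r=1}^n t_r$ because $w$ is a bijection of $[n]$, which is the right side at $w$. For (2), the key point is that for fixed $w$ there is a unique $A \subseteq [n]$ with $w^{-1}(A) = V(G_{(i,j)})$, namely $A = w(V(G_{(i,j)}))$, and it automatically has cardinality $|G_{(i,j)}|$; hence exactly one term $\bar{f}_A^{(i,j)}(w)$ survives and equals $t_{w(i)} - t_{w(j)} = \bar{x}_i(w) - \bar{x}_j(w)$. For (3), the summand $\bar{y}_{G,k}^j(w)$ is nonzero exactly when $k \in w(G)$; summing $t_k - t_{w(j)}$ over these $|G|$ indices and reindexing by $r = w^{-1}(k) \in G$ gives $\sum_{r \in G} t_{w(r)} - |G|\,t_{w(j)}$, which matches $\big(\sum_{r \in G}\bar{x}_r\big)(w) - |G|\,\bar{x}_j(w)$. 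For (4), set $r = w^{-1}(k)$: if $r = j$ both sides vanish (then $r$ lies in no component of $\Gamma - j$, and $\bar{t}_k(w) - \bar{x}_j(w) = t_k - t_k = 0$), while if $r \ne j$ then $r$ lies in a unique component $G$, so exactly one summand survives and equals $t_k - t_{w(j)} = \bar{t}_k(w) - \bar{x}_j(w)$.

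Since every step is an elementary counting argument about the bijection $w$, I do not expect a genuine obstacle. The only points needing a moment of care are confirming in (2) that the unique qualifying $A$ has the right size (immediate, as $w$ preserves cardinality) and isolating in (4) the degenerate case $w^{-1}(k) = j$ so that the simultaneous vanishing of both sides is made explicit.
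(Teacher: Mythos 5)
Your proposal is correct and follows essentially the same route as the paper: pointwise evaluation at an arbitrary $w \in S_n$, with (1)--(2) reduced to the bijectivity of $w$ and the disjointness of the supports of the $\bar{f}_A^{(i,j)}$, and (3)--(4) handled by the same reindexing $r = w^{-1}(k)$ and the unique-component (or $w^{-1}(k)=j$) case analysis used in the paper. Your write-up is in fact slightly more explicit than the paper's for (1) and (2), which the paper dismisses as easy.
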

\begin{proof}
    Relation $(1)$ is easy, as is relation $(2)$ once it is noted that the support of each $\bar{f}_A^{(i,j)}$ for a fixed $(i,j)$ is disjoint. Fix $w \in S_n$.\par 
    For relation (3),  compute that
    \begin{align*}
        \sum_{k=1}^n \bar{y}_{G,k}^j(w) &= \sum_{\substack{k \in [n]\\w^{-1}(k) \in G}} t_k - t_{w(j)} = \left(\sum_{\substack{k\in [n]\\w^{-1}(k) \in G}} t_k\right) - \abs{G}t_{w(j)} \\ 
        &= \left(\sum_{r \in G} t_{w(r)}\right) - \abs{G}t_{w(j)} = \left(\left(\sum_{r\in G} \bar{x}_r\right) - \abs{G}\bar{x}_j\right)(w).
    \end{align*}
    For relation (4), note that either $w^{-1}(k) = j$ or $w^{-1}(k) \in G$ for one and only one connected component $G$ of $\Gamma - j$. As such, if $w^{-1}(k) = j$, in which case $w$ is not in the support of any of the $\bar{y}_{G,k}^j$ and likewise $\bar{t}_k(w)-\bar{x}_j(w) = 0$. Otherwise, $w^{-1}(k) \in G$ for some particular connected component $G$ and $\bar{y}_{G,k}^j(w) = t_k-t_{w(j)}$. This is precisely $\bar{t}_k(w) - \bar{x}_j(w)$, and we have (4).
\end{proof}
 Lemma \ref{lem:strong_y_spline} below shows that if $j$ is a cut vertex and a connected component $G$ of $\Gamma - j$ is connected to $j$ by a cut edge $(i,j)$, then the spline $\bar{y}_{G,k}^j$ can be written as a sum of other splines from $\cf_\Gamma$. In particular, it will allow us to remove from $\cf_\Gamma$ the splines in $\cy_\Gamma$ that correspond to components connected by cut edges.
\begin{lemma}\label{lem:strong_y_spline}
    Let $\Gamma$ be a graph, $j$ a cut vertex and $(i,j)$ a cut edge, choosing $G_{(i,j)}$ to be the component containing the vertex $i$. Let $C$ be connected component of the forest with vertex set $V(G_{(i,j)}) \cup \{j\}$ and edge set $\{s \mid s\text{ is a cut edge of }G_{(i,j)}\} \cup \{(i,j)\}$ that contains the vertex $j$. Then for all $k \in [n]$, 
    \[
    \sum_{\substack{v \in C-j \\v \vdash \Gamma}} \sum_{\substack{ G\\ G \cap C = \emptyset}} \bar{y}_{G,k}^v + \sum_{a \in E(C)} \sum_{\substack{A \ni k\\ \abs{A} = \abs{G_a}}} \bar{f}_A^{a} = \bar{y}_{G_{(i,j)},k}^j,
    \]
    where $G$ in the first double-sum is a connected component of $\Gamma - v$, where $G_a$ is the connected component that is a subset of $G_{(i,j)}$, and where $A$ in the second double-sum is a subset of $[n]$.
\end{lemma}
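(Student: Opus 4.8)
The plan is to verify the asserted identity pointwise: since two elements of $\prod_{w \in S_n} \poly$ coincide if and only if they agree at every vertex, fix $w \in S_n$, put $u \coloneqq w^{-1}(k)$, and compare the two sides evaluated at $w$. The basic structural input is that $(i,j)$ and every cut edge of $G_{(i,j)}$ are bridges of $\Gamma$, and that deleting the $j$-side endpoint of such a bridge completely severs the far side; concretely, $[n]\setminus V(G_{(i,j)})$ is connected in $\Gamma$, contains $j$, and remains connected in $\Gamma - v$ for any $v \in V(G_{(i,j)})$. With this, the case $u \notin V(G_{(i,j)})$ is immediate: the right-hand side vanishes by definition of $\bar y_{G_{(i,j)},k}^j$; in the second sum every $G_a$ lies inside $V(G_{(i,j)})$, so $u\notin V(G_a)$ and each $\bar f_A^a(w)=0$; and in the first sum, for $v\in V(C)\setminus\{j\}\subseteq V(G_{(i,j)})$ and a component $G$ of $\Gamma-v$ with $G\cap C=\emptyset$, the structural fact forces $V(G)\subseteq V(G_{(i,j)})$, so again $u\notin V(G)$ and $\bar y_{G,k}^v(w)=0$.

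The substance is the case $u\in V(G_{(i,j)})$, where the right-hand side at $w$ equals $t_k-t_{w(j)}=t_{w(u)}-t_{w(j)}$. Here I would first simplify the left-hand side: for a fixed cut edge $a\in E(C)$ the inner sum $\sum_{A\ni k}\bar f_A^a$ contributes the single term $A=w(V(G_a))$, which is present exactly when $u\in V(G_a)$ and equals then the edge-difference of $a$; for a fixed cut vertex $v$ the component of $\Gamma-v$ through $u$ is unique, so $\sum_G\bar y_{G,k}^v(w)$ is $t_{w(u)}-t_{w(v)}$ when that component is disjoint from $C$, and $0$ otherwise. Two combinatorial claims then close the argument. \textbf{(a)} There is \emph{at most one} $v\in V(C)\setminus\{j\}$ whose $u$-component in $\Gamma-v$ misses $C$: two such vertices $v_1,v_2$ would each separate $u$ from the other (since each lies in $V(C)$), and truncating a simple $u$--$v_1$ path at its occurrence of $v_2$ produces a $u$--$v_2$ path avoiding $v_1$, contradicting that $v_1$ separates $u$ from $v_2$. \textbf{(b)} When $u\notin V(C)$ such a vertex $v^*$ \emph{does} exist --- take the element closest to $u$ in the chain of cut vertices of $\Gamma$ lying in $V(C)$ and separating $u$ from $j$, a chain that is nonempty because every $u$--$j$ path uses the bridge $(i,j)$, so $i$ is such a vertex --- and moreover $\{a\in E(C):u\in V(G_a)\}$ is exactly the edge set of the path from $j$ to $v^*$ in the tree $C$; when $u\in V(C)$ the same holds with $v^*=u$ and with no surviving $\bar y$-term. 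The point behind (b) is that if the $u$-component of $\Gamma-v^*$ did meet $V(C)$, the bridge $a$ of $C$ leading that way would have both of its endpoints on every $u$--$j$ path, exhibiting a separator strictly closer to $u$ than $v^*$.

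Granting (a) and (b), the left-hand side at $w$ telescopes: the unique $\bar y$-term contributes $t_{w(u)}-t_{w(v^*)}$ (or nothing when $v^*=u$), and the $\bar f$-terms along the $j$--$v^*$ path of $C$, oriented toward $j$, sum to $t_{w(v^*)}-t_{w(j)}$, for a total of $t_{w(u)}-t_{w(j)}$ as required; the orientation and sign bookkeeping for the coset splines $\bar f_A^a$ is routine. The main obstacle is precisely the combinatorics of (a) and (b): one must simultaneously track the bridge structure that builds $C$ and the cut-vertex structure governing $\cy_\Gamma$, and show that the surviving terms assemble into a single path from $u$ to $j$. If the direct bookkeeping proves unwieldy, a cleaner packaging is induction on $\abs{E(C)}$: the base case $E(C)=\{(i,j)\}$ is exactly the computation above with $v=i$, and when $i$ is incident to a cut edge $a_1=(i,i')$ of $G_{(i,j)}$ one applies the lemma to the smaller component $G_{a_1}$ with cut vertex $i$ and cut edge $a_1$, reassembling with relation (2) of Lemma \ref{lem:linear_relations}.
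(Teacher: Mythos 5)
Your proposal is correct and shares the paper's core mechanism: evaluate both sides at each $w\in S_n$, set $u=w^{-1}(k)$, dispose of the case $u\notin V(G_{(i,j)})$, and otherwise show that exactly one $\bar{y}$-term plus the $\bar{f}$-terms along a path in the tree $C$ survive at $w$ and telescope to $t_k-t_{w(j)}$. The difference lies in how the surviving terms are located. The paper fixes a simple path $P$ from $u$ to $j$ and takes its first vertex $p_m$ in $C$; the prefix of $P$ then simultaneously witnesses that $p_m$ is the unique contributing cut vertex (for any other $v\in C$ the prefix connects $u$ to $p_m$ inside $\Gamma-v$), and that the surviving cut edges are exactly those of the tree path $P^C$ from $p_m$ to $j$ — so your claims (a) and (b) come essentially for free from one chosen path. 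You instead define $v^*$ extremally as the separator in $V(C)$ closest to $u$ and must establish (a) and (b) as standalone facts. Your argument for (a) is correct. Claim (b) is the thin spot: the one-line reason ("the bridge $a=(y,v^*)$ of $C$ leading toward the $u$-component $H$ would lie on every $u$--$j$ path") presupposes that $u$ sits on the $y$-side of that bridge, and you still need the short observation that $y\in H$ forces this, since a $u$--$y$ path avoiding $v^*$ cannot use the bridge, whose only crossing passes through $v^*$. Likewise the identification of $\{a\in E(C):u\in V(G_a)\}$ with the edge set of the $j$--$v^*$ tree path is asserted rather than argued (one direction uses that $H\cap V(C)=\emptyset$ lets you join $u$ to $v^*$ avoiding every edge of $C$, so $u$ and $v^*$ lie on the same side of each bridge of $C$). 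Both points are quick to fill, and your fallback induction on $\abs{E(C)}$ would also work; the paper's chosen-path device is simply the cheaper packaging of the same combinatorics, while your extremal formulation isolates the uniqueness statement more explicitly.
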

Since Lemma \ref{lem:strong_y_spline} is rather technical, we will walk through an example before seeing the full proof. Let $\Gamma$ be the graph on $13$ vertices below, 
\begin{center} 
    \begin{tikzpicture}
            \begin{scope}[scale=1.5]
                \draw (0,0) node[draw,circle] (V1) {1};
                \draw (0.5,-1) node[draw,circle] (V2) {2};
                \draw (1.15,-1) node[draw,circle] (V3) {3};
                \draw (1,0) node[draw,circle] (V4) {4};
                \draw (1.5,1) node[draw,circle] (V5) {5};
                \draw (1.85,-1) node[draw,circle] (V6) {6};
                \draw (2.5,-1) node[draw,circle] (V7) {7};
                \draw (2,0) node[draw,circle] (V8) {8};
                \draw (3,0) node[draw,circle] (V10) {11};
                \draw (3.25,-1) node[draw,circle] (V9) {9};
                \draw (3.5,1) node[draw,circle] (V11) {10};
                \draw (4,0) node[draw,circle] (V12) {12};
                \draw (5,0) node[draw,circle] (V13) {13};

                \draw (V1)--(V4);
                \draw (V2)--(V4);
                \draw (V3)--(V4);
                \draw (V2)--(V3);
                \draw (V4)--(V5);
                \draw (V5)--(V8);
                \draw (V4)--(V8);
                \draw (V6)--(V7);
                \draw (V6)--(V8);
                \draw (V7)--(V8);
                \draw (V8)--(V10);
                \draw (V9)--(V10);
                \draw (V10)--(V11);
                \draw (V10)--(V12);
                \draw (V12)--(V13);
            \end{scope}
        \end{tikzpicture}
    \end{center}
    where $(11,12)$ is cut edge, and $G_{(11,12)}$ is the component that contains $11$. The the forest of cut edges with vertex set $V(G_{(11,12)}) \cup \{12\}$ has edge set $\{(1,4),(8,11),(9,11),(10,11),(12,11)\}$. If we mark in $\Gamma$ the component of this forest that contains the vertex $11$ with double lines, we get the following graph.
    \begin{center} 
    \begin{tikzpicture}
            \begin{scope}[scale=1.5]
                \draw (0,0) node[draw,circle] (V1) {1};
                \draw (0.5,-1) node[draw,circle] (V2) {2};
                \draw (1.15,-1) node[draw,circle] (V3) {3};
                \draw (1,0) node[draw,circle] (V4) {4};
                \draw (1.5,1) node[draw,circle] (V5) {5};
                \draw (1.85,-1) node[draw,circle] (V6) {6};
                \draw (2.5,-1) node[draw,circle] (V7) {7};
                \draw (2,0) node[draw,circle] (V8) {8};
                \draw (3,0) node[draw,circle] (V10) {11};
                \draw (3.25,-1) node[draw,circle] (V9) {9};
                \draw (3.5,1) node[draw,circle] (V11) {10};
                \draw (4,0) node[draw,circle] (V12) {12};
                \draw (5,0) node[draw,circle] (V13) {13};

                \draw (V1)--(V4);
                \draw (V2)--(V4);
                \draw (V3)--(V4);
                \draw (V2)--(V3);
                \draw (V4)--(V5);
                \draw (V5)--(V8);
                \draw (V4)--(V8);
                \draw (V6)--(V7);
                \draw (V6)--(V8);
                \draw (V7)--(V8);
                \draw[double distance=2pt] (V8)--(V10);
                \draw[double distance=2pt] (V9)--(V10);
                \draw[double distance=2pt] (V10)--(V11);
                \draw[double distance=2pt] (V10)--(V12);
                \draw (V12)--(V13);
            \end{scope}
        \end{tikzpicture}
    \end{center}
    In essence, Lemma \ref{lem:strong_y_spline} says that for all $k \in [13]$, the spline $\bar{y}_{G_{(11,12)},k}^{12}$ can be written as a sum of some splines in $\cc_\Gamma$ associated to the cut edges $(8,11)$, $(9,11)$, and $(10,11)$ (since they are a part of that marked tree), and some splines in $\cy_\Gamma$ associated to cut vertex $8$, since $\Gamma - 8$ has components that ``hang off of" that marked tree.\par 
    For each of the cut edges $a \in \{(8,11),(9,11),(10,11)\}$, we must choose $G_a$ to be the component contained within $G_{(11,12)}$, so $V(G_{(9,11)}) = \{1,...,11\}$, $V(G_{(9,11)}) = \{9\}$, and $V(G_{(10,11)}) = \{10\}$.\par 
    More formally, Lemma \ref{lem:strong_y_spline} states that for all $k \in [13]$, the following equality holds (we will denote the specific subgraph by their vertex set).
    \[\bar{y}_{[5],k}^8 + \bar{y}_{\{6,7\},k}^8 + \sum_{\substack{\abs{A} = 8 \\ k \in A}} \bar{f}_A^{(8,11)}+
    \sum_{\substack{\abs{A} = 1 \\ k \in A}} \bar{f}_A^{(9,11)}+
    \sum_{\substack{\abs{A} = 1 \\ k \in A}} \bar{f}_A^{(10,11)}+
    \sum_{\substack{\abs{A} = 11 \\ k \in A}} \bar{f}_A^{(11,12)} = \bar{y}_{[11],k}^{12}.\]
    The proof proceeds in cases by the value of $w^{-1}(k)$. For example, say we wished to evaluate both sides of the above expression at a $w \in S_{13}$ such that $w^{-1}(k) = 3$. This makes it easier to determine which splines in the sum above on the left are supported at $w$. In particular,
    \begin{enumerate}[itemsep=4pt]
        \item ${\ds \bar{y}_{[5],k}^8(w) = t_k - t_{w(8)}}$ since $3 \in [5]$,
        \item ${\ds \bar{y}_{\{6,7\},k}^8(w) = 0}$ since $3 \notin \{6,7\}$,
        \item ${\ds \sum_{\substack{\abs{A} = 8 \\ k \in A}} \bar{f}_A^{(8,11)}(w) = t_{w(8)} - t_{w(11)}}$ since the set $A = w(V(G_{(8,11)})$ contains $w(3) = k$,
        \item ${\ds \sum_{\substack{\abs{A} = 1 \\ k \in A}} \bar{f}_A^{(9,11)}(w) = 0}$ and ${\ds \sum_{\substack{\abs{A} = 1 \\ k \in A}} \bar{f}_A^{(10,11)}(w) = 0}$ since $k \notin w(\{9\})$ and $k\notin w(\{10\})$, and finally 
        \item ${\ds \sum_{\substack{\abs{A} = 11 \\ k \in A}} \bar{f}_A^{(11,12)}(w) = t_{w(11)} - t_{w(12)}}$ since the set $A = w(V(G_{(11,12)})$ contains $w(3) = k$.
    \end{enumerate}
    If we add these all up, the sum telescopes and the evaluation is 
    \[(t_k - t_{w(8)}) + (t_{w(8)} - t_{w(11))} + (t_{w(11)} - t_{w(12)}) = t_k - t_{w(12)},\]
    which is precisely ${\ds \bar{y}_{[11]}^{12}(w)}$. The essence of the proof, which we will now provide, is that the splines in the sum with support at a particular $w \in S_n$ can be determined from any simple path from $w^{-1}(k)$ to $j$, and that the sum always telescopes as it did in the example above.
\begin{proof}[Proof of Lemma \ref{lem:strong_y_spline}]
    Let $w \in S_n$. We will show that both sides of the claimed equality are equal when evaluated at $w$. Let $k \in [n]$, and we will proceed in cases based off of the value $w^{-1}(k)$. \par 
    First, say $w^{-1}(k) \notin V(G_{(i,j)})$. All paths that begin in $G_{(i,j)}$ and leave must contain the edge $(i,j)$ and thus visit the vertex $j$. In particular, for any $v \in C$ where $c \vdash \Gamma$, the connected component of $\Gamma - v$ that contains $w^{-1}(k)$ also contains $j \in C$. So $w^{-1}(k) \notin G$ for any $G$ in the first double-sum, and so 
    \[\sum_{\substack{v \in C-j \\v \vdash \Gamma}} \sum_{\substack{ G\\ G \cap C = \emptyset}} \bar{y}_{G,k}^v(w) = 0\] 
    If $a \in E(C)$ then $G_a$ was chosen to be contained within $G_{(i,j)}$, so $w^{-1}(k) \notin G_a$ as well. In particular, if $k \in A$ then $w^{-1}(A) \neq G_a$, and so 
    \[\sum_{a \in E(C)} \sum_{\substack{A \ni k\\\abs{A} = \abs{G_a}}} \bar{f}_A^{a}(w) = 0.\]
    It is direct from the definition that $\bar{y}_{G_{(i,j)},k}^v(w) = 0$, and so the claim holds when $w^{-1}(k) \notin G_{(i,j)}$. \par 
    Now assume that  $w^{-1}(k) \in G_{(i,j)}$.  
    Let $P = \left(p_0,\ldots,p_\ell, p_{\ell+1} \right)$ be a simple path from $p_0 \coloneqq w^{-1}(k)$ to $p_{\ell+1} \coloneqq j$. Note that $p_\ell$ must be the vertex $i$. This path $P$ may start outside of $C$, but must eventually enter the tree $C$. Say that $m \in \{0,...,\ell\}$ is the lowest index such that $p_m \in C$. Since $i \in C$ and $p_\ell = i$, this integer $m$ does exist. Simple paths from vertex to vertex within trees are unique, so there is a unique simple path from $p_m$ to $j$ in $C$. This path is $P^C \coloneqq (p_m,...,p_{\ell+1})$. \par 
    First we will determine value of $\bar{f}_A^{a}(w)$ for $a \in E(C)$. Say the edge $a \in C$ is not an edge in the path $P^C$. Then the vertices $w^{-1}(k)$ and $j$ are in the same connected component of the graph $([n],E(\Gamma)-a)$. In particular $w^{-1}(k) \notin G_a$, so if $k \in A$ then $w^{-1}(A) \neq V(G_a)$. It follows that for all $A$ such that $k \in A$, if $a \notin P^C$ then  $\bar{f}_A^{a}(w) = 0$. \par
    On the other hand, if $a \in P^C$ then we may let $A \coloneqq w(V(G_a))$, and then $k \in A$. So for each $a \in P^C$ there exists a single spline $\bar{f}_A^{a}$ in the sum that is supported at $w$. In particular, if $a = (p,q)$ then $\bar{f}_A^{q}(w) = t_{w(p)} - t_{w(q)}$.\par 
    At this point, we have that 
    \[
    \sum_{\substack{v \in C-j \\v \vdash \Gamma}} \sum_{\substack{ G\\ G \cap C = \emptyset}} \bar{y}_{G,k}^v(w) + \sum_{a \in E(C)} \sum_{\substack{A \ni k\\\abs{A} = \abs{G_a}}} \bar{f}_A^{a}  = \sum_{\substack{v \in C-j \\v \vdash \Gamma}} \sum_{\substack{ G\\ G \cap C = \emptyset}} \bar{y}_{G,k}^v(w) + \sum_{(p,q) \in P^C} t_{w(p)} - t_{w(q)}
    \]
    Now we will have two cases, if $w^{-1}(k) \in C$ and if $w^{-1}(k) \notin C$. \par 
    Case 1: $w^{-1}(k) \in C$. So $m = 0$. Then for all $v \in G_{(i,j)}$, if a connected component $G$ of $\Gamma - v$ contains $w^{-1}(k)$ then so does $G \cap C$. In particular, \[\sum_{\substack{v \in C-j \\v \vdash \Gamma}} \sum_{\substack{ G\\ G \cap C = \emptyset}} \bar{y}_{G,k}^v(w) = 0.\] 
Now we compute  
    \begin{align*}
         \sum_{(r,v) \in P^C} t_{w(v)} - t_{w(r)}   &=\sum_{i=0}^\ell t_{w(p_i)} - t_{w(p_{i+1})} \\
        &= t_{w(p_0)} - t_{w(p_{\ell+1})}\\
        &= t_k - t_{w(j)}.
    \end{align*}
    This is precisely $\bar{y}_i^j(w)$, and so the equality holds if $w^{-1}(k) \in C$.\par 
    Case 2: $w^{-1}(k) \notin C$. Then $m \neq 0$, and consider the vertex $p_{m-1}$. Since $p_m \in C$ and $\Gamma - p_m$ separates $w^{-1}(k)$ from $j$, the vertex $p_m$ is a cut vertex of $\Gamma$. If $v' \in C$ is any vertex other than $p_m$, then $p_m$ and $w^{-1}(k)$ are in the same connected component of $\Gamma - v'$ (connected via the path $(p_0,...,p_m)$). In particular, any connected component of $\Gamma - v'$ that contains $w^{-1}(k)$ intersects non-trivially with $C$. So for $v=p_m \in C-j$ there is precisely one component $G$ of $\Gamma - v$ that contains $w^{-1}(k)$. If this component $G$ intersected nontrivially with $C$, then $(p_{m-1},p_m)$ would have to be an edge in $C$, but $(p_{m-1},p_m)$ was explicitly assumed not to be a cut edge. In particular, $w$ is supported on one and only one spline in the first double-sum (the one where $v = p_m$ and $G \ni p_{m-1})$), and so 
    \[\sum_{\substack{v \in C-j \\v \vdash \Gamma}} \sum_{\substack{ G\\ G \cap C = \emptyset}} \bar{y}_{G,k}^v(w) = t_k - t_{w(p_m)}. \]
    It follows that 
    \begin{align*}
        \sum_{\substack{v \in C-j \\v \vdash \Gamma}} \sum_{\substack{ G\\ G \cap C = \emptyset}} \bar{y}_{G,k}^v(w) + \sum_{(p,q) \in P^C} t_{w(p)} - t_{w(q)} &= t_k - t_{w(p_m)} + \sum_{r=m}^\ell t_{w(p_r)} - t_{w(p_{r+1})} \\
        &= t_{k} - t_{w(p_m)} +  t_{w(p_m)} - t_{w(p_{\ell+1})}\\
        &= t_k - t_{w(j)}.
    \end{align*}
    So in either case, the sum evaluates to $\bar{y}_{G_{(i,j)},k}^j(w)$.
\end{proof}
    Now Lemma \ref{lem:strong_y_spline} two very important consequences. First, as mentioned, it will allow us to disregard those splines $\bar{y}_{G,k}^j$ where $G$ is connected to $j$ via a cut edge. Second, observe we only required $j$ to be a cut vertex so that $\bar{y}_{G,k}^j$ is defined. We may however remove this restriction and ``force through" the argument as follows. If $(p,q)$ is a cut edge and $q$ is not a cut vertex then $q$ is a leaf in $\Gamma$. Let $G_{(p,q)}$ be the connected component containing $p$ (and thereby all of $[n] \setminus\{q\}$). We might abuse notation and let for all $w \in S_n$,
    \begin{align*}
    \bar{y}_{G_{(p,q)},k}^q(w) &= \begin{cases}
            t_k - t_{w(q)} & \text{if }w^{-1}(k) \in [n] \setminus\{q\} \\ 0 &\text{otherwise}
        \end{cases} \\
        &= \bar{t}_k(w) - \bar{x}_q(w),
    \end{align*}
    and get another relation from Lemma \ref{lem:strong_y_spline}. A consequence of this is Lemma \ref{lem:hard_relation} below.
\begin{lemma}\label{lem:hard_relation}
    Let $(i,j)$ be a leaf edge in $\Gamma$ with $j$ the leaf vertex, and $G_{(i,j)}$ the connected component of $([n],E(\Gamma) \setminus\{s\})$ that contains $i$. Then for all $A \subset [n]$ such that $\abs{A} = \abs{G_{(i,j)}} = n-1$, we have that 
    \[ \bar{f}_A^{(i,j)} \in \C\left\{\bar{\rho} \in \cf_\Gamma\left| \bar{\rho} \neq \bar{f}_B^{(i,j)} \text{ for any }B \subset [n] \right. \right\}.\]
\end{lemma}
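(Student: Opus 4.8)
The plan is to derive $\bar{f}_A^{(i,j)}$ as a $\C$-combination of other members of $\cf_\Gamma$ by combining the ``forced-through'' instance of Lemma~\ref{lem:strong_y_spline} (the relation described in the paragraph immediately above this lemma) with relation~(2) of Lemma~\ref{lem:linear_relations}. Since $j$ is a leaf of $\Gamma$, the set $G_{(i,j)}$ has vertex set $[n]\setminus\{j\}$ and $\abs{G_{(i,j)}}=n-1$; the forest appearing in Lemma~\ref{lem:strong_y_spline} has vertex set $[n]$, and its component $C$ containing $j$ has $(i,j)$ as the unique edge incident to $j$. Writing out the forced-through relation for an arbitrary $k\in[n]$ and isolating the $a=(i,j)$ inner sum (which is the only one with $\abs{G_a}=n-1$), I would obtain
\[
\sum_{\substack{\abs{A}=n-1\\ A\ni k}}\bar{f}_A^{(i,j)}
=\bar{t}_k-\bar{x}_j
-\sum_{\substack{v\in C-j\\ v\vdash\Gamma}}\ \sum_{\substack{G\\ G\cap C=\emptyset}}\bar{y}_{G,k}^{v}
-\sum_{\substack{a\in E(C)\\ a\neq(i,j)}}\ \sum_{\substack{A\ni k\\ \abs{A}=\abs{G_a}}}\bar{f}_A^{a}
\;\eqqcolon\;\bar{w}_k .
\]
Every spline on the right lies in $\cf_\Gamma$ and is \emph{not} of the form $\bar{f}_B^{(i,j)}$: the edges $a\neq(i,j)$ appearing are cut edges of $\Gamma$ (deleting such an $a$ disconnects $G_{(i,j)}$, and $j$ remains attached through $(i,j)$), so their splines lie in $\cc_\Gamma$; the vertices $v$ are cut vertices of $\Gamma$, so their splines lie in $\cy_\Gamma$; and $\bar{t}_k\in\ct_n$, $\bar{x}_j\in\cx_n$. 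Hence $\bar{w}_k\in V\coloneqq\C\left\{\bar\rho\in\cf_\Gamma \mid \bar\rho\neq\bar{f}_B^{(i,j)}\text{ for any }B\subset[n]\right\}$.

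The second step is a counting identity: an $(n-1)$-subset of $[n]$ is the complement of a single point, $A=[n]\setminus\{m\}$, and $k\in A$ precisely when $m\neq k$. Setting $S\coloneqq\sum_{m\in[n]}\bar{f}_{[n]\setminus\{m\}}^{(i,j)}=\sum_{\abs{A}=n-1}\bar{f}_A^{(i,j)}$, the left-hand side of the displayed equation equals $S-\bar{f}_{[n]\setminus\{k\}}^{(i,j)}$, so $\bar{f}_{[n]\setminus\{k\}}^{(i,j)}=S-\bar{w}_k$ for every $k$. By Lemma~\ref{lem:linear_relations}(2) we have $S=\bar{x}_i-\bar{x}_j$, and since $\bar{x}_i,\bar{x}_j\in\cx_n$ this lies in $V$; therefore $\bar{f}_{[n]\setminus\{k\}}^{(i,j)}=S-\bar{w}_k\in V$ for all $k\in[n]$. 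As every $A$ with $\abs{A}=n-1$ equals $[n]\setminus\{k\}$ for some $k$, this proves the lemma. The key idea here is simply that knowing all the ``$S$-minus-one-term'' combinations together with $S$ itself lets one solve for each individual $\bar{f}_{[n]\setminus\{k\}}^{(i,j)}$.

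The only genuinely delicate point, which I would dispatch first, is confirming that the forced-through version of Lemma~\ref{lem:strong_y_spline} really is available when $j$ is a leaf rather than a cut vertex: one checks that under the stated abuse of notation $\bar{y}_{G_{(i,j)},k}^{j}=\bar{t}_k-\bar{x}_j$, and that the case analysis in the proof of Lemma~\ref{lem:strong_y_spline} goes through verbatim (the case $w^{-1}(k)\notin V(G_{(i,j)})$ now merely says $w^{-1}(k)=j$, for which both sides of the identity vanish). Beyond that, the argument is bookkeeping: checking the membership claims above and the elementary identity for $(n-1)$-subsets.
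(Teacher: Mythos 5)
Your proposal is correct and follows essentially the same route as the paper: both invoke the forced-through version of Lemma \ref{lem:strong_y_spline} at the leaf edge $(i,j)$ to show each sum $\bar{\sigma}_k=\sum_{\abs{A}=n-1,\,A\ni k}\bar{f}_A^{(i,j)}$ lies in the span avoiding the $\bar{f}_B^{(i,j)}$, and then solve for the individual coset splines $\bar{f}_{[n]\setminus\{k\}}^{(i,j)}$. The only (immaterial) difference is the last step: you recover the total sum $S=\sum_{\abs{A}=n-1}\bar{f}_A^{(i,j)}$ directly as $\bar{x}_i-\bar{x}_j$ via Lemma \ref{lem:linear_relations}(2), whereas the paper obtains it as $\frac{1}{n-1}\sum_{k\in[n]}\bar{\sigma}_k$; both are valid.
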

\begin{proof}
    Let $C$ be connected component of the forest with vertex set $V(G_{(i,j)}) \cup \{j\} = [n]$ and edge set $\{s \mid s\text{ is a cut edge of }\Gamma)\}$ that contains the vertex $j$. By Lemma \ref{lem:strong_y_spline} and the discussion above, for all $k \in [n]$, 
    \[
    \sum_{\substack{v \in C-j \\v \vdash \Gamma}} \sum_{\substack{ G\\ G \cap C = \emptyset}} \bar{y}_{G,k}^v + \sum_{a \in E(C)} \sum_{\substack{A \ni k\\\abs{A} = \abs{G_a}}} \bar{f}_A^{a} = \bar{t}_k - \bar{x}_j,
    \]
    where $G$ in the first double-sum is a connected component of $\Gamma - v$, $G_{a}$ is the connected component of $([n],E(\Gamma)\setminus\{(i,j)\})$ that is a subgraph of $G_{(i,j)}$ (i.e. doesn't contain $j$), and $A$ in the second double-sum is a subset of $[n]$.  In particular,
    \[
    \sum_{\substack{\abs{A} = n-1 \\ k \in A}} \bar{f}_A^{(i,j)}= \bar{t}_k - \bar{x}_n-  \sum_{\substack{v \in C-j \\v \vdash \Gamma}} \sum_{\substack{ G\\ G \cap C = \emptyset}} \bar{y}_{G,k}^v - \sum_{\substack{a \in E(C) \\ a\neq (i,j)}} \sum_{\substack{A \ni k\\\abs{A}=\abs{G_a} }} \bar{f}_A^{a}.
    \]
    Now the right hand side is in ${\ds \C\left\{\bar{\rho} \in \cf_\Gamma\left| \bar{\rho} \neq \bar{f}_B^{(i,j)} \text{ for any }B \subset [n] \right. \right\}}$. Let ${\ds \bar{\sigma}_k :=\sum_{\substack{\abs{A} = n-1 \\ k \in A}} \bar{f}_A^{(i,j)},}$
    so the above relation says ${\ds \bar{\sigma}_k \in \\C\left\{\bar{\rho} \in \cf_\Gamma\left| \bar{\rho} \neq \bar{f}_B^{(i,j)} \text{ for any }B \subset [n] \right. \right\}}$. For each $p \in [n]$ we have that
    \[ \bar{f}_{[n] \setminus \{p\}}^{(i,j)}  = \left(\frac{1}{n-1} \sum_{k \in [n]} \bar{\sigma}_k\right) - \bar{\sigma}_p.  \]
    Thus ${\ds } \bar{f}_{[n] \setminus \{p\}}^{(i,j)} \in \C\left\{\bar{\rho} \in \cf_\Gamma\left| \bar{\rho} \neq \bar{f}_B^{(i,j)} \text{ for any }B \subset [n] \right. \right\}$, and as all subsets of size $\abs{A} = n-1$ take the form $A = [n] \setminus \{p\}$, the claim follows.
\end{proof} 

\begin{example}
    If $\Gamma$ is the graph below where $(12,13)$ is a leaf, 
    \begin{center} 
    \begin{tikzpicture}
            \begin{scope}[scale=1.5]
                \draw (0,0) node[draw,circle] (V1) {1};
                \draw (0.5,-1) node[draw,circle] (V2) {2};
                \draw (1.15,-1) node[draw,circle] (V3) {3};
                \draw (1,0) node[draw,circle] (V4) {4};
                \draw (1.5,1) node[draw,circle] (V5) {5};
                \draw (1.85,-1) node[draw,circle] (V6) {6};
                \draw (2.5,-1) node[draw,circle] (V7) {7};
                \draw (2,0) node[draw,circle] (V8) {8};
                \draw (3,0) node[draw,circle] (V10) {11};
                \draw (3.25,-1) node[draw,circle] (V9) {9};
                \draw (3.5,1) node[draw,circle] (V11) {10};
                \draw (4,0) node[draw,circle] (V12) {12};
                \draw (5,0) node[draw,circle] (V13) {13};

                \draw (V1)--(V4);
                \draw (V2)--(V4);
                \draw (V3)--(V4);
                \draw (V2)--(V3);
                \draw (V4)--(V5);
                \draw (V5)--(V8);
                \draw (V4)--(V8);
                \draw (V6)--(V7);
                \draw (V6)--(V8);
                \draw (V7)--(V8);
                \draw (V8)--(V10);
                \draw (V9)--(V10);
                \draw (V10)--(V11);
                \draw (V10)--(V12);
                \draw (V12)--(V13);
            \end{scope}
        \end{tikzpicture}
    \end{center}
    then Lemma \ref{lem:hard_relation} states that $\C\cf_\Gamma$ is identical to ${\ds \C\left\{\bar{\rho} \in \cf_\Gamma\left| \bar{\rho} \neq \bar{f}_B^{(12,13)} \text{ for any }B \subset [n] \right. \right\}}$. \par 
    Since $(1,4)$ is also a leaf, we have that $\C\cf_\Gamma$ is equal to ${\ds \C\left\{\bar{\rho} \in \cf_\Gamma\left| \bar{\rho} \neq \bar{f}_B^{(1,4)} \text{ for any }B \subset [n] \right. \right\}}$ as well. Note we cannot remove the coset splines for $(1,4)$ and $(12,13)$ from $\cf_\Gamma$ at the same time and maintain the $\C$-span, we have to pick a particular leaf to remove and stick with it.
\end{example}

\section{Natural Labels and Cliqued Graphs}\label{ssec:cliqued}
In this section we reduce the computation for arbitrary $\Gamma$ in two ways. First, we show that $\Gamma$ may be replaced by a cliqued graph (defined below) without altering $\splines{\Gamma}^1$. Second, we replace $\Gamma$ with a particular representative of the isomorphism class we call naturally labeled. Subsection \ref{ssec:lin_tech_lemmas} gives three technical lemmas on splines that hold for these constructions. \par 
First, if $\Gamma$ does not have a cut vertex, then it is $2$-connected. Thus $\lrep{\Gamma}_1$ is trivial and $\rrep{\Gamma}_1$ is the first degree piece of the graded regular representation by Theorem \ref{thm:kconn} and Corollary \ref{cor:conn_rrep}. So we may assume that $\Gamma$ has a cut vertex, in particular we may assume that $\Gamma$ has at least three vertices. \par 
A \emph{clique} is a subgraph isomorphic to a complete graph. Let $\Gamma$ by any (connected) graph on $[n]$. Call $\Gamma$ \emph{cliqued} if two vertices are connected by an edge in $\Gamma$ whenever there exists two vertex-disjoint paths between them.  Define 
\[\Gamma' = \left([n], E(\Gamma) \cup \{(i,j) \mid \; \text{exists two vertex-disjoint paths from $i$ to $j$ in $\Gamma$}\}\right).\]
 Now $\Gamma'$ is cliqued, and we call $\Gamma'$ the \emph{cliqued version} of $\Gamma$. By Lemma \ref{lem:kconn_better}, the first degree pieces of $\splines{\Gamma}$ and $\splines{\Gamma'}$ are equal. Therefore it suffices to consider cliqued graphs $\Gamma$ when proving results on the structure of $\splines{\Gamma}^1$.
\begin{example}\label{ex:cliqued}
    Below is an example of a graph $\Gamma$ and the cliqued graph $\Gamma'$ such that $\splines{\Gamma}^1 = \splines{\Gamma'}^1$.
    \begin{center}
    \begin{tikzpicture}
        \begin{scope}
            \draw (0,0) node[draw,circle] (V11) {};
            \draw (1,-0.5) node[draw,circle] (V12) {};
            \draw (1,0.5) node[draw,circle] (V13) {};
            \draw (2,0) node[draw,circle] (V14) {};
            \draw (3,0.5) node[draw,circle] (V21) {};
            \draw (3,1.5) node[draw,circle] (V22) {};
            \draw (4,1.5) node[draw,circle] (V23) {};
            \draw (4,0.5) node[draw,circle] (V24) {};
            \draw (3,-0.5) node[draw,circle] (V31) {};
            \draw (4,-0.5) node[draw,circle] (V32) {};
            \draw (4,-1.5) node[draw,circle] (V33) {};
            \draw (3,-1.5) node[draw,circle] (V34) {};

            \draw (2,-2.2) node (label) {$\Gamma$};
            
            \draw (V11)--(V12);
            \draw (V11)--(V13);
            \draw (V12)--(V14);
            \draw (V13)--(V14);
            \draw (V14)--(V21);
            \draw (V14)--(V31);

            \draw (V21)--(V22);
            \draw (V22)--(V23);
            \draw (V23)--(V24);
            \draw (V24)--(V21);

            \draw (V31)--(V32);
            \draw (V32)--(V33);
            \draw (V33)--(V34);
            \draw (V34)--(V31);
        \end{scope}
        \begin{scope}[xshift=2.2in]
            \draw (0,0) node[draw,circle] (V11) {};
            \draw (1,-0.5) node[draw,circle] (V12) {};
            \draw (1,0.5) node[draw,circle] (V13) {};
            \draw (2,0) node[draw,circle] (V14) {};
            \draw (3,0.5) node[draw,circle] (V21) {};
            \draw (3,1.5) node[draw,circle] (V22) {};
            \draw (4,1.5) node[draw,circle] (V23) {};
            \draw (4,0.5) node[draw,circle] (V24) {};
            \draw (3,-0.5) node[draw,circle] (V31) {};
            \draw (4,-0.5) node[draw,circle] (V32) {};
            \draw (4,-1.5) node[draw,circle] (V33) {};
            \draw (3,-1.5) node[draw,circle] (V34) {};
            
            \draw (2,-2.2) node (label) {$\Gamma'$};
            
            \draw (V11)--(V12);
            \draw (V11)--(V13);
            \draw (V12)--(V14);
            \draw (V13)--(V14);
            \draw (V11)--(V14);
            \draw (V12)--(V13);
            
            \draw (V14)--(V21);
            \draw (V14)--(V31);

            \draw (V21)--(V22);
            \draw (V22)--(V23);
            \draw (V23)--(V24);
            \draw (V24)--(V21);
            \draw (V21)--(V23);
            \draw (V22)--(V24);
            
            \draw (V31)--(V32);
            \draw (V32)--(V33);
            \draw (V33)--(V34);
            \draw (V34)--(V31);
            \draw (V31)--(V33);
            \draw (V32)--(V34);
        \end{scope}
    \end{tikzpicture}        
    \end{center}
\end{example}
A \emph{2-connected component} of a graph $\Gamma$ is a subgraph of $\Gamma$ that is $2$-connected. A \emph{block} is a maximal $2$-connected component. In a cliqued graph, every block is a clique, and the process of cliquing a graph simply converts every block to a clique.
\begin{definition}\label{def:block_cut}
    Let $\Gamma$ be a graph. The \emph{block-cut tree} of $\Gamma$ is the tree with vertex set 
    \[ \{v \mid v \vdash \Gamma\} \cup \{B \mid B\text{ is a block in } \Gamma\}\]
    consisting of cut vertices and blocks in $\Gamma$, and edge set $\{(v,B) \mid v \in B\}.$
\end{definition}
\begin{example}\label{ex:block_cut}
    The graph on the left below is $\Gamma$ from Example \ref{ex:block_cut} with the blocks and cut vertices labeled. The graph on the right below is the associated the block-cut tree. Note that the block cut tree for the cliqued version $\Gamma'$ of $\Gamma$ in Example \ref{ex:block_cut} would be the same.
    \begin{center}
        \begin{tikzpicture}
        \begin{scope}[scale=1]
            \draw (0,0) node[draw,circle] (V11) {$1$};
            \draw (1,-0.5) node[draw,circle] (V12) {$2$};
            \draw (1,0.5) node[draw,circle] (V13) {$3$};
            \draw (2,0) node[draw,circle] (V14) {$4$};
            \draw (3,0.5) node[draw,circle] (V21) {$5$};
            \draw (3,1.5) node[draw,circle] (V22) {$6$};
            \draw (4,1.5) node[draw,circle] (V23) {$7$};
            \draw (4,0.5) node[draw,circle] (V24) {$8$};
            \draw (3,-0.5) node[draw,circle] (V31) {$9$};
            \draw (4,-0.5) node[draw,circle] (V32) {$10$};
            \draw (4,-1.5) node[draw,circle] (V33) {$11$};
            \draw (3,-1.5) node[draw,circle] (V34) {$12$};

            \draw (V11)--(V12);
            \draw (V11)--(V13);
            \draw (V12)--(V14);
            \draw (V13)--(V14);
            \draw (V14)--(V21);
            \draw (V14)--(V31);

            \draw (V21)--(V22);
            \draw (V22)--(V23);
            \draw (V23)--(V24);
            \draw (V24)--(V21);

            \draw (V31)--(V32);
            \draw (V32)--(V33);
            \draw (V33)--(V34);
            \draw (V34)--(V31);
        \end{scope}
        \begin{scope}[xshift=3in, scale=1]
            \draw (-1,0) node[draw,circle] (V1) {$B_3$};
            \draw (0.25,0) node[draw,circle] (V2) {$4$};
            \draw (1,-1) node[draw,circle] (V3) {$B_2$};
            \draw (2,-1) node[draw,circle] (V4) {$9$};
            \draw (3,-1) node[draw,circle] (V5) {$B_1$};
            \draw (1,1) node[draw,circle] (V6) {$B_4$};
            \draw (2,1) node[draw,circle] (V7) {$5$};
            \draw (3,1) node[draw,circle] (V8) {$B_5$};

            \draw (V1)--(V2);
            \draw (V2)--(V3);
            \draw (V3)--(V4);
            \draw (V4)--(V5);
            \draw (V2)--(V6);
            \draw (V6)--(V7); 
            \draw (V7)--(V8);
        \end{scope}
        \end{tikzpicture}
    \end{center}
    Where \begin{multicols}{2}
           \begin{itemize}
        \item $B_1$ is the induced subgraph on vertices $\{9,10,11,12\}$
        \item $B_2$ is the induced subgraph on vertices $\{4,9\}$ 
        \item $B_3$ is the induced subgraph on vertices $\{1,2,3,4\}$
        \end{itemize}  
       %\colbreak   
        \begin{itemize}
        \item $B_4$ is the induced subgraph on vertices $\{4,5\}$
        \item $B_5$ is the induced subgraph on vertices $\{5,6,7,8\}$
    \end{itemize}
    \end{multicols}
\end{example}
It is easy to reason that the block-cut tree is indeed a tree by arguing that it cannot contain cycles. In a block cut tree of a graph $\Gamma$, every leaf is a block in $\Gamma$ and not a cut vertex, and paths in the block-cut tree alternate between cut-vertices and blocks.
Since the block-cut tree ignores all of the internal structure of a $2$-connected component, the block-cut tree of a graph $\Gamma$ and the block-cut tree of its cliqued version $\Gamma'$ are isomorphic as graphs.\par 

We now use the block-cut tree to construct a particular representative of the isomorphism class of $\Gamma$. We will construct a bijection $\phi \colon [n] \to [n]$ to re-label the vertices of $\Gamma$. \par 
Choose a cut vertex $v \vdash \Gamma$ such that $v$ is adjacent to at most $1$ block that is not a leaf in the block-cut tree of $\Gamma$. One may obtain such a vertex $v$ by (1) removing all leaves from the block-cut tree of $\Gamma$ (which must be blocks), then (2) choosing a leaf from the tree that remains (which must be cut vertices). The vertices $5$ and $9$ satisfy this condition in Example \ref{ex:block_cut}.\par 
Let $B$ be the largest block that is also a leaf adjacent to $v$ in the block-cut tree of $\Gamma$. Since $B$ is a leaf in the block-cut tree, there exists only a single cut vertex in $B$, namely $v \in B$. The following is the algorithm that produces $\phi \colon [n] \to [n]$.
\begin{enumerate}
    \item Choose $i \in B$ so that $i \neq v$. Define $\phi(i) \coloneqq n$. Note $n$ is adjacent to at most one cut vertex of $\Gamma$ and is not itself a cut vertex of $\Gamma$.
    \item Define $\phi$ on the remaining vertices in $B$ as follows.  Define $\phi$ on $B \setminus \{i,v\}$ so that $d(j,i) < d(k,i)$ implies $\phi(j) > \phi(k)$ for all $j,k \in B \setminus\{i,v\}$. Let $\phi(v) \coloneqq n-\abs{B}+1$
    \item Define $\phi$ on the remaining vertices of $\Gamma$ as follows. Let $\phi$ be any bijection satisfying if $j,k \in \Gamma \setminus V(B)$, then $d(j,i) < d(k,i)$ implies $\phi(j) > \phi(k)$. 
\end{enumerate}
Now that we have a bijection $\phi \colon [n] \to [n]$, define a new graph $\Gamma'' \coloneqq ([n], \{(\phi(j),\phi(k)) \mid (j,k) \in E(\Gamma)\})$. This graph is clearly isomorphic to $\Gamma$. We call the graph $\Gamma''$ constructed in this manner \emph{naturally labeled}. \par 
\begin{example}\label{ex:natural_label}
    This example will construct a naturally labeled graph from the not-naturally labeled graph with $12$ vertices drawn below.
    \begin{center} 
    \begin{tikzpicture}
            \begin{scope}[scale=1.5]
                \draw (0,0) node[draw,circle] (V1) {7};
                \draw (0.5,-1) node[draw,circle] (V2) {6};
                \draw (1.15,-1) node[draw,circle] (V3) {11};
                \draw (1,0) node[draw,circle] (V4) {4};
                \draw (1.5,1) node[draw,circle] (V5) {12};
                \draw (1.85,-1) node[draw,circle] (V6) {9};
                \draw (2.5,-1) node[draw,circle] (V7) {10};
                \draw (2,0) node[draw,circle] (V8) {1};
                \draw (3,0) node[draw,circle] (V10) {2};
                \draw (3.25,-1) node[draw,circle] (V9) {3};
                \draw (3.5,1) node[draw,circle] (V11) {5};
                \draw (4,0) node[draw,circle] (V12) {8};

                \draw (V1)--(V4);
                \draw (V2)--(V4);
                \draw (V3)--(V4);
                \draw (V2)--(V3);
                \draw (V4)--(V5);
                \draw (V5)--(V8);
                \draw (V4)--(V8);
                \draw (V6)--(V7);
                \draw (V6)--(V8);
                \draw (V7)--(V8);
                \draw (V8)--(V10);
                \draw (V9)--(V10);
                \draw (V10)--(V11);
                \draw (V10)--(V12);
                \draw (V11)--(V12);
            \end{scope}
        \end{tikzpicture}
    \end{center}
    This has block-cut tree 
    \begin{center} 
    \begin{tikzpicture}
            \begin{scope}[scale=1.5]
                \draw (0,0) node[draw,circle] (B1) {$B_1$};
                \draw (1,0) node[draw,circle] (v1) {$4$};
                \draw (2,0) node[draw,circle] (B2) {$B_2$};
                \draw (2,0.75) node [draw,circle] (B3) {$B_3$};
                \draw (3,0.75) node[draw,circle] (v2) {$1$};
                \draw (4,0) node[draw,circle] (B4) {$B4$};
                \draw (4,0.75) node[draw,circle] (B5) {$B_5$};
                \draw (5,0.75) node[draw,circle] (v3) {$2$};
                \draw (6,0) node[draw,circle] (B6) {$B_6$};
                \draw (6,0.75) node[draw,circle] (B7) {$B_7$};

                \draw (B1)--(v1);
                \draw (v1)--(B2);
                \draw (v1)--(B3);
                \draw (B3)--(v2);
                \draw (v2)--(B4);
                \draw (v2)--(B5);
                \draw (B5)--(v3);
                \draw (v3)--(B6);
                \draw (v3)--(B7);
            \end{scope}
        \end{tikzpicture}
    \end{center}
    with blocks \begin{multicols}{3}
           \begin{itemize}
        \item $B_1$ on  $\{4,7\}$
        \item $B_2$ on $\{4,6,11\}$ 
        \item $B_3$ on $\{1,4,12\}$
        \end{itemize}  
       %\colbreak   
        \begin{itemize}
        \item $B_4$ on $\{1,9,10\}$
        \item $B_5$  on $\{1,2\}$
        \end{itemize} 
        \begin{itemize}
        \item $B_6$ on $\{2,3\}$
        \item $B_7$ on $\{2,5,8\}$
    \end{itemize}
    \end{multicols}
    To choose a vertex $i$ such that $\phi(i) = 12$, first we identify an appropriate cut vertex $v$. There are two cut vertices adjacent to only one non-leaf vertex in the block cut tree, $2$ and $4$. Let $v=2$. The vertex $2$ is adjacent to blocks $B_6$ and $B_7$ in the block-cut tree. Since $B_7$ is bigger than $B_6$, we know that either of $i=5$ or $i=8$ will work. We choose $i=8$ so $\phi(8) = 12$, which concludes step (1).
    \begin{center} 
    \begin{tikzpicture}
            \begin{scope}[scale=1.5]
                \draw (0,0) node[draw,circle] (V1) {7};
                \draw (0.5,-1) node[draw,circle] (V2) {6};
                \draw (1.15,-1) node[draw,circle] (V3) {11};
                \draw (1,0) node[draw,circle] (V4) {4};
                \draw (1.5,1) node[draw,circle] (V5) {12};
                \draw (1.85,-1) node[draw,circle] (V6) {9};
                \draw (2.5,-1) node[draw,circle] (V7) {10};
                \draw (2,0) node[draw,circle] (V8) {1};
                \draw (3,0) node[draw,circle] (V10) {2};
                \draw (3.25,-1) node[draw,circle] (V9) {3};
                \draw (3.5,1) node[draw,circle] (V11) {5};
                \draw (4,0) node[draw,circle] (V12) {8};
                \draw (4.45,0) node (p12) {{\color{blue}12}};

                \draw (V1)--(V4);
                \draw (V2)--(V4);
                \draw (V3)--(V4);
                \draw (V2)--(V3);
                \draw (V4)--(V5);
                \draw (V5)--(V8);
                \draw (V4)--(V8);
                \draw (V6)--(V7);
                \draw (V6)--(V8);
                \draw (V7)--(V8);
                \draw (V8)--(V10);
                \draw (V9)--(V10);
                \draw (V10)--(V11);
                \draw (V10)--(V12);
                \draw (V11)--(V12);
            \end{scope}
        \end{tikzpicture}
    \end{center}
    Now the block $B_7$ has three vertices, which leaves no choice for defining $\phi$ on the remainder of $B_7$. So $\phi(5) = 11$ and $\phi(4) = 10$. This concludes step (2)
    \begin{center} 
    \begin{tikzpicture}
            \begin{scope}[scale=1.5]
                \draw (0,0) node[draw,circle] (V1) {7};
                \draw (0.5,-1) node[draw,circle] (V2) {6};
                \draw (1.15,-1) node[draw,circle] (V3) {11};
                \draw (1,0) node[draw,circle] (V4) {4};
                \draw (1.5,1) node[draw,circle] (V5) {12};
                \draw (1.85,-1) node[draw,circle] (V6) {9};
                \draw (2.5,-1) node[draw,circle] (V7) {10};
                \draw (2,0) node[draw,circle] (V8) {1};
                \draw (3,0) node[draw,circle] (V10) {2};
                \draw (3.25,-1) node[draw,circle] (V9) {3};
                \draw (3.5,1) node[draw,circle] (V11) {5};
                \draw (4,0) node[draw,circle] (V12) {8};
                \draw (4.45,0) node (p12) {{\color{blue}12}};
                \draw (3.95,1) node (p11) {{\color{blue}11}};
                \draw (2.8,0.35) node (p10) {{\color{blue}10}};

                \draw (V1)--(V4);
                \draw (V2)--(V4);
                \draw (V3)--(V4);
                \draw (V2)--(V3);
                \draw (V4)--(V5);
                \draw (V5)--(V8);
                \draw (V4)--(V8);
                \draw (V6)--(V7);
                \draw (V6)--(V8);
                \draw (V7)--(V8);
                \draw (V8)--(V10);
                \draw (V9)--(V10);
                \draw (V10)--(V11);
                \draw (V10)--(V12);
                \draw (V11)--(V12);
            \end{scope}
        \end{tikzpicture}
    \end{center}
    Finally, we define the rest of $\phi$ based on distance from the vertex $i=8$, and replace the old graph with the naturally labeled one. One possible natural label is the following.
    \begin{center} 
    \begin{tikzpicture}
            \begin{scope}[scale=1.5]
                \draw (0,0) node[draw,circle] (V1) {1};
                \draw (0.5,-1) node[draw,circle] (V2) {2};
                \draw (1.15,-1) node[draw,circle] (V3) {3};
                \draw (1,0) node[draw,circle] (V4) {4};
                \draw (1.5,1) node[draw,circle] (V5) {5};
                \draw (1.85,-1) node[draw,circle] (V6) {6};
                \draw (2.5,-1) node[draw,circle] (V7) {7};
                \draw (2,0) node[draw,circle] (V8) {8};
                \draw (3,0) node[draw,circle] (V10) {10};
                \draw (3.25,-1) node[draw,circle] (V9) {9};
                \draw (3.5,1) node[draw,circle] (V11) {11};
                \draw (4,0) node[draw,circle] (V12) {12};

                \draw (V1)--(V4);
                \draw (V2)--(V4);
                \draw (V3)--(V4);
                \draw (V2)--(V3);
                \draw (V4)--(V5);
                \draw (V5)--(V8);
                \draw (V4)--(V8);
                \draw (V6)--(V7);
                \draw (V6)--(V8);
                \draw (V7)--(V8);
                \draw (V8)--(V10);
                \draw (V9)--(V10);
                \draw (V10)--(V11);
                \draw (V10)--(V12);
                \draw (V11)--(V12);
            \end{scope}
        \end{tikzpicture}
    \end{center}
    We note how many choices were made along the way. In particular, the isomorphism class of a graph $\Gamma$ may have many different naturally labeled members.
\end{example}
We can use a natural label to more efficiently identify cut edges and connected components of a graph. The following definitions formalizes this.
\begin{definition}\label{def:dominant}
    Let $\Gamma$ be a naturally labeled graph. If $j \in \Gamma$ is a cut vertex then $i < j$ is \emph{$j$-dominant} if  $i$ is the maximal value vertex in a connected component of $\Gamma - j$ that does not contain the vertex $n$. We call such an $(i,j)$ a \emph{dominant pair}. A dominant pair $(i,j)$ is \emph{strongly dominant} if $(i,j)$ is a cut edge of $\Gamma$, denoted $(i,j) \gg \Gamma$. Otherwise, $(i,j)$ is \emph{weakly dominant}, denoted denoted $(i,j)>\Gamma$. 
    \end{definition} 
There are a four things to note about dominant pairs.
\begin{enumerate}
    \item Even if $(n-1,n)$ is a cut edge, $(n-1,n)$ is never a strongly dominant pair since $n$ is not a cut vertex by the definition of a natural label. However, every other cut edge in a naturally labeled graph $\Gamma$ is a strongly dominant pair, since the higher-labeled vertex in the cut edge must be a cut vertex.
    \item If $j \vdash \Gamma$, then all vertices larger than $j$ must be concentrated in one connected component of $\Gamma - j$. If $k$ is in a connected component of $\Gamma - j$ that does not contain the vertex $n$, then any path from $k$ to $n$ must pass through $j$ and so $d(k,n) > d(j,n)$ and thus $k<j$ by the definition of a natural label. In particular, the only connected component of $\Gamma - j$ whose maximal vertex is greater than $j$ is the one that contains $n$, so each connected component of $\Gamma - j$ that doesn't contain $n$ contains precisely one $j$-dominant vertex.
    \item Since a natural label is constructed by distance from $n$, for each cut vertex $j$, the maximal-labeled vertices in a connected component of $\Gamma - j$ that does not contain $n$ must be adjacent to $j$. In particular, dominant pairs are also edges.
    \item The lower vertex in the dominant pair uniquely determines the pair. In particular, if for contradiction we assume $(i,j)$ and $(i,k)$ are both dominant pairs, then $i$ and $k$ are in the same connected component of $\Gamma - j$, and $i<k$ and so $(i,j)$ is not a dominant pair.
\end{enumerate}
\begin{definition}\label{def:cut_decomp}
    Let $\Gamma$ be naturally labeled and fix $j \in \Gamma$. Let $\Cut_\Gamma(j) \coloneqq \{i \mid (i,j) > \Gamma \text{ or } (i,j) \gg \Gamma\}$, and $\cut_\Gamma(j) \coloneqq \abs{\Cut_\Gamma(j)}$. If $j$ is not a cut vertex, then $\Cut_\Gamma(j) = \emptyset$ and $\cut_\Gamma(j) = 0$.
    If $j$ is a cut vertex of $\Gamma$, then the \emph{cut decomposition} of $\Gamma-j$ is  \[\Gamma-j \coloneqq \Gamma_0^j \cup \bigcup_{i \in \Cut_{\Gamma}(j)}\Gamma_i^j,\]
Where $\Gamma_i^j$ the connected component of $\Gamma - j$ such that $i \in \Gamma_i^j$ and $\Gamma_0^j$ denotes the single connected component of $\Gamma - j$ where $n \in \Gamma_0^j$. 
\end{definition}
So if $j \vdash \Gamma$ is a cut vertex of $\Gamma$ and $k \in [n]$ such that $k > j$, then $k \in \Gamma_0^j$. When $\Gamma$ is obvious from context, we write $\Cut(j)$ and $\cut(j)$ without the subscripts.
\begin{remark}\label{rem:cutedge_component}
    If $(i,j)$ is a cut edge and $j$ is a cut vertex, then $\Gamma_i^j$ as in the cut decomposition of $\Gamma - j$ is one of the two connected components of $([n],E(\Gamma) \setminus \{(i,j)\})$. In particular, $\Gamma_i^j$ is one of the two valid choices for $G_s$ when defining $\bar{f}_A^{(i,j)}$ at the beginning of Section \ref{sec:lin_gen_rels}. From now on, even if $\Gamma$ is not naturally labeled and even if the cut edge is $(i,j) = (n-1,n)$, we will choose $G_s$ to be the connected component of $(V(\Gamma),E(\Gamma)-(i,j))$ that contains $i < j$, so that the notation always agrees with Definition \ref{def:cut_decomp}.
\end{remark} 

\begin{example}\label{linear_example_11}
    The following is the cliqued and naturally labeled graph $\Gamma$ from Example \ref{ex:natural_label}. We have labeled the strongly dominant pairs in double lines and the weakly dominant pairs in dashed lines:
    \begin{center}
        \begin{tikzpicture}
            \begin{scope}[scale=1.5]
                \draw (0,0) node[draw,circle] (V1) {$1$};
                \draw (0.5,-1) node[draw,circle] (V2) {$2$};
                \draw (1.15,-1) node[draw,circle] (V3) {$3$};
                \draw (1,0) node[draw,circle] (V4) {$4$};
                \draw (1.5,1) node[draw,circle] (V5) {$5$};
                \draw (1.85,-1) node[draw,circle] (V6) {$6$};
                \draw (2.5,-1) node[draw,circle] (V7) {$7$};
                \draw (2,0) node[draw,circle] (V8) {$8$};
                \draw (3,0) node[draw,circle] (V10) {$10$};
                \draw (3.25,-1) node[draw,circle] (V9) {$9$};
                \draw (3.5,1) node[draw,circle] (V11) {$11$};
                \draw (4,0) node[draw,circle] (V12) {$12$};

                \draw[double distance=2pt] (V1)--(V4);
                \draw (V2)--(V4);
                \draw[dashed] (V3)--(V4);
                \draw (V2)--(V3);
                \draw (V4)--(V5);
                \draw[dashed] (V5)--(V8);
                \draw (V4)--(V8);
                \draw (V6)--(V7);
                \draw (V6)--(V8);
                \draw[dashed] (V7)--(V8);
                \draw[double distance=2pt] (V8)--(V10);
                \draw[double distance=2pt] (V9)--(V10);
                \draw (V10)--(V11);
                \draw (V10)--(V12);
                \draw (V11)--(V12);
            \end{scope}
        \end{tikzpicture}
    \end{center}
The cut vertices are $\{4,8,10\}$, and $\cut_{\Gamma}(10) = \cut_{\Gamma}(8) =\cut_{\Gamma}(4) = 2$. The graph $\Gamma - 8$ is displayed below, with the cut decomposition labeled.
\begin{center}
        \begin{tikzpicture}
            \begin{scope}[scale=1.5]
                \draw (0,0) node[draw,circle] (V1) {$1$};
                \draw (0.5,-1) node[draw,circle] (V2) {$2$};
                \draw (1.15,-1) node[draw,circle] (V3) {$3$};
                \draw (1,0) node[draw,circle] (V4) {$4$};
                \draw (1.5,1) node[draw,circle] (V5) {$5$};
                \draw (1.85,-1) node[draw,circle] (V6) {$6$};
                \draw (2.5,-1) node[draw,circle] (V7) {$7$};
                %\draw (2,0) node[draw,circle] (V8) {$8$};
                \draw (3,0) node[draw,circle] (V10) {$10$};
                \draw (3.25,-1) node[draw,circle] (V9) {$9$};
                \draw (3.5,1) node[draw,circle] (V11) {$11$};
                \draw (4,0) node[draw,circle] (V12) {$12$};

                \draw (0.75,0.75) node (A) {$\Gamma_5^8$};
                \draw (2.1,-0.5) node (B) {$\Gamma_7^8$};
                \draw (2.75,0.75) node (C) {$\Gamma_0^8$};
                
                \draw (V1)--(V4);
                \draw (V2)--(V4);
                \draw (V3)--(V4);
                \draw (V2)--(V3);
                \draw (V4)--(V5);
                %\draw (V5)--(V8);
                %\draw (V4)--(V8);
                \draw (V6)--(V7);
                %\draw (V6)--(V8);
                %\draw (V7)--(V8);
                %\draw (V8)--(V10);
                \draw (V9)--(V10);
                \draw (V10)--(V11);
                \draw (V10)--(V12);
                \draw (V11)--(V12);
            \end{scope}
        \end{tikzpicture}
    \end{center}
\end{example}
Lemma \ref{lem:natural_label} below summarizes some properties of a cliqued and naturally labeled graph.
\begin{lemma}\label{lem:natural_label}
    If $\Gamma$ is cliqued and naturally labelled, then 
    \begin{enumerate}
        \item if $i,j \in \adj(n)$ are both adjacent to the vertex $n$ in $\Gamma$, then $(i,j) \in E(\Gamma)$,
        \item if $n-1 \vdash \Gamma$ is a cut vertex of $\Gamma$, then at most one of the connected components $\Gamma_i^{n-1}$ for $i \in \Cut_{\Gamma}(n-1)$ in the cut decomposition of $\Gamma- (n-1)$  is \emph{not} a single vertex, 
        \item If $r,k \in \Gamma_i^j \cap \adj(j)$ are vertices both adjacent to $j\vdash \Gamma$ and in the same connected component of $\Gamma - j$, then $(r,k) \in E(\Gamma)$.
    \end{enumerate}
\end{lemma}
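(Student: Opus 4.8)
The plan is to treat the three statements separately. Statements (1) and (3) are two instances of the same short argument and use only that $\Gamma$ is cliqued; statement (2) is where the natural labelling is genuinely needed.

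For (3), suppose $r$ and $k$ are distinct, both adjacent to the cut vertex $j$, and both lying in one connected component $\Gamma_i^j$ of $\Gamma - j$. The idea is to exhibit two vertex-disjoint paths from $r$ to $k$: the path $r\!-\!j\!-\!k$, whose only internal vertex is $j$, together with a path from $r$ to $k$ lying entirely inside $\Gamma_i^j$ (such a path exists because $\Gamma_i^j$ is connected, and it avoids $j$ since $j\notin V(\Gamma_i^j)$). These two paths meet only at their endpoints, so because $\Gamma$ is cliqued we conclude $(r,k)\in E(\Gamma)$. Statement (1) is the same argument with the cut vertex replaced by the vertex $n$: the natural-labelling construction guarantees $n$ is not a cut vertex, so $\Gamma-n$ is connected and plays the role of $\Gamma_i^j$; given distinct $i,j\in\adj(n)$, the path $i\!-\!n\!-\!j$ together with any path from $i$ to $j$ in $\Gamma-n$ are vertex-disjoint, whence $(i,j)\in E(\Gamma)$.

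For (2), the first step is to locate the vertex labelled $n-1$. Let $v$ and $B$ be the cut vertex and the largest leaf block adjacent to $v$ used in constructing the relabelling $\phi$. By the algorithm, the vertices of $B$ receive the top $\abs{B}$ labels, with $\phi(v)=n-\abs{B}+1$ and $B\setminus\{v\}$ receiving $\{n-\abs{B}+2,\ldots,n\}$. Since $B$ is a leaf of the block-cut tree, it contains a unique cut vertex of $\Gamma$, namely $v$; in particular no vertex of $B\setminus\{v\}$ is a cut vertex. If $\abs{B}\geq 3$ then the vertex labelled $n-1$ would be one of these non-cut vertices, contradicting the hypothesis that $n-1$ is a cut vertex; hence $\abs{B}=2$, the vertex labelled $n-1$ is $v$ itself, and $B$ is the bridge joining $n-1$ and $n$. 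The second step uses the defining property of $v$: it was chosen to be adjacent to at most one block of the block-cut tree that is not a leaf. Every other block containing $v$ is therefore a leaf block, and since $\abs{B}=2$ is maximal among the leaf blocks at $v$, each of these is a bridge. The last step is the bookkeeping on $\Gamma-(n-1)=\Gamma-v$: removing a cut vertex leaves exactly one component per block through that vertex, and a bridge $\{v,u\}$ through $v$ contributes the isolated vertex $\{u\}$, since $u$ is a non-cut vertex whose only neighbour in $\Gamma$ is $v$. One of these isolated components is $\{n\}=\Gamma_0^{n-1}$, and every vertex not yet accounted for lies in the single component coming from the at-most-one non-leaf block through $v$. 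Hence at most one of the components $\Gamma_i^{n-1}$ with $i\in\Cut_\Gamma(n-1)$ fails to be a single vertex.

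The main obstacle is the bookkeeping in the last two steps of (2): reading the labels off the three-step algorithm without error, and justifying carefully that deleting $v$ collapses every bridge sector to an isolated vertex while confining all remaining vertices to one component. Both points rest on standard facts about block-cut trees — a leaf block contains exactly one cut vertex, and deleting a cut vertex produces one component per incident block — which I would cite or dispatch in a line; everything in (1) and (3) is immediate once the cliqued hypothesis is invoked.
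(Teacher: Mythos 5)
Your proof is correct and follows essentially the same route as the paper: parts (1) and (3) via the cliqued hypothesis applied to two internally vertex-disjoint paths (the paper phrases (1) as ``both $i,j$ lie in the block containing $n$, and blocks of a cliqued graph are cliques,'' which is the same fact), and part (2) via the natural-labelling algorithm forcing $|B_0|=2$ and the block-cut-tree bookkeeping at the chosen cut vertex $v=n-1$. Your write-up of (2) is in fact more explicit than the paper's terse version, but the argument is the same.
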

\begin{proof}
    Let $B_0$ be the block in $\Gamma$ that contains $n$.\par 
    (1) If $i,j \in \adj(n)$ then $i,j \in B_0$. Since $\Gamma$ is cliqued, $B_0$ must be a clique and so $(i,j) \in E(\Gamma)$. \par 
    (2) The vertex $n-1$ is a cut vertex of $\Gamma$ if and only if $n$ is a leaf, and $B_0$ is size $2$.  Since $\Gamma$ is naturally labeled, $n-1$ is adjacent to at most one block that is not a leaf in the block-cut tree of $\Gamma$, and $B_0$ is of maximal size among those leaves in the block-cut tree. Thus the blocks adjacent to $n-1$ in the block-cut tree of $\Gamma$ are either not a leaf in the block-cut tree (of which there can only be one), or a leaf in the block-cut tree and size no greater than $2$.\par  
    (3) There exists a path $(r,j,k)$ in $\Gamma$, and another path from $r$ to $k$ in $\Gamma_i^j$, which doesn't contain the vertex $j$. Since $\Gamma$ is cliqued and we know there are two vertex-disjoint paths from $r$ to $k$ in $\Gamma$, it follows that $(r,k) \in E(\Gamma)$.
\end{proof}
    We use Lemma \ref{lem:natural_label} to categorize cliqued and naturally labelled graphs in to three types, based on the structure of $\Gamma$ near the vertex $n$.
    \begin{lemma}\label{lem:graph_type}
        If $\Gamma$ is cliqued and naturally labeled, then it falls in to one of the following three types:
    \begin{enumerate}
        \item[(A)] The edge $(n-1,n)$ is a cut edge of $\Gamma$, and at most one component of $\Gamma - (n-1)$ is \emph{not} an isolated vertex.
        \item[(B)] The vertex $n-2$ is a cut vertex, and the vertices $\{n-2,n-1,n\}$ form a block in $\Gamma$.
        \item[(C)] None of the vertices $\{n,n-1,n-2\}$ are cut vertices and $\{n-2,n-1,n\}$ form a clique in $\Gamma$.
    \end{enumerate}
    \end{lemma}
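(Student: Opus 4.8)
The plan is to run a case analysis on the size of the unique block $B_0$ of $\Gamma$ containing the vertex $n$. First I would observe that $B_0$ is well defined and is a clique: since $\Gamma$ has a cut vertex it has at least three vertices, hence no isolated vertex, so $n$ lies in some block; since $n$ is not a cut vertex of $\Gamma$ by the definition of a natural label, that block is unique; and since $\Gamma$ is cliqued, $B_0$ is a clique. The crucial input is then read off from the natural-label construction itself: because $\phi$ sends a vertex $i$ of the chosen leaf block $B$ to $n$, we have $B_0 = \phi(B)$, so $B_0$ is a leaf of the block-cut tree of $\Gamma$, its vertex set is exactly $\{n - \abs{B_0} + 1,\ldots, n\}$, and its unique cut vertex $v$ carries the label $n - \abs{B_0} + 1$ (step (1) sets $\phi(i) = n$, step (2) sets $\phi(v) = n - \abs{B} + 1$, and the remaining vertices of $B$ get the intermediate labels). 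Note $\abs{B_0} \ge 2$.

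Now I split on $\abs{B_0}$. If $\abs{B_0} = 2$, then $B_0 = \{n-1, n\}$; every edge at $n$ lies in $B_0$, so $\deg(n) = 1$ and $(n-1,n)$ is a cut edge, and deleting $n-1$ isolates $n$ while leaving a nonempty remainder, so $n-1 \vdash \Gamma$. Then $\Gamma_0^{n-1} = \{n\}$ is a single vertex, and Lemma \ref{lem:natural_label}(2) gives that at most one of the components $\Gamma_i^{n-1}$ with $i \in \Cut(n-1)$ fails to be a single vertex; since the components of $\Gamma - (n-1)$ are $\Gamma_0^{n-1}$ together with these $\Gamma_i^{n-1}$, we are in type (A). If $\abs{B_0} = 3$, then $B_0 = \{n-2, n-1, n\}$ is a block, and its unique cut vertex $v = n-2$ is a cut vertex of $\Gamma$, so we are in type (B). If $\abs{B_0} \ge 4$, then $\{n-2, n-1, n\} \subseteq B_0$, so these three vertices form a clique; the only cut vertex of $\Gamma$ lying in $B_0$ is $v$, which has label $n - \abs{B_0} + 1 \le n-3$, so none of $n, n-1, n-2$ equals $v$, and hence (using also that $n$ is not a cut vertex) none of $n, n-1, n-2$ is a cut vertex of $\Gamma$; we are in type (C).

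Since $\abs{B_0}$ is an integer at least $2$, the three cases are mutually exclusive and exhaustive, which finishes the argument. The two points that need care are (i) extracting from the natural-label algorithm that $B_0$ occupies the top $\abs{B_0}$ labels with its cut vertex at the bottom, and (ii) noticing in the $\abs{B_0} = 2$ case that the component $\Gamma_0^{n-1}$ not directly covered by Lemma \ref{lem:natural_label}(2) is precisely the isolated vertex $n$, so that the lemma's conclusion matches the condition defining type (A). I do not expect a genuine obstacle here: the statement is a normalization lemma whose role is to pin down the cliqued, naturally-labeled representative that the remaining sections work with.
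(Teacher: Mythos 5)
Your proof is correct and follows essentially the same route as the paper: the paper splits on $\abs{\adj(n)} \in \{1,2,>2\}$, which is the same trichotomy as your split on $\abs{B_0}$, since (as the paper also notes) the block containing $n$ is $\adj(n) \cup \{n\}$. Your explicit extraction from the natural-label algorithm that $B_0$ occupies the top $\abs{B_0}$ labels with its unique cut vertex at $n-\abs{B_0}+1$, together with the use of Lemma \ref{lem:natural_label}(2) in the $\abs{B_0}=2$ case, is precisely the content the paper compresses into its appeals to ``$\Gamma$ is naturally labeled'' and Lemma \ref{lem:natural_label}.
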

    \begin{proof}
        Categorize $\Gamma$ by the size of the neighborhood $\adj(n)$.  For every $\Gamma$ exactly one of the following is true: $\abs{\adj{(n)}} = 1$, $\abs{\adj{(n)}} = 2$, or $\abs{\adj(n)} > 2$. Note that, since every block is a clique and $n$ is not a cut vertex, the block $B$ containing $n$ has vertices $N(n) \cup \{n\}$. \par 
        If $\abs{\adj{(n)}} = 1$, then $n-1$ is a cut vertex and $\Gamma$ is type A. The rest of the claim for type A is Lemma \ref{lem:natural_label}(2). \par 
        Suppose $\abs{\adj{(n)}} = 2$. Since $\Gamma$ is naturally labeled, $n-2$ must be a cut vertex. We also have $(n-2,n-1) \in E(\Gamma)$ by Lemma \ref{lem:natural_label}(1). \par 
        Finally suppose $\abs{\adj{(n)}} > 2$. Since $\Gamma$ is naturally labeled, none of $\{n-2,n-1,n\}$ is a cut vertex. Once again $(n-2,n-1) \in E(\Gamma)$ by Lemma \ref{lem:natural_label}(1). \par 
    \end{proof}

Visually, Lemma \ref{lem:graph_type} says that if $\Gamma$ is cliqued and naturally labeled, then $\Gamma$ can be represented diagramatically in one of the following three ways: 
\begin{center}
    \begin{tikzpicture}
        \begin{scope}
            \draw (0,0) node[draw,circle] (V1) {$n$};
            \draw (2,0) node[draw,circle] (V2) {$n-1$};
            \draw (5,0) node[draw,rectangle] (V3) {$\Gamma_{n-k-1}^{n-1}$};
            \draw (-1,0) node (type) {A:};

            \draw (1,2) node[draw,circle] (V4) {$n-2$};
            \draw (2,2) node (dots) {$\cdots$};
            \draw (3,2) node[draw,circle] (V5) {$n-k$};

            \draw (V1)--(V2);
            \draw (V2)--(V4);
            \draw (V2)--(V5);
            \draw[double] (V2)--(V3);
        \end{scope}
        \begin{scope}[xshift = 3in]
            \draw (0,0) node[draw,circle] (V1) {$n$};
            \draw (2,2) node[draw,circle] (V2) {$n-1$};
            \draw (2,0) node[draw,circle] (V3) {$n-2$};
            \draw (5,0) node[draw,rectangle] (V4) {$\Gamma \setminus \left\{\substack{n,\\n-1,\\n-2}\right\}$};            
            \draw (-1,0) node (type) {B:};

            \draw (V1)--(V2);
            \draw (V1)--(V3);
            \draw (V2)--(V3);
            \draw[double] (V3)--(V4);
        \end{scope}
        \begin{scope}[xshift = 1.5in, yshift = -1in]
            \draw (0,0) node[draw,circle] (V1) {$n$};
            \draw (2,1) node[draw,circle] (V2) {$n-1$};
            \draw (2,-1) node[draw,circle] (V3) {$n-2$};
            \draw (5,0) node[draw,rectangle] (V4) {$\Gamma \setminus \left\{\substack{n,\\n-1,\\n-2}\right\}$};            
            \draw (-1,0) node (type) {C:};

            \draw (V1)--(V2);
            \draw (V1)--(V3);
            \draw (V2)--(V3);
            \draw[double] (V2)--(V4);
            \draw[double] (V3)--(V4);
            \draw[double] (V1)--(V4);
        \end{scope}
    \end{tikzpicture}
\end{center} 
In the diagram for type A above, $\cut_{\Gamma}(n-1) = k$. We remark that that in type $B$, the induced subgraph $\Gamma \setminus \{n-2,n-2,n\}$ may be disconnected (such as it would be for the graph in Example \ref{linear_example_11}). On the other hand, in type $C$ the vertex $n-3$ must be in the same block as $n$, $n-1$, and $n-2$, so the induced subgraph $\Gamma \setminus\{n-2,n-1,n\}$ is actually connected. \par 
\begin{example}\label{ex:linear_example_2}
    The graph $\Gamma$ from Example \ref{linear_example_11} is type B. The following is a naturally labeled type A graph.
    \begin{center} 
    \begin{tikzpicture}
            \begin{scope}[scale=1.5]
                \draw (0,0) node[draw,circle] (V1) {1};
                \draw (0.5,-1) node[draw,circle] (V2) {2};
                \draw (1.15,-1) node[draw,circle] (V3) {3};
                \draw (1,0) node[draw,circle] (V4) {4};
                \draw (1.5,1) node[draw,circle] (V5) {5};
                \draw (1.85,-1) node[draw,circle] (V6) {6};
                \draw (2.5,-1) node[draw,circle] (V7) {7};
                \draw (2,0) node[draw,circle] (V8) {8};
                \draw (3,0) node[draw,circle] (V10) {11};
                \draw (3.25,-1) node[draw,circle] (V9) {9};
                \draw (3.5,1) node[draw,circle] (V11) {10};
                \draw (4,0) node[draw,circle] (V12) {12};

                \draw (V1)--(V4);
                \draw (V2)--(V4);
                \draw (V3)--(V4);
                \draw (V2)--(V3);
                \draw (V4)--(V5);
                \draw (V5)--(V8);
                \draw (V4)--(V8);
                \draw (V6)--(V7);
                \draw (V6)--(V8);
                \draw (V7)--(V8);
                \draw (V8)--(V10);
                \draw (V9)--(V10);
                \draw (V10)--(V11);
                \draw (V10)--(V12);
            \end{scope}
        \end{tikzpicture}
    \end{center}
    A different natural label on the same graph, such as the one below, can have a different classification. The following naturally labeled graph is in the same isomorphism class as the previous, but is type B.   
    \begin{center} 
    \begin{tikzpicture}
            \begin{scope}[scale=1.5]
                \draw (0,0) node[draw,circle] (V1) {9};
                \draw (0.5,-1) node[draw,circle] (V2) {12};
                \draw (1.15,-1) node[draw,circle] (V3) {11};
                \draw (1,0) node[draw,circle] (V4) {10};
                \draw (1.5,1) node[draw,circle] (V5) {8};
                \draw (1.85,-1) node[draw,circle] (V6) {6};
                \draw (2.5,-1) node[draw,circle] (V7) {5};
                \draw (2,0) node[draw,circle] (V8) {7};
                \draw (3,0) node[draw,circle] (V10) {4};
                \draw (3.25,-1) node[draw,circle] (V9) {3};
                \draw (3.5,1) node[draw,circle] (V11) {2};
                \draw (4,0) node[draw,circle] (V12) {1};

                \draw (V1)--(V4);
                \draw (V2)--(V4);
                \draw (V3)--(V4);
                \draw (V2)--(V3);
                \draw (V4)--(V5);
                \draw (V5)--(V8);
                \draw (V4)--(V8);
                \draw (V6)--(V7);
                \draw (V6)--(V8);
                \draw (V7)--(V8);
                \draw (V8)--(V10);
                \draw (V9)--(V10);
                \draw (V10)--(V11);
                \draw (V10)--(V12);
            \end{scope}
        \end{tikzpicture}
    \end{center}
\end{example}

\subsection{The Spanning Set Revisited}\label{ssec:lin_gens_revisit}
A natural labeling provides a more convenient indexing for the splines in $\cy_\Gamma$ and $\cc_\Gamma$, using the cut decomposition from Definition \ref{def:cut_decomp}. In particular, we write 
\[\bar{y}_{i,k}^j \coloneqq \bar{y}_{\Gamma_i^j,k}^j\]
so that 
\[\cy_\Gamma = \left\{\left.\bar{y}_{i,k}^j \right| j \vdash \Gamma,\; i \in \Cut_{\Gamma}(j)\cup\{0\},\; k \in [n]\right\}.\]
Additionally, while the notation for individual splines $\bar{f}_A^s \in \cc_\Gamma$ does not change we note that, as stated in Remark \ref{rem:cutedge_component}, we choose $G_{s}$ for $s=(i<j)$ to be equal to $\Gamma_i^j$. \par 
We collect the important naturally-labeled versions of Lemmas \ref{lem:strong_y_spline}, \ref{lem:linear_relations}, and \ref{lem:hard_relation} below in Proposition \ref{prop:hard_linear_relations}.
\begin{proposition}\label{prop:hard_linear_relations}
    Let $\Gamma$ be naturally labeled, and let 
    \[
    \cb_\Gamma \coloneqq \left\{\bar{t}_i \mid i \in [n]\right\} \cup \left\{\bar{x}_i \mid i \in [n]\right\} \cup \left\{\bar{f}_A^s \left| \begin{matrix} s=(i,j) \gg \Gamma,\\ \abs{A} = \abs{\Gamma_i^j}\end{matrix}\right.\right\} \cup \left\{\bar{y}_{i,k}^j \left| \begin{matrix}(i,j) > \Gamma,\\ k \in [n] \end{matrix}\right.\right\}.
    \]
    Then the following hold:
    \begin{enumerate}
        \item If $(i,j) \gg \Gamma$, then $\bar{y}_{i,k}^j \in \C \cb_\Gamma$ for all $k \in [n]$.
        \item If $j \vdash \Gamma$, then $\bar{y}_{0,k}^j \in \C \cb_\Gamma$ for all $k \in [n]$.
        \item If $(n-1,n)$ is a cut edge of $\Gamma$, then $\bar{f}_{A}^{(n-1,n)} \in \C\cb_\Gamma$ for all $A \subset [n]$ where $\abs{A} = n-1$.
    \end{enumerate}
    In particular, $\C\cb_\Gamma = \C\cf_\Gamma$.
\end{proposition}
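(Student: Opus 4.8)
The plan is to obtain items (1)--(3) by specializing the relations of Subsection~\ref{ssec:lin_relations} to the naturally-labeled indexing of Subsection~\ref{ssec:lin_gens_revisit}, and then to read off $\C\cb_\Gamma = \C\cf_\Gamma$ from them. The point is that $\cb_\Gamma$ is exactly $\cf_\Gamma$ with three families removed: the splines $\bar y_{i,k}^j$ with $(i,j)\gg\Gamma$, the splines $\bar y_{0,k}^j$ for cut vertices $j$, and --- since every cut edge of a naturally-labeled graph other than possibly $(n-1,n)$ is a strongly dominant pair (cf.\ the remarks after Definition~\ref{def:dominant}) --- the splines $\bar f_A^{(n-1,n)}$ when $(n-1,n)$ is a cut edge. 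Thus proving items (1)--(3) shows $\cf_\Gamma\subseteq\C\cb_\Gamma$, and $\cb_\Gamma\subseteq\cf_\Gamma$ gives $\C\cb_\Gamma\subseteq\C\cf_\Gamma$, so the final equality follows at once.

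For item (1): given $(i,j)\gg\Gamma$, by definition $j$ is a cut vertex, $(i,j)$ a cut edge, $i<j$, and $G_{(i,j)}=\Gamma_i^j$, so Lemma~\ref{lem:strong_y_spline} writes $\bar y_{i,k}^j$ as a $\C$-linear combination of coset splines $\bar f_A^a$ with $a\in E(C)$ and of splines $\bar y_{G,k}^v$ with $v$ a cut vertex of $C-j$ and $G$ a component of $\Gamma-v$ disjoint from $C$. I would then check that each summand lies in $\cb_\Gamma$. For $\bar f_A^a$: either $a=(i,j)\gg\Gamma$, or $a$ is a cut edge of $\Gamma_i^j$; in the latter case, since $\Gamma_i^j$ meets the rest of $\Gamma$ only at $j$, $a$ is a cut edge of $\Gamma$, and as every vertex of $\Gamma_i^j$ is strictly farther from $n$ than $j$ and hence has label $<j\le n-1$, we get $a\ne(n-1,n)$, so $a\gg\Gamma$; either way $\bar f_A^a\in\cb_\Gamma$. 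For $\bar y_{G,k}^v$: a path from $j$ to $n$ avoids $v\in V(\Gamma_i^j)$, so $j\in\Gamma_0^v\cap C$, hence $G\ne\Gamma_0^v$, i.e.\ $G=\Gamma_{i_G}^v$ with $i_G$ its $v$-dominant vertex; and if $(i_G,v)$ were a cut edge it would be a cut edge of $\Gamma_i^j$, hence an edge of the forest defining $C$, forcing $i_G\in C$ against $G\cap C=\emptyset$. So $(i_G,v)>\Gamma$ and $\bar y_{G,k}^v=\bar y_{i_G,k}^v\in\cb_\Gamma$. This argument is uniform in $k$.

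Item (2) is then immediate from Lemma~\ref{lem:linear_relations}(4): summing over the components $\Gamma_0^j$ and $\{\Gamma_i^j\mid i\in\Cut_\Gamma(j)\}$ of $\Gamma-j$ gives $\bar y_{0,k}^j=\bar t_k-\bar x_j-\sum_{i\in\Cut_\Gamma(j)}\bar y_{i,k}^j$, and each term on the right lies in $\C\cb_\Gamma$ (by item (1) if $(i,j)\gg\Gamma$, directly if $(i,j)>\Gamma$, and $\bar t_k,\bar x_j\in\cb_\Gamma$). Item (3) is Lemma~\ref{lem:hard_relation} applied to the leaf edge $(n-1,n)$ (if $(n-1,n)$ is a cut edge, $n$ is not a cut vertex, hence a leaf): it expresses $\bar f_A^{(n-1,n)}$ as a $\C$-combination of elements of $\cf_\Gamma\setminus\{\bar f_B^{(n-1,n)}\}$, and every such element is in $\C\cb_\Gamma$ --- the $\bar t_i,\bar x_i$ and the remaining coset splines (which correspond to strongly dominant pairs) lie in $\cb_\Gamma$, while $\cy_\Gamma\subseteq\C\cb_\Gamma$ by items (1), (2) and the definition of $\cb_\Gamma$. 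Together with the case analysis of the first paragraph this gives $\cf_\Gamma\subseteq\C\cb_\Gamma$ and hence $\C\cb_\Gamma=\C\cf_\Gamma$.

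The main obstacle is the bookkeeping in item (1): matching the pieces produced by Lemma~\ref{lem:strong_y_spline} against the restricted set $\cb_\Gamma$. Specifically one must pin down (i) the orientation convention --- the lower-labeled endpoint of any cut edge of a naturally-labeled graph lies on the side away from $n$, so $\Gamma_p^q$ really is the choice of $G_s$ dictated by Remark~\ref{rem:cutedge_component}; (ii) that a cut edge internal to the pendant piece $\Gamma_i^j$ is genuinely a cut edge of $\Gamma$ and is distinct from $(n-1,n)$; and (iii) the dichotomy ``an edge appearing as an index in the $\bar y$-summands is either a cut edge, hence in $C$, or not a cut edge, hence a weakly dominant pair.'' Once these three points are established, items (2) and (3) and the final equality are routine substitutions into the relations already proved.
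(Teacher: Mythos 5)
Your proposal is correct and takes essentially the same route as the paper: item (1) is Lemma \ref{lem:strong_y_spline}, item (2) combines (1) with Lemma \ref{lem:linear_relations}(4), item (3) is Lemma \ref{lem:hard_relation} applied to the cut edge $(n-1,n)$, and the equality $\C\cb_\Gamma=\C\cf_\Gamma$ follows since $\cb_\Gamma\subseteq\cf_\Gamma$ while (1)--(3) absorb the removed families. The extra bookkeeping you carry out --- that the coset splines produced by Lemma \ref{lem:strong_y_spline} correspond to strongly dominant pairs distinct from $(n-1,n)$, and the $\bar y$-summands to weakly dominant pairs --- is exactly the verification the paper leaves implicit, and your checks of it are sound.
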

\begin{proof}
    The first relation (1) is exactly Lemma \ref{lem:strong_y_spline}. \par 
    The second relation (2) follows from (1) together with Lemma \ref{lem:linear_relations}(4). \par 
    The third relation (3) is Lemma \ref{lem:hard_relation}, applied to the cut edge $(n-1,n)$.
\end{proof}

\subsection{Technical Lemmas}\label{ssec:lin_tech_lemmas}
This subsection contains three lemmas that are used within the proof of Theorem \ref{thm:linear_spanning}.\par 
Let $S_n^i \coloneq\{w \in S_n \mid w(i) = n\}$ be a left coset of $S_{n-1}$ in $S_n$. The first Lemma \ref{lem:technical_n1} establishes what values a linear spline $\bar{\rho}$ may take on $S_n^{n-1}$ if $\bar{\rho}$ is not supported on $S_n^n = S_{n-1}$.
\begin{lemma}\label{lem:technical_n1}
    Let $\Gamma$ be naturally labeled, and $\bar{\rho} \in \splines{\Gamma}^1$ where $\bar{\rho} \equiv 0$ on $S_n^n$. If $w,v \in S_n^{n-1}$, then $\bar{\rho}(w) = c_w\left(t_n-t_{w(n)}\right)$ and $\bar{\rho}(v) = c_v\left(t_n-t_{v(n)}\right)$ for some $c_w,c_v \in \C$. Furthermore, if $w(n) = v(n)$, or $\Gamma$ is type $B/C$, then $c_w=c_v$.
\end{lemma}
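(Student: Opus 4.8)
The plan is to manufacture the scalars $c_w$ by ``bouncing'' off the coset $S_n^n$, on which $\bar\rho$ vanishes identically, and then to transport the equality $c_w=c_v$ along a path in $\cg_\Gamma$ chosen so that the edge labels it meets cannot ``see'' the relevant difference of variables. First I would record that $(n-1,n)\in E(\Gamma)$ for any naturally labeled $\Gamma$: in the construction of the natural label $n=\phi(i)$ with $i$ a vertex of a leaf block $B$, and $\phi^{-1}(n-1)$ is the vertex of $B\setminus\{i,v\}$ nearest to $i$ (or $v$ itself when $|B|=2$), which is a neighbour of $i$ because $B$ is $2$-connected; so it is an edge. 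Granting this, fix $w\in S_n^{n-1}$. The edge $(w,\,w(n-1,n))$ of $\cg_\Gamma$ has $w(n-1,n)(n)=w(n-1)=n$, so $w(n-1,n)\in S_n^n$ and $\bar\rho(w(n-1,n))=0$; the spline condition then gives $\bar\rho(w)\in\cl(w,w(n-1,n))=\langle t_{w(n-1)}-t_{w(n)}\rangle=\langle t_n-t_{w(n)}\rangle$. As $\bar\rho$ is homogeneous of degree $1$ and $t_n-t_{w(n)}\neq 0$ (since $w(n-1)=n$ forces $w(n)\neq n$), the only option is $\bar\rho(w)=c_w(t_n-t_{w(n)})$ for a unique $c_w\in\C$, and likewise for $v$. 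This gives the first assertion.

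For the equality of scalars I would treat the two hypotheses separately. Suppose first $w(n)=v(n)=:m$. Since $n$ is not a cut vertex of $\Gamma$ (this is part of the definition of a natural label), $\Gamma-n$ is connected on $[n-1]$, so the transpositions coming from its edges generate $S_{[n-1]}$; and $w^{-1}v$ fixes $n$ because $w(n)=v(n)$, hence $w^{-1}v=s_1\cdots s_\ell$ with each $s_i\in E(\Gamma-n)$. Along the path $w,\,ws_1,\,\dots,\,ws_1\cdots s_\ell=v$ in $\cg_\Gamma$ the entry $m$ stays pinned at position $n$ (each $s_i$ fixes position $n$), so every edge label $\langle t_a-t_b\rangle$ encountered has $a,b\neq m$; by Equation~(\ref{eqn:path_to_ideal}), $\bar\rho(w)-\bar\rho(v)$ lies in the ideal generated by these labels, hence in $\langle t_a-t_b : a,b\neq m\rangle$. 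But $\bar\rho(w)-\bar\rho(v)=(c_w-c_v)(t_n-t_m)$, and since every degree-$1$ element of that ideal has zero coefficient on $t_m$ while $(c_w-c_v)(t_n-t_m)$ has coefficient $-(c_w-c_v)$ there, we get $c_w=c_v$.

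Suppose now $\Gamma$ is of type B or C (Lemma~\ref{lem:graph_type}). In both cases $n-1$ is not a cut vertex of $\Gamma$: in type B the leaf block containing $n$ is $\{n-2,n-1,n\}$, whose unique cut vertex is $n-2$; in type C none of $n,n-1,n-2$ is a cut vertex. Hence $\Gamma-(n-1)$ is connected and its edges generate $S_{[n]\setminus\{n-1\}}$. Since $w,v\in S_n^{n-1}$, the element $w^{-1}v$ fixes position $n-1$, so $w^{-1}v=s_1\cdots s_\ell$ with $s_i\in E(\Gamma-(n-1))$, and every partial product $ws_1\cdots s_i$ again lies in $S_n^{n-1}$ (the entry $n$ never leaves position $n-1$). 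For each step $(u,u')$ of the resulting path the label $\langle t_a-t_b\rangle$ has $a,b\neq n$, because the position holding $n$ is untouched; applying the formula from Part~1 at $u$ and $u'$, the coefficient of $t_n$ in $\bar\rho(u)-\bar\rho(u')$ is $c_u-c_{u'}$, and an element of $\langle t_a-t_b\rangle$ with $a,b\neq n$ has $t_n$-coefficient $0$, so $c_u=c_{u'}$. Chaining along the path gives $c_w=c_v$.

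The real content is choosing the right ``detour graph'' in each case: $\Gamma-n$ when $w(n)=v(n)$ (to keep the common entry $m$ at position $n$) and $\Gamma-(n-1)$ in types B/C (to stay inside $S_n^{n-1}$), each being connected exactly because $n$, respectively $n-1$, fails to be a cut vertex; the coefficient comparisons are routine. I expect the main point to watch is type A, where $n-1$ can be a cut vertex and $\Gamma\setminus\{n-1,n\}$ can be disconnected, so that the conclusion there genuinely requires the hypothesis $w(n)=v(n)$ — i.e. one must be sure no cheaper path exists in that case, which is why the statement splits exactly as it does.
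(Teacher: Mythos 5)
Your proposal is correct and follows essentially the same route as the paper: bounce off $S_n^n$ via the edge $(n-1,n)$ to get $\bar{\rho}(w)=c_w(t_n-t_{w(n)})$, then use connectivity of $\Gamma-n$ (when $w(n)=v(n)$) or of $\Gamma-(n-1)$ (types B/C) to compare the coefficient of $t_{w(n)}$, respectively $t_n$. The only cosmetic difference is that you transport the equality edge-by-edge along a path in $\cg_\Gamma$ using Equation (\ref{eqn:path_to_ideal}), whereas the paper reduces to generating transpositions and invokes Lemma \ref{lem:diff_ideal}; the underlying argument is the same.
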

\begin{proof}
    First, since $\Gamma$ is naturally labeled it follows that $(n-1,n) \in E(\Gamma)$. We will show the first part of the claim for $w \in S_n^{n-1}$, and the same will hold for $v \in S_n^{n-1}$. If $w \in S_n^{n-1}$, then $w(n-1,n) \in S_n^n$, and $\mathcal{L}(w,w(n-1,n)) = \llangle t_{w(n-1)} - t_{w(n)}\rrangle = \llangle t_n - t_{w(n)} \rrangle$. Since $\bar{\rho}$ is linear and $\bar{\rho}(w(n-1,n)) = 0$, the first part of the claim follows. \par 
    Now we prove the second part of the claim. First we assume $w(n) = v(n)$. Two permutations $w,v \in S_n^{n-1}$ have the property $w(n) = v(n)$ if and only if  $v \in wS_{n-2}$. The claim will follow if $c_w=c_v$ for $v = w(r,s)$ whenever $(r,s) \in S_{n-2}$. Let $\{r,s\} \subset [n-2]$. As $\Gamma$ is naturally labeled and so $n$ is not a cut vertex of $\Gamma$, there exists a simple path $(r_0,r_1,...,r_m)$ in $\Gamma$ from $r=r_0$ to $s=r_m$ that does not contain $n$, and by Lemma \ref{lem:diff_ideal} \[\bar{\rho}(w) - \bar{\rho}(w(r,s)) \in \llangle t_{w(r_i)} - t_{w(r_{i-1})} \mid i \in [m]\rrangle.\] In particular, 
        \[ (c_w-c_{w(r,s)})t_n - (c_w - c_{w(r,s)}) t_{w(n)} \in \llangle t_{w(r_i)} - t_{w(r_{i-1})} \mid i \in [m]\rrangle.\]
        The monomial $t_{w(n)}$ does not appear in $\{t_{w(r_i)} - t_{w(r_{i-1})} \mid i \in [m]\}$, and thus $c_{w} = c_{w(r,s)}$. Since the transpositions $(r,s)$ generate $S_{n-2}$, the claim follows. \par 
        Now we prove the claim if $\Gamma$ is type $B/C$. If $\Gamma$ is type $B$ or type $C$ then $\Gamma-(n-1)$ is connected. It suffices to prove $c_w = c_{w(r,s)}$ for $n-1 \notin \{r,s\}$. Since $\Gamma - (n-1)$ is connected, there exists a path $(r_0,...,r_m)$ from $r=r_0$ to $s=r_m$ in $\Gamma$ that does not visit the vertex $n-1$. So
      \[ (c_w-c_{w(r,s)})t_n - c_wt_{w(n)} - c_{w(r,s)} t_{w(r,s)(n)} \in \llangle t_{w(r_i)} - t_{w(r_{i-1})} \mid i \in [m]\rrangle.\]
      Now $t_n = t_{w(n-1)}$ never appears in $\left\{ t_{w(r_i)} - t_{w(r_{i-1})} \mid i \in [m]\right\}$, and thus $c_w=c_v$.
\end{proof}
The second two lemmas assume that $(i,j)$ is a dominant pair in $\Gamma$, and establish what values a linear spline $\bar{\rho}$ may take on $S_n^i$ if $\supp(\bar{\rho}) \cap S_n^j = \emptyset$. Lemma \ref{lem:technical_cutedge} below assumes $(i,j)$ is strongly dominant, and relates $\bar{\rho}(w)$ to $\bar{\rho}(v)$ if $w$ and $v$ are in the same coset $S_n^i$ and they are also in the same coset of the reflection subgroup generated by the transpositions $E(\Gamma) \setminus\{(i,j)\}$.
\begin{lemma}\label{lem:technical_cutedge}
    Let $\Gamma$ be cliqued and naturally labelled, and say $(i,j) \gg \Gamma$. Let $\bar{\rho} \in \splines{\Gamma}^1$ where $\bar{\rho} \equiv 0$ on $S_n^j$. If $w,v \in S_n^{i}$, then $\bar{\rho}(w) = c_w\left(t_n-t_{w(j)}\right)$ and $\bar{\rho}(v) = c_v\left(t_n-t_{v(j)}\right)$ for some $c_w,c_v \in \C$. Furthermore, if $v \in w\langle E(\Gamma)\setminus (i,j)\rangle$, then $c_w=c_v$.
\end{lemma}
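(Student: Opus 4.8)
The plan is to follow the structure of the proof of Lemma~\ref{lem:technical_n1} (the special case $(i,j)=(n-1,n)$), but to replace the ``choose a path avoiding a vertex'' step — which breaks here because the component of $\Gamma-(i,j)$ containing $j$ need not remain connected after deleting $j$ — with a monomial-ideal computation anchored at the bridge $(i,j)$.

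First I would establish the two normal forms. Since $(i,j)\gg\Gamma$ is in particular a cut edge of $\Gamma$, it is an edge; and since $j\vdash\Gamma$, the natural label forces $j\le n-1$ and $i<j$, so for $w\in S_n^{i}$ we have $w(i)=n$ and $w(j)\ne n$. Then $w(i,j)\in S_n^{j}$, whence $\bar\rho(w(i,j))=0$ by hypothesis, while $(w,w(i,j))\in E(\cg_\Gamma)$ carries the label $\cl(w,w(i,j))=\llangle t_{w(i)}-t_{w(j)}\rrangle=\llangle t_n-t_{w(j)}\rrangle$. As $\bar\rho$ is linear, this forces $\bar\rho(w)=c_w(t_n-t_{w(j)})$ for a scalar $c_w$, and identically $\bar\rho(v)=c_v(t_n-t_{v(j)})$.

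For the ``furthermore,'' fix a spanning tree $T$ of $\Gamma$; since $(i,j)$ is a bridge, $(i,j)\in E(T)$, and deleting it splits $T$ into $T_i$, spanning the component $\Gamma_i^{j}$ of $([n],E(\Gamma)\setminus\{(i,j)\})$ containing $i$, and $T_H$, spanning the other component $H\ni j$ (which also contains $n$). Because $\Gamma_i^{j}$ and $H$ are connected, $\langle E(T)\setminus\{(i,j)\}\rangle=S_{V(\Gamma_i^{j})}\times S_{V(H)}=\langle E(\Gamma)\setminus\{(i,j)\}\rangle$, so the hypothesis $v\in w\langle E(\Gamma)\setminus\{(i,j)\}\rangle$ gives $v\in w\langle E(T)\setminus\{(i,j)\}\rangle$, and Lemma~\ref{lem:diff_ideal} yields $\bar\rho(w)-\bar\rho(v)\in I_B^{w}$ with $B\coloneqq E(T)\setminus\{(i,j)\}$. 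Following $\S$\ref{ssec:structure:implied_conditions}, treat $\{t_{w(a)}-t_{w(b)}\mid (a,b)\in E(T)\}$ together with the extra generator $t_n$ as algebraically independent generators of $\C[t_\bullet]$; then $I_B^{w}$ is the monomial ideal generated by all of these tree-differences \emph{except} $t_{w(i)}-t_{w(j)}=t_n-t_{w(j)}$. Writing $v=w\sigma$ with $\sigma\in S_{V(\Gamma_i^{j})}\times S_{V(H)}$, we have $\sigma(j)\in V(H)$, and the unique $T$-path from $j$ to $\sigma(j)$ stays inside $T_H$, so $t_{w(j)}-t_{v(j)}\in I_B^{w}$. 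Hence $\bar\rho(v)=c_v(t_n-t_{v(j)})\equiv c_v(t_n-t_{w(j)})\pmod{I_B^{w}}$, and therefore $\bar\rho(w)-\bar\rho(v)\equiv(c_w-c_v)(t_n-t_{w(j)})\pmod{I_B^{w}}$. Since the left-hand side lies in $I_B^{w}$ while $t_n-t_{w(j)}$ is one of the independent generators \emph{not} among the generators of $I_B^{w}$, we conclude $c_w=c_v$.

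The main obstacle — and the reason one cannot simply recycle the argument of Lemma~\ref{lem:technical_n1} — is that $H-j$ may be disconnected, so two permutations $w,v$ in a common coset of $\langle E(\Gamma)\setminus\{(i,j)\}\rangle$ need not be joined by transpositions realizable along paths that avoid $j$; isolating the bridge $(i,j)$ inside a spanning tree and arguing with the associated monomial ideal is what makes the cancellation of $c_w-c_v$ go through. The only routine care needed is to record that $w(i)=n$ and $w(j)\ne n$, so that the relevant edge label and the distinguished tree-variable are exactly $t_n-t_{w(j)}$.
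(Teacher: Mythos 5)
Your proposal is correct. The first claim is handled exactly as in the paper: the edge $(w,w(i,j))$ into $S_n^j$, where $\bar\rho$ vanishes, carries label $\llangle t_n-t_{w(j)}\rrangle$, and linearity gives the normal form. For the ``furthermore,'' however, you take a genuinely different route. The paper replaces $E(\Gamma)\setminus\{(i,j)\}$ by the larger generating set of all transpositions supported inside $V(\Gamma_i^j)$ or inside $[n]\setminus V(\Gamma_i^j)$, and then checks $c_w=c_{w\tau}$ one generator $\tau$ at a time, each time choosing a path in $\Gamma$ avoiding $j$ (respectively avoiding $i$) and invoking Lemma \ref{lem:diff_ideal} along that path so that the variable $t_{w(j)}$ (respectively $t_n$) is absent from the ideal. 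You instead isolate the bridge $(i,j)$ inside a spanning tree $T$, apply Lemma \ref{lem:diff_ideal} once with $B=E(T)\setminus\{(i,j)\}$ to the whole coset condition, observe that $t_{w(j)}-t_{v(j)}\in I_B^w$ because $\sigma(j)$ stays in the $j$-side component, and finish with the change-of-variables/monomial-ideal argument of Subsection \ref{ssec:structure:implied_conditions}: the residual linear form $(c_w-c_v)(t_n-t_{w(j)})$ is a scalar multiple of the one tree-variable excluded from $I_B^w$, forcing $c_w=c_v$. All the steps check out: $\langle E(T)\setminus\{(i,j)\}\rangle=S_{V(\Gamma_i^j)}\times S_{V(H)}=\langle E(\Gamma)\setminus\{(i,j)\}\rangle$ because spanning-tree transpositions of each component generate its full symmetric group, and the $T$-path from $j$ to $\sigma(j)$ indeed lies in $T_H$. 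What your version buys is a single uniform argument that treats an arbitrary coset element at once, rather than an induction over subgroup generators with case-by-case path choices; what the paper's version buys is that it never needs the spanning-tree/monomial change of coordinates here and works directly with paths in $\Gamma$, matching the style of Lemma \ref{lem:technical_n1} and Lemma \ref{lem:technical_cutvert}.
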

\begin{proof}
    First, since $\Gamma$ is naturally labeled it follows that $(i,j) \in E(\Gamma)$. We will show the first part of the claim for $w \in S_n^i$, and the same will hold for $v \in S_n^i$. If $w \in S_n^i$, then $w(i,j) \in S_n^j$, and $\mathcal{L}(w,w(i,j)) = \llangle t_{w(i)} - t_{w(j)}\rrangle = \llangle t_n - t_{w(j)} \rrangle$. Since $\bar{\rho}$ is linear and $\bar{\rho}(w(i,j)) = 0$, the first part of the claim follows from the definition of a spline on $\cg_\Gamma$. \par 
    The reflection subgroup $\llangle E(\Gamma) \setminus\{(i,j)\}\rrangle$ is also generated by the transpositions $\{(r,s) \mid \{r,s\} \subset V(\Gamma_i^j)\} \cup \{(p,q) \mid \{p,q\} \subset [n] \setminus V(\Gamma_i^j)\}$, as this set contains $E(\Gamma) \setminus\{i,j\}$. We will show that, for those generating transpositions, $c_w = c_{w(r,s)}$ and $c_w = c_{w(p,q)}$. \par 
    First we show $c_w = c_{w(r,s)}$ for $\{r,s\} \subset V(\Gamma_i^j)$. Let $(r_0,...,r_m)$ be a path in $\Gamma_i^j$ from $r=r_0$ to $r_m=s$, then by Lemma \ref{lem:diff_ideal} \[\bar{\rho}(w) - \bar{\rho}(w(r,s)) \in \llangle t_{w(r_k)} - t_{w(r_{k-1})} \mid k \in [m]\rrangle.\] In particular, 
        \[ (c_w-c_{w(r,s)})t_n - (c_w - c_{w(r,s)}) t_{w(j)} \in \llangle t_{w(r_k)} - t_{w(r_{k-1})} \mid k \in [m]\rrangle.\]
        Since $j$ is not in the path $(r_0,...,r_m)$, the monomial $t_{w(j)}$ does not appear in $\{t_{w(r_k)} - t_{w(r_{k-1})} \mid k \in [m]\}$, and thus $c_{w} = c_{w(r,s)}$.\par 
        Now let $\{p,q\} \subset [n] \setminus V(\Gamma_i^j)$. Since $(i,j)$ is a cut edge, the induced subgraph of $\Gamma$ with vertex set $[n] \setminus V(\Gamma_i^j)$ is connected. Let $(p_0,...,p_m)$ be a path from $p = p_0$ to $q=p_m$ in $\Gamma$ that does \emph{not} contain $i$. 
        By Lemma \ref{lem:diff_ideal} \[\bar{\rho}(w) - \bar{\rho}(w(p,q)) \in \llangle t_{w(p_k)} - t_{w(p_{k-1})} \mid k \in [m]\rrangle.\] In particular, 
        \[ (c_w-c_{w(p,q)})t_n - c_wt_{w(j)} + c_{w(p,q)} t_{w(p,q)(j)} \in \llangle t_{w(p_k)} - t_{w(p_{k-1})} \mid k \in [m]\rrangle.\]
        Since $i$ is not in the path $(p_0,...,p_m)$, the monomial $t_n = t_{w(i)}$ does not appear in $\{t_{w(p_k)} - t_{w(p_{k-1})} \mid k \in [m]\}$, and thus $c_w=c_{w(p,q)}$.\par 
        Since the reflection subgroup $\langle E(\Gamma)\setminus(i,j)\rangle$ is generated by $\{(r,s) \mid \{r,s\} \subset \Gamma_i^j\} \cup \{(p,q) \mid \{p,q\} \in [n]\setminus \Gamma_i^j\}$, the claim follows.
\end{proof}
Lemma \ref{lem:technical_cutvert} below assumes that $(i,j)$ is weakly dominant, then relates $\bar{\rho}(w)$ and $\bar{\rho}(v)$ if $w$ and $v$ are in the same coset $S_n^i$.
\begin{lemma}\label{lem:technical_cutvert}
    Let $\Gamma$ be cliqued and naturally labelled, and say $(i,j)>\Gamma$. Let $\bar{\rho} \in \splines{\Gamma}^1$ where $\bar{\rho} \equiv 0$ on $S_n^j$. If $w,v \in S_n^{i}$, then $\bar{\rho}(w) = c_w\left(t_n-t_{w(j)}\right)$ and $\bar{\rho}(v) = c_v\left(t_n-t_{v(j)}\right)$ for some $c_w,c_v \in \C$. Furthermore, $c_w=c_v$.
\end{lemma}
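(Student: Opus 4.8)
The plan is to follow the pattern of Lemma~\ref{lem:technical_cutedge}: first pin down the shape of $\bar{\rho}$ on $S_n^i$, then improve its ``equal on a reflection-subgroup coset'' conclusion to ``equal on all of $S_n^i$'', the improvement being exactly what the failure of $(i,j)$ to be a cut edge buys us. For the shape: since $\Gamma$ is naturally labelled and $(i,j)$ is a dominant pair, $(i,j)\in E(\Gamma)$; for $w\in S_n^i$ we have $w(i,j)\in S_n^j$, where $\bar{\rho}$ vanishes, and $\mathcal{L}(w,w(i,j))=\langle t_{w(i)}-t_{w(j)}\rangle=\langle t_n-t_{w(j)}\rangle$, so linearity of $\bar{\rho}$ forces $\bar{\rho}(w)=c_w(t_n-t_{w(j)})$ for some $c_w\in\C$ (and $t_n-t_{w(j)}\ne 0$ since $w(i)=n$, $i\ne j$); likewise at $v$.

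The core step is to exhibit a set $T$ of transpositions of $[n]\setminus\{i\}$ such that (i) $c_w=c_{w\tau}$ for every $w\in S_n^i$ and every $\tau\in T$, and (ii) $T$ generates $\mathrm{Stab}_{S_n}(i)=S_{[n]\setminus\{i\}}$. For a transposition $\tau=(a,b)$ with $a,b\ne i$ and a path $\pi$ from $a$ to $b$ in $\Gamma$, Lemma~\ref{lem:diff_ideal} places $\bar{\rho}(w)-\bar{\rho}(w\tau)$ in the ideal generated by the differences $t_{w(p)}-t_{w(p')}$ along the edges of $\pi$; since $w(q)=n$ precisely when $q=i$ and $w\tau(i)=w(i)=n$, comparing the coefficient of a variable absent from that ideal in this degree-$\le 1$ relation will force $c_w=c_{w\tau}$. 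Three families of $\tau$ suffice. First, if $a,b\in V(\Gamma_i^j)\setminus\{i\}$, take $\pi$ inside the connected graph $\Gamma_i^j$; it misses $j$, and as $w\tau$ also fixes $j$ the relation is $(c_w-c_{w\tau})(t_n-t_{w(j)})\in\langle\cdots\rangle$ with $t_{w(j)}$ absent on the right, so $c_w=c_{w\tau}$. Second, the subgraph of $\Gamma$ induced on $[n]\setminus V(\Gamma_i^j)$ is connected --- every edge leaving $\Gamma_i^j$ ends at the cut vertex $j$, which re-connects the remaining components of $\Gamma-j$ --- so any $a,b\in[n]\setminus V(\Gamma_i^j)$ are joined by a path $\pi$ avoiding $i$; comparing the coefficient of $t_n$ (absent on the right, since no vertex of $\pi$ is $i$) gives $c_w=c_{w\tau}$, and the same works when $b=j$, the relation then merely reading $(c_w-c_{w\tau})t_n-c_w t_{w(j)}+c_{w\tau}t_{w(a)}\in\langle\cdots\rangle$ with $t_n$ still the decisive variable. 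Third, because $(i,j)>\Gamma$ is weakly dominant it is not a cut edge, so there is a path from $i$ to $j$ in $\Gamma$ not using the edge $(i,j)$; its penultimate vertex is some $a^\ast\in V(\Gamma_i^j)\setminus\{i\}$ with $(a^\ast,j)\in E(\Gamma)$, and the length-one path $(a^\ast,j)$ yields $c_w=c_{w(a^\ast,j)}$ exactly as in the second family.

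Finally I would assemble these: $T$ contains every transposition inside $V(\Gamma_i^j)\setminus\{i\}$, every transposition inside $[n]\setminus V(\Gamma_i^j)$, and $(a^\ast,j)$, so the graph on $[n]\setminus\{i\}$ whose edges are the pairs in $T$ is two cliques linked by the edge $(a^\ast,j)$ --- connected, with both cliques nonempty since $|\Gamma_i^j|\ge 2$ for a weakly dominant pair (else $i$ would meet only $j$ and $(i,j)$ would be a cut edge) and $n\in\Gamma_0^j\subseteq[n]\setminus V(\Gamma_i^j)$. Transpositions whose graph is connected on a set $U$ generate $S_U$, so $\langle T\rangle=S_{[n]\setminus\{i\}}$. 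Given $w,v\in S_n^i$, $w^{-1}v$ fixes $i$, hence $w^{-1}v=\tau_1\cdots\tau_m$ with $\tau_\ell\in T$; each partial product $w\tau_1\cdots\tau_\ell$ lies in $S_n^i$ and $c$ is unchanged at each step, so $c_w=c_v$.

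I expect the delicate point to be the third family --- extracting $a^\ast$ from the non-cut-edge hypothesis --- together with the bookkeeping in the coefficient comparisons: in each case one must check precisely which of $t_n$ or $t_{w(j)}$ is guaranteed absent from the relevant path ideal and whether the chosen $\tau$ fixes the index $j$ (which determines whether $w\tau(j)=w(j)$ or $w\tau(j)=w(a)$). Everything else is a routine adaptation of the arguments already used for Lemmas~\ref{lem:technical_n1} and~\ref{lem:technical_cutedge}.
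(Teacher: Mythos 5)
Your proof is correct, but for the ``furthermore'' part it takes a genuinely different route from the paper. The paper stays inside the Cayley graph: using cliquedness (Lemma~\ref{lem:natural_label}(3)) it produces an edge $(i,k)\in E(\Gamma)$ with $k\in\Gamma_i^j$ adjacent to $j$, proves the slightly stronger claim that the constant agrees on all of $S_n^i\sqcup S_n^k$, and does so by showing the induced subgraph of $\cg_\Gamma$ on $S_n^i\sqcup S_n^k$ is connected (the delicate step, with explicit factorizations such as $w(r,s)=w(i,k)(i,r)(i,s)(i,r)(i,k)$), comparing coefficients only along genuine edges of $\cg_\Gamma$. You instead stay inside the single coset $S_n^i$ and adapt the paper's proof of the cut-edge Lemma~\ref{lem:technical_cutedge}: Lemma~\ref{lem:diff_ideal} lets you compare $c_w$ with $c_{w\tau}$ for any transposition $\tau$ fixing $i$ realized by a simple path in $\Gamma$ (the path's edges lie in a spanning tree, so the lemma applies), and your three families --- transpositions inside $V(\Gamma_i^j)\setminus\{i\}$, transpositions inside $[n]\setminus V(\Gamma_i^j)$ (whose induced subgraph is indeed connected through $j$), and the bridge $(a^\ast,j)$ extracted correctly from the failure of $(i,j)$ to be a cut edge --- form a connected transposition graph on $[n]\setminus\{i\}$, hence generate $\mathrm{Stab}_{S_n}(i)$, which is exactly what is needed to link any two elements of $S_n^i$. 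The coefficient bookkeeping is right in each case (the coefficient of $t_{w(j)}$ in the first family, of $t_n$ in the other two, using that a linear element of an ideal generated by linear forms is a constant-coefficient combination of them --- the same convention the paper uses in Lemmas~\ref{lem:technical_n1} and~\ref{lem:technical_cutedge}). What each approach buys: yours bypasses the paper's hardest step (connectivity of $S_n^i\sqcup S_n^k$ in $\cg_\Gamma$) and never actually uses the cliqued hypothesis, so it is marginally more general; the paper's argument, in exchange, only ever invokes spline conditions along actual edges of $\cg_\Gamma$ and additionally shows the constant agrees across the two cosets $S_n^i$ and $S_n^k$, information your single-coset argument does not produce (though it is not needed for the lemma).
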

\begin{proof}
    The proof of first part of this claim is identical to the first part of the proof of Lemma \ref{lem:technical_cutedge}. \par 
    Since $(i,j)$ is not a cut edge and $\Gamma$ is naturally labeled, there exists $(k,j) \in E(\Gamma)$ with $k<i<j$ and $k \in \Gamma_i^j$. By Lemma \ref{lem:natural_label}(3), $(i,k) \in E(\Gamma)$. If $u \in S_n^k$ then $\bar{\rho}(u) = c_u\left(t_n-t_{u(j)}\right)$ for the same reason that $\bar{\rho}(w)$ and $\bar{\rho}(v)$ take this form. We will prove the slightly stronger claim that $c_w=c_v$ for all $w,v \in S_n^i \sqcup S_n^k$. We proceed for now assuming that the induced subgraph of $\cg_\Gamma$ with vertex set $S_n^i \sqcup S_n^k$ is connected, and will verify that this assumption holds afterwards. \par 
    If $S_n^i \sqcup S_n^k$ is connected, it will suffice to check if $c_w=c_v$ for adjacent elements $w,v \in S_n^i \sqcup S_n^k$. We check edges in two cases: those within $S_n^i$ (resp. $S_n^k$), and the edges $(w,w(i,k))$ between $S_n^i$ and $S_n^k$.\par  
         If $(w,w(p,q))$ is an edge in $\cg_\Gamma$ between elements of $S_n^i$, then $i \notin\{p,q\}$ and \[\bar{\rho}(w) - \bar{\rho}(w(p,q)) = (c_w-c_{w(p,q)})t_n - c_wt_{w(j)} + t_{w(p,q)(j)} \in \llangle t_{w(q)}-t_{w(p)}\rrangle\]
         Since $n \notin \{w(p),w(q)\}$, it follows $c_w=c_{w(p,q)}$. The same logic holds for edges in $\cg_\Gamma$ between two elements of $S_n^k$. \par 
         For edges $(w,w(i,k))$ in $\cg_\Gamma$ between $w \in S_n^i$ and $w(i,k) \in S_n^k$, compute 
         \[\bar{\rho}(w) - \bar{\rho}(w(i,k)) = c_wt_n-c_{w(i,k)}t_{w(k)} - (c_w-c_{w(i,k)})t_{w(j)} \in \llangle t_{w(i)}-t_{w(k)}\rrangle \] 
         Since $w(j) \notin \{w(i),w(k)\}$, it follows that $c_w=c_{w(i,k)}$.\par 
         If $S_n^i\sqcup S_n^k$ is connected, and equality holds on every edge, it follows that $c_w=c_v$ for all $w,v \in S_n^i$.\par 
        Now we will prove that the induced subgraph of $\cg_\Gamma$ with vertex set $S_n^i \sqcup S_n^k$ is connected. Since $\Gamma$ is cliqued, $(i,k) \in E(\Gamma)$. In particular, if $w \in S_n^i$ then $w$ is connected in $\cg_\Gamma$ directly to $w(i,k)$ in $S_n^k$.\par 
         Let $\{r,s\} \subset [n] \setminus\{i\}$. We prove in three cases that for all $r,s \neq i$, the permutation $w \in S_n^i$ is connected to $w(r,s) \in S_n^i$ within the induced subgraph of $\cg_\Gamma$ with vertex set $S_n^i\sqcup S_n^k$. It will follow by symmetry (replace $i$ with $k$) and that the induced subgraph of $\cg_\Gamma$ with vertex set $S_n^i\sqcup S_n^k$ is connected. The three cases are (i) there exists a simple path from $r$ to $s$ in $\Gamma$ that does not visit the vertex $i$, (ii) simple paths from $r$ to $s$ in $\Gamma$ must visit $i$, but need not visit $k$, and (iii) simple paths from $r$ to $s$ must visit the vertex $i$ and the vertex $k$. \par
         (i) If there exists a path $(p_0,...,p_\ell)$ in $\Gamma - i$ from $r=p_0$ to $s=p_\ell$, then $w$ is connected to $w(r,s)$ via only elements in $S_n^i$ since
\[ w(r,s) = (p_0,p_1) \cdots (p_{\ell-1},p_\ell) \cdots (p_0,p_1).\]         
         We will use this path computation implicitly in (ii) and (iii).\par 
         (ii) If there exists a path $(r,...,i,...,s)$ from $r$ to $s$ in $\Gamma$ containing $i$ but not $k$, then $r$, $i$, and $s$ are in the same connected component in $\Gamma - k$. Consider
         \[w(r,s) = w(i,k)(i,r)(i,s)(i,r)(i,k).\]
         This sequence of transpositions gives a path in $S_n^i \sqcup S_n^k$ from $w$ to $w(r,s).$ Below is a diagram that shows how each right multiplication moves between $S_n^i$ and $S_n^k$. 
         \begin{center}
                 \begin{tikzpicture}[scale=2.0]
                 \draw (0,0) node (V1) {$S_n^i$};
                 \draw (1.5,0) node (V2) {$S_n^k$};
                 \draw (3,0) node (V3) {$S_n^i$};

                 \path[->] (V1) edge node[below] {$(i,k)$} (V2) 
                   (V2) edge[out=45,in=125,looseness=2.5] node[above] {$(i,r)(i,s)(i,r)$} (V2) 
                   (V2) edge node[below] {$(i,k)$} (V3);
             \end{tikzpicture}
         \end{center}\par 
         (iii) If both $i$ and $k$ must be in a simple path from $r$ to $s$, it suffices to assume this path takes the form $(r,...,i,k,...,s)$. In particular, the first piece $(r,...,i)$ is a path in $\Gamma - k$ and the second piece $(k,...,s)$ is a path in $\Gamma - i$. Consider 
         \[
         w(r,s) = w(i,k)(r,j)(i,k)(j,k)(k,s)(j,k)(i,k)(r,j)(i,k).
         \]
         This sequence of transpositions gives a path from $w$ to $w(r,s)$ in $S_n^i\sqcup S_n^k$. Below is a diagram detailing how each right multiplication moves between $S_n^i$ and $S_n^k$.
         \begin{center}
             \begin{tikzpicture}[scale=2.0]
                 \draw (0,0) node (V1) {$S_n^i$};
                 \draw (1,0) node (V2) {$S_n^k$};
                 \draw (2.5,0) node (V3) {$S_n^i$};
                 \draw (4,0) node (V4) {$S_n^k$};
                 \draw (5,0) node (V5) {$S_n^i$};

                 \path[->] (V1) edge node[below] {$(i,k)$} (V2) 
                   (V2) edge[out=45,in=125,looseness=2.5] node[above] {$(r,j)$} (V2) 
                   (V2) edge node[below] {$(i,k)$} (V3) 
                   (V3) edge[out=45,in=125,looseness=2.5] node[above] {$(j,k)(k,s)(j,k)$} (V3) 
                   (V3) edge node[below] {$(i,k)$} (V4) 
                   (V4) edge[out=45,in=125,looseness=2.5] node[above] {$(r,j)$} (V4)
                   (V4) edge node[below] {$(i,k)$} (V5) ;
             \end{tikzpicture}
         \end{center}\par 
         So subgraph with vertices $S_n^i\sqcup S_n^k$ is connected, our earlier assumption is verified and we have the claim.
\end{proof}

\section{Proof of the Linear Spanning Theorem}\label{sec:lin_span}
This section shows that the collection $\cf_\Gamma$ from Equation \ref{eqn:linear_generators} (below Lemma \ref{lem:linear_generators}) is a $\C$-spanning set of $\splines{\Gamma}^1$. In other words, we prove $\C\cf_\Gamma = \splines{\Gamma}^1$. First we require a lemma on the compatibility of these splines on $S_n$ with splines on $S_{n-1}$.
\begin{lemma}\label{lem:base_case}
    Let $\Gamma$ on $[n]$ be cliqued and naturally labelled. Let $\cf_\Gamma^{(n)} \coloneqq \{\bar{\rho}\vert_{S_{n-1}} \mid \bar{\rho} \in \cf_{\Gamma},\; \bar{\rho}(w) \in \polyr{n-1}\;\text{for all}\;w \in S_{n-1}\}$. Then 
    \[\C\cf_\Gamma^{(n)} = \C\cf_{\Gamma-n}.\]
\end{lemma}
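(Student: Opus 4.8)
The statement asserts that, after restricting to $S_{n-1}$, the $\C$-span of the relevant linear splines on $\cg_\Gamma$ equals the $\C$-span of the linear splines on $\cg_{\Gamma-n}$. My plan is to prove this by exhibiting the two inclusions $\C\cf_\Gamma^{(n)} \subseteq \C\cf_{\Gamma-n}$ and $\C\cf_{\Gamma-n} \subseteq \C\cf_\Gamma^{(n)}$ separately, and in both directions the work is to track how the four constituent families $\ct_n$, $\cx_n$, $\cc_\Gamma$, $\cy_\Gamma$ behave under the operation of deleting the vertex $n$. The key structural input is that $\Gamma$ is cliqued and naturally labelled, so by Lemma \ref{lem:graph_type} it is one of types A, B, or C, and in every type the vertex $n$ is \emph{not} a cut vertex of $\Gamma$. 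I would first record the elementary observations: $\bar{t}_i\vert_{S_{n-1}} = \bar{t}_i$ (the generator of $\ct_{n-1}$) for $i \in [n-1]$, while $\bar{t}_n\vert_{S_{n-1}} = t_n\bar{\mathbb{1}}$, which has a coefficient outside $\polyr{n-1}$ and so is excluded by the defining condition ``$\bar{\rho}(w)\in\polyr{n-1}$''; and $\bar{x}_i\vert_{S_{n-1}} = \bar{x}_i$ for $i\in[n-1]$ (since $w\in S_{n-1}$ means $w(i)\in[n-1]$), while $\bar{x}_n\vert_{S_{n-1}} = t_n\bar{\mathbb{1}}$, again excluded. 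So the ``$\ct_n\cup\cx_n$'' part of $\cf_\Gamma^{(n)}$ is exactly $\ct_{n-1}\cup\cx_{n-1}$.

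Next I would handle cut vertices and cut edges. The main point is that, because $n$ is not a cut vertex, the cut vertices of $\Gamma-n$ are closely related to those of $\Gamma$: a vertex $j\neq n$ is a cut vertex of $\Gamma-n$ iff it is a cut vertex of $\Gamma$ that still separates after removing $n$, and similarly for cut edges, with the one subtlety that removing $n$ can \emph{create} new cut vertices/cut edges (e.g. in type A the vertex $n-1$ may behave differently) or destroy the cut edge $(n-1,n)$. This is where Lemma \ref{lem:graph_type} does the heavy lifting: in each of the three types I can explicitly compare the block-cut structure of $\Gamma$ and $\Gamma-n$. For a cut vertex $j$ of $\Gamma$ with $j\neq n$ and a component $G$ of $\Gamma-j$ not containing $n$, the spline $\bar{y}_{G,k}^j$ with $k\in[n-1]$ restricts to the analogous spline on $\cg_{\Gamma-n}$, while $\bar{y}_{G,n}^j$ and $\bar{y}_{\Gamma_0^j, k}^j$ (the component containing $n$) either restrict to something involving $t_n$ and are excluded, or — using the relations in Proposition \ref{prop:hard_linear_relations} and Lemma \ref{lem:linear_relations} — are already expressible via the surviving generators. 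Likewise each $\bar{f}_A^s\in\cc_\Gamma$ with $s$ a cut edge not equal to $(n-1,n)$: if $n\notin A$ and $n\notin V(G_s)$ (which we can arrange by the choice in Remark \ref{rem:cutedge_component}), then $\bar{f}_A^s\vert_{S_{n-1}}$ is the corresponding coset spline for $\Gamma-n$; the terms with $n\in A$ are handled by Lemma \ref{lem:linear_relations}(2) which sums them to $\bar{x}_i-\bar{x}_j$, reducing them to surviving generators modulo the ones we keep.

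For the reverse inclusion, I would take each generator of $\cf_{\Gamma-n}$ and produce a spline in $\C\cf_\Gamma$ restricting to it. Given $\bar{y}_{G,k}^j\in\cy_{\Gamma-n}$ (so $j$ is a cut vertex of $\Gamma-n$, $k\in[n-1]$), the component $G$ of $(\Gamma-n)-j$ either is still a component of $\Gamma-j$ or gets merged with others through $n$; in the former case the same-named spline for $\Gamma$ works, and in the latter I would use the relations of Lemma \ref{lem:linear_relations}(4) (fixing $k$ and summing $\bar{y}_{G',k}^j$ over components $G'$ of $\Gamma-j$ that merge into $G$) together with the $\ct_n,\cx_n$ generators. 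The analogous argument applies to $\bar{f}_A^s\in\cc_{\Gamma-n}$, and here the delicate case is a cut edge $s$ of $\Gamma-n$ that is not a cut edge of $\Gamma$, i.e. $s$ became a cut edge only after deleting $n$; one then uses that $n$ must lie in the unique block of $\Gamma$ bridging the two sides of $s$, and expresses $\bar{f}_A^s$ on $\Gamma-n$ as a restriction of a $\C$-combination of a $\bar{y}$-spline at the relevant cut vertex of $\Gamma$ plus $\ct/\cx$-terms. I expect the main obstacle to be exactly this bookkeeping of how the cut-edge/cut-vertex data of $\Gamma$ changes under deletion of $n$ — organizing the case analysis along the three types of Lemma \ref{lem:graph_type} so that every exceptional generator (those involving $t_n$, the cut edge $(n-1,n)$, the component $\Gamma_0^j$ containing $n$, and any newly-created cuts) is cleanly matched against a relation already proved in Section \ref{sec:lin_gen_rels} and in Proposition \ref{prop:hard_linear_relations}. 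Once the matching of generators is set up type-by-type, both inclusions follow by taking $\C$-spans.
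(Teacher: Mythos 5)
Your proposal is correct and follows essentially the same route as the paper's own proof: restrict the generators, identify which survive the condition $\bar{\rho}(w)\in\polyr{n-1}$ (namely $\ct_{n-1}\cup\cx_{n-1}$, the $\bar{f}_A^s$ with $s\neq(n-1,n)$ and $n\notin A$, and the $\bar{y}_{G,k}^j$ with $k\neq n$), then match the cut vertices/cut edges of $\Gamma$ and $\Gamma-n$ type-by-type via Lemma \ref{lem:graph_type} and absorb the mismatches (the vanishing cut vertex $n-1$ in type A, the new cut edge $(n-2,n-1)$ in type B) using the relations of Lemma \ref{lem:linear_relations} and Lemma \ref{lem:hard_relation}. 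Two cosmetic differences: the coset splines with $n\in A$ simply restrict to the zero function (given the choice $n\notin V(G_s)$), so no appeal to Lemma \ref{lem:linear_relations}(2) is needed there; and for the type-B cut edge $(n-2,n-1)$ of $\Gamma-n$ your idea of realizing $\bar{f}_{[n-1]\setminus\{k\}}^{(n-2,n-1)}$ directly as (minus) the restriction of $\bar{y}_{\Gamma_0^{n-2},k}^{n-2}$ does work, whereas the paper instead applies Lemma \ref{lem:hard_relation} inside $\Gamma-n$ — either mechanism closes that case.
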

\begin{proof}
     First, note that the Cayley graph $\cg_{\Gamma - n}$ is equal to the induced subgraph of $\cg_\Gamma$ with vertex set $S_{n-1}$. In particular, each element of $\cf_\Gamma^{(n)}$ is in fact a spline in $\splines{\Gamma - n}$ \par 
     By Lemma \ref{lem:linear_relations}(4), for each cut vertex $j$ in $\Gamma$ and $G$ the connected component of $\Gamma - j$ that contains $n$, we may remove the splines $\{\bar{y}_{G,k}^j\vert_{S_{n-1}} \mid k \in [n]\}$ from $\cf_{\Gamma}^{(n)}$ without changing the $\C$-span. Similarly by Lemma \ref{lem:linear_relations}(4), for each cut vertex $j \neq n-1$ of $\Gamma - n$ and connected component $G$ of $(\Gamma - n)-j$ that contains $n-1$,  we may remove the splines $\{\bar{y}_{G,k}^j \mid k \in [n]\}$ from $\cf_{\Gamma-n}$. \par 
     Let $\bar{\rho} \in \cf_{\Gamma}$ such $\bar{\rho}\vert_{S_{n-1}}$ is a nonzero element of $\cf_\Gamma^{(n)}$. This means that $\bar{\rho} \neq \bar{t}_n$ and $\bar{\rho} \neq \bar{x}_n$. Additionally, by the definitions if $\bar{\rho} = \bar{f}_A^s$ we must have $n \notin A$ (otherwise $\bar{\rho} \equiv 0$) and $s \neq (n-1,n)$, and if $\bar{\rho} = \bar{y}_{i,k}^j$ then $k \neq n$. So we have a combinatorial description for the elements of $\cf_\Gamma^{(n)}$. In particular, as collections of functions from $S_{n-1}$ to $\poly$, we wish to show that the following two sets have the same $\C$-span:
     \[
     \cf_\Gamma^{(n)} =\ct_{n-1} \cup \cx_{n-1} \cup \left\{\bar{f}_A^s \left| \begin{matrix} s\text{ cut edge of }\Gamma,\\ \abs{A} = \abs{G_s} \\ s \neq (n-1,n), \\ n \notin A \end{matrix}\right.\right\} \cup \left\{\bar{y}_{G,k}^j \left| \begin{matrix}j \vdash \Gamma,\\ n \notin G,\\ k \in [n-1] \end{matrix}\right.\right\}
     \]
     and
     \[
     \cf_{\Gamma-n} = \ct_{n-1} \cup \cx_{n-1} \cup \left\{\bar{f}_A^s \left| \begin{matrix} s\text{ cut edge of }\Gamma-n,\\ \abs{A} = \abs{G_s}\end{matrix}\right.\right\} \cup \left\{\bar{y}_{G,k}^j \left| \begin{matrix}j \vdash \Gamma-n,\\ n-1 \notin G,\\ k \in [n-1] \end{matrix}\right.\right\},
     \]
     where for the cut edges $s = (i<j)$ of either $\Gamma$ or $\Gamma - n$, the component $G_s$ is the connected component of the graph with edge $s$ removed that contains $i$. \par 
     The equalities for $\bar{t}_i \in \ct_{n-1}$ and $\bar{x}_i\in \cx_{n-1}$ is obvious, so we focus on the latter two subsets. The remainder of the proof is in three cases: whether the cliqued and naturally labeled graph $\Gamma$ is type A, B, or C. Each argument amounts to matching the cut vertices and cut edges of $\Gamma$ to those in $\Gamma - n$ (and vice versa). Each match gives pairs of splines in the third and fourth subsets above that are in fact equal to each other. Then we ensure that wherever these graph objects do not align, the ``unmatched" splines in each set are contained within the other's $\C$-span.  \par 
    \textbf{Type A:} First, we compare the cut edges of $\Gamma$ and $\Gamma - n$ and ensure that each spline in the third subsets of both $\cf_{\Gamma}^{(n)}$ and $\cf_{\Gamma-n}$ are contained within the span of the other set. If $\Gamma$ is type A, then every cut edge of $\Gamma - n$ is also a cut edge of $\Gamma$. Every cut edge of $\Gamma$ that isn't $(n-1,n)$ is also a cut edge of $\Gamma - n$. Finally, if $s = (i<j) \neq (n-1,n)$ is a cut edge, then the connected component of  $([n] E(\Gamma) \setminus\{s\})$ that contains $i$ and the connected component of $([n-1],E(\Gamma-n)\setminus\{s\})$ that contains $i$ are equal, since these are the components with lower-valued vertices and thus are unaffected by removing $n$. So the third subsets in $\cf_\Gamma^{(n)}$ and $\cf_{\Gamma - n}$ above are actually equal. \par 
    Second, we compare the cut vertices and associated connected components of $\Gamma$ and $\Gamma - n$ and ensure that each spline in the fourth subsets of both $\cf_{\Gamma}^{(n)}$ and $\cf_{\Gamma-n}$ are contained within the span of the other set. There are two cases, $\cut_{\Gamma}(n-1) = 1$ and $\cut_{\Gamma}(n-1) > 1$. If $\cut_{\Gamma}(n-1) > 1$, every cut vertex in $\Gamma$ is a cut vertex in $\Gamma - n$, and vice versa. If $j \vdash \Gamma-n$ where $j \neq n-1$, if $G$ is the component of $\Gamma - j$ that contains $n$ then $G-n$ is the component of $(\Gamma-n)-j$ that contains $n-1$. If $j = n-1$, then $\cf_{\Gamma-n}$ contains every $\bar{y}_{G,k}^{n-1}$ and $\cf_\Gamma^{(n)}$ contains every $\bar{y}_{G,k}^{n-1}$ such that $n \notin G$. Either way, these two collections of splines are identical, so the fourth subsets in $\cf_{\Gamma}^{(n)}$ and $\cf_{\Gamma - n}$ are in fact equal. \par 
    If $\cut_{\Gamma}(n-1) = 1$, then $n-1$ is not a cut vertex of $\Gamma-n$, so $\cf_{\Gamma - n}$ does not contain the spline $\bar{y}_{G,k}^{n-1} \in \cf_\Gamma^{(n)}$ where $V(G) = [n-2]$. However, in this case for all $w \in S_{n-1}$ we compute 
    \begin{align*} \bar{y}_{G,k}^{n-1}(w) &= \begin{cases}
        t_k - t_{w(n-1)} & w^{-1}(k) \in [n-2] \\ 
        0 & w^{-1}(k) = n-1.
    \end{cases}\\
    &= \bar{t}_k(w) - \bar{x}_{n-1}(w) \in \C\cf_{\Gamma-n}.
    \end{align*}
    Every other cut vertex $j \vdash \Gamma - n$ and connected component $G$ of $(\Gamma - n)-j$ (that doesn't contain $n-1$) is also a cut vertex of $\Gamma$ and connected component of $\Gamma - j$ (that doesn't contain $n$), so each spline of the form $\bar{y}_{G,k}^j$ in $\cf_{\Gamma - n}$ has a direct counterpart in $\cf_\Gamma^{(n)}$. Thus $\C\cf_\Gamma^{(n)} = \C\cf_{\Gamma-n}$, and the claim holds in type A. \par 
    \textbf{Type B:} First we compare the cut vertices and associated connected components, to match elements in the fourth subsets. If $\Gamma$ is type $B$, then every cut vertex in $\Gamma - n$ is also a cut vertex of $\Gamma$, and for all $j \vdash \Gamma$ the connected component of $\Gamma - j$ that contains $n$ also contains $n-1$, so the fourth subsets in $\cf_\Gamma^{(n)}$ and $\cf_{\Gamma - n}$ are equal. \par 
    Now compare the cut edges, to match elements in the third subsets. Every cut edge in $\Gamma$ is a cut edge in $\Gamma - n$, however the edge $(n-2,n-1)$ is a cut edge in $\Gamma - n$ but not in $\Gamma$. For this cut edge we let $G_{(n-2,n-1)}$ be the subgraph with vertex set $[n-2]$. So $\cf_{\Gamma - n}$ has a subset $\left\{\bar{f}_{[n-1] \setminus k}^{(n-2,n-1)} \mid k \in [n-1]\right\}$ of splines that is not a subset of $\cf_\Gamma^{(n)}$. We will show that these splines are contained within the span $\C\cf_\Gamma^{(n)}$. By Lemma \ref{lem:hard_relation} applied to the leaf $(n-2,n-1)$ in $\Gamma-n$, each spline in $\left\{\bar{f}_{[n-1] \setminus k}^{(n-2,n-1)}\in  \mid k \in [n-1]\right\} \subset \cf_{\Gamma-n}$ is a linear combination of the remaining splines in $\cf_{\Gamma-n}$. Since the fourth subsets are equal and every \emph{other} cut edge of $\Gamma - n$ is a cut edge of $\Gamma$, each of these remaining splines are also in $\cf_{\Gamma}^{(n)}$. So the one subset $\left\{\bar{f}_{[n-1] \setminus k}^{(n-2,n-1)} \mid k \in [n-1]\right\}$ of un-matched splines in $\cf_{\Gamma-n}$ is contained within the span $\C\cf_\Gamma^{(n)}$, and so $\C\cf_\Gamma^{(n)} = \C\cf_{\Gamma-n}$.\par
    \textbf{Type C:} If $\Gamma$ is type C, then every cut vertex or edge in $\Gamma-n$ is also a cut vertex or edge in $\Gamma$, and vice versa. Additionally, for any $j \vdash \Gamma$ the connected component of $\Gamma - j$ containing $n$ also contains $n-1$. So all indexing data is the same, and so $\cf_{\Gamma}^{(n)} = \cf_{\Gamma-n}$. Thus the claim holds in Type C.
\end{proof}
The proof of Theorem \ref{thm:linear_spanning} below assumes a natural label, so we will use the indexing conventions for $\cf_\Gamma$ described in Subsection \ref{ssec:lin_gens_revisit}. In particular, we will heavily use the weakly dominant $(i,j) > \Gamma$ and strongly dominant $(i,j) \gg \Gamma$ pairs in Definition \ref{def:dominant}. Now we are able to prove that $\C\cf_\Gamma = \splines{\Gamma}^1$, and compute a recursive dimension formula.
\begin{theorem}\label{thm:linear_spanning}
     Let $\Gamma$ be a connected graph. The splines $\cf_\Gamma$ from Equation (\ref{eqn:linear_generators}) form a $\C$-spanning set of $\splines{\Gamma}^1$. Furthermore, if $\Gamma$ is cliqued and naturally labeled, then 
    \begin{align*}
    \dim_\C(\splines{\Gamma}^1) = 1&+\dim_\C(\splines{\Gamma-n}^1) + \begin{cases} \choos{n-1}{1} &\text{if }\Gamma \; \text{is type A} \\ 1 &\text{if }\Gamma \; \text{is type B/C} \end{cases} 
    \\&+ \sum_{(i,j) \gg \Gamma} \choos{n-1}{\abs{\Gamma_i^j}-1} + \abs{\left\{(i,j) > \Gamma \right\}}.
    \end{align*}
\end{theorem}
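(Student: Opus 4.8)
The plan is to prove both assertions together by induction on $n$, after a reduction. By Lemma~\ref{lem:linear_isoms} and Proposition~\ref{prop:isom_reps} the statement is isomorphism-invariant, and by Lemma~\ref{lem:kconn_better} together with the observation that cliquing changes neither the cut edges, nor the cut vertices, nor the vertex sets of the components of $\Gamma-j$, we have $\splines{\Gamma}^1=\splines{\Gamma'}^1$ and $\cf_\Gamma=\cf_{\Gamma'}$ for the cliqued version $\Gamma'$. So it suffices to prove $\C\cf_\Gamma=\splines{\Gamma}^1$ and the dimension formula for $\Gamma$ cliqued and naturally labelled; the recursion then follows. The recursion bottoms out in the $2$-connected case, where $\cf_\Gamma=\ct_n\cup\cx_n$ and $\splines{\Gamma}^1=\splines{K_n}^1$ by Theorem~\ref{thm:kconn}, and where the relation $\sum_r\bar{x}_r=\sum_r\bar{t}_r$ of Lemma~\ref{lem:linear_relations}(1) gives $\dim_\C\C\cf_\Gamma=2n-1=\dim_\C\splines{K_n}^1$.

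The first step is to peel off $S_n^n=S_{n-1}$. The edges of $\cg_\Gamma$ inside $S_{n-1}$ are precisely those of $\cg_{\Gamma-n}$, and their labels do not involve $t_n$; since a naturally labelled $n$ is never a cut vertex, $\Gamma-n$ is connected, so a degree comparison shows every $\bar{\rho}\in\splines{\Gamma}^1$ has $\bar{\rho}|_{S_{n-1}}=\bar{\sigma}|_{S_{n-1}}+c\,\bar{t}_n|_{S_{n-1}}$ for a unique $c\in\C$ and a unique $\bar{\sigma}\in\splines{\Gamma-n}^1$ valued in $\polyr{n-1}$. By Lemma~\ref{lem:base_case} and the inductive hypothesis $\C\cf_{\Gamma-n}=\splines{\Gamma-n}^1$, every such restriction is matched by an element of $\C\cf_\Gamma$, and the space of restrictions has dimension $1+\dim_\C\splines{\Gamma-n}^1$. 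After subtracting a suitable element of $\C\cf_\Gamma$, it remains to show that $V:=\{\bar{\rho}\in\splines{\Gamma}^1:\bar{\rho}|_{S_{n-1}}=0\}$ lies in $\C\cf_\Gamma$ and to compute $\dim_\C V$.

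To analyse $V$, I would process the cosets $S_n^i$ ($i\in[n-1]$) outward along the block--cut tree, starting from the block $B_0$ containing $n$. Whenever $i$ is adjacent in $\Gamma$ to a vertex $j$ for which $\bar{\rho}|_{S_n^j}=0$ has already been arranged, the unique edge $w\mapsto w(i,j)$ from $S_n^i$ into $S_n^j$ forces $\bar{\rho}(w)=c_w(t_n-t_{w(j)})$, and Lemmas~\ref{lem:technical_n1}, \ref{lem:technical_cutedge}, \ref{lem:technical_cutvert} pin down how $c_w$ depends on $w$. Near $n$, the three cases of Lemma~\ref{lem:graph_type} govern things: $S_n^{n-1}$ contributes $\choos{n-1}{1}$ free parameters in type A (realised by the splines $\bar{f}_A^{(n-1,n)}$ with $n\in A$, of which there are $\choos{n-1}{n-2}=\choos{n-1}{1}$) and exactly one in type B/C (realised by $\bar{t}_n-\bar{x}_n$, which vanishes on $S_{n-1}$ because $\bar{x}_n|_{S_{n-1}}=\bar{t}_n|_{S_{n-1}}$); in type B/C a short argument then forces $\bar{\rho}$ to vanish on the remaining cosets attached to $B_0$, since each such $w$ lies on two edges carrying the non-proportional labels $\langle t_n-t_{w(n)}\rangle$ and $\langle t_n-t_{w(n-1)}\rangle$. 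For each strongly dominant pair $(i,j)\gg\Gamma$ (where $\Gamma_i^j\subseteq[n-2]$, since $j\le n-1$ and $i=\max\Gamma_i^j<j$), Lemma~\ref{lem:technical_cutedge} shows $\bar{\rho}|_{S_n^i}$ is constant on each of the $\choos{n-1}{\abs{\Gamma_i^j}-1}$ cosets of $\langle E(\Gamma)\setminus\{(i,j)\}\rangle$ meeting $S_n^i$, these parameters being realised by the $\bar{f}_A^{(i,j)}$ with $n\in A$; for each weakly dominant pair $(i,j)>\Gamma$, Lemma~\ref{lem:technical_cutvert} shows $\bar{\rho}|_{S_n^i}$ is a single scalar multiple of $t_n-t_{w(j)}$, realised by $\bar{y}_{i,n}^j$. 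Finally, Proposition~\ref{prop:hard_linear_relations}(1),(2) let us discard the now-redundant $\bar{y}_{i,n}^j$ with $(i,j)\gg\Gamma$ and all $\bar{y}_{0,n}^j$, and one checks that every coset $S_n^i$ with $i$ not a lower vertex of a dominant pair is forced to zero once the cosets nearer $B_0$ vanish. This gives $V\subseteq\C\cf_\Gamma$ and
\[
\dim_\C V=\begin{cases}\choos{n-1}{1}&\text{if }\Gamma\text{ is type A},\\ 1&\text{if }\Gamma\text{ is type B/C},\end{cases}\;+\;\sum_{(i,j)\gg\Gamma}\choos{n-1}{\abs{\Gamma_i^j}-1}\;+\;\abs{\{(i,j)>\Gamma\}},
\]
which together with the first step yields the stated recursion.

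The hardest part will be the bookkeeping in this last step: fixing a processing order on the cosets $S_n^i$ compatible with the block--cut tree so that each technical lemma is applicable when invoked; verifying that each generator $\bar{f}_A^{(i,j)}$ or $\bar{y}_{i,n}^j$ subtracted off vanishes on the cosets already handled (this is where $\Gamma_i^j\subseteq[n-2]$ and the location of the cut vertices along the block--cut tree enter); and showing that the cosets $S_n^i$ for vertices $i$ interior to a block are forced to zero --- which is exactly where the cliqued hypothesis is essential, as it supplies the extra edges needed to run the non-proportional-labels argument.
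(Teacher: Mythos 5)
Your sketch follows essentially the same route as the paper's proof: reduce to a cliqued, naturally labeled graph, peel off $S_{n-1}$ via Lemma \ref{lem:base_case} and the inductive hypothesis, then clear the cosets $S_n^i$ one at a time using Lemmas \ref{lem:technical_n1}, \ref{lem:technical_cutedge}, and \ref{lem:technical_cutvert}, realizing the remaining freedom by exactly the splines $\bar{f}_A^{(n-1,n)}$ or $\bar{t}_n-\bar{x}_n$, the $\bar{f}_A^{(i,j)}$ with $n\in A$, and $\bar{y}_{i,n}^j$, with the same counts. The only cosmetic differences are that the paper processes the cosets simply in decreasing order of the label $i$ (the natural labeling already makes this compatible with distance from $n$, so no separate block--cut-tree bookkeeping is needed) and that its induction bottoms out at $n=3$ rather than at the $2$-connected case.
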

  
\begin{proof}%[Proof of Theorem \ref{thm:linear_spanning}]
     It suffices to assume for both parts of the claim that $\Gamma$ is cliqued and naturally labeled. Recall the decomposition $S_n = S_{n}^1 \sqcup \cdots \sqcup S_n^{n}$ where $S_n^i \coloneqq \{w \in S_n \mid w(i) = n\}$. Note $S_n^n = S_{n-1}$. Let $\bar{\rho} \in \splines{\Gamma}^1$. We will prove that $\bar{\rho} \in \C\cf_\Gamma$, and proceed by induction on $n$. The base case is $n=3$, where $\splines{\Gamma} = \C\cf_\Gamma$ is easily verified by hand (there are only two connected graphs on three vertices), and either way follows from \cite{ayzenberg2022second}. \par 
     In each of the three steps to the proof given below, we use elements of $\cf_\Gamma$ to replace $\bar{\rho}$ with a spline supported on a strictly smaller subset of $S_n$. To track $\dim_\C(\splines{\Gamma}^1)$, we will create a set $\cb$ of linearly independent elements of $\splines{\Gamma}^1$. \par 
    $\left(\textbf{Step 1: }S_n^n\right)$ This step applies the induction assumption to replace $\bar{\rho}$ with a spline supported on $S_n^1 \sqcup \cdots \sqcup S_n^{n-1}$. If $w,v \in S_n^n = S_{n-1}$ with $w^{-1}v = (i,j) \in E(\Gamma - n)$, then $\bar{\rho}(w) - \bar{\rho}(v) = c\left(t_{w(i)}-t_{w(j)}\right)$ where $c \in \C$. Since $w(i) \neq n$ and $w(j) \neq n$, the coefficient $[t_n]\bar{\rho}(w)$ of $t_n$ in $\bar{\rho}(w)$ must be equal to $[t_n]\bar{\rho}(v)$. Since $\Gamma - n$ is connected, the induced subgraph of $\cg_\Gamma$ with vertex set $S_n^n$ is connected. Moreover, the coefficient of $t_n$ is the same for all $\bar{\rho}(u)$ where $u \in S_n^n$. Let $c_n \coloneqq [t_n]\bar{\rho}(u)$ for $u \in S_n^n$. Then $[t_n]\left(\bar{\rho}-c_n\bar{t}_n\right)(u) = 0$ for all $u \in S_{n-1}$.\par 
    So we replace $\bar{\rho}$ with $\bar{\rho} - c_n\bar{t}_n$, and now $\bar{\rho}(u) \in \polyr{n-1}$ when $u \in S_n^n$. Let $\cb \coloneqq \{\bar{t}_n\}$. We will add linearly independent elements to $\cb$ throughout the proof and keep track of $\abs{\cb}$.\par   
    By Lemma \ref{lem:base_case} and the induction hypothesis,  $\splines{\Gamma-n}^1 = \C\cf_{\Gamma-n} = \C\cf_\Gamma^{(n)}$. So we may assume that $\bar{\rho}\vert_{S_{n-1}} \equiv 0$. Add to $\cb$ the $\dim_\C(\splines{\Gamma-n}^1)$-many splines required. Note these splines are independent once restricted to $S_n^n = S_{n-1}$, so any nontrivial linear combination will have elements of $S_n^n$ in its support. At this point $\abs{\cb} = \dim_\C(\splines{\Gamma - n}^1)+1$, and $\bar{\rho} \equiv 0$ on $S_n^n$.\par 
    $\left(\textbf{Step 2: } S_n^{n-1}\right)$ Next, we use elements of $\cf_\Gamma$ to replace $\bar{\rho}$ with a spline that evaluates to $0$ on $S_n^{n-1} \sqcup S_n^n$. The process is slightly different for graphs of type $A$ and types $B/C$.\par 
    Since $\Gamma$ is cliqued and naturally labeled, $(n-1,n) \in E(\Gamma)$. Thus for all $w \in S_n^{n-1}$ there is an edge $(w,w(n-1,n)) \in E(\cg_\Gamma)$ where $w(n-1,n) \in S_n^n$. Each of these edges are labeled $\llangle t_{w(n-1)}-t_{w(n)}\rrangle = \llangle t_n-t_{w(n)}\rrangle$. Since $\bar{\rho}\equiv 0$ on $S_n^n$, we have that $\bar{\rho}(w) = c_w\left(t_n-t_{w(n)}\right)$ for some $c_w \in \C$ for all $w \in S_n^{n-1}$.\par 
    \textbf{Type A:} By Lemma \ref{lem:technical_n1}, if $w,v \in S_n^{n-1}$ and $w(n) = v(n)$, then $c_w=c_v$.  
     Let $c_k \coloneqq c_w$ when $w(n) = k$. If $w(n-1) = n$ and $w(n) = k < n$, then by definition $\bar{f}_{[n] \setminus\{k\}}^{(n-1,n)}(w) = t_n-t_{w(n)}$. On the other hand if $w(n) = n$ then  $\bar{f}_{[n] \setminus\{k\}}^{(n-1,n)}(w) = 0$ whenever $k \neq n$. It follows that  
     \[\bar{\rho} - \sum_{i=1}^{n-1}c_k \bar{f}_{[n] \setminus \{k\}}^{(n-1,n)} \equiv 0\]
     on $S_n^{n-1}\sqcup S_n^n$. Add the $n-1$ coset splines $\bar{f}_{[n] \setminus \{k\}}^{(n-1,n)}$ used above to $\cb$, which are linearly independent from the splines already in $\cb$ since they are not supported on $S_n^n$ and have disjoint support on $S_n^{n-1}$. In type A, at this point $\abs{\cb} = 1 +\dim_\C(\splines{\Gamma - n}^1) + \choos{n-1}{1}$, and $\cb$ is linearly independent, even if we restrict the splines in $\cb$ to $S_n^{n-1} \sqcup S_n^n$.\par 
     \textbf{Type B/C:} We proceed in the same format as type A. By Lemma \ref{lem:technical_n1}, if $w,v \in S_n^{n-1}$, then $c_w=c_v$. write $c \coloneqq c_w$ for $w \in S_n^{n-1}$. Then 
     \[ \bar{\rho} - c(\bar{t}_n - \bar{x}_n) \equiv 0\]
     on $S_n^{n-1}\sqcup S_n^n$. Add the single linearly independent spline $\bar{t}_n - \bar{x}_n$ to $\cb$. In type B/C at this point $\abs{\cb} = \dim_\C(\splines{\Gamma - n}^1) + 2$, and $\cb$ is linearly independent, even if we restrict each spline to $S_n^{n-1} \sqcup S_n^n$.\par 
     $\left(\textbf{Step 3: } S_n^i\right)$ Now given a spline $\bar{\rho}$ such that $\bar{\rho} \equiv 0$ on $S_n^{i+1} \sqcup \cdots S_n^n$, we show how to replace it with a spline that vanishes on $S_n^i \sqcup \cdots S_n^n$. This step is repeated until $\bar{\rho}$ vanishes on all of $S_n$. Assume that $\bar{\rho} \equiv 0$ on $S_n^{i+1} \sqcup \cdots \sqcup S_n^n$. Additionally, we assume that the splines in $\cb$ are linearly independent, moreover the set remains linearly independent once each spline is restricted to $S_n^{i+1} \sqcup \cdots \sqcup S_n^n$. In particular, any nontrivial linear combination of splines in $\cb$ is nonzero on $S_n^{i+1} \sqcup \cdots \sqcup S_n^n$ (and therefore $S_n$).  The remainder of the proof is type A/B/C-independent, but still requires three cases. \par 
     First, a formulation of $\bar{\rho}(w)$ for $w \in S_n^i$ that will be used in each case. Since $\Gamma$ is naturally labeled and $i \neq n$, there exists $j \in [n]$ such that $i<j$ and $(i,j) \in E(\Gamma)$, so $(w,w(i,j)) \in E(\cg_\Gamma)$. If $w \in S_n^i$ then $w(i,j) \in S_n^j$, so \[\bar{\rho}(w) = c_w(t_{w(i)} - t_{w(j)}) = c_w(t_n-t_{w(j)})\]
     for some $c_w \in \C$. \par 
     \textbf{Case 1:} If $i$ is \emph{not} $j$-dominant for any $j \in [n]$, since $\Gamma$ is naturally labeled there exist (at least) two vertices $j,k$ where $i < j<k$ and $\{(i,j),(i,k)\}\subset E(\Gamma)$. It follows that
     \[\bar{\rho}(w) = c_w\left(t_n-t_{w(j)}\right) = c'_w\left(t_n-t_{w(k)}\right).\]
     This is not possible for $c_w,c'_w \in \C$ unless $c_w=c'_w=0$, and so $\bar{\rho}(w) = 0$. In short, $\bar{\rho} \equiv 0$ on $S_n^i$ and we do not need any splines from $\cf_\Gamma$ to achieve this.\par 
     \textbf{Case 2:} If there exists $j \in [n]$ where $(i,j) \gg \Gamma$, then $i$ is the maximal vertex in its connected component of $\Gamma - j$. Thus, the vertex $j$ is the \emph{only} element in the neighborhood $N(i)$ of the vertex $i$ that is greater than $i$ (so $i$ is not $k$-dominant for any $k \neq j$), and $(i,j)$ is a cut edge of $\Gamma$. By Lemma \ref{lem:technical_cutedge} if $v \in w\langle E(\Gamma)\setminus(i,j)\rangle$, then $c_v=c_w$.\par 
     Recall that $v \in w\langle E(\Gamma)\setminus(i,j)\rangle$ if and only if $w(V(\Gamma_i^j)) = v(V(\Gamma_i^j))$. Let \[\ca \coloneqq \left\{ A \subset [n] \left| \abs{A} = \abs{\Gamma_i^j},\; n \in A \right.\right\},\] and write $c_A \coloneqq c_w$ if $w(V(\Gamma_i^j)) = A$. If $w(i) = n$ then $w^{-1}(V(\Gamma_i^j)) = A \in \ca$, and we compute $\bar{f}_{A}^{(i,j)}(w) = t_n - t_{w(j)}$. Also, since $\Gamma$ is naturally labeled, if $k>i$ then $k \notin \Gamma_i^j$. In particular, if $k>i$ and $w(k) = n$, then $w^{-1}(V(\Gamma_i^j)) \notin \ca$, and so $\bar{f}_A^{(i,j)}(w) = 0$ for all $w \in S_n^{i+1} \sqcup \cdots \sqcup s_n^n$. It follows that 
     \[ \bar{\rho} - \sum_{A \in \ca} c_A \bar{f}_A^{(i,j)} \equiv 0\]
     on $S_n^i \sqcup \cdots \sqcup S_n^n$. We replace $\bar{\rho}$ with this spline. The coset splines $\bar{f}_A^{(i,j)}$ for $A \in \ca$ have disjoint support among themselves and are only supported on $S_n^r$ for $r \leq i$, so $\{\bar{f}_A^{(i,j)} \mid A \in \ca\} \cup \cb$ is linearly independent, even when each spline is restricted to $S_n^i \sqcup \cdots \sqcup S_n^n$. Each time we use Case 2, i.e. for each $(i,j) \gg \Gamma$,  we add $\abs{\ca} = \choos{n-1}{\abs{\Gamma_i^j}-1}$ -many splines to $\cb$.  \par 
     \textbf{Case 3:} If there exists $j \in [n]$ where $(i,j) > \Gamma$, the vertex $j$ is the \emph{only} element in the neighborhood $N(i)$ of the vertex $i$ that is greater than $i$ (so $i$ is not $k$-dominant for any $k \neq j$), but $(i,j)$ is not a cut edge.
     By Lemma \ref{lem:technical_cutvert}, if $w,v \in S_n^i$, then $c_w = c_v \eqqcolon c$. Finally, we confirm that 
     \[\bar{\rho} - c\, \bar{y}_{i,n}^j \equiv 0\]
     on $S_n^i \sqcup \cdots \sqcup S_n^n$. We replace $\bar{\rho}$ with this spline. The single spline $\bar{y}_{i,n}^j$ is supported on $S_n^r$ for $r \leq i$, and so $\{\bar{y}_{i,n}^j\} \cup \cb$ is linearly independent, even when restricted to $S_n^i \sqcup \cdots \sqcup S_n^n$. Each time Case 3 is used, i.e. for each $(i,j) > \Gamma$, we add $1$ spline to $\cb$. \par 
     When $i=1$ is reached, we have used $\cf_\Gamma$ to replace $\bar{\rho}$ with a spline $\bar{\rho} \equiv 0$ on all of $S_n$. Thus $\bar{\rho} \in \C\cb$ and the set $\cb \subseteq \cf_\Gamma$ is linearly independent (the restriction is now to the whole symmetric group $S_n^1 \sqcup \cdots \sqcup S_n^n = S_n$), and $\abs{\cb} = \dim_\C(\splines{\Gamma}^1)$ is as claimed.
\end{proof}
\section{The Linear Dimension Formula}\label{sec:lin_dim}
This section constructs a combinatorial invariant of simple graphs that is also the $\C$-dimension of the associated linear splines. \par 
Let $\Gamma$ be a connected graph on at least three vertices. First, if $j \vdash \Gamma$ is a cut vertex, let $\cut_\Gamma(j)+1$ be the number of connected components in $\Gamma - j$. This is a straightforward expansion of the definition we gave for $\cut_\Gamma(j)$ from naturally labeled graphs to all graphs. When $\Gamma$ is fixed we may drop the subscript and write $\cut(j)$.\par  
\begin{definition}\label{def:internal_leaf_blocks}
Recall the construction of a block-cut tree in Definition \ref{def:block_cut}. In this tree, every leaf is a block of $\Gamma$. Let $\LB_\Gamma$ be the set of blocks in $\Gamma$ that are leaves in the block-cut tree. We call elements of $\LB_\Gamma$ \emph{leaf blocks} of $\Gamma$. Let $\IB_\Gamma$ be the set of blocks in $\Gamma$ that are not leaves in the block-cut tree. We call elements of $\IB_\Gamma$ \emph{internal blocks} of $\Gamma$. Note when $\Gamma$ is $2$-connected, $\LB_\Gamma = \emptyset$ and $\IB_\Gamma = \{\Gamma\}$, in particular, when the block-cut tree of $\Gamma$ is a single vertex (i.e. when $\Gamma$ is $2$-connected), we consider $\Gamma$ to be an internal block.
\end{definition}
If a block $B$ in $\Gamma$ is size $\abs{B} = 2$, that block must consist of two vertices in $\Gamma$ connected by an edge. Since blocks are maximal $2$-connected subgraphs, this edge must be a cut edge. In particular, blocks $B$ in $\IB_\Gamma$ of size $\abs{B} = 2$ are in bijection with cut edges of $\Gamma$ that are not leaf edges. Let $\IC_\Gamma \coloneqq \{(i,j) \in E(\Gamma) \mid V(B) = \{i,j\} \text{ for some }B \in \IB_\Gamma\}$. The elements of $\IC_\Gamma$ are \emph{internal cut edges} of $\Gamma$. \par 
\begin{example}\label{ex:dim_blockcut_sets}
    Consider the graph 
    \begin{center} 
    \begin{tikzpicture}
            \begin{scope}[scale=1.5]
                \draw (0,0) node[draw,circle] (V1) {7};
                \draw (0.5,-1) node[draw,circle] (V2) {6};
                \draw (1.15,-1) node[draw,circle] (V3) {11};
                \draw (1,0) node[draw,circle] (V4) {4};
                \draw (1.5,1) node[draw,circle] (V5) {12};
                \draw (1.85,-1) node[draw,circle] (V6) {9};
                \draw (2.5,-1) node[draw,circle] (V7) {10};
                \draw (2,0) node[draw,circle] (V8) {1};
                \draw (3,0) node[draw,circle] (V10) {2};
                \draw (3.25,-1) node[draw,circle] (V9) {3};
                \draw (3.5,1) node[draw,circle] (V11) {5};
                \draw (4,0) node[draw,circle] (V12) {8};

                \draw (V1)--(V4);
                \draw (V2)--(V4);
                \draw (V3)--(V4);
                \draw (V2)--(V3);
                \draw (V4)--(V5);
                \draw (V5)--(V8);
                \draw (V4)--(V8);
                \draw (V6)--(V7);
                \draw (V6)--(V8);
                \draw (V7)--(V8);
                \draw (V8)--(V10);
                \draw (V9)--(V10);
                \draw (V10)--(V11);
                \draw (V10)--(V12);
                \draw (V11)--(V12);
            \end{scope}
        \end{tikzpicture}
    \end{center}
    with block-cut tree 
    \begin{center} 
    \begin{tikzpicture}
            \begin{scope}[scale=1.5]
                \draw (0,0) node[draw,circle] (B1) {$B_1$};
                \draw (1,0) node[draw,circle] (v1) {$4$};
                \draw (2,0) node[draw,circle] (B2) {$B_2$};
                \draw (2,0.75) node [draw,circle] (B3) {$B_3$};
                \draw (3,0.75) node[draw,circle] (v2) {$1$};
                \draw (4,0) node[draw,circle] (B4) {$B_4$};
                \draw (4,0.75) node[draw,circle] (B5) {$B_5$};
                \draw (5,0.75) node[draw,circle] (v3) {$2$};
                \draw (6,0) node[draw,circle] (B6) {$B_6$};
                \draw (6,0.75) node[draw,circle] (B7) {$B_7$};

                \draw (B1)--(v1);
                \draw (v1)--(B2);
                \draw (v1)--(B3);
                \draw (B3)--(v2);
                \draw (v2)--(B4);
                \draw (v2)--(B5);
                \draw (B5)--(v3);
                \draw (v3)--(B6);
                \draw (v3)--(B7);
            \end{scope}
        \end{tikzpicture}
    \end{center}
    This is the construction from the beginning of Example \ref{ex:natural_label}. The leaf blocks are 
    \[\LB_\Gamma = \left\{ B_1,B_2,B_4,B_6,B_7  \right\}.\]
    The internal blocks are 
    \[\IB_\Gamma = \left\{B_3,B_5 \right\}.\]
    Within those internal blocks, $\abs{B_5} = 2$ and $B_5$ corresponds to the cut edge $(1,2)$. So 
    \[\IC_\Gamma = \left\{(1,2)\right\},\]
    and $(1,2)$ is the only cut edge in $\Gamma$ that is not a leaf edge.
\end{example}
For a connected graph $\Gamma$ on $n$ vertices, we define 
\begin{equation}\label{eqn:linear_dimension}
    D_\Gamma \coloneqq 2n-1 - \sum_{j \vdash \Gamma} \cut_\Gamma(j) + n\left(\abs{\LB_\Gamma} + \abs{\{B \in \IB_\Gamma \mid \abs{B} > 2\}} - 1\right) + \sum_{s \in \IC_\Gamma} \choos{n}{\abs{G_s}},
\end{equation}
where $G_s$ is defined as in Section \ref{sec:lin_gen_rels}, i.e. $G_s$ is one of the two connected components of the graph $([n],E(\Gamma)\setminus\{s\})$. This formula is unaffected by a choice of component, as the sizes of the two connected components in $([n],E(\Gamma)\setminus\{s\})$ sum to $n$ and $\choos{n}{\abs{G_s}} = \choos{n}{n-\abs{G_s}}$.
\begin{remark}
    The invariant $D_\Gamma$ might be more concisely written as 
    \[D_\Gamma = n-1 - \sum_{j \vdash \Gamma} \cut_\Gamma(j) + n\left(\abs{\LB_\Gamma} + \abs{\{B \in \IB_\Gamma \mid \abs{B} > 2\}}\right) + \sum_{s \in \IC_\Gamma} \choos{n}{\abs{G_s}},\]
    but the format in Equation \ref{eqn:linear_dimension} is more conducive to the proofs that follow.
\end{remark}
\begin{example}\label{ex:dim_blockcut_compute}
    Consider the graph $\Gamma$ from Example \ref{ex:dim_blockcut_sets} above. The three cut vertices are $4$, $1$, and $2$. Each of those cut vertices separate $\Gamma$ in to three connected components, so $\cut_\Gamma(4) = \cut_\Gamma(1) = \cut_\Gamma(2) = 2$. The block $B_3$ is the only block in $\IB_\Gamma$ with more than two vertices, so $\abs{\{B \in \IB_\Gamma \mid \abs{B} > 2\}} = 1$. The only internal cut edge is $(1,2)$, and this cut edge separates $\Gamma$ in to a component of size $8$ and a component of size $4$. We choose the component with vertex set $\{1,4,6,7,9,10,11,12\}$, but note that $\choos{12}{8} = \choos{12}{12-8} =  \choos{12}{4}$. We compute that 
    \[D_\Gamma = 2\cdot 12-1 - \left(2+2+2\right) +12\left(5+1-1\right) + \choos{12}{8} = 572  \]
\end{example}
Since $D_\Gamma$ is defined using only the block-cut tree, cut edges, and cut vertices of $\Gamma$, it is an invariant of the isomorphism class of $\Gamma$. We note that if $\Gamma$ is $2$-connected, it follows that $D_\Gamma = 2n-1$. \par 
Lemma \ref{lem:linear_dimension_natural} below gives a formulation of $D_\Gamma$ specific to naturally labeled graphs. Its proof constructs important bijections that will be used later in the proofs of Proposition \ref{prop:lin_dimension} and Corollary \ref{cor:nonlabel_reps}.
\begin{lemma}\label{lem:linear_dimension_natural}
    Let $\Gamma$ be a naturally labeled graph. Then 
    \[
    D_\Gamma = 2n-1 + \sum_{(i,j)>\Gamma} n + \sum_{(i,j) \gg \Gamma} \choos{n}{\abs{\Gamma_i^j}} - \sum_{j \vdash \Gamma} \cut_\Gamma(j).
    \]
\end{lemma}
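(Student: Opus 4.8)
The plan is to show that the general formula for $D_\Gamma$ in Equation (\ref{eqn:linear_dimension}) reduces to the claimed naturally-labeled expression by setting up bijections between the combinatorial data appearing in the two formulas. Starting from the concise form in the Remark,
\[
D_\Gamma = n-1 - \sum_{j \vdash \Gamma} \cut_\Gamma(j) + n\left(\abs{\LB_\Gamma} + \abs{\{B \in \IB_\Gamma \mid \abs{B} > 2\}}\right) + \sum_{s \in \IC_\Gamma} \choos{n}{\abs{G_s}},
\]
I would argue that the term $\sum_{j\vdash\Gamma}\cut_\Gamma(j)$ is common to both sides, so it suffices to prove
\[
n-1 + n\left(\abs{\LB_\Gamma} + \abs{\{B \in \IB_\Gamma \mid \abs{B} > 2\}}\right) + \sum_{s \in \IC_\Gamma} \choos{n}{\abs{G_s}} = 2n-1 + \sum_{(i,j)>\Gamma} n + \sum_{(i,j)\gg\Gamma} \choos{n}{\abs{\Gamma_i^j}}.
\]
Cancelling $n-1$ from both sides reduces this to matching $n\cdot(\text{number of blocks that are leaf blocks or internal blocks with more than two vertices})$ against $n + n\cdot\abs{\{(i,j)>\Gamma\}} + (\text{discrepancy in the binomial sums})$.

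The key step is to interpret dominant pairs in terms of the block structure. Recall from Definition \ref{def:dominant} and the four remarks following it that, in a naturally labeled graph, a dominant pair $(i,j)$ corresponds bijectively to a connected component of $\Gamma - j$ not containing $n$, i.e. to a ``branch'' of the block-cut tree hanging off the cut vertex $j$ on the side away from $n$. I would root the block-cut tree at the block $B_0$ containing $n$ (or rather at the vertex $n$'s side of it) and observe that each block $B$ other than $B_0$ determines a unique cut vertex $j_B$ (the one on $B$ closest to $n$ in the block-cut tree) and a unique $j_B$-dominant vertex $i_B$ (the maximal vertex of the component of $\Gamma - j_B$ containing $B$ on the far side); conversely each dominant pair arises this way from a unique block. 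Under this bijection: a block $B$ with $\abs{B}=2$ that is not $B_0$ is a cut edge, hence the dominant pair $(i_B,j_B)$ is \emph{strongly} dominant (a cut edge) — unless $B$ is the block $\{n-1,n\}$, which is excluded from strongly dominant pairs by Remark-item (1) after Definition \ref{def:dominant} (this is precisely the ``type A'' special case, and accounts for one of the two ``$2n-1$'' terms needing care). A block $B$ with $\abs{B}\geq 3$ not equal to $B_0$ gives a \emph{weakly} dominant pair $(i_B,j_B)$. For a strongly dominant pair coming from cut edge $s=(i_B,j_B)$, we have $\Gamma_{i_B}^{j_B} = G_s$, so $\choos{n}{\abs{\Gamma_{i_B}^{j_B}}} = \choos{n}{\abs{G_s}}$; and the internal cut edges $\IC_\Gamma$ are exactly the size-two blocks that are \emph{internal}, i.e. those strongly dominant pairs that are not leaf blocks.

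With this dictionary, I would carry out the bookkeeping as follows: the set $\LB_\Gamma \cup \{B\in\IB_\Gamma : \abs{B}>2\}$ consists of $B_0$ (if $\abs{B_0}>2$, or if $B_0$ is a leaf, i.e. of $n$ being a leaf — type A), together with every block $B\neq B_0$ except those that are size-two \emph{internal} blocks (the elements of $\IC_\Gamma$). So $\abs{\LB_\Gamma} + \abs{\{B\in\IB_\Gamma:\abs{B}>2\}} = 1 + \#\{\text{blocks }B\neq B_0\} - \abs{\IC_\Gamma}$, where the leading $1$ counts $B_0$ (one checks $B_0$ is always in this set: if $\abs{B_0}\geq 3$ it is a large block, if $\abs{B_0}=2$ then $n$ is a leaf and $B_0\in\LB_\Gamma$). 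Meanwhile $\#\{\text{blocks }B\neq B_0\} = \abs{\{(i,j)>\Gamma\}} + \abs{\{(i,j)\gg\Gamma\}} + [\,\{n-1,n\}\text{ is a cut edge}\,]$, since dominant pairs account for all non-$B_0$ blocks except the block $\{n-1,n\}$ in the type A situation. Substituting, $n(\abs{\LB_\Gamma}+\abs{\{B\in\IB_\Gamma:\abs{B}>2\}}) = n + n\abs{\{(i,j)>\Gamma\}} + n\abs{\{(i,j)\gg\Gamma\}} + n\cdot[\text{type A}] - n\abs{\IC_\Gamma}$. Finally, rewriting $\sum_{s\in\IC_\Gamma}\choos{n}{\abs{G_s}} = \sum_{(i,j)\gg\Gamma}\choos{n}{\abs{\Gamma_i^j}} - n\cdot[\text{type A, since }\choos{n}{n-1}=n] - n\abs{\{(i,j)\gg\Gamma\}} + n\abs{\IC_\Gamma}$ — because the strongly dominant pairs are exactly $\IC_\Gamma$ together with the size-two \emph{leaf} blocks $\neq\{n-1,n\}$ (each contributing $\choos{n}{\abs{G_s}}$ with the leaf ones being $\choos{n}{1}=n$) — and adding the two contributions, the $\pm n\abs{\IC_\Gamma}$, $\pm n\abs{\{(i,j)\gg\Gamma\}}$, and $\pm n\cdot[\text{type A}]$ terms all cancel, leaving exactly $2n-1 + \sum_{(i,j)>\Gamma}n + \sum_{(i,j)\gg\Gamma}\choos{n}{\abs{\Gamma_i^j}}$ after restoring the common $n-1$ and $-\sum\cut_\Gamma(j)$ terms. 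The main obstacle is keeping the type-A edge case ($n$ a leaf, $\{n-1,n\}$ a cut edge that is \emph{not} a strongly dominant pair) correctly tracked through every sum, since it is the one place where the clean block$\leftrightarrow$dominant-pair bijection has an exception; I would handle it by treating types A and B/C separately in the final tally if a uniform argument becomes unwieldy.
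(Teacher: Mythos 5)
Your overall strategy is exactly the paper's: cancel the common $2n-1-\sum_{j\vdash\Gamma}\cut_\Gamma(j)$, and match the block data in the definition of $D_\Gamma$ against dominant pairs via a bijection between blocks not containing $n$ and dominant pairs, with size-two blocks giving strongly dominant pairs (the internal ones being $\IC_\Gamma$) and blocks of size at least $3$ giving weakly dominant pairs. That core bijection, including your identification of $i_B$ as the maximal vertex of the relevant component of $\Gamma-j_B$, is correct and is the heart of the paper's argument.

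The problem is your handling of the ``type A'' case, which is wrong in two places that happen to cancel. In a naturally labeled graph $n$ is not a cut vertex, so it lies in exactly one block; hence when $(n-1,n)$ is a cut edge the block $\{n-1,n\}$ \emph{is} $B_0$, and it is already excluded from the set of blocks $B\neq B_0$ --- there is no exceptional non-$B_0$ block. Consequently your identity $\#\{B\neq B_0\}=\abs{\{(i,j)>\Gamma\}}+\abs{\{(i,j)\gg\Gamma\}}+[\{n-1,n\}\ \text{a cut edge}]$ is false, as is your identity $\sum_{s\in\IC_\Gamma}\choos{n}{\abs{G_s}}=\sum_{(i,j)\gg\Gamma}\choos{n}{\abs{\Gamma_i^j}}-n\cdot[\text{type A}]-n\abs{\{(i,j)\gg\Gamma\}}+n\abs{\IC_\Gamma}$: for the naturally labeled path $1-2-3$ (type A, $B_0=\{2,3\}$, one strongly dominant pair $(1,2)$, $\IC_\Gamma=\emptyset$) the true values are $1$ and $0$, while your formulas give $2$ and $-3$. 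The two spurious corrections are $+n\cdot[\text{type A}]$ and $-n\cdot[\text{type A}]$, so they cancel in your final tally and the stated formula for $D_\Gamma$ comes out right, but the intermediate steps as written do not hold. The repair is simply to delete both indicator terms: blocks not containing $n$ correspond exactly to dominant pairs with no type-A exception (the cited remark that $(n-1,n)$ is never strongly dominant reflects the fact that $(n-1,n)$ is not a dominant pair at all, its block being $B_0$, outside the bijection), and with that correction your computation becomes the paper's proof.
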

\begin{proof}
    We compare the right-hand side of the formula above with (\ref{eqn:linear_dimension}). The clear cancellation between the two sides of the claimed equality is $2n-1-\sum_{j \vdash \Gamma} \cut_\Gamma(j)$. Thus the claim will follow if 
    \begin{equation}%\label{eqn:lin_dim_equality}
    n\left(\abs{\LB_\Gamma} + \abs{\{B \in \IB_\Gamma \mid \abs{B} > 2\}} - 1\right) + \sum_{s \in \IC_\Gamma} \choos{n}{\abs{G_s}}=  \sum_{(i,j)>\Gamma} n + \sum_{(i,j) \gg \Gamma} \choos{n}{\abs{\Gamma_i^j}} . \tag{$*$}\label{inteqn:lin_dim_equality}
    \end{equation}
    Since $\Gamma$ is naturally labeled, the block containing $n$ is a leaf in the block-cut tree. So $\abs{\LB_\Gamma} - 1$ can be interpreted as the number of leaf blocks that do not contain $n$. \par 
    In the remainder of the proof, we pair blocks in $\LB_\Gamma$ and $\IB_\Gamma$ that contribute on the left side of the claimed equality (\ref{inteqn:lin_dim_equality}) with dominant pairs $(i,j)$ that contribute to the right side, in order to identify cancellations. The pairings that we will prove and use are as follows:
    \begin{enumerate}
        \item Leaf blocks $B \in \LB_\Gamma$ with $\abs{B} = 2$ and $n \notin B$ contribute $n$ to left side of (\ref{inteqn:lin_dim_equality}), and are in bijection with strongly dominant pairs $(i,j)$ where $\abs{\Gamma_i^j} = 1$, which contribute $\choos{n}{1} = n$ to the right side.
        \item The set of internal cut edges $\IC_\Gamma$ is equal to the set of strongly dominant pairs $(i,j)$ such that $\abs{\Gamma_i^j} >1$, and they both contribute $\choos{n}{\abs{G_s}} = \choos{n}{\abs{\Gamma_i^j}}$. 
        \item Leaf blocks and internal blocks of size at least three (i.e. all blocks of size at least $3$) that do \emph{not} contain the vertex $n$ contribute $n$ to the left side of the claimed equality, and are in bijection with weakly dominant pairs $(i,j)$, each of which contributes $n$ to the right side. 
    \end{enumerate}
    This list also serves as an outline of the proof that follows.\par 
    (1) Let $B$ be a leaf block of size $2$ with vertex set $V(B) = \{i,j\}$ where $i<j$. Then the single edge $(i,j)$ within $B$ is a cut edge of $\Gamma$, and since $B$ is a leaf block that cut edge $(i,j)$ must separate a single vertex. If this block does not contain $n$ (so $j \neq n$), then since $\Gamma$ is naturally labeled, the cut edge must be a dominant pair and that separated vertex must be $i$. So $(i,j) \gg \Gamma$ and $V(\Gamma_i^j) = \{i\}$, and $\abs{\Gamma_i^j} = 1$. On the other hand, if $(i,j) \gg \Gamma$ and $\abs{\Gamma_i^j} = 1$, then $i$ must be a leaf, and the subgraph $(\{i,j\},\{(i,j)\})$ is a block in $\Gamma$ that does not contain $n$. In particular, we have shown 
    \[\{B \in \LB_\Gamma \mid \abs{B} = 2,\; n \notin B\} = \{B \in \LB_\Gamma \mid V(B) = \{i,j\},\; (i,j) \gg \Gamma\}.\]
    So the leaf blocks $B$ in $\Gamma$ of size $2$ are in natural bijection with the dominant pairs $(i,j) \gg \Gamma$ where $\abs{\Gamma_i^j} = 1$. Formally, $\abs{\{B \in \LB_\Gamma \mid \abs{B} = 2,\; n \notin B\}} = \abs{\left\{(i,j) \gg \Gamma \mid \abs{\Gamma_i^j} = 1\right\}}$. Thus
    \begin{align*}
        \abs{\LB_\Gamma} + \abs{\{B \in \IB_\Gamma \mid \abs{B} > 2\}} - 1 &= \abs{\{B \in \LB_\Gamma \mid \abs{B} = 2\}}+\abs{\{B \in \LB_\Gamma \mid \abs{B} > 2\}} \\&\hspace{2cm}+ \abs{\{B \in \IB_\Gamma \mid \abs{B} > 2\}} - 1 \\
        &= \abs{\left\{(i,j) \gg \Gamma \mid \abs{\Gamma_i^j} = 1\right\}}+\abs{\{B \in \LB_\Gamma \mid \abs{B} > 2,\; n \notin B\}} \\&\hspace{2cm}+ \abs{\{B \in \IB_\Gamma \mid \abs{B} > 2\}} \\
        &= \abs{\left\{(i,j) \gg \Gamma \mid \abs{\Gamma_i^j}=1\right\}} \\&\hspace{2cm}+ \abs{\{B \mid B \text{ a block in }\Gamma,\; \abs{B} > 2,\; n \notin B\}}.
    \end{align*}
    We cancel ${\ds n\abs{\left\{(i,j) \gg \Gamma \mid \abs{\Gamma_i^j}=1\right\}}}$ from the left side and ${\ds \sum_{\substack{(i,j) \gg \Gamma \\ \abs{\Gamma_i^j}=1}} \choos{n}{1}}$ from the right side of (\ref{inteqn:lin_dim_equality}), and it remains to prove that 
    \begin{equation}%\label{eqn:lin_dim_equality2}
        n\abs{\{B \mid B \text{ a block in }\Gamma,\; \abs{B} > 2,\; n \notin B\}} + \sum_{s \in \IC_\Gamma} \choos{n}{\abs{G_s}}=  \sum_{(i,j)>\Gamma} n + \sum_{\substack{(i,j) \gg \Gamma \\ \abs{\Gamma_i^j}>1 }} \choos{n}{\abs{\Gamma_i^j}}. \tag{$**$}\label{inteqn:lin_dim_equality2}
    \end{equation}
    (2) Now if $s = (i,j) \in \IC_\Gamma$, then $s$ is a cut edge and since $\Gamma$ is naturally labeled, $n \notin \{i,j\}$. So if $(i,j) \in \IC_\Gamma$ then $(i,j) \gg \Gamma$ and $\abs{\Gamma_i^j} > 1$, otherwise $i$ is a leaf and the block $B$ where $V(B) = \{i,j\}$ is not an internal block. On the other hand, if $(i,j) \gg \Gamma$ and $\abs{\Gamma_i^j} > 1$, then $i$ cannot be a leaf, $(i,j)$ must be a cut edge and the block $B$ where $V(B) = \{i,j\}$ is not a leaf in the block-cut tree. So $\IC_\Gamma = \left\{(i,j) \gg \Gamma \mid \abs{\Gamma_i^j} > 1\right\}$. In particular,
    \[
    \sum_{s \in \IC_\Gamma} \choos{n}{\abs{G_s}} = \sum_{\substack{(i,j) \gg \Gamma\\ \abs{\Gamma_i^j} > 1}} \choos{n}{\abs{\Gamma_i^j}}.
    \]
    After cancelling this value from both sides of (\ref{inteqn:lin_dim_equality2}), it remains to show that 
    \begin{equation}
        n\abs{\{B \mid B \text{ a block in }\Gamma,\; \abs{B} > 2,\; n \notin B\}} = \sum_{(i,j)>\Gamma} n. \tag{$***$}\label{inteqn:lin_dim_equality3}
    \end{equation}
    (3) We argue that 
    \[\abs{\{B \mid B \text{ a block in }\Gamma,\; \abs{B} > 2,\; n \notin B\}} = \abs{\{(i,j) > \Gamma\}}.\]
    The bijection is as follows. If $B$ is a block in $\Gamma$ that does not contain $n$, then there is a unique path from $B$ to the block $B_0$ that contains $n$ in the block-cut tree of $\Gamma$. The first edge in that path is from $B$ to a cut vertex of $\Gamma$ that is contained within $B$. Let $j$ be this cut vertex, and let $i$ be the maximal vertex in $B$ that is \emph{not} equal to $j$. Note that $B$ is not connected to $n$ in $\Gamma - j$. \par 
    Since $\Gamma$ is naturally labeled, $(i,j)$ is an edge and $i$ must also be the maximal vertex in its connected component of $\Gamma - j$ (every path from $n$ to that component first passes through $j$, and $i$ is the largest vertex in that component that is adjacent to $j$). Since $\abs{B} > 2$, there is at least one other element of $B$ in the neighborhood of $j$, so $(i,j)$ is not a cut edge. In particular, $(i,j) > \Gamma$. \par 
    On the other hand, if $(i,j) > \Gamma$ let $B$ be the block containing $i$. This block cannot contain $n$ by the definition of a natural label, and must be of size at least $2$ since $(i,j)$ is not a cut edge. \par 
    So we have a bijection, proving as a consequence (\ref{inteqn:lin_dim_equality3}), and the claim follows.
\end{proof}
The bijection between certain dominant pairs and blocks that we constructed in the the proof of Lemma \ref{lem:linear_dimension_natural} above will be used again to compute a recursive formula for $D_\Gamma$ and provide a label-independent formula for $\lrep{\Gamma}_1$ and $\rrep{\Gamma}_1$ in Corollary \ref{cor:nonlabel_reps} below. Visually, when $\Gamma$ is naturally labeled we have the following correspondences between blocks in $\Gamma$ and dominant pairs.
\begin{center}
    \begin{tabular}{r|ccccc}
        Blocks $B$    & $\begin{Bmatrix} \abs{B} > 2\\ n \notin B  \end{Bmatrix}$ && $\begin{Bmatrix}B \in \LB_\Gamma \\ \abs{B} = 2,\;n \notin B  \end{Bmatrix}$ && $\begin{Bmatrix}B \in \IB_\Gamma \\ \abs{B} = 2  \end{Bmatrix}$ \\ &&& \\
                      & $\Big\updownarrow$ && $\Big\updownarrow$ && $\Big\updownarrow$ \\ &&& \\
        $\begin{matrix} \text{Dominant} \\ \text{pairs }(i,j)\end{matrix}$ & $\left\{(i,j) > \Gamma\right\}$ && $\begin{Bmatrix} (i,j) \gg \Gamma \\ \abs{\Gamma_i^j} = 1 \end{Bmatrix}$ && $\begin{Bmatrix} (i,j) \gg \Gamma \\ \abs{\Gamma_i^j} > 1 \end{Bmatrix}$
    \end{tabular}
\end{center}
\begin{example}\label{ex:blockpair_bijection}
    Consider the naturally labeled graph $\Gamma$ drawn below
    \begin{center} 
    \begin{tikzpicture}
            \begin{scope}[scale=1.5]
                \draw (0,0) node[draw,circle] (V1) {1};
                \draw (0.5,-1) node[draw,circle] (V2) {2};
                \draw (1.15,-1) node[draw,circle] (V3) {3};
                \draw (1,0) node[draw,circle] (V4) {4};
                \draw (1.5,1) node[draw,circle] (V5) {5};
                \draw (1.85,-1) node[draw,circle] (V6) {6};
                \draw (2.5,-1) node[draw,circle] (V7) {7};
                \draw (2,0) node[draw,circle] (V8) {8};
                \draw (3,0) node[draw,circle] (V10) {9};
                \draw (3.25,-1) node[draw,circle] (V9) {10};
                \draw (3.5,1) node[draw,circle] (V11) {11};
                \draw (4,0) node[draw,circle] (V12) {12};

                \draw (V1)--(V4);
                \draw (V2)--(V4);
                \draw (V3)--(V4);
                \draw (V2)--(V3);
                \draw (V4)--(V5);
                \draw (V5)--(V8);
                \draw (V4)--(V8);
                \draw (V6)--(V7);
                \draw (V6)--(V8);
                \draw (V7)--(V8);
                \draw (V8)--(V10);
                \draw (V9)--(V10);
                \draw (V10)--(V11);
                \draw (V10)--(V12);
                \draw (V11)--(V12);
            \end{scope}
        \end{tikzpicture}
    \end{center}
    with block-cut tree 
    \begin{center} 
    \begin{tikzpicture}
            \begin{scope}[scale=1.5]
                \draw (0,0) node[draw,circle] (B1) {$B_1$};
                \draw (1,0) node[draw,circle] (v1) {$4$};
                \draw (2,0) node[draw,circle] (B2) {$B_2$};
                \draw (2,0.75) node [draw,circle] (B3) {$B_3$};
                \draw (3,0.75) node[draw,circle] (v2) {$8$};
                \draw (4,0) node[draw,circle] (B4) {$B_4$};
                \draw (4,0.75) node[draw,circle] (B5) {$B_5$};
                \draw (5,0.75) node[draw,circle] (v3) {$9$};
                \draw (6,0) node[draw,circle] (B6) {$B_6$};
                \draw (6,0.75) node[draw,circle] (B7) {$B_7$};

                \draw (B1)--(v1);
                \draw (v1)--(B2);
                \draw (v1)--(B3);
                \draw (B3)--(v2);
                \draw (v2)--(B4);
                \draw (v2)--(B5);
                \draw (B5)--(v3);
                \draw (v3)--(B6);
                \draw (v3)--(B7);
            \end{scope}
        \end{tikzpicture}
    \end{center}
    The blocks of size at least $2$ that do not contain $n$ correspond to weakly dominant pairs in the following manner: 
    \begin{center}
        \begin{tabular}{c|c}
            Blocks & Pairs \\\hline
            $B_2$ on $\{2,3,4\}$ & $(3,4) > \Gamma$ \\
            $B_3$ on $\{4,5,8\}$ & $(5,8) > \Gamma$ \\
            $B_4$ on $\{6,7,8\}$ & $(7,8) > \Gamma$ \\
        \end{tabular}
    \end{center}
    The leaf blocks of size $2$ that do not contain $n$ correspond to strongly dominant pairs $(i,j)$ where $\abs{\Gamma_i^j} = 1$ in the following manner:
    \begin{center}
        \begin{tabular}{c|c}
            Blocks & Pairs \\\hline
            $B_1$ on $\{1,4\}$ & $(1,4) > \Gamma$ \\
            $B_6$ on $\{9,10\}$ & $(9,10) > \Gamma$ \\
        \end{tabular}
    \end{center}
    Finally the internal block $B_5$ on $\{8,9\}$ (so of size $2$) corresponds to the dominant pair $(8,9) \gg \Gamma$.
\end{example}

\begin{example}
    When $\Gamma$ is naturally labeled, each sum appearing in the formula in Lemma \ref{lem:linear_dimension_natural} has the following combinatorial interpretation:
    \begin{itemize}
        \item $n\;$: Every vertex of $\Gamma$ contributes $1$ to the sum,
        \item ${\ds n-1-\sum_{j \vdash \Gamma} \cut_\Gamma(j)}\;$: Every vertex that is \emph{not} the lower vertex in a dominant pair or the vertex $n$ contributes an additional $1$ to the sum (recall that the lower vertices in a dominant pair determine the pair uniquely).
        \item ${\ds \sum_{j \vdash \Gamma} \sum_{(i,j)>\Gamma} n }\;$: Every weakly dominant pair contributes $n$ to the sum.
        \item ${\ds \sum_{j \vdash \Gamma} \sum_{(i,j)\gg \Gamma} \choos{n}{\abs{\Gamma_i^j}}}\;$: Every strongly dominant pair $(i,j)$ contributes $\choos{n}{\abs{\Gamma_i^j}}$ to the sum.
    \end{itemize}
    Below we have drawn $\Gamma$ from Example \ref{ex:linear_example_2}, with the associated values for $D_\Gamma$ in blue. In this picture, dashed lines indicate weak dominance and double lines indicate strong dominance.
    \begin{center}
    \begin{tikzpicture}
        \begin{scope}[scale=2]
            \draw (0,0) node[draw,circle] (V1) {$1$} node[left=10pt]{{\color{blue} 1}};
            \draw (0.5,-1) node[draw,circle] (V2) {$2$} node[below=10pt]{{\color{blue} 2}};
            \draw (1.15,-1) node[draw,circle] (V3) {$3$} node[below=10pt]{{\color{blue} 1}};
            \draw (1,0) node[draw,circle] (V4) {$4$} node[above=10pt]{{\color{blue} 2}};
            \draw (1.5,1) node[draw,circle] (V5) {$5$} node[above=10pt]{{\color{blue} 1}};
            \draw (1.85,-1) node[draw,circle] (V6) {$6$} node[below=10pt]{{\color{blue} 2}};
            \draw (2.5,-1) node[draw,circle] (V7) {$7$} node[below=10pt]{{\color{blue} 1}};
            \draw (2,0) node[draw,circle] (V8) {$8$} node[below=8pt,left=8pt]{{\color{blue} 1}};
            \draw (3,0) node[draw,circle] (V11) {$11$} node[above=10pt]{{\color{blue} 2}};
            \draw (3.25,-1) node[draw,circle] (V9) {$9$} node[below=10pt]{{\color{blue} 1}};
            \draw (3.5,1) node[draw,circle] (V10) {$10$} node[above=10pt]{{\color{blue} 1}};
            \draw (4,0) node[draw,circle] (V12) {$12$} node[right=10pt]{{\color{blue} 1}};

            \draw[double distance=2pt] (V1)-- node[above] {\color{blue}$\choos{12}{1}$}(V4);
            \draw (V2)--(V4);
            \draw[dashed] (V3)-- node[right] {{\color{blue}$12$}}(V4);
            \draw (V2)--(V3);
            \draw (V4)--(V5);
            \draw[dashed] (V5)-- node[right] {{\color{blue}$12$}}(V8);
            \draw (V4)--(V8);
            \draw (V6)--(V7);
            \draw (V6)--(V8);
            \draw[dashed] (V7)-- node[right] {{\color{blue}$12$}}(V8) ;
            \draw[double distance=2pt] (V8)-- node[above] {{\color{blue}$\choos{12}{8}$}}(V11);
            \draw[double distance=2pt] (V9)-- node[right] {\color{blue}$\choos{12}{1}$}(V11);
            \draw[double distance=2pt] (V11)-- node[right] {\color{blue}$\choos{12}{1}$}(V10);
            \draw (V11)--(V12);
        \end{scope}
    \end{tikzpicture}
    \end{center}
\end{example}
\begin{remark}
    It is not obvious that the formula in Lemma \ref{lem:linear_dimension_natural} is the same for different naturally labeled graphs in the isomorphism class. However, it is clear from the definition in (\ref{eqn:linear_dimension}) that $D_\Gamma$ is an invariant, so they must be equal.
\end{remark}
The following Proposition \ref{prop:lin_dimension} gives an inductive formula for $D_\Gamma$ when $\Gamma$ is cliqued and naturally labeled that matches the inductive formula for $\dim_\C(\splines{\Gamma}^1)$ from Theorem \ref{thm:linear_spanning}.
\begin{proposition}\label{prop:lin_dimension}
    If $\Gamma$ is a cliqued and naturally labeled graph on at least $4$ vertices, then $D_\Gamma$ can be computed recursively as follows:
    \begin{align*}
    D_\Gamma = 1&+D_{\Gamma-n} + \begin{cases} \choos{n-1}{1} &\text{if } \Gamma \; \text{is type A} \\ 1 &\text{if }\Gamma \; \text{is type B/C} \end{cases} 
    \\&+ \sum_{(i,j) \gg \Gamma} \choos{n-1}{\abs{\Gamma_i^j}-1} + \abs{\left\{(i,j) >  \Gamma \right\}}.
    \end{align*}
\end{proposition}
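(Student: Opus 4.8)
The plan is to evaluate both $D_\Gamma$ and $D_{\Gamma-n}$ from the isomorphism-invariant formula \ref{eqn:linear_dimension} (equivalently the concise form in the Remark following it), subtract, and re-express the difference in the language of dominant pairs via the block-to-dominant-pair correspondences established inside the proof of Lemma \ref{lem:linear_dimension_natural}. Write $P_G := \abs{\LB_G} + \abs{\{B \in \IB_G : \abs{B}>2\}}$, so that a graph on $m$ vertices has $D_G = (m-1) - \sum_{j\vdash G}\cut_G(j) + mP_G + \sum_{s\in\IC_G}\choos{m}{\abs{G_s}}$. The only genuinely combinatorial input needed is a description of how the block-cut tree, the cut vertices with their $\cut$-values, and the internal cut edges of $\Gamma-n$ arise from those of $\Gamma$; here I split into the three types of Lemma \ref{lem:graph_type}, exactly as in the proof of Lemma \ref{lem:base_case}. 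Note $\Gamma-n$ is connected (the vertex $n$ is a leaf in type A and lies in a block of size $\geq 3$ in types B and C) and has $\geq 3$ vertices, so $D_{\Gamma-n}$ is defined.

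Since the natural labeling forces $n$ to be a non-cut vertex lying in a leaf block $B_0$, deleting it only affects $B_0$ and the cut vertex through which $B_0$ attaches. In type C one has $\abs{B_0}\geq 4$, and removing $n$ replaces $B_0$ by a still-size-$\geq 3$ leaf block while preserving every cut vertex with its $\cut$-value and every internal cut edge; thus $\sum_j\cut$, $P_\Gamma$, and $\IC_\Gamma$ are unchanged. In type B, $B_0=\{n-2,n-1,n\}$ becomes the leaf edge $(n-2,n-1)$: $\sum_j\cut$ is unchanged (the component $\{n-1,n\}$ of $\Gamma-(n-2)$ becomes $\{n-1\}$), $P$ is unchanged (a size-$3$ leaf block is traded for a size-$2$ leaf block), and $\IC$ is unchanged. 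In type A, $B_0=\{n-1,n\}$ is a leaf edge with $n-1$ a cut vertex; deleting the leaf $n$ lowers $\cut_\Gamma(n-1)$ by exactly $1$ and touches no other cut vertex, so $\sum_j\cut$ drops by $1$, and it destroys the size-$2$ leaf block $B_0$. Generically in type A this also drops $P$ by $1$ and leaves $\IC$ unchanged; the one exception is when $n-1$ attaches to the rest of $\Gamma$ through a block $B_1$ which is itself an internal cut edge, in which case $B_1$ becomes a leaf edge of $\Gamma-n$, so $P$ is in fact unchanged while $B_1$ --- whose $G_{B_1}$ has $n-2$ vertices --- leaves $\IC$. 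A short case analysis (mirroring the one in the proof of Lemma \ref{lem:base_case}) shows these exhaust the type-A possibilities and that $\cut$ always drops by exactly $1$.

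Now Pascal's rule $\choos{n}{k}-\choos{n-1}{k}=\choos{n-1}{k-1}$, together with $\abs{G_s}=\abs{\Gamma_i^j}$ for $s=(i,j)$, gives $D_\Gamma-D_{\Gamma-n}=1+P_\Gamma+\sum_{s\in\IC_\Gamma}\choos{n-1}{\abs{G_s}-1}$ in types B and C, and $D_\Gamma-D_{\Gamma-n}=(n-1)+P_\Gamma+\sum_{s\in\IC_\Gamma}\choos{n-1}{\abs{G_s}-1}$ in type A; in the exceptional type-A subcase the loss of $n-1$ from the $mP$-term is exactly cancelled by the gain of $\choos{n-1}{n-2}=n-1$ in the $\IC$-term. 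Next I invoke the bijections from the proof of Lemma \ref{lem:linear_dimension_natural}: $\IC_\Gamma=\{(i,j)\gg\Gamma:\abs{\Gamma_i^j}>1\}$, the leaf blocks of size $2$ not containing $n$ biject with $\{(i,j)\gg\Gamma:\abs{\Gamma_i^j}=1\}$, and the blocks of size $\geq 3$ not containing $n$ biject with $\{(i,j)>\Gamma\}$. As $B_0$ has size $\geq 3$ in types B and C and size $2$ in type A, in every case $P_\Gamma = 1+\abs{\{(i,j)>\Gamma\}}+\abs{\{(i,j)\gg\Gamma:\abs{\Gamma_i^j}=1\}}$; substituting this and folding the pairs with $\abs{\Gamma_i^j}=1$ into the binomial sum (using $\choos{n-1}{0}=1$) converts both identities into
\[
D_\Gamma - D_{\Gamma-n} = 1 + \begin{cases}\choos{n-1}{1} &\text{if }\Gamma\text{ is type A}\\ 1 &\text{if }\Gamma\text{ is type B/C}\end{cases} + \sum_{(i,j)\gg\Gamma}\choos{n-1}{\abs{\Gamma_i^j}-1}+\abs{\{(i,j)>\Gamma\}},
\]
which is exactly the claimed recursion.

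I expect the type-A bookkeeping of the second paragraph to be the main obstacle: one must enumerate how $n-1$ sits inside $\Gamma$ (it is always a cut vertex, but need not remain one after deleting $n$, and the block connecting it to the rest may be large or an internal cut edge) and verify that $D_\Gamma-D_{\Gamma-n}$ is the same in every sub-situation, the crucial point being the cancellation $\choos{n-1}{n-2}=n-1$ in the subcase that destroys an internal cut edge. Types B and C, and the translation into dominant-pair language, are routine given Lemma \ref{lem:linear_dimension_natural} and its proof.
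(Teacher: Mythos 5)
Your proposal is correct and follows essentially the same route as the paper: evaluate $D_\Gamma$ and $D_{\Gamma-n}$ from the block-cut-tree formula, track how $\sum_j\cut(j)$, the leaf blocks, the internal blocks of size $>2$, and $\IC$ change under deleting $n$ in types A/B/C, apply Pascal's identity, and translate via the block-to-dominant-pair bijections from Lemma \ref{lem:linear_dimension_natural}. Your generic/exceptional split of type A is just a compressed version of the paper's finer A1--A5 analysis (Lemmas \ref{lem:dim_LB}, \ref{lem:dim_IB}, \ref{lem:dim_IC}), with the exceptional subcase being exactly the paper's type A2 and the cancellation $\choos{n-1}{n-2}=n-1$ matching the paper's computation.
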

Before proving Proposition \ref{prop:lin_dimension}, we prove several computational lemmas and construct a finer categorization of type A graphs. The key idea is to concretely describe the block-cut tree of $\Gamma - n$ in terms of the block-cut tree of $\Gamma$.  \par
If $B$ is a block in $\Gamma$ that does not contain $n$, then $B$ is still $2$-connected in $\Gamma - n$. Additionally, if $i$ and $k$ where $n \notin \{i,k\}$ are vertices in $\Gamma$ such that $i$ and $k$ are in different connected components of $\Gamma - j$, then $i$ and $k$ are also in different connected components of $ (\Gamma - n) - j$. In particular, If $B$ is a block in $\Gamma$ that does not contain $n$, then $B$ is a block in $\Gamma - n$. \par 
Lemma \ref{lem:dim_typeBC} below describes the simplest case, when $\Gamma$ is either type B or type C
\begin{lemma}\label{lem:dim_typeBC}
    Let $\Gamma$ be a cliqued and naturally labeled graph on at least $4$ vertices. If $\Gamma$ is of type B or C, then the following equalities hold:
    \begin{itemize}
        \item $\abs{\LB_\Gamma} = \abs{\LB_{\Gamma - n}}$,
        \item $\IB_\Gamma = \IB_{\Gamma - n}$, and 
        \item $\IC_\Gamma = \IC_{\Gamma-n}$.
    \end{itemize}
\end{lemma}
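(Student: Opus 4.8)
The plan is to describe the block structure of $\Gamma - n$ explicitly in terms of that of $\Gamma$. Write $B_0$ for the unique block of $\Gamma$ containing $n$; by the construction of a natural label, $B_0$ is a leaf block (so $n$ lies in no other block), and by Lemma \ref{lem:graph_type} together with the cliqued hypothesis, $B_0 = \{n-2,n-1,n\}$ in type B (a triangle) while in type C $B_0$ is a clique with $\{n-3,n-2,n-1,n\} \subseteq V(B_0)$. Since $n$ is not a cut vertex, all of $\LB_\Gamma \setminus \{B_0\}$, $\IB_\Gamma$, and the edges in $\IC_\Gamma$ avoid $n$; the whole proof is then a matter of tracking what happens to $B_0$ and of checking that leaf/internal status is preserved for the other blocks.

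First I would show that $\Gamma$ and $\Gamma - n$ have the same cut vertices. In types B and C the vertex $n$ has at least two neighbors, so for any cut vertex $j \ne n$ of $\Gamma$ no component of $\Gamma - j$ equals $\{n\}$; hence one can choose vertices $i,k \ne n$ in distinct components of $\Gamma - j$, and by the remarks preceding the lemma $i,k$ remain separated in $(\Gamma - n) - j$, so $j$ is a cut vertex of $\Gamma - n$. For the reverse inclusion I would use the general fact that re-adding a vertex to a graph cannot increase the number of connected components: if $v$ is a cut vertex of $\Gamma - n$, then $\Gamma - v$ is $((\Gamma - n) - v)$ with $n$ added back, and in every case ($v \notin B_0$, or $v \in V(B_0) \setminus \{n\}$, or the harmless $v = n-1$, which is a leaf of $\Gamma - n$ and hence not a cut vertex there) the neighbors of $n$ in $\Gamma - v$ all lie in a single component, so $\Gamma - v$ is still disconnected.

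Next I would identify the blocks of $\Gamma - n$ as exactly the blocks of $\Gamma$ not containing $n$ (unchanged, by the remarks before the lemma) together with $B_0 - n$: in type B this is the edge $\{n-2,n-1\}$, in type C it is the clique on $V(B_0) \setminus \{n\}$ of size $\ge 3$, so in either case a $2$-connected subgraph, and it is maximal in $\Gamma - n$ because $B_0$ is the only block of $\Gamma$ containing $n-1$. There are no others: any block $B'$ of $\Gamma - n$ is a $2$-connected subgraph of $\Gamma$, hence contained in some block $B$ of $\Gamma$; if $n \notin B$ then $B' = B$, and if $n \in B$ then $B = B_0$, forcing $B' \subseteq B_0 - n$ and thus $B' = B_0 - n$.

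Finally, a block is a leaf of the block-cut tree exactly when it contains precisely one cut vertex; since the cut-vertex sets of $\Gamma$ and $\Gamma - n$ coincide, each block of $\Gamma$ not containing $n$ keeps its leaf/internal status in $\Gamma - n$, and both $B_0$ and $B_0 - n$ are leaf blocks (each meets exactly one cut vertex). Therefore $\LB_{\Gamma - n} = (\LB_\Gamma \setminus \{B_0\}) \cup \{B_0 - n\}$, giving $\abs{\LB_\Gamma} = \abs{\LB_{\Gamma-n}}$; $\IB_{\Gamma - n} = \IB_\Gamma$ as sets, since every internal block avoids $n$ and is unchanged; and $\IC_{\Gamma - n} = \IC_\Gamma$, these being the two-vertex members of the $\IB$'s. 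The main obstacle is the bookkeeping for this block correspondence — in particular verifying that $B_0 - n$ is a leaf block of $\Gamma - n$ and that no internal block changes type — which is precisely where the type B/C hypothesis (pinning down the local picture near $n$) is used; once that picture is fixed, the underlying verifications are routine.
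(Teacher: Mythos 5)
Your proof is correct and follows essentially the same route as the paper: identify the block $B_0$ containing $n$, observe that in types B/C the clique $B_0-n$ is still a block of $\Gamma-n$ while all other blocks and all cut vertices are untouched, so the block-cut trees correspond and leaf/internal statuses (hence $\LB$, $\IB$, $\IC$) are preserved. You simply spell out the bookkeeping (equality of cut-vertex sets in both directions, no new blocks) that the paper's shorter argument asserts without detail.
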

\begin{proof}
    Let $B_0$ be the block in $\Gamma$ that contains $n$. Since $\Gamma$ is type B or C, $\abs{B_0} > 2$.  Since $\Gamma$ is cliqued, the subgraph $B_0 -n$ has at least $2$ vertices and is also a clique, and in particular $B_0 - n$ is a block in $\Gamma - n$. Every other block or cut vertex in $\Gamma$ is a block or cut vertex in $\Gamma - n$. Thus, the block-cut tree of $\Gamma$ is isomorphic to the block-cut tree of $\Gamma - n$. \par 
    Since the block-cut trees are isomorphic they have the same number of leaves, and so $\abs{\LB_\Gamma} = \abs{\LB_{\Gamma - n}}$. Since the only block that changes is $B_0$ and all non-leaf blocks in $\Gamma$ are non-leaf blocks in $\Gamma - n$, it follows that $\IB_\Gamma = \IB_{\Gamma - n}$. This equality implies $\IC_\Gamma = \IC_{\Gamma-n}$ by definition.
\end{proof}
 We now assume that $\Gamma$ is type A. We will give a similar computation for type-A graphs, but there are several cases to consider. Since $(n-1,n)$ is a cut edge, it corresponds to a block $B_0$ containing $ n$ in $\Gamma$ that has no natural counterpart in the block-cut tree of $\Gamma - n$. Not only that, but $n-1$ may not be a cut vertex in $\Gamma-n$. \par 
 Addressing this requires further decomposition of type A graphs, which we denote A1, A2, A3, A4, and A5. We will define them carefully below, but the consequences in each case in terms of moving from the block-cut tree of $\Gamma$ to that of $\Gamma - n$ are essentially as follows (recall $B_0$ is the block in $\Gamma$ containing $n$):
 \begin{itemize}
     \item[(A1)] The block cut tree of $\Gamma - n$ is simply that of $\Gamma$ with the block $B_0$ removed.
     \item[(A2)] The block-cut tree of $\Gamma - n$ is the block cut tree of $\Gamma$ with the block $B_0$ and the cut vertex $n-1$ removed, and an internal block $B'$ of size $2$ for $\Gamma$ becomes a leaf block for $\Gamma-n$.
     \item[(A3)]  The block-cut tree of $\Gamma - n$ is the block cut tree of $\Gamma$ with the block $B_0$ and the cut vertex $n-1$ removed, but every other internal (resp. leaf) block of $\Gamma$ remains an internal (resp. leaf) block of $\Gamma - n$.
     \item[(A4)]  The block-cut tree of $\Gamma - n$ is the block cut tree of $\Gamma$ with the block $B_0$ and the cut vertex $n-1$ removed, and an internal block $B'$ of size greater than $2$ for $\Gamma$ becomes a leaf block for $\Gamma - n$.
     \item[(A5)] The graph $\Gamma - n$ is $2$-connected.
 \end{itemize}\par 
 First, a type A graph $\Gamma$ is type A1 if $\cut_\Gamma(n-1) > 1$. Graphically, $\Gamma$ looks like 
 \begin{center}
     \begin{tikzpicture}
         \begin{scope}
            \draw (0,0) node[draw,circle] (V1) {$n$};
            \draw (2,0) node[draw,circle] (V2) {$n-1$};
            \draw (5,0) node[draw,rectangle] (V3) {$\Gamma_{\cut(n-1)}^{n-1}$};
            \draw (-1,0) node (type) {A1:};

            \draw (1,1.5) node[draw,circle] (V4) {$n-2$};
            \draw (2,1.5) node (dots) {$\cdots$};
            \draw (3,1.5) node[draw,circle] (V5) {$n-k$};

            \draw (V1)--(V2);
            \draw (V2)--(V4);
            \draw (V2)--(V5);
            \draw[double] (V2)--(V3);
        \end{scope}
     \end{tikzpicture}
 \end{center}
 Note that in type A1, $\Gamma - n$ has the same cut vertices as $\Gamma$, so the block-cut tree of $\Gamma - n$ is the block-cut tree of $\Gamma$ with $B_0$ removed. \par 
If $\Gamma$ is type A but not type A1, then $n-1$ is a cut vertex of $\Gamma$ but it is not a cut vertex of $\Gamma - n$. So the block-cut tree of $\Gamma - n$ is the block-cut tree of $\Gamma$ with both the block $B_0$ and the cut vertex $n-1$ removed. In particular, $n-1$ is contained within precisely two blocks in $\Gamma$, one that contains $n$ (so $B_0$) and one that contains $n-2$. Let $B'$ be the block in $\Gamma$ that contains $n-1$ and $n-2$.  \par 
We say $\Gamma$ is type A2 if $\abs{B'} = 2$, i.e. $V(B') = \{n-2,n-1\}$. Graphically, $\Gamma$ and its block- cut tree look like
 \begin{center}
     \begin{tikzpicture}
        \begin{scope}
            \draw (0,0) node[draw,circle] (V1) {$n$};
            \draw (2,0) node[draw,circle] (V2) {$n-1$};
            \draw (4,0) node[draw,circle] (V3) {$n-2$};
            \draw (6,-0.8) node[draw,rectangle] (V4) {$\Gamma_1^{n-2}$};
            \draw (6,0.8) node[draw,rectangle] (V5) {$\Gamma_{\cut(n)}^{n-2}$};
            \draw (6,0.1) node (V9) {$\vdots$};
            \draw (-1,0) node (type) {A2:};
            \draw (8.5,0) node (gr) { $\leftarrow$ $\Gamma$};

            \draw (V1)--(V2);
            \draw (V2)--(V3);
            \draw[double] (V3)--(V4);
            \draw[double] (V3)--(V5);
        \end{scope}
        \begin{scope}[yshift=-1in]
             \draw (0,0) node[draw,circle] (V1) {$B_0$};
            \draw (2,0) node[draw,circle] (V2) {$n-1$};
            \draw (3.75,0) node[draw,circle] (V3) {$B'$};
            \draw (5.5,0) node[draw,circle ] (V4) {$n-2$};   
            \draw (7,0) node (V5) {$\cdots$};

            \draw (9.2,0) node (gr) { $\leftarrow$ \begin{tabular}{c}
                 Block-cut \\ tree of $\Gamma$
            \end{tabular}} ;

            \draw (V1)--(V2);
            \draw (V2)--(V3);
            \draw (V3)--(V4);
            \draw[double] (V4)--(V5);
        \end{scope}

        \end{tikzpicture}
 \end{center}
 Note that the $B'$ where $V(B') = \{n-2,n-1\}$ is associated to an internal cut edge in $\Gamma$, and is a leaf block in $\Gamma - n$. \par  
 The graph $\Gamma$ is type A3 if $\abs{B'} > 2$ and $B'$ contains more than $2$ cut vertices of $\Gamma$. So $B'$ is adjacent to more than two vertices in the block-cut tree of $\Gamma$. Graphically, the block-cut tree of $\Gamma$ looks like
 \begin{center}
    \begin{tikzpicture}
        \begin{scope}
             \draw (0,0) node[draw,circle] (V1) {$B_0$};
            \draw (2,0) node[draw,circle] (V2) {$n-1$};
            \draw (4,0) node[draw,circle] (V3) {$B'$};
            \draw (6,-0.8) node[draw,circle] (V4) {$v_k$};
            \draw (6,0.8) node[draw,circle] (V5) {$v_1$};
            \draw (6,0.1) node (V9) {$\vdots$};
            \draw (7.5,0) node (v10) {$\cdots$};
            \draw (-1,0) node (type) {A3:};

            \draw (V1)--(V2);
            \draw (V2)--(V3);
            \draw (V3)--(V4);
            \draw (V3)--(V5);
            \draw[double] (V4)--(7,-0.8);
            \draw[double] (V5)--(7,0.8);
            
        \end{scope}
    \end{tikzpicture}
\end{center}
 where $k > 1$. Note that $B'$ is an internal block in both $\Gamma$ and $\Gamma - n$. \par 
 The graph $\Gamma$ is type A4 if $\abs{B'} > 2$, and $B'$ contains precisely $2$ cut vertices of $\Gamma$, $n-1$ and some other cut vertex $v$ of $\Gamma$. In particular, $B'$ is adjacent to precisely two vertices in the block-cut tree of $\Gamma$. Graphically, the block-cut tree of $\Gamma$ looks like
\begin{center}
    \begin{tikzpicture}
        \begin{scope}
             \draw (0,0) node[draw,circle] (V1) {$B_0$};
            \draw (2,0) node[draw,circle] (V2) {$n-1$};
            \draw (4,0) node[draw,circle] (V3) {$B'$};
            \draw (5.5,0) node[draw,circle ] (V4) {$v$};   
            \draw (7,0) node (V5) {$\cdots$};
            
            \draw (-1,0) node (type) {A4:};

            \draw (V1)--(V2);
            \draw (V2)--(V3);
            \draw (V3)--(V4);
            \draw[double] (V4)--(V5);
        \end{scope}
    \end{tikzpicture}
\end{center}
Note that $B'$ is an internal block of size at least $3$ in $\Gamma$ and a leaf block in $\Gamma - n$. \par 
Finally, the graph $\Gamma$ is type A5 if $n-1$ is the only cut vertex in $B'$. In particular, $B_0$ and $B'$ are the only two blocks in $\Gamma$, so the block cut tree of $\Gamma$ is
\begin{center}
    \begin{tikzpicture}
        \begin{scope}
             \draw (0,0) node[draw,circle] (V1) {$B_0$};
            \draw (2,0) node[draw,circle] (V2) {$n-1$};
            \draw (4,0) node[draw,circle] (V3) {$B'$};
            
            \draw (-1,0) node (type) {A5:};

            \draw (V1)--(V2);
            \draw (V2)--(V3);
        \end{scope}
    \end{tikzpicture}
\end{center}
The computations for type A5 are generally easy, as $\Gamma - n$ is $2$-connected. \par 
Lemmas \ref{lem:dim_LB}, \ref{lem:dim_IB}, and \ref{lem:dim_IC} below use this finer categorization of type A graphs to explicitly compute the relationship between sums in the formulas of $D_\Gamma$ and $D_{\Gamma-n}$. For the proofs of Lemmas \ref{lem:dim_IB}, \ref{lem:dim_IC}, and \ref{lem:dim_LB} below, $B_0$ is the block in $\Gamma$ that contains $n$ and (if $\Gamma$ type A2-A5) $B'$ is the block in $\Gamma$ that contains $n-1$ but not $n$ (so $B'$ is still a block in $\Gamma - n$).
\begin{lemma}\label{lem:dim_LB}
    Let $\Gamma$ be a cliqued and naturally labeled graph on at least $4$ vertices. Then the number of leaf blocks in $\Gamma - n$ is
    \[\abs{\LB_{\Gamma - n}} = \begin{cases}
        \abs{\LB_{\Gamma}} & \text{ if } \Gamma \text{ is type A2 or A4} \\
        \abs{\LB_{\Gamma}}-1 & \text{ if } \Gamma \text{ is type A1 or A3} \\
        \abs{\LB_{\Gamma}}-2 & \text{ if } \Gamma \text{ is type A5.}
    \end{cases}\]
\end{lemma}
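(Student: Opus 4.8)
The plan is to run through the five type-A subtypes A1--A5 (these exhaust the type-A case by definition, and types B and C are already handled by Lemma \ref{lem:dim_typeBC}), in each case reading off the change in the number of leaf blocks from the descriptions of the block-cut tree of $\Gamma-n$ recorded just before the lemma. The common setup: write $B_0$ for the block of $\Gamma$ containing $n$. Since $\Gamma$ is type A, the edge $(n-1,n)$ is a cut edge, so $B_0$ is the two-vertex block $\{n-1,n\}$, and since $n$ is never a cut vertex of a naturally labeled graph, $B_0$ is a leaf of the block-cut tree of $\Gamma$; thus $B_0\in\LB_\Gamma$ and passing to $\Gamma-n$ always removes this one leaf. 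The only thing left to determine in each subtype is whether removing $B_0$ (and, when $\cut_\Gamma(n-1)=1$, also the vertex $n-1$) turns some block that was internal in $\Gamma$ into a leaf of $\Gamma-n$, or collapses the tree entirely.

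First I would dispatch A1, where $\cut_\Gamma(n-1)>1$: then $\Gamma-(n-1)$ has at least three components, hence $n-1$ lies in at least three blocks of $\Gamma$ and has degree $\ge 3$ in the block-cut tree, so after deleting the leaf $B_0$ it still has degree $\ge 2$, remains a cut vertex, and no block incident to it is promoted to a leaf. So exactly one leaf is lost: $\abs{\LB_{\Gamma-n}}=\abs{\LB_\Gamma}-1$. The same count holds in A3: there the block-cut tree of $\Gamma-n$ is that of $\Gamma$ with $B_0$ and $n-1$ deleted, and every block other than $B_0$ keeps its leaf/internal status (in particular $B'$ stays internal). For A2 and A4, the recorded description says precisely that $B'$, which is internal in $\Gamma$, becomes a leaf block of $\Gamma-n$ while all other blocks keep their status; so we lose the leaf $B_0$ and gain the leaf $B'$, giving $\abs{\LB_{\Gamma-n}}=\abs{\LB_\Gamma}$. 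Finally, for A5 the block-cut tree of $\Gamma$ is the path $B_0 - (n-1) - B'$, so $\LB_\Gamma=\{B_0,B'\}$ has size two, whereas $\Gamma-n=B'$ is $2$-connected and hence, by the convention in Definition \ref{def:internal_leaf_blocks}, $\LB_{\Gamma-n}=\emptyset$; thus $\abs{\LB_{\Gamma-n}}=0=\abs{\LB_\Gamma}-2$.

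The step I expect to require the most care is making sure the A1--A5 structure statements are being applied verbatim and that the degenerate A5 case is counted via the right convention from Definition \ref{def:internal_leaf_blocks}; everything else is the elementary bookkeeping that deleting a leaf of a tree, and possibly its unique neighbor, changes the leaf count by an amount completely pinned down by that neighbor's degree. A secondary point to double-check is that in A1 the claim that the block-cut tree of $\Gamma-n$ is ``that of $\Gamma$ with $B_0$ removed'' really does leave all other leaf/internal designations untouched; this follows from the degree computation above, and I would include that one-line justification explicitly.
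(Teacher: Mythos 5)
Your proposal is correct and follows essentially the same route as the paper: a case analysis over the subtypes A1--A5, reading off from the block-cut-tree descriptions that $B_0$ is always a lost leaf, that $B'$ is promoted to a leaf exactly in types A2 and A4, and handling A5 via the convention that a $2$-connected graph has no leaf blocks. Your added degree argument in A1 (that $n-1$ keeps degree $\ge 2$ so no block is promoted) is a harmless extra justification of what the paper asserts directly.
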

\begin{proof}
    If $\Gamma$ is type A2 or A4, then $B' \in \LB_{\Gamma-n}$ is a leaf block of $\Gamma-n$, but $B' \in \IB_\Gamma$ is an internal block of $\Gamma$. So $\LB_{\Gamma-n} = \left(\LB_{\Gamma} \setminus\{B_0\}\right) \cup \{B'\}$. Less formally, we lose a block $B_0$ and gain a block $B'$, maintaining the same size. \par 
    If $\Gamma$ is type $A1$ or $A3$, every leaf block in $\Gamma - n$ is a leaf block in $\Gamma$, but we still lose $B_0$. So the size decrements by $1$. \par 
    If $\Gamma$ is type A5, then $\Gamma$ has two leaf blocks ($B_0 $ and $B'$), whereas $\Gamma - n$ is a clique, with zero leaf blocks. We directly compute $\abs{\LB_{\Gamma}} = 2$ and $\abs{\LB_{\Gamma-n}} = 0$.
\end{proof}
\begin{lemma}\label{lem:dim_IB}
    Let $\Gamma$ be a cliqued and naturally labeled graph on at least $4$ vertices. Then
    \[\abs{\{B \in \IB_{\Gamma-n} \mid \abs{B} > 2\}} = \begin{cases}
        \abs{\{B \in \IB_\Gamma \mid \abs{B} > 2\}} & \text{ if } \Gamma \text{ is type A1, A2, or A3} \\
        \abs{\{B \in \IB_\Gamma \mid \abs{B} > 2\}}-1 & \text{ if } \Gamma \text{ is type A4} \\
        \abs{\{B \in \IB_\Gamma \mid \abs{B} > 2\}}+1 & \text{ if } \Gamma \text{ is type A5.}
    \end{cases}\]
\end{lemma}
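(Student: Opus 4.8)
The plan is to track, subtype by subtype through A1--A5, exactly which blocks change their leaf/internal status when we pass from the block-cut tree of $\Gamma$ to the block-cut tree of $\Gamma - n$, and then read off the effect on the number of internal blocks of size greater than $2$. Two structural observations organize everything. First, since $\Gamma$ is of type A the edge $(n-1,n)$ is a cut edge, so the block $B_0$ containing $n$ is a leaf block of size $2$; deleting $B_0$ from a block-cut tree therefore never changes the number of internal blocks of size greater than $2$. Second, in a type-A graph the vertex $n-1$ is contained in exactly two blocks --- $B_0$ and, when $\cut_\Gamma(n-1) = 1$, the block $B'$ that also contains $n-2$ --- so when $n-1$ ceases to be a cut vertex the only block whose leaf/internal status can possibly change is $B'$. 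With these in hand the proof becomes the bookkeeping below.

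First I would handle the three subtypes in which the count is unchanged. In type A1 one has $\cut_\Gamma(n-1) > 1$, so $n-1$ is still a cut vertex of $\Gamma - n$ and the block-cut tree of $\Gamma - n$ is that of $\Gamma$ with the leaf $B_0$ deleted; no block of size $> 2$ changes status, so $\abs{\{B \in \IB_{\Gamma-n} \mid \abs{B} > 2\}} = \abs{\{B \in \IB_\Gamma \mid \abs{B} > 2\}}$. In type A2 the block $B'$ has size $2$: it becomes a leaf block of $\Gamma - n$, but since $\abs{B'} = 2$ it contributes to neither count, and no other block is affected. In type A3 one has $\abs{B'} > 2$ and $B'$ is adjacent to more than two vertices of the block-cut tree of $\Gamma$, so even after deleting $n-1$ it remains adjacent to at least two vertices, hence stays an internal block of $\Gamma - n$; again no size-$>2$ block moves, and the two counts agree.

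Next I would treat the two cases that shift the count. In type A4 one has $\abs{B'} > 2$ and $B'$ is adjacent in the block-cut tree of $\Gamma$ to exactly the two cut vertices $n-1$ and $v$; after $n-1$ is deleted, $B'$ is adjacent only to $v$, so $B'$ becomes a leaf block of $\Gamma - n$. Thus exactly one internal block of size $> 2$ is lost, giving the ``$-1$''. In type A5 the graph $\Gamma$ has only the two blocks $B_0$ and $B'$, both of which are leaves of the two-edge block-cut tree, so $\IB_\Gamma = \emptyset$ and the count for $\Gamma$ is $0$; on the other hand $\Gamma - n$ equals $B'$ as a graph, which is $2$-connected on at least three vertices, so $\IB_{\Gamma-n} = \{\Gamma - n\}$ with $\abs{\Gamma - n} > 2$ and the count is $1$, giving the ``$+1$''.

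The only step that requires genuine care --- and the one I expect to be the main (if mild) obstacle --- is justifying, in each subtype, that $n-1$ lies in precisely the two blocks $B_0$ and $B'$ and that deleting the vertex $n-1$ together with the block $B_0$ from the block-cut tree of $\Gamma$ really yields the block-cut tree of $\Gamma - n$; this is exactly what the structural discussion preceding the lemma establishes, using that $\Gamma$ is cliqued and naturally labeled together with Lemmas \ref{lem:graph_type} and \ref{lem:natural_label}, so the rest is the finite case analysis above.
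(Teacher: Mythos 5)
Your proposal is correct and follows essentially the same case-by-case argument as the paper: in each subtype A1--A5 the only block whose leaf/internal status can change is $B'$ (with $B_0$ simply deleted), and the bookkeeping of whether $B'$ has size $2$ or size greater than $2$, and whether it becomes a leaf, yields exactly the stated $0$, $-1$, $+1$ shifts, including the type A5 observation that $\Gamma-n$ is $2$-connected and hence is itself an internal block of size greater than $2$ since $n\geq 4$.
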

\begin{proof}
    If $\Gamma$ is type A1, then every internal block of $\Gamma$ is an internal block of $\Gamma'$ and vice versa. If $\Gamma$ is type A2, then $B'$ where $V(B') = \{n-2,n-1\}$ is an internal block for $\Gamma$ but a leaf block for $\Gamma - n$. However, $B'$ is not counted above since $\abs{B'} = 2$. Every other internal block of $\Gamma$ is an internal block of $\Gamma - n$ and vice versa. If $\Gamma$ is type A3, then $B'$ is still an internal block in $\Gamma - n$, because it is adjacent to more than $2$ cut vertices in the block-cut tree of $\Gamma - n$. Every other internal block is also the same, and so the equality follows. \par 
    If $\Gamma$ is type A4, then $B'$ is an internal block of $\Gamma$ but a leaf block in $\Gamma - n$. Every other internal block in $\Gamma - n$ is an internal block in $\Gamma$, so $\abs{\{B \in \IB_{\Gamma-n} \mid \abs{B} > 2\}} = \abs{\{B \in \IB_\Gamma \mid \abs{B} > 2\}}-1$. \par 
    If $\Gamma$ is type A5, then $\IB_\Gamma = \emptyset$. On the other hand, $\IB_{\Gamma - n} = \{B'\}$. Since $\Gamma$ has at least $4$ vertices, we know that $\abs{B'} > 2$, and the claim follows.
\end{proof}
\begin{lemma}\label{lem:dim_IC}
    Let $\Gamma$ be a cliqued and naturally labeled graph on at least $4$ vertices. Then the internal cut edges in $\Gamma-n$ are
    \[\IC_{\Gamma-n} = \begin{cases}
        \IC_\Gamma & \text{ if } \Gamma \text{ is type  A1, A3, A4, or A5} \\
        \IC_\Gamma \setminus \{(n-2,n-3)\} & \text{ if } \Gamma \text{ is type A2.}
    \end{cases}\]
\end{lemma}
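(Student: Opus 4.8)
\emph{Approach.} By Lemma~\ref{lem:dim_typeBC} the equality $\IC_{\Gamma-n}=\IC_\Gamma$ is already known when $\Gamma$ is of type B or C, so the plan is to handle only the type~A subcases A1--A5 using the finer classification introduced above. The guiding principle is that deleting $n$ disturbs the block--cut tree only locally: since $(n-1,n)$ is a cut edge of the naturally labeled graph $\Gamma$, the block $B_0$ through $n$ equals $\{n-1,n\}$ and is a leaf of the block-cut tree, so $(n-1,n)\notin\IC_\Gamma$, and passing to $\Gamma-n$ simply deletes this leaf. Because $n$ is a leaf vertex of $\Gamma$ (its only neighbor is $n-1$), every separation witnessed by deleting a vertex $v\neq n-1$ survives in $(\Gamma-n)-v$; hence $n-1$ is the only vertex that can lose cut-vertex status, and $B_0$ and the blocks of $\Gamma$ containing $n-1$ are the only blocks whose vertex set or leaf/internal status can change. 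Since a size-two block $\{a,b\}$ belongs to $\IC$ exactly when both $a$ and $b$ are cut vertices of the ambient graph, the $\IC$-membership of an edge can change only if that edge is a size-two block through $n-1$.

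\emph{Steps.} First I would dispose of type A1: here $\cut_\Gamma(n-1)>1$, so $n-1$ remains a cut vertex of $\Gamma-n$, the block-cut tree of $\Gamma-n$ is that of $\Gamma$ with the leaf $B_0$ removed, and a direct check shows every other block keeps its status and every size-two block keeps both endpoints as cut vertices, giving $\IC_{\Gamma-n}=\IC_\Gamma$. Next I would treat A3, A4 and A5 together: in each of these $\cut_\Gamma(n-1)=1$, so $n-1$ ceases to be a cut vertex and lies in exactly the two blocks $B_0$ and $B'$ (the block with $n-2\in B'$); since $\abs{B'}>2$ in all three cases, $B'$ contributes nothing to $\IC$ whether it remains internal (A3) or becomes a leaf block (A4), and in A5 one computes directly that $\IC_\Gamma=\IC_{\Gamma-n}=\emptyset$ because $\Gamma$ then has only the two leaf blocks $B_0,B'$ and $\Gamma-n=B'$ is $2$-connected. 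Finally I would do A2, the only case that produces a change: here $\abs{B'}=2$, so $B'=\{n-2,n-1\}$ is an internal size-two block of $\Gamma$ contributing its edge $(n-2,n-1)$ to $\IC_\Gamma$; deleting $n$ leaves $B'$ with its single remaining cut vertex $n-2$, turning $B'$ into a leaf block of $\Gamma-n$, so $(n-2,n-1)$ drops out of $\IC$ while everything else is untouched, giving $\IC_{\Gamma-n}=\IC_\Gamma\setminus\{(n-2,n-1)\}$.

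\emph{Main obstacle.} The arithmetic is routine; the one delicate point is justifying the ``nothing else changes'' claim in each case, i.e.\ that no block far from $n$ switches between internal and leaf and that no size-two block other than $B'$ changes its $\IC$-status. I would isolate this as a single sublemma: deleting the leaf vertex $n$ preserves the cut-vertex status of every vertex except possibly $n-1$ and preserves the vertex set of every block except $B_0$. This follows from the leaf-vertex observation above together with the structural facts about cliqued, naturally labeled graphs recorded in Lemma~\ref{lem:natural_label}; with it in hand, each of the five cases reduces to reading off the local block-cut picture already drawn for types A1--A5.
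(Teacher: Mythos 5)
Your proposal is correct and takes essentially the same route as the paper: the preliminary observation that blocks not containing $n$ persist as blocks of $\Gamma-n$ (your ``nothing else changes'' sublemma) plus the A1--A5 case analysis, with the only substantive change occurring in type A2 where the size-two internal block $B'$ on $\{n-2,n-1\}$ becomes a leaf block of $\Gamma-n$. Note that your identification of the removed edge as $(n-2,n-1)$ agrees with the paper's own proof and its later use in Proposition \ref{prop:lin_dimension}; the $(n-2,n-3)$ appearing in the lemma statement is evidently a typo.
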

\begin{proof}
    The set $\IC_\Gamma$ is the set of internal cut edges in $\Gamma$. The two types where $B'$ is an internal block in $\Gamma$ but a leaf block in $\Gamma - n$ are types A2, A3, and A4. In types A3 and A4 the block $B'$ is assumed to have size $\abs{B} > 2$, and so does not correspond to an element of $\IC_\Gamma$. If $\Gamma$ is type A2, $B'$ contributes to $\IC_\Gamma$ but not $\IC_{\Gamma - n}$, and that contribution is precisely the cut edge $(n-2,n-1)$.
\end{proof}
Now we are ready to prove the recursive formula for $D_\Gamma$.
\begin{proof}[Proof of Proposition \ref{prop:lin_dimension}]
Consider the sum $\sum_{j \vdash \Gamma - n} \cut_{\Gamma - n}(j)$. If $\Gamma$ is type B or C, then every cut vertex of $\Gamma$ is a cut vertex of $\Gamma - n$ and vice versa. For each such cut vertex $j \vdash \Gamma$ (and $j \vdash \Gamma - n$), since $\Gamma$ is type B/C no connected component of $\Gamma - j$ consists of only the vertex $n$, so $\cut_\Gamma(j) = \cut_{\Gamma - n}(j)$. \par 
If $\Gamma$ is type A1, then every cut vertex of $\Gamma$ is a cut vertex of $\Gamma - n$ but $\cut_\Gamma(n-1) = \cut_{\Gamma - n}(n-1) + 1$. If $\Gamma$ is type A2, A3, A4, or A5 then $n-1$ is not a cut vertex of $\Gamma - n$ but $\cut_\Gamma(n-1) = 1$. So we set $\cut_{\Gamma - n}(n-1) \coloneqq 0$ and the same relationship as in type A1 applies. 
Now for every other $j \neq n-1$, the vertex $j \vdash \Gamma$ if and only if $j \vdash \Gamma - n$, and then $\cut_\Gamma(j) = \cut_{\Gamma - n}(j)$. So we compute
    \begin{align*}
    \sum_{j \vdash \Gamma-n} \cut_{\Gamma-n}(j)  &= \sum_{j \vdash \Gamma} \cut_\Gamma(j) - \begin{cases}
            1 &\text{if } \Gamma \text{ type A}\\
            0 &\text{if } \Gamma \text{ type B/C.}
        \end{cases} 
    \end{align*} 
    Next, we compare the contributions to $D_\Gamma$ and $D_{\Gamma - n}$ by leaf blocks and internal blocks of size greater than $2$. By Lemmas \ref{lem:dim_typeBC}, \ref{lem:dim_IB} and \ref{lem:dim_IC}, it follows that 
    \[\abs{LB_{\Gamma-n}} +\abs{\{B \in \IB_{\Gamma-n} \mid \abs{B} > 2\}} = \abs{LB_\Gamma} + \abs{\{B \in \IB_{\Gamma} \mid \abs{B} > 2\}} - \begin{cases}
        1 & \text{ if $\Gamma$ type A but not A2,} \\
        0 & \text{ if $\Gamma$ type B, C, or A2.}
    \end{cases} \] 
    Since $\Gamma$ is naturally labeled, if $s \in \IC_{\Gamma-n}$ (so $s \neq (n,n-1)$), then the connected component of $([n-1],E(\Gamma-n) \setminus s)$ that does not contain the vertex $n-1$ is equal to the connected component of $([n],E(\Gamma)\setminus\{s\})$ that does not contain the vertex $n$. It is important for proving the recursion that for each internal cut edge $s \in \IC_{\Gamma - n} \subset \IC_\Gamma$ we always pick the component $G_s$ to be equal in $\Gamma$ and $\Gamma - n$ (i.e. always choose $n \notin G_s \subset \Gamma$), so we adopt this convention. If $\Gamma$ is type A2 (i.e. $\IC_\Gamma \neq \IC_{\Gamma - n}$) we remove the ``over-counting" from the internal cut edge $(n-2,n-1) \in \IC_\Gamma$ below, and get that 
    \[
    \sum_{s \in \IC_{\Gamma-n}} \choos{n-1}{\abs{G_s}} = \sum_{s \in \IC_{\Gamma}} \choos{n-1}{\abs{G_s}} - \begin{cases}
        \choos{n-1}{n-2} & \Gamma \text{ is type A2}\\
        0 &\text{otherwise.}
    \end{cases}
    \]
    Now we compute 
    \begin{align*}
        D_{\Gamma - n} &= 2n-3 - \sum_{j \vdash \Gamma-n} \cut_{\Gamma-n}(j) + (n-1)\left(\abs{\LB_{\Gamma-n}} + \abs{\{B \in \IB_{\Gamma-n} \mid \abs{B} > 2\}} - 1\right) \\
        &\hspace{1cm}+ \sum_{s \in \IC_{\Gamma-n}} \choos{n-1}{\abs{G_s}} \\
        &= 2n-3 - \sum_{j \vdash \Gamma}\cut_{\Gamma}(j) + \begin{cases}
            1 &\text{if } \Gamma \text{ type A}\\
            0 &\text{if } \Gamma \text{ type B/C.}
        \end{cases} \\
        &\hspace{1cm} + (n-1)\left(\abs{LB_\Gamma} + \abs{\{B \in \IB_{\Gamma} \mid \abs{B} > 2\}} - 1\right) -\begin{cases}
        n-1 & \text{ if $\Gamma$ type A but not A2} \\
        0 & \text{ if $\Gamma$ type B, C, or A2.}
    \end{cases} \\
    &\hspace{1cm} + \sum_{s \in \IC_{\Gamma}} \choos{n-1}{\abs{G_s}} - \begin{cases}
        \choos{n-1}{n-2} & \Gamma \text{ is type A2}\\
        0 &\text{otherwise.}
    \end{cases}\\
    &= 2n-2 - \sum_{j \vdash \Gamma}\cut_{\Gamma}(j) - \begin{cases}
            n-1 &\text{if } \Gamma \text{ type A}\\
            1 &\text{if } \Gamma \text{ type B/C.}
        \end{cases} \\
        &\hspace{1cm} + (n-1)\left(\abs{LB_\Gamma} + \abs{\{B \in \IB_{\Gamma} \mid \abs{B} > 2\}} - 1\right) + \sum_{s \in \IC_{\Gamma}} \choos{n-1}{\abs{G_s}} %-  \begin{cases} n-1 & \text{ if $\Gamma$ type A} \\ 0 & \text{ if $\Gamma$ type B/C.} \end{cases} \\
    \end{align*}
    Now we have $D_{\Gamma-n}$ in a form that closely resembles that of $D_\Gamma$. Recall Pascal's identity $\choos{n}{k} = \choos{n-1}{k}+\choos{n-1}{k-1}$. For each $s \in \IC_\Gamma$ we have that 
    \[\choos{n}{\abs{G_s}} - \choos{n-1}{\abs{G_s}} = \choos{n-1}{\abs{G_s}-1}.\]
    In particular, 
    \[
    \sum_{s \in \IC_\Gamma} \choos{n}{\abs{G_s}} - \sum_{s \in \IC_\Gamma}\choos{n-1}{\abs{G_s}} = \sum_{s \in \IC_\Gamma}\choos{n-1}{\abs{G_s}-1}.
    \]
    So we compute that the difference $D_\Gamma - D_{\Gamma - n}$ is equal to
    \begin{align*}
        1 + \left(\abs{\LB_\Gamma} + \abs{\{B \in \IB_{\Gamma} \mid \abs{B} > 2\}} - 1\right) + \sum_{s \in \IC_\Gamma}\choos{n-1}{\abs{G_s}-1} + \begin{cases}
        n-1 & \text{ if $\Gamma$ type A} \\
        1 & \text{ if $\Gamma$ type B/C.}
    \end{cases}
    \end{align*}
    The remainder of the proof is essentially the same as the proof for Lemma \ref{lem:linear_dimension_natural}. In particular,
    \begin{enumerate}
        \item Leaf blocks of size $2$ that do not contain the vertex $n$ are in bijection with strongly dominant pairs $(i,j)$ where $\abs{\Gamma_i^j} = 1$,
        \item The set of internal cut edges $\IC_\Gamma$ is equal to the set of strongly dominant pairs $(i,j)$ where $\abs{\Gamma_i^j} >1$, and 
        \item Leaf blocks and internal blocks of size at least three (i.e. all blocks of size at least $3$) that do \emph{not} contain the vertex $n$ are in bijection with weakly dominant pairs $(i,j)$. 
    \end{enumerate} 
     We note that $\choos{n-1}{1-1} = 1$, and get that 
    \[ \left(\abs{\LB_\Gamma} + \abs{\{B \in \IB_{\Gamma} \mid \abs{B} > 2\}} - 1\right) + \sum_{s \in \IC_\Gamma}\choos{n-1}{\abs{G_s}-1}  =  \sum_{(i,j) \gg \Gamma} \choos{n-1}{\abs{\Gamma_i^j}-1} + \abs{\left\{(i,j) >  \Gamma \right\}}. \]
    Thus $D_\Gamma - D_{\Gamma - n}$ has the claimed form.
\end{proof}

\begin{corollary}
    Let $\Gamma$ be a connected graph on $[n]$ where $n \geq 3$, and $D_\Gamma$ as defined in Equation (\ref{eqn:linear_dimension}). Then $\dim_\C(\splines{\Gamma}^1) = D_\Gamma$. Moreover, $\dim_\C(\mathrm{L}_\Gamma)_1 = \dim_\C(\mathrm{R}_\Gamma)_1 = D_\Gamma - n$.
\end{corollary}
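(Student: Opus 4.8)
The \emph{moreover} clause will drop out of the main equality, so the plan is to reduce to it first. By construction $(\lqot{\Gamma})_1$ is the degree-one part of the graded quotient $\splines{\Gamma}/\llangle \bar t_1,\ldots,\bar t_n\rrangle$, and since that ideal is generated in degree one, its degree-one component is precisely the $\C$-span of $\bar t_1,\ldots,\bar t_n$; these $n$ splines are linearly independent (evaluate at the identity, where $\bar t_i(e)=t_i$). Hence $\dim_\C(\lqot{\Gamma})_1 = \dim_\C\splines{\Gamma}^1 - n$, and the identical argument with $\bar x_1,\ldots,\bar x_n$ (independent because $\bar x_i(e)=t_i$) gives $\dim_\C(\rqot{\Gamma})_1 = \dim_\C\splines{\Gamma}^1 - n$. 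So it remains to prove $\dim_\C\splines{\Gamma}^1 = D_\Gamma$.

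Next I would reduce to the case that $\Gamma$ is cliqued and naturally labeled. The integer $D_\Gamma$ depends only on the block–cut tree of $\Gamma$, the numbers $\cut_\Gamma(j)$ at cut vertices, and the cut edges, all of which are fixed both by relabeling and by passing to the cliqued version $\Gamma'$ (cliquing replaces each block by a clique on the same vertex set, hence fixes every block of size $2$, every cut vertex, and every cut edge, and yields an isomorphic block–cut tree); thus $D_\Gamma = D_{\Gamma'} = D_{\Gamma''}$ for the naturally labeled representative $\Gamma''$ constructed in Section~\ref{ssec:cliqued}. On the spline side, a graph isomorphism induces a degree-preserving ring isomorphism by Proposition~\ref{prop:isom_rings}, so $\dim_\C\splines{\Gamma}^1$ is a graph invariant, and $\splines{\Gamma}^1 = \splines{\Gamma'}^1$ by Lemma~\ref{lem:kconn_better} (any two vertices of a block are joined by two vertex-disjoint paths). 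Therefore both sides of the claimed equality survive the reduction unchanged, and I may assume $\Gamma$ is cliqued and naturally labeled.

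Now induct on $n\geq 3$. For the base case $n=3$ there are exactly two connected graphs, $([3],\{(1,2),(2,3)\})$ and $K_3$, and one checks directly that $\dim_\C\splines{\Gamma}^1 = D_\Gamma$ ($7$ and $5$, respectively), which also follows from \cite{ayzenberg2022second}. For the inductive step $n\geq 4$: because $\Gamma$ is cliqued and naturally labeled, $n$ is not a cut vertex, so $\Gamma-n$ is a connected graph on $n-1\geq 3$ vertices; by the inductive hypothesis, together with the invariance established above (so that $\Gamma-n$ need be neither cliqued nor naturally labeled), $\dim_\C\splines{\Gamma-n}^1 = D_{\Gamma-n}$. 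Finally, Theorem~\ref{thm:linear_spanning} expresses $\dim_\C\splines{\Gamma}^1$ in terms of $\dim_\C\splines{\Gamma-n}^1$, and Proposition~\ref{prop:lin_dimension} expresses $D_\Gamma$ in terms of $D_{\Gamma-n}$, with the \emph{same} correction term
\[
1 + \begin{cases}\choos{n-1}{1} & \Gamma \text{ is type A}\\ 1 & \Gamma \text{ is type B/C}\end{cases} + \sum_{(i,j)\gg\Gamma}\choos{n-1}{\abs{\Gamma_i^j}-1} + \abs{\{(i,j)>\Gamma\}} ,
\]
so combining the two recursions with the inductive hypothesis gives $\dim_\C\splines{\Gamma}^1 = D_\Gamma$, closing the induction.

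All the real work sits in Theorem~\ref{thm:linear_spanning} and Proposition~\ref{prop:lin_dimension}; the only point requiring care here is the bookkeeping around the reductions — namely that the two recursions are being evaluated on genuinely the same graph $\Gamma-n$ and that the inductive hypothesis passes through the isomorphism and cliquing steps, i.e.\ one does not need $\Gamma-n$ to be cliqued and naturally labeled, only that $D$ and $\dim_\C\splines{\cdot}^1$ already coincide on it, which is exactly what the (previously established) invariance of both quantities guarantees. I expect no genuine obstacle beyond this assembly.
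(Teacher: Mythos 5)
Your proposal is correct and follows essentially the same route as the paper: reduce to a cliqued, naturally labeled graph, then observe that Theorem \ref{thm:linear_spanning} and Proposition \ref{prop:lin_dimension} give the same recursion and check the two graphs on three vertices, with the \emph{moreover} clause coming from quotienting the $n$-dimensional spans of $\{\bar{t}_i\}$ and $\{\bar{x}_i\}$. Your extra bookkeeping (why $D_\Gamma$ survives cliquing and relabeling, why the degree-one part of the ideal is exactly that span) just makes explicit what the paper leaves implicit.
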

\begin{proof}
    It suffices to assume that $\Gamma$ is cliqued and naturally labeled. In this case, by Theorem \ref{thm:linear_spanning} and Proposition \ref{prop:lin_dimension} both $\dim_\C(\splines{\Gamma}^1)$ and $D_\Gamma$ follow the same recursion. Thus it suffices to show equality on all connected graphs on $3$ vertices. Quick computation confirms that 
    \[\dim_\C(\splines{K_3}^1) = 5 = D_{K_3}\text{ and } \dim_\C(\splines{P_3}^1) = 7 = D_{P_3},\]
    and so the two statistics must be equal. The ``moreover" part follows from the fact that $\C\{\bar{t}_1,...,\bar{t}_n\}$ and $\C\{\bar{x}_1,...,\bar{x}_n\}$ are the ($n$-dimensional) linear subspace of $\splines{\Gamma}^1$ quotiented to obtain $\mathrm{L}_\Gamma$ and $\mathrm{R}_\Gamma$ respectively.
\end{proof}
We now have a closed combinatorial formula for the $\C$-dimension of the first graded piece of $\splines{\Gamma}^1$. In particular, we have the dimension of the representations corresponding to $\lrep{\Gamma}_1$ and $\rrep{\Gamma}_1$. We are also quite close to constructing $\C$-bases of $\splines{\Gamma}^1$, as the formulae for $D_\Gamma$ in Lemma \ref{lem:linear_dimension_natural} and the natural one for $\abs{\cb_\Gamma}$ are very similar.
\section{The Left and Right Linear Representations}\label{sec:lin_reps}
This subsection computes $\lrep{\Gamma}_1$ and $\rrep{\Gamma}_1$ for all connected $\Gamma$, proving Theorem \ref{intthm:linear} from the introduction. We prove this assuming that $\Gamma$ is naturally labeled, and get the label-independent formula as a corollary. The computation is direct, and achieved by computing the dot action on two subsets $\mathcal{LS}_\Gamma$ and $\mathcal{RS}_\Gamma$ of $\cb_\Gamma$ that project to bases of $(\lqot{\Gamma})_1$ and $(\rqot{\Gamma})_1$, respectively. \par 
Say $\Gamma$ is a naturally labeled connected graph. By Proposition \ref{prop:hard_linear_relations} and Theorem \ref{thm:linear_spanning}, if 
\[
\cb_\Gamma \coloneqq \left\{\bar{t}_i \mid i \in [n]\right\} \cup \left\{\bar{x}_i \mid i \in [n]\right\} \cup \left\{\bar{f}_A^s \left| \begin{matrix} s=(i,j) \gg \Gamma,\\ \abs{A} = \abs{\Gamma_i^j}\end{matrix}\right.\right\} \cup \left\{\bar{y}_{i,k}^j \left| \begin{matrix}(i,j) > \Gamma,\\ k \in [n] \end{matrix}\right.\right\},
\]
then $\splines{\Gamma}^1 = \C\cb_{\Gamma}$. For the first graded piece of $\lqot{\Gamma}$ and $\rqot{\Gamma}$, we will remove elements from $\cb_\Gamma$ using the relations in Lemma \ref{lem:linear_relations}, prove that the image of what remains is a basis by dimension, then compute the representations on $(\lqot{\Gamma})_1$ and $(\rqot{\Gamma})_1$ using Lemma \ref{lem:linear_dotact_elts} below.
\begin{lemma}\label{lem:linear_dotact_elts}
    Let $w \in S_n$. Then 
    \[w \cdot \bar{f}_A^{(i,j)} = \bar{f}_{w(A)}^{(i,j)} \text{  and  } w \cdot \bar{y}_{r,k}^j = \bar{y}_{r,w(k)}^j\]
\end{lemma}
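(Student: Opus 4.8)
The plan is to verify each of the two identities directly from the definitions of the dot action and of the splines $\bar{f}_A^{(i,j)}$ and $\bar{y}_{r,k}^j$, evaluating both sides at an arbitrary $v \in S_n$. Recall that the dot action is $w \cdot \bar{\rho}(v) = w\bar{\rho}(w^{-1}v)$, and that in the naturally labeled setting $\bar{f}_A^{(i,j)}$ is the coset spline with $G_s = \Gamma_i^j$, so $\bar{f}_A^{(i,j)}(u) = t_{u(i)} - t_{u(j)}$ if $u^{-1}(A) = V(\Gamma_i^j)$ and $0$ otherwise, while $\bar{y}_{r,k}^j(u) = t_k - t_{u(j)}$ if $u^{-1}(k) \in \Gamma_r^j$ and $0$ otherwise.

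First I would handle $\bar{f}_A^{(i,j)}$. Fix $v \in S_n$ and set $u = w^{-1}v$. Then $(w \cdot \bar{f}_A^{(i,j)})(v) = w\left(\bar{f}_A^{(i,j)}(u)\right)$. The support condition $u^{-1}(A) = V(\Gamma_i^j)$ is equivalent, after substituting $u = w^{-1}v$, to $v^{-1}(w(A)) = V(\Gamma_i^j)$, which is exactly the support condition for $\bar{f}_{w(A)}^{(i,j)}$ at $v$. When that condition holds, $\bar{f}_A^{(i,j)}(u) = t_{u(i)} - t_{u(j)}$, and applying $w$ (which acts on polynomials by $t_\ell \mapsto t_{w(\ell)}$, so that $w(t_{u(i)} - t_{u(j)}) = t_{w(u(i))} - t_{w(u(j))} = t_{v(i)} - t_{v(j)}$) yields precisely $\bar{f}_{w(A)}^{(i,j)}(v)$. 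When the condition fails, both sides are $0$. Since $v$ was arbitrary, the first identity follows.

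The argument for $\bar{y}_{r,k}^j$ is entirely parallel. With $u = w^{-1}v$, the support condition $u^{-1}(k) \in V(\Gamma_r^j)$ becomes $v^{-1}(w(k)) \in V(\Gamma_r^j)$, which is the support condition for $\bar{y}_{r,w(k)}^j$ at $v$; and when it holds, $w\left(\bar{y}_{r,k}^j(u)\right) = w(t_k - t_{u(j)}) = t_{w(k)} - t_{w(u(j))} = t_{w(k)} - t_{v(j)} = \bar{y}_{r,w(k)}^j(v)$; outside the support both sides vanish. One mild point to note, so that the right-hand sides are legitimate members of the indexed families $\cy_\Gamma$ and $\cc_\Gamma$, is that $\bar{y}_{r,w(k)}^j$ is indeed indexed by an element of $[n]$ (namely $w(k)$), and $\bar{f}_{w(A)}^{(i,j)}$ by a subset $w(A) \subset [n]$ of the same cardinality $\abs{A} = \abs{\Gamma_i^j}$ as $A$; these are immediate since $w$ is a bijection of $[n]$. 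There is no real obstacle here — the only thing requiring care is bookkeeping the composition $w \circ (w^{-1}v) = v$ correctly and tracking how the $S_n$-action on polynomials interacts with the index substitution, which is the content of the remark on well-definedness of the dot action already established in the excerpt.
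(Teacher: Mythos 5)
Your proof is correct and matches the paper's intended argument: the paper simply states that the lemma is "direct from the definitions," and your computation — evaluating $w\cdot\bar{\rho}$ at $v$ via $u=w^{-1}v$, translating the support condition, and applying the $S_n$-action $t_\ell\mapsto t_{w(\ell)}$ to the value — is exactly that direct verification, carried out correctly for both families of splines.
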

The proof of Lemma \ref{lem:linear_dotact_elts} is direct from the definitions. \par 
Now we will define two subsets of $\cb_\Gamma$, one for $(\lqot{\Gamma})_1$ and one for $(\rqot{\Gamma})_1$, that project to $\C$-bases in these quotients. \par 
For the linear piece of the left quotient $(\lqot{\Gamma})_1$, we first remove from $\cb_\Gamma$ the splines $\{\bar{t}_1,...,\bar{t}_n\}$. We may also discard 
\begin{itemize}
    \item[(1)] The single spline $\bar{x}_n$ by Lemma \ref{lem:linear_relations}(1),
    \item[(2)] The splines $\{\bar{x}_i \mid (i,j) \gg \Gamma\}$ by Lemma \ref{lem:linear_relations}(2), and
    \item[(3)] The splines $\{\bar{x}_i \mid (i,j) > \Gamma\}$ by Lemma \ref{lem:linear_relations}(3).
\end{itemize}
Note the set of splines in (2) and (3) is size $\abs{\{\bar{x}_i \mid (i,j) \gg \Gamma\} \cup \{\bar{x}_i \mid (i,j) > \Gamma\}} = \sum_{j \vdash \Gamma} \cut(j)$. So the image of 
\[ \mathcal{LS}_\Gamma \coloneqq \left\{\bar{x}_r \in \cx_{n-1} \mid r \text{ is not $s$-dominant }\forall s\in [n]\right\} \cup \left\{\bar{f}_A^s \mid s \gg \Gamma\right\} \cup \left\{\bar{y}_{r,k}^j \left| (r,j) > \Gamma,\;\; k \in [n]\right.\right\}\]
in $(\lqot{\Gamma})_1$ is a spanning set. Note that the size of $\left\{\bar{x}_r \in \cx_{n-1} \mid r \text{ is not $s$-dominant }\forall s\in [n]\right\}$ is $n-1-\sum_{j\vdash \Gamma} \cut(j)$, the size of $\left\{\bar{f}_A^s \mid s \gg \Gamma\right\}$ is ${\ds \sum_{(i,j) \gg \Gamma} \choos{n}{\abs{\Gamma_i^j}}}$, and the size of $\left\{\bar{y}_{r,k}^j \left| (r,j) > \Gamma,\;\; k \in [n]\right.\right\}$ is ${\ds \sum_{(i,j) > \Gamma}n}$. Thus the size of $\mathcal{LS}_\Gamma$ is precisely the dimension $D_\Gamma - n$ of $(\lqot{\Gamma})_1$, as computed in Lemma \ref{lem:linear_dimension_natural}. So $\mathcal{LS}_\Gamma$ projects to a basis of $(\lqot{\Gamma})_1$. In fact, $\mathcal{LS}_\Gamma$ is a permutation basis for the dot action representation, from which it is easy to compute the dot action representation (we will state and prove this in Theorem \ref{thm:linear_reps}).\par 
For the linear piece of the right quotient $(\rqot{\Gamma})_1$, we first remove from $\cb_\Gamma$ the splines  $\{\bar{x}_1,...,\bar{x}_n\}$. Let $m_{ij} \coloneqq \abs{\Gamma_i^j}$, and let $\left\{A_p \mid p \in \left[\choos{n}{m_{ij}}\right]\right\}$ be an enumeration of the $\choos{n}{m_{ij}}$-many subsets $A$ associated to a strongly dominant pair $(i,j) \gg \Gamma$. By Lemma \ref{lem:linear_relations}, the following three relations hold in $\rqot{\Gamma}$:
\[ \sum_{r=1}^n \bar{t}_r \sim 0,\;\; \sum_{A \subset [n]} \bar{f}_A^{(i,j)} \sim 0, \;  \text{and}\;\; \sum_{k=1}^n \bar{y}_{i,k}^j \sim 0.\]
The natural subset of $\cb_\Gamma$ whose image spans $(\rqot{\Gamma})_1$ is therefore  
\[\mathcal{RS}_\Gamma \coloneqq \{\bar{t}_r-\bar{t}_{r+1} \mid r \in [n-1]\}\cup \left\{ \bar{f}_{A_p}^{(i,j)} - \bar{f}_{A_{p+1}}^{(i,j)} \left| \begin{matrix} p \in \left[\choos{n}{m_{ij}}-1\right], \\ (i,j) \gg \Gamma \end{matrix} \right.\right\} \cup \left\{ 
\bar{y}_{r,k}^{j} - \bar{y}_{r,k+1}^j \left| \begin{matrix} (r,j) > \Gamma,\\ k\in [n-1] \end{matrix}   \right.\right\}\]
The first subset is size $n-1$. The second subset is size ${\ds  \sum_{(i,j)\gg \Gamma} \left(\choos{n}{\abs{\Gamma_i^j}} - 1 \right)}$ and the third subset is size ${\ds  \sum_{(i,j)>\Gamma} (n-1) }$. The number of (strong or weak) dominant pairs is \[\abs{(i,j) \in E(\Gamma) \mid (i,j) \gg \Gamma \text{ or } (i,j) > \Gamma\}} = \sum_{j \vdash \Gamma} \cut(j),\] so
\[
 \sum_{(i,j)\gg \Gamma} \left(\choos{n}{\abs{\Gamma_i^j}} - 1 \right)+ \sum_{(i,j)>\Gamma} (n-1) = \sum_{(i,j)\gg \Gamma} \choos{n}{\abs{\Gamma_i^j}} + \sum_{(i,j)>\Gamma} n -\sum_{j \vdash \Gamma}\cut(j).
\]
So the image of $\mathcal{RS}_\Gamma$ is a basis for $(\rqot{\Gamma})_1$.
\begin{theorem}\label{thm:linear_reps}
    Let $\Gamma$ be a naturally labeled graph. If $(i,j) \gg \Gamma$, define the partition $\lambda_{ij} \coloneqq \left(n-\abs{\Gamma_i^j},\abs{\Gamma_i^j}\right)$ (re-ordered if necessary). Then 
    \[
    \lrep{\Gamma}_1 = \sum_{(i,j) \gg \Gamma} h_{\lambda_{ij}} + \sum_{(i,j) > \Gamma} h_{n-1,1} + \left(n-1-\sum_{j \vdash \Gamma} \cut(j)\right) h_n,
    \]
    and 
    \[
    \rrep{\Gamma}_1 = s_{n-1,1} + \sum_{(i,j) \gg \Gamma} \left(h_{\lambda_{ij}}-s_n\right) + \sum_{(i,j) > \Gamma} s_{n-1,1}.
    \]
\end{theorem}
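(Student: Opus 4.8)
The plan is to read off both symmetric functions directly from the combinatorial generating sets built above, using Lemma~\ref{lem:linear_dotact_elts} together with the elementary identities $w\cdot\bar{t}_r = \bar{t}_{w(r)}$ and $w\cdot\bar{x}_r = \bar{x}_r$ (both immediate from the definition of the dot action and Equation~(\ref{eqn:sn_action_poly})). The key observation is that each of the three flavours of spline occurring in $\cb_\Gamma$ spans a \emph{permutation} subrepresentation for the dot action whose orbits are permuted through natural actions on $[n]$ or on subsets of $[n]$, so that the point-stabilizers are Young subgroups and the Frobenius characters are homogeneous symmetric functions.

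First I would handle $(\lqot{\Gamma})_1$. By the discussion preceding the theorem, the set $\mathcal{LS}_\Gamma$ projects to a $\C$-basis of $(\lqot{\Gamma})_1$, and the dot-action identities above show this set is dot-stable; hence $(\lqot{\Gamma})_1$ is literally a permutation $S_n$-module on the basis $\mathcal{LS}_\Gamma$, and it suffices to list the orbits. (i) Each $\bar{x}_r$ with $r$ not $s$-dominant is dot-fixed and so spans a trivial summand contributing $h_n$; by property (4) of dominant pairs there are exactly $n-1-\sum_{j\vdash\Gamma}\cut(j)$ of these. (ii) For each strongly dominant pair $(i,j)\gg\Gamma$, Lemma~\ref{lem:linear_dotact_elts} gives $w\cdot\bar{f}_A^{(i,j)}=\bar{f}_{w(A)}^{(i,j)}$, so $\{\bar{f}_A^{(i,j)}\mid \abs{A}=\abs{\Gamma_i^j}\}$ is one orbit with point-stabilizer the Young subgroup $S_A\times S_{[n]\setminus A}$ of type $\lambda_{ij}$, contributing $h_{\lambda_{ij}}$. (iii) For each weakly dominant pair $(i,j)>\Gamma$, Lemma~\ref{lem:linear_dotact_elts} gives $w\cdot\bar{y}_{i,k}^j=\bar{y}_{i,w(k)}^j$, so $\{\bar{y}_{i,k}^j\mid k\in[n]\}$ is one orbit with point-stabilizer $\cong S_{n-1}$, contributing $h_{n-1,1}$. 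Summing gives the stated formula for $\lrep{\Gamma}_1$.

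Next I would handle $(\rqot{\Gamma})_1$. Since the $\bar{x}_i$ vanish in $\rqot{\Gamma}$, the image of $\cb_\Gamma\setminus\{\bar{x}_i\mid i\in[n]\}$ spans $(\rqot{\Gamma})_1$, and it is a union of three dot-stable families: the defining permutation module $\{\bar{t}_r\mid r\in[n]\}$ (character $h_{n-1,1}$), one orbit $\{\bar{f}_A^{(i,j)}\}$ per strongly dominant pair (character $h_{\lambda_{ij}}$), and one orbit $\{\bar{y}_{i,k}^j\mid k\in[n]\}$ per weakly dominant pair (character $h_{n-1,1}$). In $\rqot{\Gamma}$ the relations Lemma~\ref{lem:linear_relations}(1),(2),(3) become $\sum_r\bar{t}_r\sim 0$, $\sum_A\bar{f}_A^{(i,j)}\sim 0$, and $\sum_k\bar{y}_{i,k}^j\sim 0$ respectively; each kills the unique trivial submodule (spanned by the all-ones vector) inside the corresponding permutation module. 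A dimension count shows nothing else is killed: removing one trivial from each block leaves total dimension $(n-1)+\sum_{(i,j)\gg\Gamma}\bigl(\choos{n}{\abs{\Gamma_i^j}}-1\bigr)+\sum_{(i,j)>\Gamma}(n-1)$, which equals $D_\Gamma-n=\dim_\C(\rqot{\Gamma})_1$ by Lemma~\ref{lem:linear_dimension_natural} and the corollary to Proposition~\ref{prop:lin_dimension}. Hence $(\rqot{\Gamma})_1$ is the direct sum of the quotient modules $\mathrm{Ind}_{S_\mu}^{S_n}\mathbb{1}\,/\,\C\{\text{all-ones}\}$, each with Frobenius character $h_\mu-s_n$, so
\[
\rrep{\Gamma}_1 = (h_{n-1,1}-s_n) + \sum_{(i,j)\gg\Gamma}\left(h_{\lambda_{ij}}-s_n\right) + \sum_{(i,j)>\Gamma}(h_{n-1,1}-s_n),
\]
and the claimed form follows from the Pieri identity $h_{n-1,1}=s_{n-1,1}+s_n$.

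The orbit–stabilizer bookkeeping and the dot-action identities for $\bar{t}_r,\bar{x}_r,\bar{f}_A^s,\bar{y}_{i,k}^j$ are routine. The one place requiring care — and thus the main obstacle — is the right-quotient step: one must confirm that the three displayed relations are a \emph{complete} set of relations among the images of $\cb_\Gamma\setminus\{\bar{x}_i\}$ (which is exactly what the dimension count against $D_\Gamma-n$ supplies) and that passing to the quotient of each permutation module by the span of its all-ones vector genuinely produces an $S_n$-module with Frobenius character $h_\mu-s_n$.
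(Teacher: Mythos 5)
Your proposal is correct and follows essentially the same route as the paper: both treat the spanning families as permutation modules for the dot action via Lemma~\ref{lem:linear_dotact_elts}, use the dimension count against $D_\Gamma - n$ (Lemma~\ref{lem:linear_dimension_natural}) to certify that the listed relations are the only ones, and read off the orbit characters $h_{\lambda_{ij}}$, $h_{n-1,1}$, $h_n$ for the left quotient and $h_\mu - s_n$ for the right. The only difference is presentational: the paper exhibits the explicit difference basis $\mathcal{RS}_\Gamma$ and computes the character on the complement of the invariant vector inside each permutation module, whereas you quotient each permutation module by its all-ones vector — the same argument by complete reducibility.
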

\begin{proof}
    Since $\C\mathcal{LS}_\Gamma$ and $\C\mathcal{RS}_\Gamma$ are $S_n$-invariant vector spaces, the dot action on each is a representation. Since the projection of these spaces to $(\lqot{\Gamma})_1$ and $(\rqot{\Gamma})_1$ are in fact isomorphisms, the symmetric functions $\lrep{\Gamma}_1$ and $\rrep{\Gamma}_1$ are the characters of the dot action representation on $\C\mathcal{LS}_\Gamma$ and $\C\mathcal{RS}_\Gamma$ respectively. Each of the identified subsets in bases $\mathcal{LS}_\Gamma$ and $\mathcal{RS}_\Gamma$ span $S_n$-invariant subspaces of $\C\mathcal{LS}_\Gamma$ and $\C\mathcal{RS}_\Gamma$ respectively. \par 
    First we will compute each part of $\lrep{\Gamma}_1$. The dot action fixes each $\bar{x}_i$, and so the character of the dot action representation on $\C\left\{\bar{x}_r \in \cx_{n-1} \mid r \text{ is not $s$-dominant }\forall s\in [n]\right\}$ is ${\ds \left(n-1-\sum_{j \vdash \Gamma} \cut(j)\right) h_n}$. By Lemma \ref{lem:linear_dotact_elts} the character of the dot action representation restricted to $\C\left\{\bar{f}_A^s \mid s \gg \Gamma\right\}$ is ${\ds \sum_{(i,j) \gg \Gamma} h_{\lambda_{ij}}}$. By Lemma \ref{lem:linear_dotact_elts} as well the character of the dot action representation on $\C\left\{\bar{y}_{r,k}^j \left| (r,j) > \Gamma,\;\; k \in [n]\right.\right\}$ is ${\ds \sum_{(i,j) > \Gamma} h_{n-1,1}}$. \par 
    Now we'll compute each part of $\rrep{\Gamma}_1$. Each of the following computations use the same principle argument. If $K$ is an integer, and the set $\{e_i \mid i \in [K]\}$ is a permutation basis of some permutation representation of $S_n$ with character $h_\lambda$, then the vector ${\ds \sum_{i=1}^K e_i}$ is invariant under that representation. Furthermore, the character of the representation on the orthogonal subspace spanned by $\{e_{i+1} - e_{i} \mid i \in [K-1]\}$ is $h_\lambda - s_n$. \par 
    The character of the dot action representation on $\C\{t_i \mid i \in [n]\}$ is $h_{n-1,n}$, and so the dot action representation on $\C\{\bar{t}_r-\bar{t}_{r+1} \mid r \in [n-1]\}$ is $h_{n-1,n}-s_n = s_{n-1,1}$. The character of the dot action representation on $\C\left\{ \bar{f}_{A_p}^{(i,j)}\left|  p \in \left[\choos{n}{m_i}\right], \; (i,j) \gg \Gamma \right.\right\}$ is ${\ds \sum_{(i,j) \gg \Gamma} h_{\lambda_{ij}}}$, and so the character of the dot action representation on 
    $\C\left\{ \bar{f}_{A_p}^{(i,j)} - \bar{f}_{A_{p+1}}^{(i,j)} \left|  p \in \left[\choos{n}{m_i}-1\right], \; (i,j) \gg \Gamma \right.\right\}$ is ${\ds \sum_{(i,j) \gg \Gamma} h_{\lambda_{ij}} - s_n}$. The character of the dot action representation on $\C\left\{ 
\left.\bar{y}_{r,k}^{j}\right| (r,j) > \Gamma,\; k\in [n]   \right\}$ is ${\ds \sum_{(i,j) > \Gamma} h_{n-1,1}}$, and so the character of the dot action representation on $\C \left\{\left. 
\bar{y}_{r,k}^{j} - \bar{y}_{r,k+1}^j \right| (r,j) > \Gamma,\; k\in [n-1]   \right\}$ is ${\ds \sum_{(i,j) > \Gamma}(h_{n-1,1}-s_n) = \sum_{(i,j) > \Gamma}s_{n-1,1}}$. 
\end{proof}
 The Schur-expansion of $h_{\lambda_{ij}}$ and is easy to compute since $\lambda_{ij}$ is only a two-part partition. In particular, if $K$ is the larger of $\abs{\Gamma_i^j}$ and $n-\abs{\Gamma_i^j}$, then ${\ds h_{\lambda_{ij}}-s_n = \sum_{m=0}^{n-K-1} s_{K+m,n-K-m}}$. \par 
The following corollary gives the label-independent description, from the statistics on block-cut trees described in Section \ref{sec:lin_dim}.
\begin{corollary}\label{cor:nonlabel_reps}
    Let $\Gamma$ be a connected simple graph, and let $\LB_\Gamma$, $\IB_\Gamma$, and $\IC_\Gamma$ be the leaf blocks, internal blocks, and internal cut edges of $\Gamma$ as defined in the beginning of Section \ref{sec:lin_dim}. For $(i,j) \in \IC_\Gamma$, let $\lambda_{ij}$ be the partition $\left(n-\abs{G_{(i,j)}},\abs{G_{(i,j)}}\right)$ (re-ordered if necessary), where $G_{(i,j)}$ is a connected component of the graph $\left([n],E(\Gamma)\setminus\{(i,j)\}\right)$. Then 
    \[
    \lrep{\Gamma}_1 = \sum_{(i,j) \in \IC_\Gamma} h_{\lambda_{ij}} + \left( \abs{\LB_\Gamma} +  \abs{\{B \in \IB_\Gamma \mid \abs{B} > 2\}} - 1\right) h_{n-1,1} + \left(n-1-\sum_{j \vdash \Gamma} \cut(j)\right) h_n,
    \]
    and 
    \[
    \rrep{\Gamma}_1 =\sum_{(i,j) \in \IC_\Gamma} \left(h_{\lambda_{ij}}-s_n\right) + \left( \abs{\LB_\Gamma} +  \abs{\{B \in \IB_\Gamma \mid \abs{B} > 2\}}\right)s_{n-1,1}.
    \]
\end{corollary}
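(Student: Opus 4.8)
The plan is to deduce Corollary~\ref{cor:nonlabel_reps} from Theorem~\ref{thm:linear_reps} by translating every term that appears in the naturally-labeled formula into the label-independent language of the block-cut tree. Since $\lrep{\Gamma}_1$ and $\rrep{\Gamma}_1$ are invariants of the isomorphism class (by Proposition~\ref{prop:isom_reps}, and since cliquing does not change $\splines{\Gamma}^1$ by Lemma~\ref{lem:kconn_better}), it suffices to fix a cliqued and naturally labeled representative $\Gamma$ and rewrite the right-hand sides of the two formulas in Theorem~\ref{thm:linear_reps}. The key observation is that $\cut_\Gamma(j)$ makes sense for arbitrary graphs (it is the number of connected components of $\Gamma-j$ minus one), so the coefficient $n-1-\sum_{j\vdash\Gamma}\cut(j)$ of $h_n$ is already label-independent and needs no translation.

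The first step is to handle the $h_{n-1,1}$ coefficient. Theorem~\ref{thm:linear_reps} gives $\sum_{(i,j)>\Gamma} h_{n-1,1}$ for the left representation and $s_{n-1,1}+\sum_{(i,j)>\Gamma} s_{n-1,1}$ for the right; here I invoke the bijection established inside the proof of Lemma~\ref{lem:linear_dimension_natural}, namely that weakly dominant pairs $(i,j)>\Gamma$ are in bijection with blocks of size at least $3$ that do not contain the vertex $n$. Since $\Gamma$ is naturally labeled, the block $B_0$ containing $n$ is a leaf block, so blocks of size at least $3$ not containing $n$ are precisely $\{B\in\LB_\Gamma\mid \abs{B}>2,\,n\notin B\}\cup\{B\in\IB_\Gamma\mid \abs{B}>2\}$, and I must check that $\abs{\LB_\Gamma}-1$ counts the leaf blocks not containing $n$ (true because $B_0\in\LB_\Gamma$) and that a leaf block of size $2$ contributes to neither side. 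Combining, $\abs{\{(i,j)>\Gamma\}}=\abs{\LB_\Gamma}+\abs{\{B\in\IB_\Gamma\mid\abs{B}>2\}}-1$, which for the left-hand formula gives the stated coefficient $\left(\abs{\LB_\Gamma}+\abs{\{B\in\IB_\Gamma\mid\abs{B}>2\}}-1\right)$ of $h_{n-1,1}$, and adding back the extra standalone $s_{n-1,1}$ in the right-hand formula bumps this to $\abs{\LB_\Gamma}+\abs{\{B\in\IB_\Gamma\mid\abs{B}>2\}}$, as claimed.

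The second step is to handle the $h_{\lambda_{ij}}$ terms. Here I again use a bijection from the proof of Lemma~\ref{lem:linear_dimension_natural}: the set of internal cut edges $\IC_\Gamma$ equals $\{(i,j)\gg\Gamma\mid \abs{\Gamma_i^j}>1\}$. So the strongly dominant pairs split into those with $\abs{\Gamma_i^j}=1$, for which $\lambda_{ij}=(n-1,1)$ so $h_{\lambda_{ij}}=h_{n-1,1}$ and $h_{\lambda_{ij}}-s_n = s_{n-1,1}$, and those with $\abs{\Gamma_i^j}>1$, which correspond to $(i,j)\in\IC_\Gamma$. The pairs with $\abs{\Gamma_i^j}=1$ are themselves in bijection with leaf blocks of size $2$ not containing $n$ (again from Lemma~\ref{lem:linear_dimension_natural}); adding their contribution of $h_{n-1,1}$ to the count from step one replaces $\{B\in\LB_\Gamma\mid\abs{B}>2,n\notin B\}$ by the full $\LB_\Gamma\setminus\{B_0\}$, yielding exactly the coefficient $\abs{\LB_\Gamma}-1$ (plus the internal blocks of size $>2$) in front of $h_{n-1,1}$. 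For the remaining strongly dominant pairs, $\Gamma_i^j$ is one of the two components of $([n],E(\Gamma)\setminus\{(i,j)\})$, so $\lambda_{ij}$ matches the partition defined in the corollary (with the caveat that the choice of component is immaterial since $\lambda$ is unordered), and the sum $\sum_{(i,j)\gg\Gamma,\,\abs{\Gamma_i^j}>1} h_{\lambda_{ij}}$ becomes $\sum_{(i,j)\in\IC_\Gamma}h_{\lambda_{ij}}$.

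Assembling these substitutions yields precisely the two displayed formulas in the corollary; the final bookkeeping is to double-check the $s_n$ terms in the right representation. In $\rrep{\Gamma}_1$, each strongly dominant pair with $\abs{\Gamma_i^j}=1$ contributes $h_{n-1,1}-s_n=s_{n-1,1}$ and each weakly dominant pair contributes $s_{n-1,1}$, together with the standalone $s_{n-1,1}$, totalling $\left(\abs{\LB_\Gamma}+\abs{\{B\in\IB_\Gamma\mid\abs{B}>2\}}\right)s_{n-1,1}$, while the strongly dominant pairs in $\IC_\Gamma$ contribute $\sum_{(i,j)\in\IC_\Gamma}(h_{\lambda_{ij}}-s_n)$, matching the statement exactly. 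I expect the only real subtlety — the ``main obstacle'' — to be verifying carefully that the three bijections extracted from the proof of Lemma~\ref{lem:linear_dimension_natural} partition the dominant pairs and the blocks correctly, in particular that the leaf block $B_0$ containing $n$ is consistently excluded and that no block is double-counted; once that combinatorial ledger is balanced, the representation-theoretic content is entirely contained in Theorem~\ref{thm:linear_reps} and nothing further is needed.
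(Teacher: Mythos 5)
Your proposal is correct and follows exactly the route the paper takes: the paper's proof of Corollary \ref{cor:nonlabel_reps} is precisely to combine Theorem \ref{thm:linear_reps} with the bijections between dominant pairs and blocks established in the proof of Lemma \ref{lem:linear_dimension_natural} (and Proposition \ref{prop:lin_dimension}), which is the bookkeeping you carry out in detail. Your ledger of the $h_{\lambda_{ij}}$, $h_{n-1,1}$, $s_{n-1,1}$, and $s_n$ contributions matches the paper's intended cancellation, so nothing further is needed.
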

\begin{proof}
    This follows directly from Theorem \ref{thm:linear_reps} and the bijections/equalities described in the proof of Lemma \ref{lem:linear_dimension_natural} (and also the proof of Proposition \ref{prop:lin_dimension}).
\end{proof}
We note that in the statement of Theorem \ref{intthm:linear} in the introduction, the sets are $E_1 = \IC_\Gamma$ and $E_2 = \LB_\Gamma \cup \{B \in \IB_\Gamma \mid \abs{B} > 2\}$, and the integer $k = n-1-\sum_{j \vdash \Gamma} \cut(j)$.
    \begin{example}
        Let $\Gamma$ be the graph from Example \ref{ex:blockpair_bijection}. We may compute the representations with either Theorem \ref{thm:linear_reps} or Corollary \ref{cor:nonlabel_reps}. Then \[ \lrep{\Gamma}_1 = 6h_{12} + 4h_{11,1} + h_{8,4}\]
        and \[ \rrep{\Gamma}_1 = 4s_{11,1} + h_{8,4}-s_{12} = 4s_{11,1} + (s_{8,4} + s_{9,3} + s_{10,2} + s_{11,1}).\]
    \end{example}
We note that, by this formula, the symmetric function $\lrep{\Gamma}_1$ is $h$-positive for all graphs $\Gamma$. So Theorem \ref{thm:linear_reps} and Corollary \ref{cor:nonlabel_reps} prove an extension of linear part of the graded Stanley-Stembridge conjecture from Hessenberg graphs to all connected graphs.

\bibliographystyle{alpha}
\bibliography{splines}

\end{document}